\documentclass[10pt]{amsart}
\usepackage{pdfsync}
\usepackage{amssymb}
\usepackage{amscd}

\def\today{\number\day\space\ifcase\month\or   January\or February\or
   March\or April\or May\or June\or   July\or August\or September\or
   October\or November\or December\fi\   \number\year}

\newcounter{TmpEnumi}

\theoremstyle{definition}
\newtheorem{thm}{Theorem}[section]
\newtheorem{lem}[thm]{Lemma}
\newtheorem{prp}[thm]{Proposition}
\newtheorem{dfn}[thm]{Definition}
\newtheorem{cor}[thm]{Corollary}

\newtheorem{rmk}[thm]{Remark}
\newtheorem{ntn}[thm]{Notation}
\newtheorem{exa}[thm]{Example}

\newtheorem{qst}[thm]{Question}

\newcommand{\beq}{\begin{equation}}
\newcommand{\eeq}{\end{equation}}
\newcommand{\beqr}{\begin{eqnarray*}}
\newcommand{\eeqr}{\end{eqnarray*}}
\newcommand{\bal}{\begin{align*}}
\newcommand{\eal}{\end{align*}}
\newcommand{\bei}{\begin{itemize}}
\newcommand{\eei}{\end{itemize}}
\newcommand{\limi}[1]{\lim_{{#1} \to \infty}}

\newcommand{\af}{\alpha}
\newcommand{\bt}{\beta}
\newcommand{\gm}{\gamma}
\newcommand{\dt}{\delta}
\newcommand{\ep}{\varepsilon}
\newcommand{\zt}{\zeta}
\newcommand{\et}{\eta}

\newcommand{\io}{\iota}

\newcommand{\ld}{\lambda}
\newcommand{\sm}{\sigma}
\newcommand{\kp}{\kappa}
\newcommand{\ph}{\varphi}
\newcommand{\ps}{\psi}
\newcommand{\rh}{\rho}
\newcommand{\om}{\omega}
\newcommand{\ta}{\tau}

\newcommand{\Gm}{\Gamma}
\newcommand{\Dt}{\Delta}

\newcommand{\Th}{\Theta}
\newcommand{\Ld}{\Lambda}

\newcommand{\Ph}{\Phi}
\newcommand{\Ps}{\Psi}
\newcommand{\Om}{\Omega}

\newcommand{\Q}{{\mathbb{Q}}}
\newcommand{\Z}{{\mathbb{Z}}}
\newcommand{\R}{{\mathbb{R}}}
\newcommand{\C}{{\mathbb{C}}}
\newcommand{\N}{{\mathbb{Z}}_{> 0}}
\newcommand{\Nz}{{\mathbb{Z}}_{\geq 0}}

\pagenumbering{arabic}

\newcommand{\id}{{\mathrm{id}}}

\newcommand{\sint}{{\mathrm{int}}}

\newcommand{\Prim}{{\mathrm{Prim}}}

\newcommand{\supp}{{\mathrm{supp}}}

\newcommand{\spn}{{\mathrm{span}}}
\newcommand{\card}{{\mathrm{card}}}
\newcommand{\Aut}{{\mathrm{Aut}}}
\newcommand{\Ad}{{\mathrm{Ad}}}

\newcommand{\dirlim}{\varinjlim}

\newcommand{\OT}{{\mathcal{O}_2}}

\newcommand{\Open}{{\mathbb{O}}}
\newcommand{\M}{M}

\newcommand{\andeqn}{\,\,\,\,\,\, {\mbox{and}} \,\,\,\,\,\,}


\newcommand{\Wolog}{Without loss of generality}
\newcommand{\Tfae}{The following are equivalent}
\newcommand{\tfae}{the following are equivalent}
\newcommand{\ifo}{if and only if}

\newcommand{\ca}{C*-algebra}
\newcommand{\uca}{unital C*-algebra}

\newcommand{\hm}{homomorphism}

\newcommand{\fd}{finite dimensional}
\newcommand{\tst}{tracial state}

\newcommand{\pj}{projection}

\newcommand{\nzp}{nonzero projection}
\newcommand{\mvnt}{Murray-von Neumann equivalent}

\newcommand{\ct}{continuous}
\newcommand{\cfn}{continuous function}
\newcommand{\nbhd}{neighborhood}

\newcommand{\chs}{compact Hausdorff space}
\newcommand{\hme}{homeomorphism}
\newcommand{\mh}{minimal homeomorphism}

\newcommand{\cp}{crossed product}

\newcommand{\dr}{pseudocovering}


\renewcommand{\S}{\subset}

\newcommand{\SM}{\setminus}
\newcommand{\I}{\infty}
\newcommand{\E}{\varnothing}

\title[Minimal dynamics on prime C*-algebras]{Minimal
  dynamical systems on prime C*-algebras}

\author{Eberhard Kirchberg}

\address{Institut f\"{u}r Mathematik,
 Humboldt-Universit\"{a}t zu Berlin,
 Unter den Linden~6,
 DE-10099, Germany.}


\author{N.~Christopher Phillips}

\date{6 December 2023}

\address{Department of Mathematics, University  of Oregon,
       Eugene OR 97403-1222, USA.}

\email[]{ncp@darkwing.uoregon.edu}

\subjclass[2010]{Primary 46L55;
 Secondary 46L40.}
\thanks{This material is based upon work of the second author
supported by the US National Science Foundation under Grants
DMS-0701076 and DMS-1101742
and by a visiting professorship at the Research Institute
for Mathematical Sciences at Kyoto University,
and on work of both authors supported
by the Danish National Research Foundation
through the Centre for Symmetry and Deformation.}

\begin{document}

\begin{abstract}
We give a number of examples of exotic actions of locally compact groups
on separable nuclear C*-algebras.
In particular, we give examples of the following:
\begin{itemize}
\item
Minimal effective actions of $\Z$ and $F_n$
on unital nonsimple prime AF algebras.
\item
For any second countable noncompact locally compact group,
a minimal effective action
on a separable nuclear nonsimple prime C*-algebra.
\item
For any amenable second countable noncompact locally compact group,
a minimal effective action
on a separable nuclear nonsimple prime C*-algebra
(unital when the group is ${\mathbb{Z}}$ or ${\mathbb{R}}$)
such that the crossed product
is $K \otimes {\mathcal{O}}_2$
(${\mathcal{O}}_2$ when the group is ${\mathbb{Z}}$).
\item
For any second countable
locally compact abelian group~$G$ which is not discrete,
an action on $K \otimes {\mathcal{O}}_2$
such that the crossed product is a nonsimple prime \ca.
\end{itemize}
In most of these situations,
we can specify the primitive ideal space of the C*-algebra
(of the crossed product in the last item)
within a class of spaces.
\end{abstract}

\maketitle

\indent
The purpose of this paper is to give examples of several kinds
of exotic actions of locally compact groups on \ca{s}.
In particular, we give examples of minimal actions
(there are no nontrivial invariant ideals;
see Definition~\ref{D:MinActCSt})
of various groups, including $\R$ and~$\Z$,
on \ca{s} which are prime but not simple.
In some cases, Takai duality then gives actions of abelian
groups on $K \otimes {\mathcal{O}}_2$
or ${\mathcal{O}}_2$ such that the crossed products
are prime but not simple.
The original motivation was questions related to~\cite{It}.
In particular,
our minimal actions of~$\R$ on prime nonsimple \ca{s}
give examples for Corollary~3.6 of~\cite{It}.

In more detail,
we do the following.
For every locally compact noncompact Hausdorff space~$P$,
we define a non Hausdorff compactification $\Xi (P)$,
in such a way that any separable \ca{}
whose primitive ideal space is $\Xi (P)$
must be prime.
(See Definition~\ref{D:X}.)
Using this construction, we give the following kinds of examples.
(In some cases, the actual examples are more general than
described here,
or have additional properties.)
\begin{enumerate}
\item\label{Summ_2X25_ZAF}
For any homeomorphism $h$ of a totally disconnected
nonempty second countable locally compact Hausdorff space~$P$
which has a not necessarily finite invariant Borel measure
with full support
but such that $h$ has no nonempty compact invariant subsets,
we give a minimal automorphism $\af$
of a unital nonsimple prime AF algebra~$A$
such that $\Prim (A) \cong \Xi (P)$
in such a way that the \hme{} induced by~$\af$
comes from~$h$.
(This is a special case of Corollary~\ref{C-FnAct508}.)
In particular (Corollary~\ref{C-ZAF428}) we may take $P = \Z$
with the translation action.
\item\label{Summ_2X25_FnAF}
For $n \geq 2$, we give a minimal action of $F_n$
on a unital nonsimple prime AF algebra
whose restriction to every abelian subgroup is not minimal.
(See Example~\ref{E-FnMin508}.)
\item\label{Summ_2X25_AllGp}
For any second countable noncompact locally compact group~$G$
and every second countable locally compact Hausdorff $G$-space $P$
with no nonempty compact $G$-invariant subsets,
we give a minimal action $\gm$
on a separable nuclear nonsimple prime C*-algebra~$A$
such that $\Prim (A) \cong \Xi (P)$
with the action induced by~$\gm$ being the same as the
one coming from the given action on~$P$.
(See Theorem~\ref{T:ExistMinAct}.)
In particular, we can take $P = G$ with translation.
\item\label{Summ_2X25_CPO2}
If $G$ in~(\ref{Summ_2X25_AllGp}) is amenable,
we can arrange that $C^* (G, C, \gm) \cong K \otimes \OT$.
(See Theorem~\ref{T-CPO2-430}.)
\item\label{Summ_2X25_UnitR}
If $G = \R$ in~(\ref{Summ_2X25_AllGp}),
we can take $C$ to be unital.
(See Proposition~\ref{P-RUnital}.)
\item\label{Summ_2X25_Star}
For any second countable
locally compact abelian group~$G$ which is not discrete
and any second countable
nonempty locally compact Hausdorff ${\widehat{G}}$-space $P$
with no nonempty compact ${\widehat{G}}$-invariant subsets,
we give an action $\af \colon G \to \Aut (K \otimes \OT)$
such that $C^* (G, \, K \otimes \OT, \, \af)$
is a nonsimple prime \ca{} whose primitive ideal space
is homeomorphic to $\Xi (P)$.
(See Corollary~\ref{C:ExoticCptAb}.)
\end{enumerate}

It is not possible to produce examples as in~(\ref{Summ_2X25_AllGp})
with compact groups (Corollary~\ref{C:CptGMinA})
or as in~(\ref{Summ_2X25_Star}) with discrete abelian groups
(Remark~\ref{R_2X27No}).
There are stronger restrictions if the algebra has the ideal property
(Corollary~\ref{C:ConnNotRR0} and Corollary~\ref{C:PrimeNotRR0}).

Some of our examples have similar properties to some examples
and results in~\cite{Brt}.
It is shown there that crossed products
by product type actions of connected locally compact abelian groups
are never simple, but they can easily be prime.
This can also happen for other locally compact abelian groups,
including some which are totally disconnected.
The dual actions on the crossed products are minimal.
Our AF~examples differ:
we get only actions of $\Z$ and $F_n$
(but \cite{Brt} gives nothing for $F_n$),
our algebras are unital
(not possible with the constructions in~\cite{Brt}),
and we construct examples with predetermined
primitive ideal spaces within a suitable class.
See the introduction to Section~\ref{Sec:AF}
for a more careful comparison of the results.

The basic method for most of the examples is to specify
a suitable primitive ideal space~$X$
(or the form $\Xi (P)$ as above)
for the \ca{} to be constructed,
with an action of the group $G$ on~$X$,
use the results of~\cite{HK} to obtain a separable nuclear \ca~$A$
with primitive ideal space~$X$, and, with additional work,
show that $A$ can be chosen so that there is an action
of $G$ on~$A$ which induces the given action on~$X$.
Thus, the results are an application of the range result of~\cite{HK}
for primitive ideal spaces of separable nuclear \ca.

This paper is organized as follows.
Section~\ref{Sec:1} describes the
non Hausdorff compactification $\Xi (P)$,
gives its basic properties,
and discusses group actions on it.
The easy parts of the proof that $\Xi (P)$
is the primitive ideal space of a separable nuclear \ca{} are also here.
The most complicated step of this proof is in Section~\ref{Sec:1b}.

In Section~\ref{Sec:AF},
we prove the results (\ref{Summ_2X25_ZAF}) and~(\ref{Summ_2X25_FnAF})
above involving AF algebras,
and related results.

The short Section~\ref{Sec:2a} gives the nonexistence result
for minimal actions of compact groups on nonsimple prime \ca{s}.

Section~\ref{Sec:2b}
contains the construction of \ca{s} with actions of groups
which induce given actions
on the primitive ideal space when it has the form $\Xi (P)$.
This construction is used to produce
the examples in (\ref{Summ_2X25_AllGp}) and~(\ref{Summ_2X25_UnitR}),
as well as some related examples.
In Section~\ref{Sec:3},
we show how to make the crossed products simple,
giving~(\ref{Summ_2X25_CPO2}) and related results.
Examples as in~(\ref{Summ_2X25_Star})
are then obtained using Takai duality.

Our convention is that
compact and locally compact spaces are not required to be Hausdorff.
(Thus, spaces we call compact or locally compact
are the spaces sometimes called
quasi-compact and locally quasi-compact.)
However, compact and locally compact groups are assumed to be Hausdorff.

We are grateful to Benjam\'{\i}n Itz\'{a}-Ortiz
for raising the questions
addressed here,
to Lew Ward for pointing out the Menger-N\"{o}beling Theorem,
to Ken Goodearl for the reference to Theorem~10.9 of~\cite{Gd},
and to George Willis (via Ken Ross)
for the example at the end of Section~\ref{Sec:1}.

\section{A non-Hausdorff compactification}\label{Sec:1}

\indent
We want to construct \ca{s} by constructing their
primitive ideal spaces.
In this section,
we recall some relevant definitions and results from~\cite{HK}.
We then define a non-Hausdorff compactification
of a locally compact space,
and prove basic facts about it,
such as functoriality.
We verify the easier parts of the hypotheses of the
reconstruction theorem of~\cite{HK}.
The harder parts are done in Section~\ref{Sec:1b}.

\begin{ntn}\label{N:PrimA}
For a \ca~$A$,
we let $\Prim (A)$ denote its primitive ideal space.
For an isomorphism $\ph \colon A \to B$ of \ca{s},
we let $\Prim (\ph) \colon \Prim (A) \to \Prim (B)$
denote the induced map $\Prim (\ph) (J) = \ph (J) \S B$
for primitive ideals $J \S A$.
\end{ntn}

Our notation is chosen so that an action of a group $G$ on~$A$
corresponds to an action of $G$ on $\Prim (A)$,
without replacing $g$ by $g^{-1}$.
However, our definition of $\Prim (\ph)$ goes the ``wrong way''.
(If $\ph$ is surjective,
then the inverse image of a primitive ideal is primitive,
but the direct image might not be.
For injective maps, the inverse image need not be primitive,
and the direct image need not be an ideal.
All counterexamples can be taken to have dimension at most~$4$.)

\begin{dfn}[Definition~A.1 of~\cite{HK}]\label{D:PtComplete}
Let $X$ be a ${\mathrm{T}}_0$-space.
Then $X$ is said to be {\emph{prime}}
(in algebraic geometry: irreducible)
if $X$ is not the union of two proper closed subsets.
The space $X$ is said to be {\emph{point-complete}}
if for every nonempty closed $T \S X$ which is prime in its relative
topology,
there is $x \in T$ such that $T = {\overline{ \{ x \} }}$.
\end{dfn}

Other terms are used in the literature.
In Definition~4.9 of~\cite{HfKm},
the term irreducible is used,
a closed subset is called monogenic if
it is the closure of a point,
and (also see the beginning of Section~3 of~\cite{BE})
a point-complete ${\mathrm{T}}_0$-space is called
a spectral space.
In Definition~2.6 of~\cite{HL},
a point-complete ${\mathrm{T}}_0$-space is called sober.

\begin{dfn}[Definition~1.3 of~\cite{HK}]\label{D:Pseudo}
Let $X$ and $Z$ be ${\mathrm{T}}_0$-spaces,
and let $\pi \colon X \to Z$ be \ct.
\begin{enumerate}
\item\label{D:Pseudo:Graph}
We define the {\emph{pseudo-graph}} $R_{\pi}$ of $\pi$
to be
\[
R_{\pi} = \big\{ (x, y) \in X \times X
  \colon \pi (y) \in {\overline{ \{ \pi (x) \} }} \big\}
\]
\item\label{D:Pseudo:Inv}
A subset $S \S X$ is {\emph{$R_{\pi}$-invariant}}
if $y \in S$ and $(x, y) \in R_{\pi}$ imply $x \in S$.
\item\label{D:Pseudo:Open}
The map $\pi \colon X \to Z$ is {\emph{pseudo-open}} if:
\begin{enumerate}
\item\label{D:Pseudo:Open:1}
The map $R_{\pi} \to X$, given by $(x, y) \mapsto x$, is open.
\item\label{D:Pseudo:Open:2}
For every $R_{\pi}$-invariant open set $U \S X$,
the image $\pi (U)$ is an open subset of $\pi (X)$.
\end{enumerate}
\item\label{D:Pseudo:Epi}
The map $\pi \colon X \to Z$ is {\emph{pseudo-epimorphic}} if
for every closed subset $F \S Z$,
the subset $\pi (X) \cap F$ is dense in~$F$.
\end{enumerate}
\end{dfn}

\begin{thm}[\cite{HK}; \cite{Kb}]\label{T:ExistA}
Let $X$ be a ${\mathrm{T}}_0$-space.
Suppose that $X$ is point-complete (Definition~\ref{D:PtComplete})
and that there exists a locally compact Polish space~$Z$
and a \ct{} map $Z \to X$ which is
pseudo-open (Definition \ref{D:Pseudo}(\ref{D:Pseudo:Open}))
and pseudo-epimorphic (Definition \ref{D:Pseudo}(\ref{D:Pseudo:Epi})).
Then there exists a separable nuclear \ca~$A$
such that $K \otimes \OT \otimes A \cong A$,
and a \hme{} $g \colon X \to \Prim (A)$.
Moreover, we have the following uniqueness statement.
Whenever $B$ is a separable nuclear \ca{}
such that $K \otimes \OT \otimes B \cong B$ and
$h \colon X \to \Prim (B)$ is a \hme,
then there exists an isomorphism $\ph \colon A \to B$
such that, with $\Prim (\ph)$ being as in Notation~\ref{N:PrimA},
we have $\Prim (\ph) \circ g = h$.
\end{thm}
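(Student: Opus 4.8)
The plan is to split the statement into its existence and uniqueness halves, and to note that the absorption hypothesis $K \otimes \OT \otimes A \cong A$ puts the algebra squarely in the class to which Kirchberg's $\OT$-classification applies. The whole assertion is the range-and-uniqueness theorem for primitive ideal spaces of separable nuclear $\OT$-absorbing \ca{s}; I would prove it by combining the reconstruction theorem of~\cite{HK} (for existence) with Kirchberg's classification theorem~\cite{Kb} (for uniqueness), and below I indicate how the pieces fit.

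For the uniqueness assertion, suppose $A$ and $B$ are as in the statement, with homeomorphisms $g \colon X \to \Prim(A)$ and $h \colon X \to \Prim(B)$. Then $h \circ g^{-1} \colon \Prim(A) \to \Prim(B)$ is a homeomorphism, and on complements of closures it corresponds to an isomorphism of the lattices of open subsets, equivalently to an isomorphism of the ideal lattices $\Ideal(A) \cong \Ideal(B)$. Since $K \otimes \OT \otimes A \cong A$ and $K \otimes \OT \otimes B \cong B$, both algebras are separable, nuclear, stable, and $\OT$-absorbing. For such algebras $\OT$-absorption trivializes all $K$-theoretic data, so the primitive ideal space is a complete isomorphism invariant; this is exactly Kirchberg's classification of $\OT$-absorbing \ca{s}~\cite{Kb}, which produces an isomorphism $\ph \colon A \to B$ inducing the given lattice isomorphism. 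The only care needed is bookkeeping with the orientation conventions of Notation~\ref{N:PrimA}: one checks that $\ph$ can be chosen so that $\Prim(\ph) \circ g = h$ rather than its inverse.

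For existence, I would use the map $\pi \colon Z \to X$ to transport the locally compact Polish structure of $Z$ to the level of \ca{s}. A natural starting point is an algebra with primitive ideal space~$Z$, for instance $C_0(Z) \otimes K \otimes \OT$, whose ideal lattice is $\Open(Z)$ and which is already separable, nuclear, stable, and $\OT$-absorbing. The task is then to collapse $Z$ onto~$X$, that is, to realize the lattice $\Open(X)$ as the ideal lattice of a \ca{} in the same class. Point-completeness (sobriety) guarantees that $X$ is recovered as the space of join-irreducible elements of $\Open(X)$, so that $X$ genuinely is a primitive ideal space and not merely a lattice-theoretic artifact. The pseudo-epimorphic hypothesis forces the induced map on lattices to have dense image on each closed piece, and the pseudo-open hypothesis ensures that $R_\pi$-invariant open subsets of $Z$ descend to open subsets of $X$ and that the induced map on spectra is continuous and open onto its image. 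Together these are precisely the conditions under which the reconstruction theorem of~\cite{HK} applies to produce the desired $A$.

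The main obstacle is the existence half, and within it the construction of~\cite{HK} realizing the abstractly presented lattice $\Open(X)$ as the ideal lattice of a separable nuclear $\OT$-absorbing stable \ca. The technical heart is verifying that the pseudo-open and pseudo-epimorphic properties of $\pi$ are exactly what allow the second-countable, locally compact, Polish data on $Z$ to be pushed forward to a separable algebra with the correct quotient lattice; this is carried out in~\cite{HK}. By contrast the uniqueness half is comparatively soft: once one knows that $A$ and $B$ lie in the $\OT$-absorbing stable class, Kirchberg's theorem~\cite{Kb} applies directly, and only the orientation bookkeeping described above remains.
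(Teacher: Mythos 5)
Your proposal is correct and takes essentially the same approach as the paper: the paper's proof is purely a citation, taking existence from Corollary~1.5 of~\cite{HK} and uniqueness from the discussion following that corollary (which rests on Kirchberg's classification~\cite{Kb}), exactly the two ingredients you invoke. The extra narrative you supply about how point-completeness and the pseudo-open/pseudo-epimorphic hypotheses feed into the reconstruction machinery is reasonable exposition, but the paper delegates all of that work to the cited references rather than reproving any of it.
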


\begin{proof}
Existence of~$A$ is Corollary~1.5 of~\cite{HK}.
Uniqueness is in the discussion after Corollary~1.5 of~\cite{HK}.
\end{proof}

\begin{dfn}\label{D:X}
Let $P$ be a locally compact Hausdorff space.
We define a non-Hausdorff compactification $\Xi (P)$ as follows.
As a set,
$\Xi (P) = P \cup \{ \I \}$,
the same as for the one point (Alexandroff) compactification~$P^+$.
The topology of $\Xi (P)$ consists of the following sets:
\begin{itemize}
\item
$\Xi (P)$.
\item
$\E$.
\item
All complements in $\Xi (P)$ of compact subsets of~$P$.
\end{itemize}
\end{dfn}

If $P$ is compact,
we have just added an extra point whose closure is the whole space.
If $P$ is in addition second countable,
the resulting space is easily realized as the primitive ideal
space of a separable nuclear \ca.
Namely,
for a unital essential extension
\[
0 \longrightarrow K
  \longrightarrow E
  \longrightarrow C (P)
  \longrightarrow 0,
\]
we have $\Prim (E) \cong \Xi (P)$.
However, this algebra,
and in fact the space $\Xi (P)$ for compact~$P$,
are not useful for the purposes of this paper.

\begin{lem}\label{L:Top}
The sets in Definition~\ref{D:X}
do in fact form a topology on $\Xi (P)$,
and $\Xi (P)$ is a compact ${\mathrm{T}}_0$-space
such that every nonempty open subset of $\Xi (P)$ contains~$\infty$.
\end{lem}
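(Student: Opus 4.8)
The plan is to check the topology axioms first, then compactness, then the ${\mathrm{T}}_0$ property, leaving the assertion about $\infty$ as an immediate consequence of the definition. The organizing observation is that, writing $U_K = \Xi(P) \setminus K$ for compact $K \S P$, the nonempty open sets other than $\Xi(P)$ itself are precisely the sets $U_K$, and that $\infty \in U_K$ for every such $K$, since $K \S P$ and $\infty \notin P$. I would also record at the start the fact that carries most of the weight: because $P$ is Hausdorff, every compact subset of $P$ is closed in $P$.

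For the topology axioms, $\varnothing$ and $\Xi(P)$ are on the list explicitly (and $\Xi(P) = U_{\varnothing}$). Finite intersections are handled by the identity $\bigcap_{j=1}^{n} U_{K_j} = \Xi(P) \setminus \bigcup_{j=1}^{n} K_j$: a finite union of compact sets is compact, so the right-hand side is again of the form $U_K$. Arbitrary unions are handled by $\bigcup_{i} U_{K_i} = \Xi(P) \setminus \bigcap_{i} K_i$, so I must verify that $\bigcap_{i} K_i$ is compact whenever the index set is nonempty. This is where the Hausdorff hypothesis enters: each $K_i$ is closed in $P$, hence $\bigcap_i K_i$ is closed in $P$ and therefore closed in any fixed $K_{i_0}$, and a closed subset of a compact space is compact. (The cases where $\varnothing$ or $\Xi(P)$ occurs among the sets, or where the index set is empty, are disposed of directly.) I expect this step, closure under arbitrary unions, to be the only one requiring more than bookkeeping, and hence the main point: it is exactly where one needs arbitrary intersections of compact subsets of a Hausdorff space to remain compact.

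For compactness, given an open cover of $\Xi(P)$, I would select a member $V_0$ containing $\infty$; since every nonempty open set contains $\infty$ such a member exists, and discarding the trivial case $V_0 = \Xi(P)$ I may write $V_0 = U_{K_0}$. Every point not in $V_0$ lies in $K_0$, and the traces of the cover on $K_0$ form an open cover of $K_0$ in its subspace topology (which is the same whether induced from $P$ or from $\Xi(P)$, by transitivity). As $K_0$ is compact, finitely many members $V_1, \dots, V_m$ cover $K_0$, and then $V_0, V_1, \dots, V_m$ cover $\Xi(P)$.

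Finally, for ${\mathrm{T}}_0$: for any $x \in P$ the singleton $\{x\}$ is compact, so $U_{\{x\}} = \Xi(P) \setminus \{x\}$ is an open set containing every point of $\Xi(P)$ except $x$, in particular containing $\infty$; this separates $x$ from any other point $y \in \Xi(P)$ (including $y = \infty$), which gives ${\mathrm{T}}_0$. The closing assertion that every nonempty open set contains $\infty$ is immediate from the description of the open sets. I would also note in passing that $\Xi(P)$ fails to be ${\mathrm{T}}_1$: since every nonempty open set contains $\infty$, the point $\infty$ has closure all of $\Xi(P)$.
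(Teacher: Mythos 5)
Your proof is correct and follows essentially the same route as the paper's: the compactness argument (select a cover member containing $\infty$, observe that its complement is a compact subset of $P$, and pass to a finite subcover there) is identical to the paper's, and the rest of your write-up simply spells out, with the Hausdorff hypothesis made explicit, the verifications of the topology axioms and the ${\mathrm{T}}_0$ property that the paper dismisses as obvious or easy. One small caveat: your parenthetical ``by transitivity'' is not quite the right justification, since the original topology on $P$ is strictly finer than the topology $P$ inherits from $\Xi(P)$ when $P$ is noncompact, but this is harmless because the only fact your argument needs is the easy direction --- that the trace on $K_0$ of an open set $\Xi(P) \setminus K$ is $K_0 \cap (P \setminus K)$, which is open in $K_0 \subset P$ by your opening observation that compact subsets of the Hausdorff space $P$ are closed.
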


\begin{proof}
It is obvious that the collection of complements of these sets
is closed under finite unions and arbitrary intersections.
It is easy to check that this topology makes
$\Xi (P)$ a ${\mathrm{T}}_0$-space.

For compactness,
let ${\mathcal{U}}$ be an open cover of $\Xi (P)$.
Choose $U_0 \in {\mathcal{U}}$ such that $\infty \in U_0$.
If $U_0 = \Xi (P)$, we have found a finite subcover of~${\mathcal{U}}$.
Otherwise, $\Xi (P) \SM U_0$ is a subset of~$P$
which is compact in the original topology on~$P$.
The sets $U \SM \{ \I \}$, for $U \in {\mathcal{U}}$,
are open subsets of $P$ in its original topology
and which cover $\Xi (P) \SM U_0$.
Since $\Xi (P) \SM U_0$ is compact,
there are $n$ and $U_1, U_2, \ldots, U_n \in {\mathcal{U}}$
such that
$U_1 \SM \{ \I \}, \, U_2 \SM \{ \I \}, \, \ldots, \, U_n \SM \{ \I \}$
cover $\Xi (P) \SM U_0$.
Then $U_0, U_1, U_2, \ldots, U_n$ cover $\Xi (P)$.

It is immediate from Definition~\ref{D:X}
that every nonempty open subset of $\Xi (P)$ contains~$\infty$.
\end{proof}

\begin{cor}\label{L-Prime426}
Let $P$ be a locally compact Hausdorff space,
and let $A$ be a \ca{} such that $\Prim (A) \cong \Xi (P)$.
Then $A$ is prime.
\end{cor}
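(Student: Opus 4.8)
The plan is to show that $\Xi(P)$ is a prime topological space in the sense of Definition~\ref{D:PtComplete}, and then invoke the standard correspondence between primeness of a \ca{} and irreducibility of its primitive ideal space. Recall that a \ca~$A$ is prime exactly when $\Prim(A)$ is a prime space: this is because $\Prim(A)$ fails to be prime precisely when it is the union of two proper closed subsets, which correspond to two nonzero ideals whose intersection is zero. Since $\Prim(A) \cong \Xi(P)$ as topological spaces, it suffices to verify that $\Xi(P)$ is prime.

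To see that $\Xi(P)$ is prime, I would argue directly from the topology described in Definition~\ref{D:X} together with the key property established in Lemma~\ref{L:Top}, namely that every nonempty open subset of $\Xi(P)$ contains~$\infty$. Suppose for contradiction that $\Xi(P) = F_1 \cup F_2$ with $F_1$ and $F_2$ proper closed subsets. Then $U_1 = \Xi(P) \SM F_1$ and $U_2 = \Xi(P) \SM F_2$ are nonempty open sets with $U_1 \cap U_2 = \E$. But by Lemma~\ref{L:Top} both $U_1$ and $U_2$ must contain~$\infty$, so $\infty \in U_1 \cap U_2$, contradicting $U_1 \cap U_2 = \E$. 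Hence $\Xi(P)$ cannot be written as the union of two proper closed subsets, so it is prime.

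**The only real subtlety** is making the passage from primeness of $\Xi(P)$ to primeness of~$A$ precise, since this is where one must be careful about the \ca{} theory rather than pure topology. The cleanest route is to recall that an ideal of~$A$ corresponds to an open subset of $\Prim(A)$, that the intersection of two ideals corresponds to the intersection of the two open sets, and that a nonzero ideal corresponds to a nonempty open set. Given two nonzero ideals $I_1, I_2 \S A$, their corresponding open sets $U_1, U_2 \S \Xi(P)$ are nonempty, hence each contains~$\infty$ by Lemma~\ref{L:Top}, so $U_1 \cap U_2 \neq \E$, which forces $I_1 \cap I_2 \neq 0$. Since a \ca{} is prime precisely when every pair of nonzero ideals has nonzero intersection, this shows $A$ is prime and in fact bypasses the abstract topological notion of primeness entirely. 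This ideal-theoretic formulation is the one I would actually write down, as it is the most transparent and self-contained given the tools already in hand.
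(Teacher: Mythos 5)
Your proposal is correct and takes essentially the same route as the paper: the paper's proof is exactly the observation that, by the last part of Lemma~\ref{L:Top}, every nonempty open subset of $\Xi(P)$ contains~$\infty$, so $\Xi(P)$ has no two disjoint nonempty open subsets, and hence $A$ is prime via the standard correspondence between ideals and open subsets of $\Prim(A)$. Your ideal-theoretic formulation in the final paragraph just makes explicit what the paper leaves implicit.
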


\begin{proof}
It is immediate from the last part of Lemma~\ref{L:Top}
that $\Xi (P)$ does not contain two disjoint nonempty open subsets.
\end{proof}

\begin{lem}\label{L:PtC}
Let the notation be as in Definition~\ref{D:X}.
The space $\Xi (P)$ is point-complete
in the sense of Definition~\ref{D:PtComplete}.
\end{lem}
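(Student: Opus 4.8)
The plan is to begin by identifying the closed subsets of $\Xi (P)$ explicitly. By Definition~\ref{D:X} the open sets are $\Xi (P)$, $\E$, and the complements of compact subsets of~$P$; dualizing, the nonempty closed subsets of $\Xi (P)$ are exactly $\Xi (P)$ itself together with the nonempty compact subsets of~$P$ (compact in the original Hausdorff topology). From this list I would immediately record the relevant point closures: for $x \in P$ the singleton $\{ x \}$ is compact, hence closed in $\Xi (P)$, so $\ov{ \{ x \} } = \{ x \}$; and since the only closed subset of $\Xi (P)$ containing $\I$ is $\Xi (P)$ (the point $\I$ lies in no compact subset of~$P$), we have $\ov{ \{ \I \} } = \Xi (P)$. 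This is consistent with Lemma~\ref{L:Top}, which already tells us that every nonempty open set contains~$\I$.

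Now let $T \S \Xi (P)$ be nonempty, closed, and prime in its relative topology. I would split into two cases according to the classification above. If $T = \Xi (P)$, then $T = \ov{ \{ \I \} }$ and there is nothing more to do. Otherwise $T$ is a nonempty compact subset of~$P$, and the whole content of the proof is to show that such a prime $T$ must be a single point; once that is done, $T = \{ x \}$ for some $x \in P$ and $T = \ov{ \{ x \} }$ by the first paragraph, giving point-completeness.

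The key step, and the only place where the hypotheses on~$P$ are really used, is to show that a prime compact $T \S P$ is a singleton. I would argue the contrapositive. Suppose $a, b \in T$ are distinct. Since $P$ is Hausdorff, choose disjoint open sets $U, V \S P$ with $a \in U$ and $b \in V$. The sets $T \SM U$ and $T \SM V$ are closed in the compact space~$T$, hence are themselves compact subsets of~$P$ and therefore closed in $\Xi (P)$; they are proper (since $a \notin T \SM U$ and $b \notin T \SM V$), and their union is all of~$T$ because $U \cap V = \E$. This exhibits $T$ as a union of two proper closed subsets, contradicting primeness. (Equivalently, the same compactness argument shows that the subspace topology $T$ inherits from $\Xi (P)$ coincides with the one it inherits from the Hausdorff space~$P$, whence $T$ is Hausdorff, and a prime Hausdorff space is a single point.) I expect this identification of relatively closed subsets of~$T$ with compact subsets of~$P$ to be the main obstacle: it is precisely where compactness of~$T$ together with Hausdorffness of~$P$ enters, and everything else is routine bookkeeping with the explicit list of closed sets.
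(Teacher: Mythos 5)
Your proof is correct and takes essentially the same route as the paper: classify the nonempty closed subsets of $\Xi (P)$ as $\Xi (P)$ itself together with the nonempty compact subsets of~$P$, observe that $\Xi (P) = \ov{ \{ \I \} }$ and that singletons of $P$ are their own closures, and show that a compact subset of $P$ with more than one point is not prime. The only difference is that the paper asserts this last fact without proof, while you supply the Hausdorff-separation argument justifying it.
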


\begin{proof}
The nonempty closed subsets of $\Xi (P)$ consist of:
\begin{itemize}
\item
$\Xi (P)$.
\item
All nonempty compact subsets of $P \S \Xi (P)$.
\end{itemize}
Of these,
$\Xi (P) = {\overline{ \{ \I \} }}$,
one element subsets of $P$ are their own closures,
and compact subsets of $P$ with more than one element
are not prime.
\end{proof}

We will prove that $\Xi (P)$ is the primitive ideal space
of a separable nuclear \ca{}
using Theorem~\ref{T:ExistA}.

The space $\Xi (P)$ is also {\emph{coherent}},
in the sense that the intersection of
two compact $G_{\dt}$-subsets of $\Xi (P)$ is again compact.
(This is fairly straightforward to check, but we omit the proof.)
It is known~\cite{KrC} (but not yet published)
that every point-complete locally compact second countable
coherent ${\mathrm{T}}_0$-space is the primitive ideal space
of a separable nuclear \ca.
However, the proof is much more complicated
than what we give in this paper.

We will need machinery to verify the other hypotheses of
Theorem~\ref{T:ExistA}.
The following definition is intended only for use
in this paper.

\begin{dfn}\label{D:DR}
Let $P$ be
a noncompact second countable locally compact Hausdorff space.
A {\emph{\dr}} for $P$ is a $5$-tuple $(Y, Z, \rh, h, q)$
consisting of:
\begin{enumerate}
\item\label{D:DR:Y}
A second countable locally compact Hausdorff space~$Y$.
\item\label{D:DR:Z}
A compact metric space $Z$ with metric~$\rh$.
\item\label{D:DR:h}
An injective \ct{} function $h \colon Y \to Z$.
\item\label{D:DR:q}
A proper open \ct{} surjective function $q \colon Y \to P$.
\setcounter{TmpEnumi}{\value{enumi}}
\end{enumerate}
In addition, we require that:
\begin{enumerate}
\setcounter{enumi}{\value{TmpEnumi}}
\item\label{D:DR:EpD}
For every $\ep > 0$ there is a compact set $K \S P$
such that for all $x \in P \SM K$,
the set $h (q^{-1} (x))$ is $\ep$-dense in~$Z$.
\end{enumerate}
\end{dfn}

In Definition~\ref{D:DR}, since $q$ is proper,
we may replace Condition~(\ref{D:DR:EpD})
by the requirement that
for every $\ep > 0$ there be a compact set $L \S Y$
such that for all $y \in Y \SM L$,
the set $h (q^{-1} ( q (y)))$ is $\ep$-dense in~$Z$.

\begin{lem}\label{L:CompDense}
Let $P$ be
a noncompact second countable locally compact Hausdorff space,
and let $(Y, Z, \rh, h, q)$
be a \dr{} for~$P$.
Then $Z \SM h (Y)$ is dense in~$Z$.
\end{lem}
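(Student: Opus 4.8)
The plan is to show that every point of $Z$ is a limit of points in $Z \SM h(Y)$, using the $\ep$-density condition~(\ref{D:DR:EpD}) together with the noncompactness of $P$. The key observation is that $h(Y)$ cannot be ``too large'': if a small ball around some point $z_0 \in Z$ were entirely contained in $h(Y)$, then since $h$ is injective, the $h$-images $h(q^{-1}(x))$ of all the fibers would have to crowd into that ball infinitely often as $x \to \I$ in~$P$, and this will be incompatible with the fibers being disjoint subsets of~$h(Y)$.

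More precisely, fix $z_0 \in Z$ and $\ep > 0$; I would show the ball $B(z_0, \ep)$ meets $Z \SM h(Y)$. By Condition~(\ref{D:DR:EpD}), choose a compact set $K \S P$ so that $h(q^{-1}(x))$ is $(\ep/3)$-dense in~$Z$ for every $x \in P \SM K$. Since $P$ is noncompact, I can select an infinite sequence of distinct points $x_1, x_2, \ldots \in P \SM K$; because $q$ is surjective, each fiber $q^{-1}(x_n)$ is nonempty, and each contains a point $y_n$ with $\rh(h(y_n), z_0) < \ep/3$, by $(\ep/3)$-density. The points $h(y_1), h(y_2), \ldots$ all lie in $B(z_0, \ep/3) \S Z$. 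Since the $x_n$ are distinct and $q$ is a well-defined function, the fibers $q^{-1}(x_n)$ are pairwise disjoint, so the $y_n$ are distinct, and since $h$ is injective the $h(y_n)$ are distinct points of the closed ball ${\ov{B(z_0, \ep/3)}}$.

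Now I would extract from $\{ h(y_n) \}$ a convergent subsequence using compactness of~$Z$, with limit $w \in {\ov{B(z_0, \ep/3)}} \S B(z_0, \ep)$. The claim is that $w \notin h(Y)$, which gives a point of $Z \SM h(Y)$ inside $B(z_0, \ep)$ and finishes the argument since $z_0$ and $\ep$ were arbitrary. To see $w \notin h(Y)$, suppose $w = h(y)$ for some $y \in Y$. Then a subsequence of the distinct points $y_n = h^{-1}(h(y_n))$ has $h$-images converging to $h(y)$; I must rule out that infinitely many distinct $y_n$ accumulate at~$y$. Here is where I expect the main obstacle, and where properness of~$q$ does the work: since $h$ is continuous and injective but not assumed to be a homeomorphism onto its image, convergence of $h(y_n) \to h(y)$ in $Z$ does not immediately give $y_n \to y$ in~$Y$. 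The resolution is to argue via~$q$: the $y_n$ lie in the distinct fibers over $x_1, x_2, \ldots$, so $q(y_n) = x_n$ are distinct points escaping every compact subset of~$P$ (one may arrange $x_n \to \I$), whence $\{ y_n \}$ escapes every compact subset of~$Y$ by properness of~$q$. Thus no subsequence of $\{ y_n \}$ converges in~$Y$, so in particular none converges to~$y$; combined with the injectivity of~$h$ (which forces $h(y_n) \ne h(y)$ for all but at most one~$n$), this contradicts $h(y_n) \to h(y)$ if one knows $h$ is a homeomorphism onto its image on compact sets --- which indeed follows because $h$ restricted to any compact neighborhood of~$y$ in~$Y$ is a continuous injection from a compact space into the Hausdorff space~$Z$, hence a homeomorphism onto its image. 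Therefore $h(y_n) \to h(y)$ would force $y_n \to y$, contradicting the escape to infinity. Hence $w \notin h(Y)$, as required.
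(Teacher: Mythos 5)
Your argument has a genuine gap at its final, crucial step: the claim that the limit $w$ of (a subsequence of) the points $h(y_n)$ lies outside $h(Y)$. What you actually establish is that the $y_n$ escape every compact subset of~$Y$ (this part, via properness of~$q$, is fine). But from ``$y_n \to \I$ in $Y$ and $h(y_n) \to w$'' one cannot conclude $w \notin h(Y)$. Your attempted justification --- that $h$ restricted to a compact neighborhood $N$ of $y$ is a homeomorphism onto its image --- only controls sequences lying in~$N$; the $y_n$ are eventually outside~$N$, and nothing prevents points outside $N$ from having images converging to $h(y)$. Concretely, take $Y = [0, \I)$, $Z$ the closed unit disk, and $h$ an injective continuous curve that starts at the origin and spirals back toward the origin without touching it again; then $y_n = n \to \I$ in $Y$, yet $h(y_n) \to h(0) \in h(Y)$. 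So the implication you invoke is false as a general topological statement, and at that step your proof uses nothing beyond injectivity, continuity, and escape to infinity. Note also that condition~(\ref{D:DR:EpD}) of Definition~\ref{D:DR} forces $h(Y)$ itself to be dense in~$Z$, so no argument that merely exhibits $w$ as a limit of points of $h(Y)$ can, by itself, certify $w \in Z \SM h(Y)$.

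The paper avoids this trap by never trying to certify that a single constructed point avoids all of $h(Y)$. Instead it fixes a compact set $L \S Y$ and shows that the open set $Z \SM h(L)$ is $\varepsilon$-dense for every $\varepsilon > 0$: choosing $y$ outside the compact set $E = q^{-1}(q(L)) \cup q^{-1}(K)$, the entire fiber image $h(q^{-1}(q(y)))$ is $\varepsilon$-dense and, by injectivity of $h$ and disjointness of the fibers of~$q$, disjoint from $h(L)$ --- a statement about avoiding a fixed compact set, which is verifiable, unlike avoidance of all of $h(Y)$. Writing $Y$ as a countable union of compact sets (second countability plus local compactness) and applying the Baire Category Theorem to the dense open sets $Z \SM h(L_n)$ then gives density of $Z \SM h(Y)$. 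Some category-type argument of this kind seems unavoidable here, precisely because both $h(Y)$ and its complement are dense in~$Z$.
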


\begin{proof}
For every compact set $L \S Y$,
the set $Z \SM h (L)$ is open.
We will prove that it is also dense.
Since $Y$ is a countable union of compact sets,
the Baire Category Theorem will imply that $Z \SM h (Y)$ is dense.

We in fact prove that $Z \SM h (L)$ is $\ep$-dense for every $\ep > 0$.
Choose a compact set $K \S P$
as in Definition~\ref{D:DR}(\ref{D:DR:EpD})
for the given value of~$\ep$.
Set $E = q^{-1} (q (L)) \cup q^{-1} (K)$,
which is a compact subset of $Y$ because $q$ is proper.
Since $P$ is not compact, neither is~$Y$, so $Y \SM E \neq \E$.
Choose $y \in Y \SM E$.
Then $h (q^{-1} ( q (y)))$ is $\ep$-dense in~$Z$.
Also,
$q^{-1} ( q (y)) \cap E = \E$ and $h$ is injective,
so $h (q^{-1} ( q (y))) \S Z \SM h (L)$.
Thus $Z \SM h (L)$ is $\ep$-dense.
\end{proof}

\begin{lem}\label{L:VerPseudo}
Let $X$ and $Z$ be ${\mathrm{T}}_0$-spaces,
and let $\pi \colon X \to Z$ be \ct, surjective, and open.
Then $\pi$ is pseudo-open and pseudo-epimorphic.
\end{lem}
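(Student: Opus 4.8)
The plan is to dispose of the two easy conditions immediately, using surjectivity and openness directly, and then concentrate all the work on the openness of the projection $R_{\pi} \to X$.

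First I would note that surjectivity trivializes the pseudo-epimorphic condition: since $\pi (X) = Z$, for every closed $F \S Z$ we have $\pi (X) \cap F = F$, which is (trivially) dense in~$F$. Likewise, condition~(\ref{D:Pseudo:Open:2}) of pseudo-openness is automatic and does not even use $R_{\pi}$-invariance. Indeed, because $\pi$ is open and surjective, $\pi (U)$ is open in $Z = \pi (X)$ for \emph{every} open $U \S X$, in particular for every $R_{\pi}$-invariant one.

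The substance is therefore condition~(\ref{D:Pseudo:Open:1}), that the projection $p \colon R_{\pi} \to X$, $p (x, y) = x$, is open. Since the sets $(U \times V) \cap R_{\pi}$ with $U, V \S X$ open form a basis for the subspace topology on $R_{\pi}$, and the image of a union is the union of the images, it suffices to show that each set $p \big( (U \times V) \cap R_{\pi} \big)$ is open. The plan is to establish the explicit identity
\[
p \big( (U \times V) \cap R_{\pi} \big) = U \cap \pi^{-1} \big( \pi (V) \big),
\]
whose right-hand side is open because $\pi$ is open (so $\pi (V)$ is open) and $\pi$ is \ct. The inclusion ``$\supseteq$'' is the easy one: if $x \in U$ and $\pi (x) \in \pi (V)$, I choose $y \in V$ with $\pi (y) = \pi (x)$; then $\pi (y) \in \ov{\{\pi (x)\}}$, so $(x, y) \in R_{\pi}$ and $x = p (x, y)$.

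The reverse inclusion ``$\subseteq$'' is where the hypotheses interact, and I expect it to be the only genuine point of the argument. Suppose $x \in U$ and there is $y \in V$ with $(x, y) \in R_{\pi}$, that is, $\pi (y) \in \ov{\{\pi (x)\}}$. Recalling that $\pi (y) \in \ov{\{\pi (x)\}}$ means exactly that every open set containing $\pi (y)$ also contains $\pi (x)$, and that $\pi (V)$ is an open set containing $\pi (y)$ (since $y \in V$), I conclude $\pi (x) \in \pi (V)$, whence $x \in U \cap \pi^{-1} \big( \pi (V) \big)$. This proves the identity, so $p$ carries basic open sets to open sets and is therefore open, completing the verification of pseudo-openness. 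The hard part, such as it is, lies in recognizing that openness of $\pi$ is precisely what forces $\pi (x)$ back into $\pi (V)$; without it the closure relation would not descend and the projection could fail to be open.
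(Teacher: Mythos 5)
Your proof is correct and follows essentially the same route as the paper: surjectivity disposes of pseudo-epimorphicity, openness disposes of condition~(\ref{D:Pseudo:Open:2}), and the key identity $p \big( R_{\pi} \cap (U \times V) \big) = U \cap \pi^{-1} (\pi (V))$ handles condition~(\ref{D:Pseudo:Open:1}). The only (cosmetic) difference is that you prove the nontrivial inclusion directly from the definition of ${\overline{ \{ \pi (x) \} }}$, whereas the paper runs the same openness-of-$\pi (V)$ argument as a proof by contradiction.
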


\begin{proof}
A surjective map trivially satisfies
Definition \ref{D:Pseudo}(\ref{D:Pseudo:Epi}).
An open map trivially satisfies
Condition~(\ref{D:Pseudo:Open:2})
of Definition \ref{D:Pseudo}(\ref{D:Pseudo:Open}).
It remains only to prove
Condition~(\ref{D:Pseudo:Open:1})
of Definition \ref{D:Pseudo}(\ref{D:Pseudo:Open}).
By considering unions, it suffices to consider
the images  of open subsets of $R_{\pi}$
of the form $R_{\pi} \cap (U \times V)$ for $U, V \S X$ open.

We claim that if $U, V \S X$ are open,
then the image $S$ of $R_{\pi} \cap (U \times V)$
is equal to $U \cap \pi^{-1} (\pi (V))$.
This will prove the lemma,
because the hypotheses on $\pi$ imply that $U \cap \pi^{-1} (\pi (V))$
is open in~$X$.
First, if $x \in U \cap \pi^{-1} (\pi (V))$,
then $\pi (x) \in \pi (V)$.
Choosing $y \in V$ such that $\pi (y) = \pi (x)$
gives $(x, y) \in R_{\pi} \cap (U \times V)$.
So $S \S U \cap \pi^{-1} (\pi (V))$.
For the reverse inclusion, let $(x, y) \in R_{\pi} \cap (U \times V)$.
It is obvious that $x \in U$.
Suppose now that $x \not\in \pi^{-1} (\pi (V))$.
Then $\pi (x) \not\in \pi (V)$.
Since $\pi (V)$ is open
(because $\pi$ is an open map),
it follows that ${\overline{ \{ \pi (x) \} }} \cap \pi (V) = \E$.
Thus, there is no $y \in V$ such that
$\pi (y) \in {\overline{ \{ \pi (x) \} }}$.
So there is no $y \in V$ such that $(x, y) \in R_{\pi}$,
which contradicts $x \in S$.
We conclude that $x \in \pi^{-1} (\pi (V))$,
completing the proof that $S \S U \cap \pi^{-1} (\pi (V))$.
\end{proof}

\begin{prp}\label{P:AppOfDR}
Let $P$ be
a noncompact second countable locally compact Hausdorff space,
and let $(Y, Z, \rh, h, q)$ be a \dr{} for $P$.
Define $\pi \colon Z \to \Xi (P)$ as follows:
\begin{enumerate}
\item\label{P:AppOfDR:h}
If $z = h (y)$ for some $y \in Y$
(necessarily unique), then $\pi (z) = q (y)$.
\item\label{P:AppOfDR:Noth}
If $z \in Z \SM h (Y)$, then $\pi (z) = \I$.
\end{enumerate}
Then $\pi$ is surjective, \ct, and open.
\end{prp}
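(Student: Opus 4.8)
The plan is to verify the three asserted properties separately, treating surjectivity and continuity quickly and reserving the real work for openness.

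For surjectivity, since $q$ is surjective every $x \in P$ equals $q(y) = \pi(h(y))$ for some $y \in Y$, so $P \S \pi(Z)$; and Lemma~\ref{L:CompDense} gives $Z \SM h(Y) \neq \E$, so any point there maps to $\I$. For continuity, I would use that the nonempty proper open subsets of $\Xi(P)$ are exactly the sets $\Xi(P) \SM C$ with $C \S P$ compact, so it suffices to show $\pi^{-1}(C)$ is closed. Since $\I \notin C$, a point $z$ lies in $\pi^{-1}(C)$ iff $z = h(y)$ with $q(y) \in C$; that is, $\pi^{-1}(C) = h(q^{-1}(C))$. Because $q$ is proper the set $q^{-1}(C)$ is compact, because $h$ is \ct{} its image is compact, and compact subsets of the metric space $Z$ are closed.

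For openness, let $W \S Z$ be open; if $W = \E$ then $\pi(W) = \E$ is open, so assume $W \neq \E$. First I would record the identity $\pi(W) \cap P = q(h^{-1}(W))$, which holds because $x \in P$ lies in $\pi(W)$ exactly when some $y \in q^{-1}(x)$ satisfies $h(y) \in W$. As $h$ is \ct{} and $q$ is open, $q(h^{-1}(W))$ is open in~$P$, so $P \SM \pi(W)$ is closed in~$P$. The crucial point is that $\I \in \pi(W)$: this is precisely where Lemma~\ref{L:CompDense} enters, since the density of $Z \SM h(Y)$ forces the nonempty open set $W$ to contain a point $z$ with $\pi(z) = \I$. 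Consequently $\Xi(P) \SM \pi(W) = P \SM \pi(W)$, a closed subset of~$P$, and it remains only to see that this set is compact, for then $\pi(W)$ is by definition open in $\Xi(P)$.

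To bound $P \SM \pi(W)$ inside a compact set I would invoke the density condition Definition~\ref{D:DR}(\ref{D:DR:EpD}). Fixing any $z_0 \in W$ together with $\ep > 0$ such that the open ball of radius $\ep$ about $z_0$ lies in $W$, let $K \S P$ be the associated compact set. For $x \in P \SM K$ the set $h(q^{-1}(x))$ is $\ep$-dense in~$Z$, hence meets this ball and therefore meets~$W$, giving $x \in \pi(W)$. Thus $P \SM \pi(W) \S K$, and being a closed subset of~$P$ contained in the compact set~$K$ it is compact, which finishes the proof.

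The main obstacle is the openness step, and its substance lies in the interplay between the two kinds of hypotheses on the \dr{}: the purely topological conditions (continuity of~$h$, openness and properness of~$q$) deliver closedness of $P \SM \pi(W)$ in~$P$, while the metric $\ep$-density condition, together with Lemma~\ref{L:CompDense} forcing $\I \in \pi(W)$, is exactly what upgrades ``closed in~$P$'' to ``compact,'' which is what the topology of $\Xi(P)$ demands.
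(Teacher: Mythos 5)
Your proposal is correct and follows essentially the same route as the paper's proof: continuity via $\pi^{-1}(C) = h(q^{-1}(C))$ being compact for compact $C \S P$, and openness by combining openness of $q(h^{-1}(W))$ in~$P$, Lemma~\ref{L:CompDense} to get $\I \in \pi(W)$, and the $\ep$-density condition of Definition~\ref{D:DR}(\ref{D:DR:EpD}) to trap the complement inside a compact set. The only cosmetic difference is that the paper centers the $\ep$-ball at a point of $W \cap (Z \SM h(Y))$, using one point for both purposes, whereas you use an arbitrary point of~$W$ for the ball and invoke density separately; both work.
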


\begin{proof}
Every point of $P$ is in $\pi (Z)$ because $q$ is surjective.
Also $\I \in \pi (Z)$ because
Lemma~\ref{L:CompDense} implies that
$Z \SM h (Y) \neq \E$.
So $\pi$ is surjective.

We claim that $\pi$ is \ct.
Let $F \S \Xi (P)$ be closed.
If $\I \in F$ then $F = \Xi (P)$ so $\pi^{-1} (F) = Z$ is closed.
Otherwise, $F$ is a compact subset of~$P$.
Since $q$ is proper
(Definition~\ref{D:DR}(\ref{D:DR:q})),
the set $q^{-1} (F)$ is a compact subset of~$Y$.
Therefore
$\pi^{-1} (F) = h (q^{-1} (F))$ is a compact subset of~$Z$,
and hence closed.
This proves the claim.

We complete the proof by showing that $\pi$ is open.
Clearly $\pi (\E) = \E$ is open.
So let $U \S Z$ be a nonempty open subset.
Then $h^{-1} (U) \S Y$ is open.
Since $q$ is open (Definition~\ref{D:DR}(\ref{D:DR:q})),
the set $\pi (U) \SM \{ \I \} = q (h^{-1} (U))$ is open in~$P$
in its usual topology.

Using Lemma~\ref{L:CompDense},
choose $z \in U \cap ( Z \SM h (Y))$,
and then choose $\ep > 0$ such that
$B_{\ep} (z) = \{ y \in Z \colon \rh (y, z) < \ep \}$ is contained
in~$U$.
Choose (Definition~\ref{D:DR}(\ref{D:DR:EpD}))
a compact subset $K \S P$ such that
for all $x \in P \SM K$,
the set $h (q^{-1} (x))$ is $\ep$-dense in~$Z$.
In particular,
if $x \in (P \SM K) \cup \{ \I \}$
then $x \in \pi (B_{\ep} (z)) \S \pi (U)$.
Thus, $\Xi (P) \SM \pi (U) \S K$.
Since we already know that $\Xi (P) \SM \pi (U)$ is closed
in the usual topology on~$P$,
it follows that  $\Xi (P) \SM \pi (U)$ is compact.
Therefore $\pi (U)$ is open in $\Xi (P)$.
This completes the proof.
\end{proof}

We now know that if $P$ is a noncompact second countable
locally compact Hausdorff space which has a pseudocovering,
then $\Xi (P)$ satisfies the hypotheses on $X$
in Theorem~\ref{T:ExistA},
so that $\Xi (P)$ is the primitive ideal space
of a separable nuclear \ca.
In fact, the existence of \dr{s} is automatic,
but we postpone the proof to the next section.
We finish this section by giving some basic functoriality results.

\begin{prp}\label{P:Funct}
Let $P$ and $Q$ be locally compact Hausdorff spaces,
and let $h \colon P \to Q$ be \ct{} and proper.
Then the map $\Xi (h) \colon \Xi (P) \to \Xi (Q)$,
given by $\Xi (h) |_P = h$ and $\Xi (h) (\I) = \I$,
is \ct.
Moreover, with this definition, $\Xi$ becomes a functor
from locally compact Hausdorff spaces and proper maps
to compact ${\mathrm{T}}_0$-spaces and \ct{} maps.
\end{prp}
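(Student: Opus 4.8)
The plan is to verify continuity directly from the explicit description of the closed sets of $\Xi (P)$ recorded in the proof of Lemma~\ref{L:PtC}, and then check the two functor axioms, the only substantive ingredient being that the target really is a compact ${\mathrm{T}}_0$-space and that the composite of proper maps is again proper.

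First I would recall that the nonempty closed subsets of $\Xi (Q)$ are exactly $\Xi (Q)$ itself together with the nonempty compact subsets of $Q \S \Xi (Q)$. To prove that $\Xi (h)$ is \ct, I would check that $\Xi (h)^{-1}(F)$ is closed for each closed $F \S \Xi (Q)$. The cases $F = \E$ and $F = \Xi (Q)$ give $\E$ and $\Xi (P)$ respectively, both closed. The only remaining case is a nonempty compact $F \S Q$; here $\I \notin F$, and since $\Xi (h) (\I) = \I$ while $\Xi (h) |_P = h$, one computes $\Xi (h)^{-1}(F) = h^{-1}(F)$. Because $h$ is proper and $F$ is compact, $h^{-1}(F)$ is a compact subset of $P$, hence a closed subset of $\Xi (P)$. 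This establishes continuity.

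For functoriality, preservation of identities is immediate: $\Xi (\id_P)$ agrees with $\id_P$ on $P$ and fixes $\I$, so it equals $\id_{\Xi (P)}$. For composition, given proper \ct{} maps $P \to Q \to R$, I would first note that the composite is again proper (the preimage of a compact set is an iterated preimage, compact at each stage) and \ct, so that $\Xi$ is defined on it. Both $\Xi (h_2 \circ h_1)$ and $\Xi (h_2) \circ \Xi (h_1)$ restrict to $h_2 \circ h_1$ on $P$ and send $\I$ to $\I$, so they coincide. That the codomain lies in compact ${\mathrm{T}}_0$-spaces is Lemma~\ref{L:Top}.

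There is no genuine obstacle here; the argument is a routine unwinding of the definition of the topology. The single place where a hypothesis is actually used is that properness of $h$ is precisely what makes $h^{-1}(F)$ compact for compact $F$, which is exactly the closedness condition in $\Xi (P)$; without properness the map $\Xi (h)$ need not be continuous, so this is the one step to carry out with care.
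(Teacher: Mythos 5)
Your proof is correct and is essentially the paper's own argument: the paper likewise observes that functoriality is immediate and that continuity reduces to the fact that properness makes $h^{-1}(K)$ compact for compact $K \S Q$, which is exactly the closedness condition in $\Xi (P)$. Your version merely spells out the case analysis on closed sets that the paper leaves implicit.
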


\begin{proof}
The only point needing proof is continuity of $\Xi (h)$.
But this follows easily from the fact that if $K \S Q$ is compact,
then so is $h^{-1} (K)$.
\end{proof}

\begin{prp}\label{P:IndActXi}
Let $P$ be a locally compact Hausdorff space,
and let $G$ be a topological group which acts continuously on~$P$.
Then the induced action on $\Xi (P)$
(following Proposition~\ref{P:Funct}) is \ct.
\end{prp}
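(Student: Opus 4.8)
The plan is to reduce the joint continuity of the induced action to a single ``absorption'' property of compact sets, which the tube lemma will supply. First I would record that the action is well defined: for $g \in G$ write $\ell_g \colon P \to P$ for the \hme{} $x \mapsto gx$; it is proper, so Proposition~\ref{P:Funct} gives a \ct{} map $\Xi (\ell_g) \colon \Xi (P) \to \Xi (P)$, and functoriality makes $g \mapsto \Xi (\ell_g)$ an action fixing~$\I$. The content of the proposition is then the joint continuity of $a \colon G \times \Xi (P) \to \Xi (P)$, $(g, \xi) \mapsto g \xi$. By Definition~\ref{D:X} the only nonempty proper open subsets of $\Xi (P)$ are the sets $\Xi (P) \SM K$ with $K \S P$ compact, so it suffices to show that each $a^{-1}(\Xi (P) \SM K)$ is open, equivalently that $a^{-1}(K)$ is closed in $G \times \Xi (P)$.

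I would verify this pointwise. Fix $(g_0, \xi_0)$ with $g_0 \xi_0 \notin K$ and seek a product \nbhd{} $W \times V$ of it mapping into $\Xi (P) \SM K$. Since $\I$ is fixed and $\I \notin K$, the value $g \xi$ can fall in $K$ only when $\xi \in P$; and as every \nbhd{} of $\xi_0$ in $\Xi (P)$ has the shape $\Xi (P) \SM K'$ with $K' \S P$ compact and $\xi_0 \notin K'$ (automatic when $\xi_0 = \I$), we take $V$ of this form. A direct check shows that $W \times (\Xi (P) \SM K')$ maps into $\Xi (P) \SM K$ exactly when $g^{-1} K \S K'$ for every $g \in W$, i.e. when $\bigcup_{g \in W} g^{-1} K \S K'$. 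Thus everything comes down to finding an open \nbhd{} $W$ of $g_0$ and a compact $K' \S P$ absorbing $g^{-1} K$ uniformly over $W$, with $\xi_0 \notin K'$ when $\xi_0 \in P$.

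This uniformity over a whole \nbhd{} of $g_0$ is the main obstacle, and I expect the tube lemma to overcome it. The map $\af \colon G \times P \to P$, $(g, x) \mapsto g^{-1} x$, is \ct, and $g_0^{-1} K$ is compact. Using local compactness and Hausdorffness of $P$, I would choose a compact \nbhd{} $K'$ of $g_0^{-1} K$, arranging $\xi_0 \notin K'$ when $\xi_0 \in P$ (possible because $g_0 \xi_0 \notin K$ forces $\xi_0 \notin g_0^{-1} K$, so the point can be separated from this compact set). Then $\{ g_0 \} \times K \S \af^{-1}(\sint(K'))$, and since $K$ is compact the tube lemma produces an open $W \ni g_0$ with $W \times K \S \af^{-1}(\sint(K'))$; that is, $g^{-1} K \S K'$ for all $g \in W$. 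With this $W$ and $K'$, the set $V = \Xi (P) \SM K'$ is a genuine \nbhd{} of $\xi_0$ and $W \times V$ maps into $\Xi (P) \SM K$, so $a^{-1}(K)$ is closed and $a$ is \ct. I note that the argument uses only continuity of the action and local compactness of $P$; no hypothesis on $G$ beyond being a topological group enters, since the tube lemma requires compactness only of~$K$.
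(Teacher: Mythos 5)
Your proof is correct and takes essentially the same route as the paper's: both reduce to the subbasic open sets $\Xi (P) \SM K$ with $K \S P$ compact, use local compactness and Hausdorffness of $P$ to produce a compact neighborhood (your $K'$, the paper's $L$) of $g_0^{-1} K$ avoiding $x_0$, and then use compactness of $K$ to get a neighborhood $W$ of $g_0$ with $g^{-1} K \S \sint (K')$ for all $g \in W$. The only difference is cosmetic: you invoke the tube lemma explicitly (applied to $(g, x) \mapsto g^{-1} x$) where the paper cites ``a standard argument, using compactness of~$K$.''
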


We don't need the action to be proper,
but of course for each~$g$,
the map $x \mapsto g x$ is a \hme{} and therefore a proper map.

\begin{proof}[Proof of Proposition~\ref{P:IndActXi}]
Let $U \S \Xi (P)$ be open,
and let $g_0 \in G$ and $x_0 \in \Xi (P)$ satisfy $g_0 x_0 \in U$.
We have to find open sets $V \S \Xi (P)$ with $x_0 \in V$
and $W \S G$ with $g_0 \in W$ such that
whenever $g \in W$ and $x \in V$,
then $g x \in U$.
There is nothing to do if $U = \Xi (P)$ (or $U = \E$).
Therefore we may assume that there is a subset $K \S P$
which is compact in the original topology on~$P$
such that $U = \Xi (P) \SM K$.
Since $P$ is locally compact and $x_0 \not\in g_0^{-1} K$,
there is a compact subset $L \S P$ (for the original topology on~$P$)
such that $x_0 \not\in L$ and $g_0^{-1} K \S \sint (L)$.
(The interior is taken in~$P$.)

A standard argument, using compactness of~$K$,
shows that
$W = \{ g \in G \colon g^{-1} K \S \sint (L) \}$
is a \nbhd{} of $g_0$ in~$G$.
So $W \times (\Xi (P) \SM L)$ is a neighborhood of $(g_0, x_0)$
in $G \times \Xi (P)$.
One checks easily that $g \in W$ and $x \in V = \Xi (P) \SM L$
imply $g x \not\in K$.
\end{proof}

We identify $G$-spaces~$P$
such that the induced action on $\Xi (P)$ is minimal.

\begin{lem}\label{L:MinXiP}
Let $G$ be a locally compact group
acting on a locally compact Hausdorff space~$P$.
Let $\Xi (P)$ be as in Definition~\ref{D:X}.
\Tfae:
\begin{enumerate}
\item\label{L:MinXiP:Min}
The induced action of $G$ on $\Xi (P)$ is minimal.
\item\label{L:MinXiP:CptInv}
There are no nonempty compact $G$-invariant subsets of~$P$.
\item\label{L:MinXiP:CptOrb}
For each $x \in P$,
the orbit closure ${\overline{G x}}$ is not compact.
\end{enumerate}
\end{lem}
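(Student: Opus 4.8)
The plan is to translate minimality of the induced action into a statement about closed $G$-invariant subsets of $\Xi (P)$, and then read off both equivalences from the explicit description of the closed subsets of $\Xi (P)$. Recall that the induced action on $\Xi (P)$ is minimal exactly when $\Xi (P)$ has no nonempty proper closed $G$-invariant subset. From the proof of Lemma~\ref{L:PtC}, the nonempty closed subsets of $\Xi (P)$ are $\Xi (P)$ itself together with all nonempty compact subsets of $P \S \Xi (P)$; hence the nonempty \emph{proper} closed subsets are precisely the nonempty compact subsets of~$P$.

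First I would establish (\ref{L:MinXiP:Min}) $\Leftrightarrow$ (\ref{L:MinXiP:CptInv}). By Proposition~\ref{P:Funct} the action of each $g \in G$ on $\Xi (P)$ fixes~$\I$ and restricts on $P$ to the given action, so a subset of $\Xi (P)$ contained in $P$ is $G$-invariant in $\Xi (P)$ if and only if it is $G$-invariant in~$P$; in particular the only closed invariant set containing~$\I$ is all of~$\Xi (P)$. Combining this with the previous paragraph, the nonempty proper closed $G$-invariant subsets of $\Xi (P)$ are exactly the nonempty compact $G$-invariant subsets of~$P$. Thus the absence of the former (minimality) is equivalent to the absence of the latter, which is~(\ref{L:MinXiP:CptInv}).

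Next I would prove (\ref{L:MinXiP:CptInv}) $\Leftrightarrow$ (\ref{L:MinXiP:CptOrb}), using that $P$ is Hausdorff, so that compact subsets of $P$ are closed. For (\ref{L:MinXiP:CptInv}) $\Rightarrow$ (\ref{L:MinXiP:CptOrb}) I argue by contraposition: if some orbit closure $\overline{G x}$ were compact, then, since each $g$ acts as a \hme, $g \, \overline{G x} = \overline{g \, G x} = \overline{G x}$, so $\overline{G x}$ is a nonempty compact $G$-invariant subset of~$P$, contradicting~(\ref{L:MinXiP:CptInv}). For (\ref{L:MinXiP:CptOrb}) $\Rightarrow$ (\ref{L:MinXiP:CptInv}), again by contraposition, suppose $F \S P$ is a nonempty compact $G$-invariant set. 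Picking $x \in F$ gives $G x \S F$, and since $F$ is compact and hence closed in~$P$ we get $\overline{G x} \S F$. A closed subset of a compact set is compact, so $\overline{G x}$ is compact, contradicting~(\ref{L:MinXiP:CptOrb}).

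There is no serious obstacle here; the argument is essentially bookkeeping. The only point requiring genuine care is the first step: one must pin down that minimality of the action on the non-Hausdorff space $\Xi (P)$ means the nonexistence of nonempty proper closed $G$-invariant subsets, and then combine the classification of closed sets in Lemma~\ref{L:PtC} with the observation that the only closed set containing $\I$ is $\Xi (P)$, so every proper closed invariant set must lie inside~$P$. Once this is in place, both equivalences follow immediately from the Hausdorff property of~$P$.
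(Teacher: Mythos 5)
Your proof is correct and follows essentially the same route as the paper: both rest on the observation that the nonempty proper closed subsets of $\Xi (P)$ are exactly the nonempty compact subsets of~$P$, together with the standard facts that orbit closures are $G$-invariant and that compact subsets of the Hausdorff space~$P$ are closed. The paper organizes this as a cycle (\ref{L:MinXiP:CptInv})~$\Rightarrow$~(\ref{L:MinXiP:Min})~$\Rightarrow$~(\ref{L:MinXiP:CptOrb})~$\Rightarrow$~(\ref{L:MinXiP:CptInv}) rather than as two pairwise equivalences, but the individual arguments are the same.
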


\begin{proof}
We prove that (\ref{L:MinXiP:CptInv}) implies~(\ref{L:MinXiP:Min}).
Assume that the induced action of $G$ on $\Xi (P)$ is not minimal,
so that there is
a nontrivial closed $G$-invariant subset $K \S \Xi (P)$.
Then $\I \not\in K$ since $K \neq \Xi (P)$.
Therefore $K$ is a nonempty compact subset of $P$
in its original topology,
necessarily $G$-invariant.

To see that (\ref{L:MinXiP:Min}) implies~(\ref{L:MinXiP:CptOrb}),
suppose there is $x \in P$ such that ${\overline{G x}}$ is compact.
Then ${\overline{G x}}$ is a nonempty closed subset of $\Xi (P)$
which contains $x$ but not~$\I$,
contradicting minimality.

For (\ref{L:MinXiP:CptOrb}) implies~(\ref{L:MinXiP:CptInv}),
assume that $K \S P$
is a nonempty compact $G$-invariant subset of~$P$.
Choose $x \in K$.
Then ${\overline{G x}}$ is compact.
\end{proof}

We give some examples for Lemma~\ref{L:MinXiP}.
First, we recall the following definition,
which we use to rule out one form of triviality.

\begin{dfn}\label{D-Effective}
Let $G$ be a group and let $S$ be a set.
(In particular, $S$ could be a topological space or a \ca.)
An action $(g, s) \mapsto g s$ is {\emph{effective}}
if for any $g \in G \SM \{ 1 \}$,
there is $s \in S$ such that $g s \neq s$.
\end{dfn}

That is,
no nontrivial element of $G$ acts as the identity.
This definition is standard for actions on topological spaces.
If $X$ is a \chs,
then an action of $G$ on~$X$ is effective
\ifo{} the corresponding action on $C (X)$ is effective.

\begin{exa}\label{E-111109dd}
The following are all examples of actions
of a noncompact second countable locally compact group~$G$
on a second countable locally compact Hausdorff space~$P$
such that the induced action of $G$ on $\Xi (P)$
is minimal and effective.
\begin{enumerate}
\item\label{E-111109dd-1}
Let $G$ as above be otherwise arbitrary,
and take $P = G$ with left translation.
The hypothesis of Lemma~\ref{L:MinXiP}(\ref{L:MinXiP:CptOrb})
is immediate.
\item\label{E-111109dd-2}
Let $G$ as above be otherwise arbitrary,
let $T$ be any second countable locally compact Hausdorff space,
and take $P = G \times T$ with left translation
in the first coordinate
and the trivial action in the second coordinate.
The hypothesis of Lemma~\ref{L:MinXiP}(\ref{L:MinXiP:CptOrb})
is again immediate.
\item\label{E-111109dd-3}
Let $G$ as above be otherwise arbitrary,
and let $H \subset G$ be a closed subgroup of~$G$
such that $G / H$ is not compact and
$\bigcap_{g \in G} g H g^{-1} = \{ 1 \}$.
Take $P = G / H$ with left translation.
Since the action is transitive,
the hypothesis of Lemma~\ref{L:MinXiP}(\ref{L:MinXiP:CptOrb})
is satisfied.
\item\label{E-111109dd-4}
Let $X$ be the Cantor set,
and set $P = \Z \times X$.
In Section~3 of~\cite{Dn},
there is a construction of many minimal \hme{s} of~$P$.
Each of them gives an action of~$\Z$ on~$P$
which obviously satisfies the condition of
Lemma~\ref{L:MinXiP}(\ref{L:MinXiP:CptInv}).
\end{enumerate}
\end{exa}

In Example~\ref{E-111109dd}(\ref{E-111109dd-3}),
noncompactness of $G / H$ is automatic if $G$ is discrete
(this is not hard),
but can fail otherwise.
The following example is due to George Willis (via Ken Ross).
Let $\ph \in \Aut (S^1 \times S^1)$ be the group automorphism
given by $\ph (\zt_1, \zt_2) = (\zt_1 \zt_2, \, \zt_2)$
for $\zt_1, \zt_2 \in S^1$.
This generates an action of~$\Z$ on $S^1 \times S^1$.
Let $G = \Z \ltimes (S^1 \times S^1)$ be the corresponding
semidirect product.
Take $H = \{ (n, 1, 1) \colon n \in \Z \}$.
Then $\bigcap_{g \in G} g H g^{-1} = \{ 1 \}$,
but $G / H$ is compact.

\section{The existence of pseudocoverings}\label{Sec:1b}

\indent
In this section, we prove that every second countable locally compact
noncompact Hausdorff space~$P$ has a \dr{} in the sense of
Definition~\ref{D:DR}.
This is the only remaining step
of the proof that $\Xi (P)$ is the primitive ideal space
of a separable nuclear \ca.

We begin by constructing a \dr{} for a special space.
We use the following ``affine transversality'' lemma.
It is surely well known,
but we do not know a reference.

\begin{lem}\label{L:AffTrans}
Let $m, n, d, k \in \Nz$ satisfy $m + n + 1 \leq d$
and $0 \leq k \leq n$.
Let $K \S \R^d$
be a finite union of images of subsets of $\R^m$ under affine maps.
Let $h \colon \R^n \to \R^d$
be an affine map
such that $h |_{[0, 1]^k \times \{ 0 \} }$ is injective.
Let $\xi_1, \xi_2, \ldots, \xi_n$
be the standard basis vectors of~$\R^n$.
For
$\et = (\et_{k + 1}, \et_{k + 2}, \ldots, \et_{n}) \in \R^{d (n - k)}$,
let $h_{\et} \colon \R^n \to \R^d$
be the affine map determined by
\[
h_{\et} (0) = h (0),
\,\,\,
h_{\et} (\xi_1) = h (\xi_1),
\,\,\,
h_{\et} (\xi_2) = h (\xi_2),
\,\,\,
\ldots,
\,\,\,
h_{\et} (\xi_k) = h (\xi_k),
\]
\[
h_{\et} (\xi_{k + 1}) = \et_{k + 1},
\,\,\,
h_{\et} (\xi_{k + 2}) = \et_{k + 2},
\,\,\,
\ldots,
\,\,\,
h_{\et} (\xi_{n}) = \et_{n}.
\]
Then the set
\[
W_K = \big\{ \et \in \R^{d (n - k)} \colon
  {\mbox{$h_{\et}$ is injective and
   $h_{\et} \big( [0, 1]^n \SM \big[ [0, 1]^k \times \{ 0 \} \big] \big)
        \cap K = \E$}} \big\}
\]
contains a dense open set in $\R^{d (n - k)}$.
\end{lem}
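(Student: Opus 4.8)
The plan is to treat the two defining conditions of $W_K$ separately: I will show that the injectivity locus is open and dense, that the set of $\et$ whose associated cube image meets $K$ (off the fixed face) is a semialgebraic set of dimension strictly smaller than that of the parameter space, and then combine these. The one genuinely delicate point, discussed below, is upgrading ``dense'' to ``contains a dense open set''.

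First I would set up notation and dispose of injectivity. Since each piece of $K$ is the image of a subset of $\R^m$ under an affine map, $K \S \bigcup_{j=1}^{N} A_j$, where each $A_j \S \R^d$ is an affine subspace with $\dim A_j \leq m$ (hence closed, of codimension $\geq d - m \geq n + 1$ by $m + n + 1 \leq d$). It suffices to produce a dense open set of $\et$ for which $h_{\et}$ is injective and $h_{\et}(C) \cap A_j = \E$ for all $j$, where $C = [0,1]^n \SM \big( [0,1]^k \times \{ 0 \} \big)$; this implies $h_{\et}(C) \cap K = \E$. Writing $h_{\et}(t) = h(0) + \sum_{i \leq k} t_i \big( h(\xi_i) - h(0) \big) + \sum_{i > k} t_i \big( \et_i - h(0) \big)$, the linear part of $h_{\et}$ has columns $h(\xi_i) - h(0)$ for $i \leq k$ and $\et_i - h(0)$ for $i > k$. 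The hypothesis that $h|_{[0,1]^k \times \{ 0 \}}$ is injective forces the first $k$ columns to be linearly independent; since $k \leq n \leq d - 1$, the locus $O_1$ where all $n$ columns are independent (equivalently $h_{\et}$ is injective) is a nonempty complement of an algebraic set, hence open and dense.

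Next comes the dimension count. Let $B = \bigcup_j B_j$ with $B_j = \{ \et : \exists\, t \in C, \ h_{\et}(t) \in A_j \}$; note that $\et \in O_1 \SM B$ lies in $W_K$. Consider the incidence set $I_j = \{ (t, \et) \in C \times \R^{d(n-k)} : h_{\et}(t) \in A_j \}$. For fixed $t \in C$ there is some $i_0 > k$ with $t_{i_0} \neq 0$, so the affine map $\et \mapsto h_{\et}(t)$ is surjective onto $\R^d$ (varying $\et_{i_0}$ alone already suffices); hence $\{ \et : h_{\et}(t) \in A_j \}$ is an affine subspace of codimension $\operatorname{codim} A_j \geq d - m$. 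Fibering $I_j$ over $t \in C$ (a set of dimension $\leq n$) gives
\[
\dim I_j \;\leq\; n + \big[ d(n-k) - (d - m) \big] \;=\; d(n-k) - (d - m - n) \;\leq\; d(n-k) - 1,
\]
using $m + n + 1 \leq d$. Since $B_j$ is the image of $I_j$ under the projection to the $\et$-coordinate, $\dim B_j \leq d(n-k) - 1 < d(n-k)$, and the same holds for the finite union $B$.

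The main obstacle is that $B$ need not be closed: a sequence of bad $\et$ can converge to a good one when the witnessing parameter $t \in C$ runs out to the removed face $[0,1]^k \times \{ 0 \}$, where $h_{\et} = h$ is fixed and may meet $A_j$. Thus the good set need not be open, and a bare density (or Baire-category) argument is insufficient. I would resolve this with semialgebraicity: each $I_j$ is semialgebraic, being cut out of $C \times \R^{d(n-k)}$ by the (bi)linear equations $h_{\et}(t) \in A_j$ together with the semialgebraic constraints $t \in [0,1]^n$ and $(t_{k+1}, \ldots, t_n) \neq 0$. By Tarski--Seidenberg its projection $B_j$, and hence $B$, is semialgebraic. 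For semialgebraic sets one has $\dim \ov{B} = \dim B < d(n-k)$, so the closed set $\ov{B}$ has empty interior and $\R^{d(n-k)} \SM \ov{B}$ is open and dense. Intersecting with $O_1$ yields an open dense subset of $\R^{d(n-k)}$ on which $h_{\et}$ is injective and $h_{\et}(C) \cap K = \E$, i.e.\ contained in $W_K$, which completes the proof.
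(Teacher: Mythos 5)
Your proof is correct, but it follows a genuinely different route from the paper's. Both arguments start with the same reduction: enlarge each piece of $K$ to an affine subspace of dimension at most $m$ and handle the finitely many subspaces separately. After that, the paper stays entirely within elementary linear algebra. Translating so that $0 \in K$, it forms $V = \spn\big(K, h(0), h(\xi_1), \ldots, h(\xi_k)\big)$, a subspace of dimension $r \leq m + k + 1$, and lets $U$ be the set of $\et$ for which $\et_{k+1} - h(0), \ldots, \et_n - h(0)$ are linearly independent modulo $V$; this set is nonempty because $r + (n - k) \leq m + n + 1 \leq d$. Being the complement of a proper algebraic set, $U$ is automatically open and dense, and a direct verification gives $U \S W_K$: independence mod $V$ yields injectivity of $h_{\et}$, and it forces any point with $h_{\et}(\af_1, \ldots, \af_n) \in V \supseteq K$ to satisfy $\af_{k+1} = \cdots = \af_n = 0$, so the image of the punctured cube misses $K$. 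Thus the paper never estimates the bad set $B$ at all, and thereby sidesteps exactly the difficulty you correctly isolate, namely that $B$ need not be closed, so smallness of $B$ does not immediately give a dense \emph{open} complement. Your argument instead runs the natural incidence-variety dimension count and then invokes semialgebraic geometry --- Tarski--Seidenberg, the fiber-dimension inequality, and $\dim \ov{B} = \dim B$ --- precisely to convert ``positive codimension'' into ``closure with empty interior''; this is a legitimate fix, and your remark that a bare Baire-category argument would only yield a dense $G_{\dt}$ rather than a dense open set is exactly right. The trade-off: the paper's proof is self-contained and constructive (the dense open set is exhibited explicitly), while yours outsources the delicate step to standard but nontrivial machinery; in exchange, your argument generalizes immediately, for instance to $K$ an arbitrary semialgebraic set of dimension at most $m$, where no finite union of $m$-dimensional affine subspaces need contain $K$ and the paper's span trick is unavailable.
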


\begin{proof}
The statement is vacuously true when $k = n$, so assume that $k < n$.

It suffices to prove the result when $K$ is
a finite union $K = K_1 \cup K_2 \cup \cdots \cup K_s$
of images of $\R^m$ under affine maps.
By intersecting the corresponding sets
$W_{K_1}, W_{K_2}, \ldots, W_{K_s}$,
it then suffices to prove this when $K$ is just one such image,
that is, there is an affine map $g \colon \R^m \to \R^d$
such that $K = g (\R^m)$.
Replacing $g$ by $\xi \mapsto g (\xi) - g (0)$
and $h$ by $\xi \mapsto h (\xi) - g (0)$,
we may assume that $0 \in K$.
Thus, $K$ is a subspace of $\R^d$ of dimension $m_0 \leq m$.
So
\[
V = \spn \big( K, \, h (0), \,
  h (\xi_1), \, h (\xi_2), \, \ldots, \, h (\xi_k) \big)
\]
is a subspace of $\R^d$
of dimension $r \leq m_0 + k + 1 \leq m + k + 1$.
Let $\mu_1, \mu_2, \ldots, \mu_r \in \R^d$ form a basis for~$V$.

Let $U_0$ be the set of all $(n - k)$-tuples
$(\nu_{k + 1}, \nu_{k + 2}, \ldots, \nu_{n}) \in \R^{d (n - k)}$
such that the $r + n - k$ elements
\[
\mu_1, \, \mu_2, \, \ldots, \, \mu_r, \,
\nu_{k + 1}, \, \nu_{k + 2}, \, \ldots, \, \nu_{n} \in \R^d
\]
are linearly independent.
Since $r + n - k \leq d$,
the set $U_0$ is dense and open in $\R^{d (n - k)}$.
Let $U \S \R^{d (n - k)}$ be the
dense open set consisting of all $(n - k)$-tuples
$(\et_{k + 1}, \et_{k + 2}, \ldots, \et_{n}) \in \R^{d (n - k)}$
such that
\[
\big( \et_{k + 1} - h (0), \, \et_{k + 2} - h (0),
 \, \ldots, \, \et_{n} - h (0) \big) \in U_0.
\]

To finish the proof, we show that if
$\et = (\et_{k + 1}, \et_{k + 2}, \ldots, \et_{n}) \in U$,
then $h_{\et}$ satisfies the requirements in the conclusion.
Observe that $h_{\et}$ is given by the formula,
for $\af_1, \af_2, \ldots, \af_n \in \R$,
\[
h_{\et} (\af_1, \af_2, \ldots, \af_n)
 = h (0) + \sum_{j = 1}^k \af_j [ h (\xi_j) - h (0) ]
         + \sum_{j = k + 1}^n \af_j [ \et_j - h (0) ].
\]

We prove that $h_{\et}$ is injective.
First, for $j = 1, 2, \ldots, k$, the elements
\[
h_{\et} (\xi_j) - h_{\et} (0) = h (\xi_j) - h (0)
\]
are linearly independent
because $h |_{[0, 1]^k \times \{ 0 \} }$ is injective.
Also, by construction,
for $j = k + 1, \, k + 2, \, \ldots, \, n$, the elements
\[
h_{\et} (\xi_{j}) - h_{\et} (0) = \et_{j} - h (0)
\]
have linearly independent images mod~$V$,
and in particular have linearly independent images mod
\[
\spn \big( \big\{ h_{\et} (\xi_j) - h_{\et} (0) \colon
   j = 1, 2, \ldots, k \big\} \big).
\]
So the elements
$h_{\et} (\xi_j) - h_{\et} (0)$,
for $j = 1, 2, \ldots, n$,
are linearly independent,
whence $h_{\et}$ is injective.

To prove that
$h_{\et} \big( [0, 1]^n \SM ([0, 1]^k \times \{ 0 \}) \big)
        \cap K = \E$,
it suffices to show that if $\af_1, \af_2, \ldots, \af_n \in \R$
and $h_{\et} (\af_1, \af_2, \ldots, \af_n) \in V$,
then $\af_{k + 1} = \af_{k + 2} = \cdots = \af_n = 0$.
But this is immediate from the fact,
observed in the previous paragraph,
that,
for $j = k + 1, \, k + 2, \, \ldots, \, n$, the elements
$h_{\et} (\xi_j) - h_{\et} (0) = \et_j - h (0)$
have linearly independent images mod~$V$.
\end{proof}

\begin{lem}\label{L:SpecialDR}
Let $n \in \Nz$.
Then there exists a \dr{} $(Y, Z, \rh, h, q)$
for $[0, 1]^n \times [0, \I)$,
with $Z = [0, 1]^{2 n + 3}$.
\end{lem}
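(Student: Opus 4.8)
The plan is to realize $P = [0,1]^n \times [0, \I)$ as the base of a branching telescope and to build $h$ one cell at a time, using the affine transversality lemma (Lemma~\ref{L:AffTrans}) to keep $h$ injective while steering the fibres to fill up~$Z$. Concretely, I would fix a rooted tree $T$ in which each vertex at level $k-1$ has $b_k \geq 1$ children, with the branching numbers $b_k$ chosen large (pinned down below). Let $|T|$ be its geometric realization with each edge isometric to $[0,1]$, and let $\ell \colon |T| \to [0, \I)$ be the distance-to-root map. Then $\ell$ is continuous, surjective, proper (preimages of $[0,m]$ are finite subtrees), and open (at every vertex the parent edge and the child edges together fill a neighborhood of the level value, and at the root the child edges fill $[0,\delta)$). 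I would set $Y = [0,1]^n \times |T|$ and $q = \id_{[0,1]^n} \times \ell \colon Y \to P$. This $Y$ is a second countable locally compact Hausdorff space of dimension $n+1$, and $q$ is continuous, surjective, proper, and open, giving Definition~\ref{D:DR}(\ref{D:DR:Y}) and~\ref{D:DR}(\ref{D:DR:q}). Taking $Z = [0,1]^{2n+3}$ with the Euclidean metric~$\rh$ gives Definition~\ref{D:DR}(\ref{D:DR:Z}); all the work is in $h$ (Definition~\ref{D:DR}(\ref{D:DR:h})) and the density condition (Definition~\ref{D:DR}(\ref{D:DR:EpD})).

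The map $h$ would be built one sheet at a time. Call $S_e = [0,1]^n \times e \cong [0,1]^{n+1}$ the sheet over an edge $e$ of~$T$; the sheets over a parent edge and a child edge meet in $Y$ along the $n$-cell $B = [0,1]^n \times \{v\}$ over their common vertex~$v$. Enumerating the edges in breadth-first order, I would define $h$ to be affine on each sheet so that it agrees on shared faces. When I place a sheet $S_e$, the affine map is already fixed and injective on its bottom face $B_e$ (the top face of the parent sheet), and only the image of the remaining ``time'' direction is free. This is exactly the setting of Lemma~\ref{L:AffTrans} with $n+1$ in place of its~$n$, with $k = n$, with $d = 2n+3$, and with $K$ the finite union of the sheets placed so far, each an affine image of a copy of $\R^{n+1}$, so that $m = n+1$. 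The hypothesis $m + (n+1) + 1 \leq d$ then reads $2n+3 \leq 2n+3$ and holds with equality; this is precisely why $Z$ must have dimension $2n+3$, matching the Menger--N\"{o}beling dimension for the $(n+1)$-dimensional space~$Y$. The lemma supplies a dense open set of choices for the free vertex making $h|_{S_e}$ injective with $h\big( S_e \SM B_e \big) \cap K = \E$. Carrying this out for every sheet keeps $h$ globally injective: distinct sheets meet in~$Y$ only along the prescribed faces, and transversality forces their images to meet only there (each parent's top face, when it was placed, was in the free part and so is disjoint from every non-adjacent sheet).

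To force the density condition I would spend the freedom left by the lemma, namely that the good set of free-vertex placements is dense. Fix a sequence $\ep_k \downarrow 0$. At level~$k$ I would arrange the $\prod_{j=1}^{k} b_j$ top faces so that their locations form an $\ep_k$-net of~$Z$, choosing each child's target inside the $\ep_{k-1}$-cell around its parent's location and inside the dense good set; the branching numbers are taken large enough, roughly $(\ep_{k-1}/\ep_k)^{2n+3}$ children per parent, for this to be possible. For $t$ in the slab $\ell^{-1}((k,k+1))$ the fibre $q^{-1}(s,t)$ consists of one point on each level-$k$ sheet, and since each such sheet stays within the $\ep_{k-1}$-cell of its parent while interpolating affinely from the parent net toward the finer child net, the image $h(q^{-1}(s,t))$ is $C\ep_{k-1}$-dense in~$Z$ for a universal constant~$C$, uniformly in~$s$. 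As $\ep_{k-1} \to 0$, this yields Definition~\ref{D:DR}(\ref{D:DR:EpD}): given $\ep$, the compact set $K_0 = [0,1]^n \times [0,T] \S P$ works once $T$ is large enough.

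The main obstacle is the tension between global injectivity of~$h$ and density of the fibres: filling $Z$ to scale $\ep_k$ packs many sheets into a bounded region, yet all of them, together with all earlier sheets, must have pairwise disjoint images off the gluing faces. Lemma~\ref{L:AffTrans} is exactly what reconciles these demands, and the reconciliation succeeds only because $\dim Z = 2n+3$ makes the transversality hypothesis hold, with no room to spare. The remaining points are routine: continuity of~$h$ at the branch vertices holds because the affine pieces agree on the shared $n$-cells and $Y$ is locally finite, and the reduction noted just after Definition~\ref{D:DR} lets the density test be phrased on~$Y$ rather than~$P$.
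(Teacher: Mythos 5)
Your overall strategy is the same one the paper uses: a tree telescope over $[0,\I)$ times the cube, sheets placed one at a time by affine maps, Lemma~\ref{L:AffTrans} invoked with exactly the same parameters ($d = 2n+3$, $k = n$, $m = n+1$, so $m+n+1 \leq d$ holds with equality) to keep the union injective, and level-$k$ cross-sections forming finer and finer nets of~$Z$. But there is a genuine gap, located exactly where the paper does something your proposal omits: you take $Y$ to be the \emph{full} product $[0,1]^n \times |T|$ with $h$ affine on every sheet. An affine map on $[0,1]^{n+1}$ that is prescribed on the bottom face $[0,1]^n \times \{0\}$ and has only the image of $\xi_{n+1}$ free necessarily carries the top face to a \emph{translate} of the bottom face; by induction every vertex cross-section $h\big([0,1]^n \times \{v\}\big)$ is a translate $F_0 + w_v$ of the root cross-section $F_0$, a nondegenerate $n$-parallelepiped of fixed positive diameter. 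Since Definition~\ref{D:DR}(\ref{D:DR:h}) requires $h$ to take values in~$Z$, every such translate must lie inside $Z = [0,1]^{2n+3}$. Now choose a coordinate $i$ in which the affine parametrization $s \mapsto p(s)$ of $F_0$ is nonconstant, and set $\dt = \max_s p(s)_i - \min_s p(s)_i > 0$. A fibre over $(s,t)$ has the form $\{p(s) + u\}$ with the vectors $u$ independent of~$s$, and containment in $Z$ forces $u_i \geq -\min_{s'} p(s')_i$ for every available~$u$; on the other hand, $\ep$-density of the fibre at the maximizing $s^*$ against a target point of $Z$ with $i$-th coordinate $0$ requires some $u$ with $u_i \leq \ep - \max_{s'} p(s')_i$. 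These are contradictory whenever $\ep < \dt$, and the failure occurs over \emph{every} point of $P$, so no compact exceptional set $K$ can repair it. Thus with the full product and sheet-affine $h$, Definition~\ref{D:DR}(\ref{D:DR:EpD}) provably fails for all small $\ep$; no choice of branching numbers or of nets can rescue the construction as stated. (Relatedly, you never verify $h(Y) \S Z$ at all: Lemma~\ref{L:AffTrans} places vertices in $\R^{2n+3}$, and even vertices chosen inside $Z$ do not keep the parallelepiped sheets inside~$Z$.)

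The paper resolves precisely this tension with a shrinking mechanism that your proposal needs. It first builds a piecewise affine injective $h^{(0)}$ on all of $[0,1]^n \times Y^{(0)}$ with values merely in $\R^N$, forcing $h^{(0)}(0, \cdot)$ to track a non-injective guide map $f$ whose level sets are explicit dyadic nets and which keeps a quantitative margin $\| f(y) \|_{\I} \leq 1 - 2^{-m}$ inside the cube. It then takes $Y$ to be the \emph{thin} subset $\big\{ (\xi, y) : \| \xi \|_{\I} \leq \ep (q_0 (y)) \big\}$, absorbing the rescaling homeomorphism $b$ into the definition of $q$ so that $q$ remains a proper open continuous surjection onto $[0,1]^n \times [0, \I)$. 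Because the Lipschitz constant of the cube directions times $\ep(\cdot)$ beats the margin, the restricted $h$ lands in $Z$, and because the cross-sections shrink to points, the fibres become $\ep$-dense for every $\ep$. Without this (or an equivalent) device, the obstruction above is not a missing verification but a proof that the stated construction cannot work.
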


\begin{proof}
Set $N = 2 n + 3$.
For convenience,
we use $Z = [-1, 1]^{N}$ instead of $Z = [0, 1]^{N}$.

Whether a $5$-tuple $(Y, Z, \rh, h, q)$ is a \dr{}
does not depend on the choice of the metric $\rh$ on~$Z$
(as long as it defines the right topology),
so for convenience we take $\rh$ to be given by the
supremum norm $\| \cdot \|_{\I}$ on $\R^N$.
We also use $\| \cdot \|_{\I}$ on $\R^n$, $\R^{n + 1}$,
and their subsets.

Set $S = \{ -1, 1 \}^N$.
We construct an infinite tree $Y^{(0)}$
with branching of degree $\card (S)$
at each vertex.
(The set $Y$ in the statement of the lemma
will be a subset of $[0, 1]^m \times Y^{(0)}$.)
The construction is direct and elementary,
but we need names for the pieces.
For $m \in \N$ and $x \in S^m$,
set $T_{m, x} = [m - 1, \, m] \times \{ x \}$,
and let
\[
T_m = [m - 1, \, m] \times S^m = \coprod_{x \in S^m} T_{m, x}.
\]
Define $T = \coprod_{m \in \N} T_m$.
(This is the disjoint union of the edges.)
We identify appropriate endpoints
using the equivalence relation $\sim$
on~$T$ defined as follows.
The equivalence classes with more than one element consist of
$\{ 0 \} \times S \S T_1$
and, for $m \in \N$ and $x \in S^m$,
the set
\[
\{ (m, x) \} \cup \{ (m, \, (x, s)) \colon s \in S \}
 \S T_{m, x}
    \cup \bigcup_{y \in S} T_{m + 1, \, (x, y)}.
\]
Then set $Y^{(0)} = T / {\sim}$.
It is easily seen to be
a second countable locally compact Hausdorff space.
For $(\ld, x) \in T_{m, x} \S T$,
we write $[\ld, x]$ for its equivalence class in~$Y^{(0)}$.
The vertices can thus be identified with
$\coprod_{m = 0}^{\I} S^m$,
with $x \in S^m$, for $m \geq 1$,
corresponding to $[m, x]$,
and with the single element of~$S^0$
corresponding to the equivalence class
$\{ 0 \} \times S \S T_1$.
When notationally convenient,
we also write $T_{m, x}$ for its image in $Y^{(0)}$.
(The identification map is a \hme{} from $T_{m, x}$ to its image.)
For $m \in \N$ further let $Y^{(0)}_m$ be
the image of $\bigcup_{k = 1}^m T_m$ in $Y^{(0)}$.
Set $Y^{(0)}_0 = \{ [0, s] \}$ for any particular $s \in S$.
(This point does not depend on~$s$.)
For $m \in \Nz$,
the set $Y_m^{(0)}$ is a finite tree and a compact subset of $Y^{(0)}$,
and $Y^{(0)}$ is the increasing union of the $Y^{(0)}_m$.

Define a function $q_0 \colon Y^{(0)} \to [0, \I)$,
giving the ``distance'' of a point of $Y^{(0)}$ from~$Y^{(0)}_0$,
by
$q_0 ([\ld, x]) = \ld$ for $\ld \in [m - 1, \, m]$ and $x \in S^m$
with $m \in \N$.
The definition of $\sim$ implies that $q_0$ is well defined,
and then one easily checks that
$q_0$ is surjective, \ct, open, and proper.
Therefore so is
\[
\id_{[0, 1]^n} \times q_0 \colon [0, 1]^n \times Y^{(0)}
  \to [0, 1]^n \times [0, \I).
\]

We now construct a \cfn{} $f \colon Y^{(0)} \to \R^N$
such that, as the distance $\ld$ from $Y_0^{(0)}$ increases,
the points $f (y)$ for $y \in Y^{(0)}$ such that $q_0 (y) = \ld$
are $\ep_{\ld}$-dense in $[-1, 1]^N$ with $\ep_{\ld} \to 0$
as $\ld \to \I$.
(This function will not be injective.)
The construction is carried out by defining
maps $f_m$ from the image in $Y^{(0)}$ of~$T_m$
to~$\R^N$,
by induction on~$m$.

Define $f_0$ by sending the single point in $Y_0^{(0)}$
to $0 \in \R^N$.

Given $f_m$,
for $\ld \in [0, 1]$, $x \in S^m$, and $s \in S$,
define
\[
f_{m + 1} ([m + \ld, \, (x, s) ]) = f_m ([m, x]) + 2^{- m - 1} \ld s.
\]
Then the $f_m$ are well defined and fit together to give
a well defined \cfn{} $f \colon Y^{(0)} \to \R^N$.
We claim that $f$ has the following properties:
\begin{enumerate}
\item\label{L:SpecialDR:1}
For $m \in \Nz$, we have
$\| f (y) \|_{\I} \leq 1 - 2^{-m}$ whenever $q_0 (y) \leq m$.
\item\label{L:SpecialDR:2}
For $m \in \N$,
the set
\[
\big\{ f (y) \colon {\mbox{$y \in Y^{(0)}$ with $q_0 (y) = m$}} \big\}
\]
consists exactly of all $z \in [-1, 1]^N$ such that
every coordinate $z_k$ of $z$
has the form $z_k = l_k / 2^m$ with $l_k \in \Z$ odd.
\item\label{L:SpecialDR:3}
Whenever $m \in \Nz$ and $\ld \geq m$,
then
\[
\big\{ f (y) \colon {\mbox{$y \in Y^{(0)}$ with $q_0 (y) = \ld$}} \big\}
\]
is $2^{- m + 1}$-dense in $[-1, 1]^N$.
\end{enumerate}

We prove~(\ref{L:SpecialDR:1}).
For $m = 0$ there is only one such~$y$,
and $f (y) = 0$,
so the statement holds.
Suppose now the statement holds for some particular value of~$m$.
We prove it for $m + 1$.
Since $1 - 2^{- (m + 1)} > 1 - 2^{-m}$,
we need only consider $y$ such that $q_0 (y) = m + \ld$
with $\ld \in [0, 1]$.
Then for $x \in S^m$ and $s \in S$,
using $\| s \|_{\I} = 1$, we have
\begin{align*}
\| f ([m + \ld, \, (x, s) ]) \|_{\I}
& \leq \| f ([m, x]) \|_{\I} + 2^{- m - 1} \ld  \| s \|_{\I} \\
& \leq 1 - 2^{-m} + 2^{- m - 1}
  = 1 - 2^{- m - 1}.
\end{align*}
So~(\ref{L:SpecialDR:1}) follows by induction.

We next prove~(\ref{L:SpecialDR:2}).
For $m \in \N$,
let
\[
Q_m = \big\{ f (y) \colon
       {\mbox{$y \in Y^{(0)}$ with $q_0 (y) = m$}} \big\}
\]
and let
\[
R_m = \big\{ z \in [-1, 1]^N \colon
       {\mbox{$z_k \in 2^{-m} (2 \Z + 1)$ for $k = 1, 2, \ldots, N$}}
              \big\}.
\]
We are supposed to prove that $Q_m = R_m$.
Using the definition of $f_1$,
we have
\[
Q_1 = \big\{ \tfrac{1}{2} s \colon s \in S \big\} = R_1.
\]
The proof now follows by induction,
since both
\[
Q_{m + 1} = Q_m + 2^{- m - 1} S
\andeqn
R_{m + 1} = R_m + 2^{- m - 1} S.
\]

Now we prove~(\ref{L:SpecialDR:3}).
It suffices to prove that for $m \in \Nz$ and $\ld \in [0, 1]$,
the set
\[
F_{m, \ld} = \big\{ f (y) \colon
   {\mbox{$y \in Y^{(0)}$ with $q_0 (y) = m + \ld$}} \big\}
\]
is $2^{- m + 1}$-dense in $[-1, 1]^N$.
Let $z \in [-1, 1]^N$.
It follows from
the identification of $F_{m, 1}$ in~(\ref{L:SpecialDR:2})
that there is $w \in F_{m, 1}$
with $\| w - z \|_{\I} \leq 2^{- (m + 1)}$.
Write
\[
w = f ([m + 1, \, (x, s)])
\]
 with $x \in S^m$ and $s \in S$.
For $\ld \in [0, 1]$,
the element
$w_{\ld} = f ([m + \ld, \, (x, s)])$ is in $F_{m, \ld}$ and satisfies
\[
\| w_{\ld} - w \|_{\I}
  = \| 2^{- m - 1} (1 - \ld) s \|_{\I}
  = 2^{- m - 1} (1 - \ld)
  \leq 2^{- m - 1}.
\]
Therefore
\[
\| w_{\ld} - z \|_{\I}
  \leq \| w_{\ld} - w \|_{\I} + \| w - z \|_{\I}
  \leq 2^{- m - 1} + 2^{- m - 1}
  < 2^{- m + 1}.
\]
This proves~(\ref{L:SpecialDR:3}).

We are now going to construct
a map $h^{(0)} \colon [0, 1]^n \times Y^{(0)} \to \R^N$
which satisfies the conditions on $h$ in Definition~\ref{D:DR},
except that,
first, we do not insist that the range of $h^{(0)}$
be contained in $[-1, 1]^N$,
and, second,
we require the extra condition
$\big\| h^{(0)} (0, y) - f ( y ) \big\|_{\I}
 < 2^{- m - 2}$
when $m \in \Nz$ and $q_0 (y) \geq m$.
We will fix the missing condition afterwards.
The construction is carried out by defining
maps $h_m^{(0)} \colon [0, 1]^n \times Y_m^{(0)} \to \R^N$
by induction on~$m$.

Recall that for $x \in S^m$,
we identify $T_{m, x} = [m - 1, \, m] \times \{ x \}$
with its image in~$Y^{(0)}$.
Our function $h^{(0)}$ will be piecewise affine,
in the sense that for $m \in \Nz$ and $x \in S^m$,
its restriction to
$[0, 1]^n \times T_{m, x}$ becomes affine under the obvious
identification of this set with $[0, 1]^{n + 1}$.

Choose an injective affine map $g_0 \colon \R^n \to \R^N$
such that $g_0 (0) = 0$.
Take $h_0^{(0)} = g_0 |_{[0, 1]^n}$.

Suppose now that $h^{(0)}_m$ has been constructed,
is \ct{} and injective,
is affine on $[0, 1]^n \times T_{k, x} \cong [0, 1]^{n + 1}$
whenever $k \leq m$ and $x \in S^k$,
and satisfies
\[
\big\| h^{(0)}_m (0, y) - f (y) \big\|_{\I} < 2^{- k - 2}
\]
for $y \in Y^{(0)}_m$ with $q_0 (y) \geq k$.
Label the elements of $S^{m + 1}$ as $x_1, x_2, \ldots, x_M$
with $M = 2^{(m + 1) N}$.

Write $x_1 = (w_1, s_1)$ with $w_1 \in S^m$ and $s_1 \in S$.
Let $\dt_1, \dt_2, \ldots, \dt_{n + 1}$ be the standard
basis vectors in $\R^{n + 1}$.
Let $g_{m + 1, \, 1} \colon \R^{n + 1} \to \R^N$ be the
unique affine map such that
$g_{m + 1, \, 1} (\xi, 0) = h^{(0)}_m (\xi, [m, w_1])$
for $\xi \in [0, 1]^n$,
and such that $g_{m + 1, \, 1} (\dt_{n + 1}) = f ([m + 1, \, x_1])$.
Apply Lemma~\ref{L:AffTrans} with $n + 1$ in place of both $m$ and~$n$,
with $N$ in place of~$d$,
with $n$ in place of~$k$,
and with $h^{(0)}_m \big( Y^{(0)}_m \big)$ in place of~$K$.
The lemma implies the existence of an injective affine function
$h_{m + 1, \, 1}^{(0)} \colon \R^{n + 1} \to \R^N$
such that
$h_{m + 1, \, 1}^{(0)} |_{\R^n \times \{ 0 \} }
  = g_{m + 1, \, 1} |_{\R^n \times \{ 0 \} }$,
such that
\[
\big\| h_{m + 1, \, 1}^{(0)} (\dt_{n + 1}) - f ([m + 1, \, x_1])
       \big\|_{\I}
    < 2^{- m - 3},
\]
and such that
\[
h_{m + 1, \, 1}^{(0)}
            \big( [0, 1]^{n + 1}
                 \SM \big[ [0, 1]^n \times \{ 0 \} \big] \big)
        \cap h^{(0)}_m \big( [0, 1]^n \times Y^{(0)}_m \big) = \E.
\]
Since
\[
\big\| h_{m + 1, \, 1}^{(0)} (0) - f ([m, x_1]) \big\|_{\I}
  = \big\| h^{(0)}_m ([m, w_1]) - f ([m, w_1]) \big\|_{\I}
  < 2^{- m - 2},
\]
convexity gives
\[
\big\| h_{m + 1, \, 1}^{(0)} (\ld \dt_{n + 1})
                  - f ([m + \ld, \, x_1]) \big\|_{\I}
  < 2^{- m - 2}
\]
for $\ld \in [0, 1]$.

We now proceed by induction on~$l$.
Suppose that for $k = 1, 2, \ldots, l$
we are given injective affine functions
$h_{m + 1, \, k}^{(0)} \colon \R^{n + 1} \to \R^N$
such that
\begin{equation}\label{Eq:1112-aa}
h_{m + 1, \, k}^{(0)} |_{\R^n \times \{ 0 \} }
  = g_{m + 1, \, k} |_{\R^n \times \{ 0 \} },
\end{equation}
such that
\begin{equation}\label{Eq:1112-bb}
\big\| h_{m + 1, \, k}^{(0)} (\dt_{n + 1}) - f ([m + 1, \, x_k])
       \big\|_{\I}
    < 2^{- m - 3},
\end{equation}
and
\begin{equation}\label{Eq:1112-cc}
\big\| h_{m + 1, \, k}^{(0)} ( \ld \dt_{n + 1}) - f ([m + \ld, \, x_k])
       \big\|_{\I}
    < 2^{- m - 2},
\end{equation}
and such that
\begin{align}\label{Eq:1112-dd}
&
h_{m + 1, \, k}^{(0)}
            \big( [0, 1]^{n + 1} \SM ([0, 1]^n \times \{ 0 \}) \big)
\\
& \hspace*{6em} {\mbox{}}
        \cap \left( h^{(0)}_m \big( [0, 1]^n \times Y^{(0)}_m \big)
               \cup \bigcup_{j = 1}^{k - 1}
                     h_{m + 1, \, j}^{(0)} \big( [0, 1]^{n + 1} \big)
                           \right)
     = \E.
  \notag
\end{align}
Let $g_{m + 1, \, l} \colon \R^{n + 1} \to \R^N$ be the
unique affine
map such that
$g_{m + 1, \, l} (\xi, 0) = h^{(0)}_m (\xi, [m, w_l])$
for $\xi \in [0, 1]^n$,
and such that $g_{m + 1, \, l} (\dt_{n + 1}) = f ([m + 1, \, x_l])$.
Using the same method as for $h_{m + 1, \, 1}^{(0)}$,
we find an injective affine function
$h_{m + 1, \, l}^{(0)} \colon \R^{n + 1} \to \R^N$
such that
(\ref{Eq:1112-aa}),
(\ref{Eq:1112-bb}),
(\ref{Eq:1112-cc}),
and~(\ref{Eq:1112-dd})
hold with $l + 1$ in place of~$k$.

With
$h_{m + 1, \, 1}^{(0)}, \, h_{m + 1, \, 2}^{(0)}, \, \ldots,
  \, h_{m + 1, \, M}^{(0)}$
in hand,
we proceed to define
\[
h_{m + 1}^{(0)} \colon [0, 1]^n \times Y^{(0)}_{m + 1} \to \R^N
\]
by $h_{m + 1}^{(0)} |_{[0, 1]^n \times Y^{(0)}_{m} } = h^{(0)}_m$
and
\[
h_{m + 1}^{(0)} (\xi, \, [m + \ld, \, x_k])
   = h_{m + 1, \, k}^{(0)} (\xi + \ld \dt_{n + 1})
\]
for $\xi \in [0, 1]^n$, $\ld \in [0, 1]$, and $k = 1, 2, \ldots, M$.
The choices made
($h_{m + 1, \, k}^{(0)}$ is injective and affine,
(\ref{Eq:1112-aa}),
(\ref{Eq:1112-bb}),
(\ref{Eq:1112-cc}),
and~(\ref{Eq:1112-dd}))
ensure that $h_{m + 1}^{(0)}$ is injective,
piecewise affine,
and satisfies
\begin{equation}\label{Eq:111108a}
\big\| h_{m + 1}^{(0)} (0, \, [l + \ld, \, x]) - f ([l + \ld, \,  x])
       \big\|_{\I}
    < 2^{- l - 2}
\end{equation}
for $l = 0, 1, \ldots, m$, $\ld \in [0, 1]$, and $x \in S^{l + 1}$.
There is then an injective,
piecewise affine function
$h^{(0)} \colon [0, 1]^n \times Y^{(0)} \to \R^N$
which agrees with $h_{m}^{(0)}$ on $[0, 1]^n \times Y_m^{(0)}$
for all $m \in \N$,
and also still satisfies the estimate~(\ref{Eq:111108a}),
now for all $l \in \Nz$.
Moreover, $h^{(0)}$ is affine on each set
$[0, 1]^n \times T_{m, x} \cong [0, 1]^n \times [m - 1, \, m]$
for $x \in S^m$.

We now modify our function $h^{(0)}$ by restricting it
to a suitable subset $Y \subset [0, 1]^n \times Y^{(0)}$
so that the range of the restriction is contained in $[-1, \, 1]^N$.

For $y \in Y^{(0)}$,
let $h_y \colon [0, 1]^n \to \R^N$
be given by $h_y (\xi) = h (\xi, y)$.
Then each $h_y$ is affine.
For $x \in S^m$ and $\ld \in [m - 1, \, m]$,
the maps $h_{[\ld, x]}$ are all restrictions of the same
affine map on $[0, 1]^n \times [m - 1, \, m]$
to subsets of the form $[0, 1]^n \times \{ \mu \}$.
Therefore they all have the same Lipschitz constant.
Since $Y^{(0)}$ is connected,
it follows that the maps $h_y$ have the same Lipschitz constant,
say~$L$, for all $y \in Y^{(0)}$.

Choose a \cfn{} $\ep \colon [0, \I) \to (0, 1)$
such that for $m \in \Nz$ and $\ld \in [0, 1]$,
we have $L \ep (m + \ld) < 2^{- m - 2}$.
Set
\[
Y = \big\{ (\xi, y) \in [0, 1]^n \times Y^{(0)} \colon
       \| \xi \|_{\I} \leq \ep (q_0 (y)) \big\}.
\]
Define $b \colon Y \to [0, 1]^n \times Y^{(0)}$
by $b (\xi, y) = \big( \ep (q_0 (y))^{-1} \xi, \, y \big)$
for $\xi \in [0, 1]^n$ and $y \in Y^{(0)}$.
Then $b$ is a \hme,
so that the function
\[
q = \big( \id_{[0, 1]^n} \times q_0 \big) \circ b \colon
      Y \to [0, 1]^n \times [0, \I)
\]
is surjective, \ct, open, and proper.

Further set $h = h^{(0)} |_Y$.
Then $h$ is injective and \ct.
By construction, we have
\begin{equation}\label{Eq:111108bb}
\| h (0, \, [m + \ld, \, x]) - f ([m + \ld, \,  x]) \|_{\I}
    < 2^{- m - 2}
\end{equation}
for $m \in \Nz$, $\ld \in [0, 1]$, and $x \in S^{m + 1}$.
Combining this with the property~(\ref{L:SpecialDR:1})
of~$f$,
we get
\[
\| h (0, \, [m + \ld, \, x]) \|_{\I} < 1 - 2^{- m - 2}
\]
for $m \in \Nz$, $\ld \in [0, 1]$, and $x \in S^{m + 1}$.
For $\xi \in [0, 1]^n$ such that $(\xi, \, [m + \ld, \, x]) \in Y$,
we then have
\begin{align*}
\| h (\xi, \, [m + \ld, \, x]) \|_{\I}
& \leq \| h (0, \, [m + \ld, \, x]) \|_{\I}
    + \| h (0, \, [m + \ld, \, x]) - h (\xi, \, [m + \ld, \, x]) \|_{\I}
       \\
& < 1 - 2^{- m - 2} + L \| \xi \|_{\I}
  \leq 1 - 2^{- m - 2} + L \ep (m + \ld)
  < 1.
\end{align*}
Therefore $h (Y) \S [-1, 1]^N$.

Combining the inequality~(\ref{Eq:111108bb})
with the property~(\ref{L:SpecialDR:3})
of~$f$,
we find that
$q_0 (y) \geq m$ implies
\[
\big\{ h (0, y) \colon
   {\mbox{$y \in Y^{(0)}$ with $q_0 (y) = \ld$}} \big\}
\]
is $\big( 2^{- m + 1} + 2^{- m - 2} \big)$-dense in $[-1, 1]^N$.
So certainly
\[
\big\{ h (\xi, y) \colon
   {\mbox{$(\xi, y) \in Y$ with $q_0 (y) = \ld$}} \big\}
\]
is $\big( 2^{- m + 1} + 2^{- m - 2} \big)$-dense in $[-1, 1]^N$.
Condition~(\ref{D:DR:EpD}) of Definition~\ref{D:DR} now follows
easily.
\end{proof}

\begin{lem}\label{L:ExtDR}
Let $P_0$ and $P$ be
noncompact second countable locally compact Hausdorff spaces.
Let $(Y_0, Z, \rh, h_0, q_0)$ be a \dr{} for $P_0$.
Let $S$ be a closed subset of $P_0$ such that
the inclusion $j \colon S \to P_0$ is proper,
and let $g \colon S \to P$ be proper, open, \ct, and surjective.
Set $Y = q_0^{-1} (S)$,
let $i \colon Y \to Y_0$ be the inclusion,
set $h = h_0 \circ i$,
and set $q = g \circ \big( q_0 |_Y \big)$.
Then $(Y, Z, \rh, h, q)$ is a \dr{} for~$P$.
\end{lem}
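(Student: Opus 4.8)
The plan is to verify the five conditions of Definition~\ref{D:DR} for the tuple $(Y, Z, \rh, h, q)$ in order, the first four being routine and the last being the only one requiring real work. Since $q_0$ is \ct{} and $S \S P_0$ is closed, $Y = q_0^{-1}(S)$ is a closed subset of~$Y_0$, and a closed subspace of a second countable locally compact Hausdorff space is again second countable locally compact Hausdorff; this gives Condition~(\ref{D:DR:Y}). Condition~(\ref{D:DR:Z}) is immediate since $Z$ and $\rh$ are unchanged. For Condition~(\ref{D:DR:h}), the inclusion $i \colon Y \to Y_0$ is injective and \ct, so $h = h_0 \circ i$ is injective and \ct{} as a composite of such maps.

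The slightly more involved routine step is Condition~(\ref{D:DR:q}), that $q = g \circ (q_0|_Y)$ is proper, open, \ct, and surjective. First I would record the corresponding properties of the intermediate map $q_0|_Y \colon Y \to S$. Because $q_0$ is surjective, $q_0(q_0^{-1}(S)) = S$, so $q_0|_Y$ maps onto~$S$; composing with the surjection $g$ shows $q$ is surjective, and continuity is clear. For properness, observe that for compact $K \S S$ (which is then compact in~$P_0$) one has $(q_0|_Y)^{-1}(K) = q_0^{-1}(K)$ since $K \S S$, and this is compact as $q_0$ is proper; so $q_0|_Y$ is proper, and composing with the proper map $g$ shows $q$ is proper. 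For openness I would invoke the general fact that if $f \colon A \to B$ is open and $C \S B$, then $f|_{f^{-1}(C)} \colon f^{-1}(C) \to C$ is open, which follows from the identity $f(V \cap f^{-1}(C)) = f(V) \cap C$ for open $V \S A$; applied to $f = q_0$ and $C = S$ this shows $q_0|_Y$ is open, and composing with the open map $g$ shows $q$ is open.

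The heart of the matter is Condition~(\ref{D:DR:EpD}). The key computation is that for $x \in P$, since $g^{-1}(x) \S S$ and $Y = q_0^{-1}(S)$,
\[
q^{-1}(x) = (q_0|_Y)^{-1} \big( g^{-1}(x) \big) = q_0^{-1} \big( g^{-1}(x) \big),
\]
so $h(q^{-1}(x)) = h_0 \big( q_0^{-1}(g^{-1}(x)) \big)$. Now fix $\ep > 0$ and apply Condition~(\ref{D:DR:EpD}) for the given \dr{} of~$P_0$ to obtain a compact $K_0 \S P_0$ such that $h_0(q_0^{-1}(s))$ is $\ep$-dense in $Z$ for every $s \in P_0 \SM K_0$. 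Since $S$ is closed in~$P_0$, the set $K_0 \cap S$ is compact, so $K = g(K_0 \cap S)$ is a compact subset of~$P$. I claim this $K$ works: if $x \in P \SM K$, then $g^{-1}(x)$ cannot be contained in $K_0 \cap S$ (otherwise $x \in g(K_0 \cap S) = K$), so there is $s \in g^{-1}(x)$ with $s \in S \SM K_0 \S P_0 \SM K_0$. For this $s$ we have $q_0^{-1}(s) \S q_0^{-1}(g^{-1}(x))$, whence
\[
h(q^{-1}(x)) = h_0 \big( q_0^{-1}(g^{-1}(x)) \big) \supseteq h_0 \big( q_0^{-1}(s) \big),
\]
and the right-hand side is $\ep$-dense in~$Z$; hence so is $h(q^{-1}(x))$. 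This verifies Condition~(\ref{D:DR:EpD}) and completes the proof.

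The main obstacle is precisely this last step: the density estimate in Definition~\ref{D:DR}(\ref{D:DR:EpD}) is phrased in terms of single fibres, whereas the fibre $q^{-1}(x)$ of the new map is the $q_0$-preimage of the entire, possibly noncompact, set $g^{-1}(x)$. The resolution is that one need not control all of $g^{-1}(x)$: it suffices that $g^{-1}(x)$ escapes $K_0$ at even a single point~$s$, since enlarging the fibre only enlarges $h(q^{-1}(x))$ and $\ep$-density is preserved under passing to supersets. Surjectivity and properness of~$g$, together with closedness of~$S$, are exactly what guarantee such an $s$ exists once $x$ lies outside the compact set $g(K_0 \cap S)$.
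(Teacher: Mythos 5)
Your proof is correct and follows essentially the same route as the paper's: the same routine verification that $q_0|_Y \colon Y \to S$ is proper, open, continuous, and surjective (via the identity $q_0(V \cap q_0^{-1}(S)) = q_0(V) \cap S$) followed by composition with $g$. For Condition~(\ref{D:DR:EpD}) you use the identical construction $K = g(K_0 \cap S)$ and the same key observation that for $x \in P \setminus K$ some (in fact every) point of $g^{-1}(x)$ lies outside $K_0$, so its $q_0$-fibre is $\varepsilon$-dense and is contained in $q^{-1}(x)$.
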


The following diagram shows the relationships among the spaces
in this lemma.
\[
\begin{CD}
Y @>{i}>> Y_0 @>{h_0}>> Z   \\
@V{q_0 |_Y}VV  @VV{q_0}V &        \\
S @>{j}>> P_0  \\
@V{g}VV  & {} \\
P
\end{CD}
\]

\begin{proof}[Proof of Lemma~\ref{L:ExtDR}]
It is clear that
$Y$ is a second countable locally compact Hausdorff space,
that $h$ is injective,
and that $Z \SM h (Y)$ is dense in~$Z$.

We claim that $q_0 |_Y \colon Y \to S$
is proper, open, \ct, and surjective.
Continuity and surjectivity are immediate.
For properness,
let $K \S S$ be compact.
Then $\big( q_0 |_Y \big)^{-1} (K) = q_0^{-1} (K)$
is compact because $q$ is proper.
Finally, let $U \S Y$ be open.
Choose an open subset $U_0 \S Y_0$ such that $U = Y \cap U_0$.
We have $q_0 (U_0) \cap S \S q_0 (U)$,
because if $y \in U_0$ and $q_0 (y) \in S$,
then $y \in Y$ by the definition of~$Y$.
So $q_0 (U) = q_0 (U_0) \cap S$,
which is open in $S$ because $q_0$ is open.
This proves the claim,
and it follows by considering compositions
that $q$ is proper, open, \ct, and surjective.

It remains to verify Condition~(\ref{D:DR:EpD}) of
Definition~\ref{D:DR}.
Let $\ep > 0$.
Choose a compact set $K_0 \S P_0$
such that for all $x \in P_0 \SM K_0$,
the set $h_0 (q_0^{-1} (x))$ is $\ep$-dense in~$Z$.
Then $K_0 \cap S$ is compact because the inclusion of $S$ in $Y_0$
is proper,
so $K = g (K_0 \cap S)$ is a compact subset of~$P$.
Let $x \in P \SM K$.
Choose $x_0 \in S$ such that $g (x_0) = x$.
Then $x_0 \not\in K_0$.
Therefore $h_0 (q_0^{-1} (x_0))$ is $\ep$-dense in~$Z$.
Since $h_0 (q_0^{-1} (x_0)) \S h (q^{-1} (x))$,
it follows that $h (q^{-1} (x))$ is $\ep$-dense in~$Z$.
\end{proof}

\begin{lem}\label{L:DRFor1dProd}
Let $X$ be a compact metric space with finite covering dimension
$\dim (X)$.
Then there exists a \dr{} for the space $P = X \times [0, \I)$.
\end{lem}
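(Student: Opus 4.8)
The plan is to reduce to the cube case already handled in Lemma~\ref{L:SpecialDR} by realizing $X \times [0, \I)$ as the image of a closed subset of $[0, 1]^{N} \times [0, \I)$ under a nice map, and then to transport the \dr{} across that map using Lemma~\ref{L:ExtDR}. The one external ingredient I need is the Menger--N\"{o}beling embedding theorem, which provides a homeomorphic embedding of a \cms{} of covering dimension~$n$ into $\R^{2 n + 1}$. (I may assume $X \neq \E$, since otherwise $P = \E$ is compact and there is nothing to prove.)

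First I would set $n = \dim (X)$ and $N = 2 n + 1$, and invoke Menger--N\"{o}beling to get a homeomorphic embedding of $X$ into $\R^{N}$. Because $X$ is compact its image is bounded, so after an affine \hme{} of a large cube onto $[0, 1]^{N}$ I obtain an embedding $e \colon X \to [0, 1]^{N}$ with $e (X)$ compact and $e \colon X \to e (X)$ a \hme. Next I would apply Lemma~\ref{L:SpecialDR} with $N$ in place of~$n$ to obtain a \dr{} $(Y_0, Z, \rh, h_0, q_0)$ for $P_0 = [0, 1]^{N} \times [0, \I)$ (here $Z = [0, 1]^{2 N + 3}$). Both $P_0$ and $P = X \times [0, \I)$ are noncompact second countable locally compact Hausdorff spaces, the noncompactness coming from the $[0, \I)$ factor.

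Then I would set $S = e (X) \times [0, \I) \S P_0$. Since $e (X)$ is compact, $S$ is closed in $P_0$ and the inclusion $j \colon S \to P_0$ is proper (the preimage of a compact set is a closed subset of a compact set). I would define $g \colon S \to P$ by $g (e (x), t) = (x, t)$, that is, $g = e^{-1} \times \id_{[0, \I)}$; as a product of homeomorphisms this is a \hme{} of $S$ onto $P$, hence in particular proper, open, \ct, and surjective. With $P_0$, $S$, and $g$ in hand, every hypothesis of Lemma~\ref{L:ExtDR} is satisfied, and its conclusion gives a \dr{} for $P = X \times [0, \I)$, completing the argument.

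I do not expect a serious obstacle: the genuine content is already contained in Lemma~\ref{L:SpecialDR} (the explicit tree-and-affine-map construction for cubes) and in the Menger--N\"{o}beling theorem (which supplies an embedding into a cube of exactly the right dimension). The only things needing care are the dimension bookkeeping ($N = 2 n + 1$ feeding into Lemma~\ref{L:SpecialDR}) and the routine verification that the identifying map~$g$ meets the four regularity conditions required by Lemma~\ref{L:ExtDR}, both of which become immediate once the embedding~$e$ is fixed.
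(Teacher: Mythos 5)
Your proposal is correct and follows essentially the same route as the paper: Menger--N\"{o}beling embedding of $X$ into a cube of dimension $2\dim(X)+1$, then Lemma~\ref{L:SpecialDR} for the cube times $[0,\I)$, then Lemma~\ref{L:ExtDR} with $S = e(X) \times [0,\I)$. The only (cosmetic) difference is that the paper identifies $X$ with its image and takes $g = \id_P$ in Lemma~\ref{L:ExtDR}, whereas you carry the embedding $e$ explicitly and take $g = e^{-1} \times \id_{[0,\I)}$.
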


\begin{proof}
By the Menger-N\"{o}beling Theorem
(Theorem~V~2 of~\cite{HW};
see page~42 of~\cite{HW} for the definition of $I_m$),
the space $X$
is homeomorphic to a subset of $[0, 1]^{2 \dim (X) + 1}$.
Therefore it suffices to prove the result for
a closed subset $X \S [0, 1]^n$ for $n \in \N$.

Apply Lemma~\ref{L:SpecialDR}
to find a \dr{} $(Y_0, Z, \rh, h_0, q_0)$
for $P_0 = [0, 1]^n \times [0, \I)$.
Let
\[
S = X \times [0, \I) \S [0, 1]^n \times [0, \I).
\]
The inclusion of $X \times [0, \I)$ in $[0, 1]^n \times [0, \I)$
is clearly proper.
Apply Lemma~\ref{L:ExtDR} with $P_0$, $(Y_0, Z, \rh, h_0, q_0)$,
and $S$ as given, and with $g = \id_P$.
\end{proof}

\begin{lem}\label{L:DRForProd}
Let $X$ be any compact metric space.
Then there exists a \dr{} for the space $P = X \times [0, \I)$.
\end{lem}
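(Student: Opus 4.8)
The statement differs from Lemma~\ref{L:DRFor1dProd} only in that $X$ is an arbitrary compact metric space rather than one of finite covering dimension. The obvious obstruction is that the Menger--N\"obeling argument used for Lemma~\ref{L:DRFor1dProd} embeds $X$ into a finite-dimensional cube, which is not available here. So the plan is to approximate an arbitrary compact metric space by finite-dimensional ones and patch the resulting pseudocoverings together along a ``telescope'' in the $[0, \I)$ direction. I would realize $X$ as an inverse limit $X = \invlim (X_k, \ph_k)$ of compact metric spaces $X_k$ each of finite covering dimension (for instance, embed $X$ into the Hilbert cube $[0, 1]^{\N}$ and take $X_k$ to be the image of $X$ under the projection onto the first $k$ coordinates, with $\ph_k \colon X_{k + 1} \to X_k$ the induced coordinate projection; each $X_k \S [0, 1]^k$ so $\dim (X_k) < \I$). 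The point is that each slab can be handled by the finite-dimensional result already proved.

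\smallskip
\noindent
The key steps, in order, would be as follows. First, fix the inverse system $(X_k, \ph_k)$ as above together with the limit projections $\ps_k \colon X \to X_k$. Second, apply Lemma~\ref{L:DRFor1dProd} to obtain, for each $k$, a pseudocovering $(Y_k, Z, \rh, h_k, q_k)$ for $X_k \times [0, \I)$; here the crucial (and fortunate) feature is that in the construction of Lemma~\ref{L:SpecialDR} the target space $Z = [-1, 1]^N$ has a \emph{fixed} dimension $N = 2 n + 3$ depending only on the cube $[0, 1]^n$, so by embedding all $Z$'s into the Hilbert cube we may arrange a \emph{single common} compact metric target $Z = [0, 1]^{\N}$ for every~$k$. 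This common target is what makes the $\ep$-density condition~(\ref{D:DR:EpD}) transferable across the slabs. Third, I would build $P = X \times [0, \I)$ as a telescope: partition $[0, \I)$ into intervals $[k, k + 1]$ and, over the slab $X \times [k, k + 1]$, pull back the $k$-th pseudocovering through the projection $\ps_k \colon X \to X_k$, gluing adjacent slabs along $X \times \{ k \}$ using the bonding maps $\ph_k$. Concretely, set $Y = \coprod_k \big( \ps_k^{\ast} Y_k \big) / {\sim}$ with the identifications dictated by $\ph_k$, let $q \colon Y \to P$ be assembled from the $q_k$, and let $h \colon Y \to Z$ be assembled from the $h_k$ (this is where one uses that all $h_k$ land in the same $Z$). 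Fourth, verify that $(Y, Z, \rh, h, q)$ is a pseudocovering: second countability, local compactness, and Hausdorffness of~$Y$ are routine from the gluing; injectivity of~$h$ and properness/openness/continuity/surjectivity of~$q$ follow slab-by-slab from the corresponding properties of each $(Y_k, h_k, q_k)$, using properness of the bonding maps at the gluing seams.

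\smallskip
\noindent
The main obstacle I expect is \textbf{Condition~(\ref{D:DR:EpD})}, the uniform $\ep$-density requirement, at the seams between slabs. Within a single slab it is inherited from the $k$-th pseudocovering, but one must ensure that as $x \in X$ escapes to infinity in the $[0, \I)$ coordinate the sets $h(q^{-1}(x))$ become $\ep$-dense in the \emph{fixed} $Z$, uniformly, and that the density does not degrade when one crosses from the slab built over $X_k$ to the one built over $X_{k + 1}$. The natural way to control this is to choose the telescope so that the index $k$ tends to infinity together with the $[0, \I)$-coordinate (i.e., over $X \times [k, k+1]$ use the $k$-th datum), and to invoke condition~(\ref{D:DR:EpD}) for $(Y_k, Z, \rh, h_k, q_k)$ in the form noted after Definition~\ref{D:DR}: for every $\ep > 0$ there is a compact $L_k \S Y_k$ outside of which the fibers are $\ep$-dense. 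One then shows that for fixed $\ep$, all but finitely many slabs already have $\ep$-dense fibers everywhere, and the finitely many exceptional slabs contribute only a compact subset of~$P$ after applying~$q$; this is exactly what~(\ref{D:DR:EpD}) demands. A secondary technical point, worth checking carefully rather than waving away, is that the pulled-back and glued space $Y$ remains locally compact Hausdorff at the seams $X \times \{ k \}$, which relies on the bonding maps $\ph_k$ being continuous and the pseudocovering maps $q_k$ being proper.
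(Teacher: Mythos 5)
Your proposal has a genuine gap, and it sits at the heart of the matter: the pullback step destroys injectivity of~$h$. If $\ps_k \colon X \to X_k$ is not injective (the typical case: if some $\ps_k$ were injective, then $X$ would embed in $[0,1]^k$ and hence be finite dimensional), pick $x \neq x'$ with $\ps_k (x) = \ps_k (x')$. In the fiber product $\ps_k^{\ast} Y_k$ the points $((x,t), y)$ and $((x',t), y)$ are distinct, but the only map to $Z$ your construction provides, namely $((x,t), y) \mapsto h_k (y)$, sends them to the same point. This cannot be patched cheaply. Since $h$ must be injective and the fibers of $q$ over distinct points are disjoint, the sets $h ( q^{-1} (x,t))$ and $h ( q^{-1} (x',t))$ must be \emph{disjoint} subsets of~$Z$, while Definition~\ref{D:DR}(\ref{D:DR:EpD}) forces both to be $\ep$-dense in the \emph{same}~$Z$ once $t$ is large; a pullback hands you two identical copies of one fiber, not two disjoint dense sets. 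Restoring injectivity by adjoining the $X$-coordinate (mapping into $[0,1]^{\N} \times Z$) kills density for the same reason. A second, independent problem is your ``common target'' step: if you embed $Z_k = [0,1]^{2k + 3}$ into the Hilbert cube, a set which is $\ep$-dense in $Z_k$ is dense only in the image of $Z_k$, which is compact, hence closed, and a proper subset of $[0,1]^{\N}$ for dimension reasons; so condition~(\ref{D:DR:EpD}) fails outright for the enlarged target. Both failures are manifestations of the same tension---injectivity of $h$ versus density of fibers in a fixed $Z$---which is exactly what the delicate construction of Lemma~\ref{L:SpecialDR} resolves and what a pullback cannot transport.

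The paper's proof goes in the opposite direction: rather than approximating $X$ from below by finite dimensional quotients $\ps_k \colon X \to X_k$, it resolves $X$ from above. Corollary~3.8 of~\cite{KR} provides a compact metric space $C$ with $\dim (C) \leq 1$ and an open \ct{} surjection $p \colon C \to X$. Lemma~\ref{L:DRFor1dProd} then gives a \dr{} for $P_0 = C \times [0, \I)$, and Lemma~\ref{L:ExtDR}, applied with $S = P_0$ and $g = p \times \id_{[0, \I)}$ (proper because $C$ is compact), pushes it forward to $X \times [0, \I)$. Pushing forward along a surjection \emph{onto} the target space is harmless precisely where your pullback fails: $h$ is literally unchanged, hence still injective, while each new fiber $q^{-1} (x,t) = q_0^{-1} \big( (p \times \id)^{-1} (x,t) \big)$ is a union of old fibers, so $\ep$-density can only improve. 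If you wanted to salvage a telescope argument, you would need, for every point of every tail slab, pairwise disjoint $\ep_k$-dense subsets of $Z$ assembled into a locally compact space with a continuous injective $h$ and a proper open $q$---that is, you would have to redo the work of Lemma~\ref{L:SpecialDR} rather than quote it.
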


\begin{proof}
Corollary~3.8 of~\cite{KR} provides a
compact metric space $C$ with $\dim (C) \leq 1$
and an open \ct{} surjective map $p \colon C \to X$.
Apply Lemma~\ref{L:DRFor1dProd}
to find a \dr{} $(Y_0, Z, \rh, h_0, q_0)$
for $P_0 = C \times [0, \I)$.
The map $g = p \times \id_{[0, \I)}$ is proper since $C$ is compact.
Apply Lemma~\ref{L:ExtDR} with $P_0$, $(Y_0, Z, \rh, h_0, q_0)$,
and $g$ as given,
and with $S = P_0$.
\end{proof}

\begin{prp}\label{P:ExistDR}
Let $P$ be
a noncompact second countable locally compact Hausdorff space.
Then there exists a \dr{} for~$P$.
\end{prp}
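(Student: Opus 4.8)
The plan is to reduce to the case already handled in Lemma~\ref{L:DRForProd}, namely a product $X \times [0, \I)$ with $X$ compact metric, and then cut down to~$P$ using the extension result Lemma~\ref{L:ExtDR}. The mechanism is to realize $P$ as a closed subspace of $P^+ \times [0, \I)$ on which the projection to the first coordinate is a homeomorphism, with the noncompactness of $P$ being absorbed into the $[0, \I)$-direction.

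First I would form the one-point compactification $X = P^+$. Since $P$ is second countable, locally compact, and Hausdorff, it is $\sm$-compact, so $P^+$ is compact, Hausdorff, and second countable, hence a compact metric space; fix a metric $\rho_0$ on $P^+$ and write $d(x) = \rho_0(x, \I)$ for $x \in P^+$. Applying Lemma~\ref{L:DRForProd} with this $X$ gives a pseudocovering $(Y_0, Z, \rh, h_0, q_0)$ for $P_0 = P^+ \times [0, \I)$, which is noncompact second countable locally compact Hausdorff, as needed.

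Next I would produce the closed copy of~$P$. Define $\ps \colon P \to [0, \I)$ by $\ps(x) = 1 / d(x)$. This is \ct, and it is proper, since for $c > 0$ the set $\{ x \in P \colon \ps(x) \leq c \}$ equals $\{ x \in P^+ \colon d(x) \geq 1/c \}$, a closed subset of $P^+$ not containing~$\I$, hence compact. Let $S = \{ (x, \ps(x)) \colon x \in P \} \subset P_0$ be the graph of~$\ps$. The one genuinely substantive point is that $S$ is closed in $P_0$: since $P_0$ is metrizable it suffices to check sequential closedness, so suppose $(x_n, \ps(x_n)) \to (x_0, t_0)$ in $P_0$. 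The case $x_0 = \I$ is impossible, because $x_n \to \I$ forces $d(x_n) \to 0$ and hence $\ps(x_n) \to \I$, contradicting $t_0 < \I$; therefore $x_0 \in P$, and continuity of~$\ps$ gives $t_0 = \ps(x_0)$, so $(x_0, t_0) \in S$. This is exactly where the escape to infinity in~$P$ is converted into escape in the $[0, \I)$-coordinate, and it is what makes the reduction work. As a closed subset, the inclusion $S \hookrightarrow P_0$ is automatically proper. The projection $g \colon S \to P$, $g(x, \ps(x)) = x$, is the inverse of the continuous bijection $x \mapsto (x, \ps(x))$ of $P$ onto~$S$, so $g$ is a homeomorphism, and in particular proper, open, \ct, and surjective.

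Finally I would invoke Lemma~\ref{L:ExtDR} with $P_0$, the pseudocovering $(Y_0, Z, \rh, h_0, q_0)$, the closed set~$S$, and the map~$g$ just built, which directly yields a pseudocovering for~$P$ and completes the proof. Everything outside the closedness of~$S$ is routine verification of the hypotheses of Lemma~\ref{L:ExtDR}, so I expect the graph-closedness argument to be the only place requiring care.
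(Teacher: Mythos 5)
Your proof is correct and takes essentially the same route as the paper: both realize $P$ as a closed, properly embedded subset of $(\text{compact metric space}) \times [0, \I)$ by using the one-point compactification $P^+$ together with a continuous function that blows up at~$\I$, and then combine Lemma~\ref{L:DRForProd} with Lemma~\ref{L:ExtDR}. The only difference is cosmetic: the paper first embeds $P^+$ into the Hilbert cube $Q$ and works inside $Q \times [0, \I)$, using an abstract continuous $f \colon P^+ \to [0, \I]$ with $f(\I) = \I$ where you use the explicit $1/d$; your version shows that the Hilbert cube step is unnecessary, since Lemma~\ref{L:DRForProd} already applies to the arbitrary compact metric space $P^+$.
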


\begin{proof}
Let $Q = [0, 1]^{\N}$ be the Hilbert cube.
Let $P^+ = P \cup \{ \I \}$
be the one point (Alexandroff) compactification of~$P$.
Then $P^+$ is a compact metric space.
Therefore there is an embedding $i_0 \colon P^+ \to Q$.
Since $P^+$ is metrizable,
there is a \cfn{} $f \colon P^+ \to [0, \I]$
such that $f (\I) = \I$ and $f (x) < \I$ for all $x \in P$.
Define $i \colon P \to Q \times [0, \I)$
by $i (x) = \big( i_0 (x), \, f (x) \big)$ for $x \in P$.
Then $i$ is the restriction of the function
$j \colon P^+ \to Q \times [0, \I]$ given by the same formula.
Since $j$ is injective and $P^+$ is compact,
$j$ is a homeomorphism onto its image.
Therefore also $i$ is a homeomorphism onto its image.

We claim that $i$ is proper.
Let $K \S Q \times [0, \I)$ be compact.
Then there is $N$ such that $K \S Q \times [0, N]$.
So $i^{-1} (K) \S f^{-1} ([0, N])$,
which is a compact subset of $P^+$ not containing~$\I$,
so is therefore a compact subset of~$P$.
Since $i^{-1} (K)$ is closed in~$P$,
it is also compact.
The claim is proved.

Apply Lemma~\ref{L:DRForProd}
to find a \dr{} $(Y_0, Z, \rh, h_0, q_0)$
for $P_0 = Q \times [0, \I)$.
Now apply Lemma~\ref{L:ExtDR} with $P_0$ and $(Y_0, Z, \rh, h_0, q_0)$
as given, with $S = i (P)$, and with $g = i^{-1}$.
\end{proof}

\begin{cor}\label{C-111109aa}
Let $P$ be
a noncompact second countable locally compact Hausdorff space.
Then the space $\Xi (P)$ of Definition~\ref{D:X}
satisfies the hypotheses on~$X$ in Theorem~\ref{T:ExistA}.
\end{cor}

\begin{proof}
Combine Lemma~\ref{L:Top}, Lemma~\ref{L:PtC},
Proposition~\ref{P:ExistDR}, Proposition~\ref{P:AppOfDR},
and Lemma~\ref{L:VerPseudo}.
\end{proof}

\begin{cor}\label{C-111109bb}
Let $P$ be
a noncompact second countable locally compact Hausdorff space.
Then there exists a separable nuclear \ca~$A$
such that $\Prim (A) \cong \Xi (P)$
and $K \otimes \OT \otimes A \cong A$.
\end{cor}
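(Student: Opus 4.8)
The plan is to apply the reconstruction result Theorem~\ref{T:ExistA} directly, taking $X = \Xi(P)$. That theorem produces exactly the desired object---a separable nuclear \ca~$A$ with $K \otimes \OT \otimes A \cong A$ together with a homeomorphism $X \to \Prim(A)$---provided that $X$ is a ${\mathrm{T}}_0$-space which is point-complete and which receives a continuous, pseudo-open, pseudo-epimorphic map from a locally compact Polish space. So the entire task reduces to checking these three hypotheses for $\Xi(P)$.

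First I would recall that $\Xi(P)$ is a compact ${\mathrm{T}}_0$-space (Lemma~\ref{L:Top}) and is point-complete (Lemma~\ref{L:PtC}), which disposes of the first two hypotheses. For the third, I would invoke Proposition~\ref{P:ExistDR}: since $P$ is noncompact, second countable, locally compact, and Hausdorff, it admits a \dr{} $(Y, Z, \rh, h, q)$. Here $Z$ is a compact metric space, hence in particular a locally compact Polish space. Proposition~\ref{P:AppOfDR} then supplies a continuous, surjective, open map $\pi \colon Z \to \Xi(P)$, and Lemma~\ref{L:VerPseudo} guarantees that any continuous surjective open map between ${\mathrm{T}}_0$-spaces is automatically both pseudo-open and pseudo-epimorphic. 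This chain of results is precisely what Corollary~\ref{C-111109aa} packages, so in practice I would simply cite it.

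Having verified the hypotheses, I would conclude by applying Theorem~\ref{T:ExistA} to $X = \Xi(P)$, obtaining a separable nuclear \ca~$A$ with $K \otimes \OT \otimes A \cong A$ and a homeomorphism $\Xi(P) \to \Prim(A)$, whence $\Prim(A) \cong \Xi(P)$, as required.

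I expect there to be no genuine obstacle at this stage: the statement is an immediate corollary of the machinery already assembled. All the real difficulty has been absorbed into the construction of the pseudocovering---Proposition~\ref{P:ExistDR}, which in turn rests on the delicate inductive construction in Lemma~\ref{L:SpecialDR} and the affine transversality Lemma~\ref{L:AffTrans}---and into the cited reconstruction theorem of~\cite{HK} quoted as Theorem~\ref{T:ExistA}. The corollary itself therefore amounts to nothing more than combining Corollary~\ref{C-111109aa} with Theorem~\ref{T:ExistA}.
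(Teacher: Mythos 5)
Your proposal is correct and is exactly the paper's argument: the paper proves this corollary by combining Theorem~\ref{T:ExistA} with Corollary~\ref{C-111109aa}, which in turn packages Lemma~\ref{L:Top}, Lemma~\ref{L:PtC}, Proposition~\ref{P:ExistDR}, Proposition~\ref{P:AppOfDR}, and Lemma~\ref{L:VerPseudo}, precisely the chain you describe. Your observation that all the real work is hidden in the pseudocovering construction and the cited reconstruction theorem also matches the paper's structure.
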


\begin{proof}
Combine Theorem~\ref{T:ExistA} and Corollary~\ref{C-111109aa}.
\end{proof}

\section{Minimal actions
   on unital nonsimple prime AF~algebras}\label{Sec:AF}

\indent
Corollary~\ref{C-111109bb}
shows that if $P$ is
a noncompact second countable locally compact Hausdorff space,
then there is a separable nuclear \ca~$A$
with primitive ideal space $\Xi (P)$.
In Section~\ref{Sec:2b},
we will make this result equivariant for suitable actions
of second countable locally compact noncompact groups.
The existence of minimal actions of such groups
on nonsimple prime \ca{s} will follow.

The algebras we get this way
are all nonunital and $\OT$-stable.
For the specific cases $G = \Z$ and $G = F_n$,
the free group on $n$ generators,
in this section we use the classification theory of AF~algebras
to construct unital nonsimple prime AF~algebras
with minimal actions of~$G$.
Besides its intrinsic interest,
this result and its ingredients have two consequences
for the more general constructions of Section~\ref{Sec:2b},
not given by the methods used there.
For $G = \Z$ and $G = F_n$, it is possible to make some of the algebras
constructed there unital,
and when $G$ is discrete,
some of the algebras constructed there have real rank zero.

The paper~\cite{Brt} contains related but different examples.
In Theorem~3.1 of \cite{Brt},
it is shown that if $H$ is a nontrivial connected
locally compact abelian group
(or, more generally, the product of such a group
and any other locally compact abelian group),
and $\bt \colon H \to \Aut (B)$ is any product type action of~$H$
on a UHF~algebra~$B$,
then $C^* (H, B, \bt)$ is not simple.
There are, however, many cases in which $C^* (H, B, \bt)$ is prime.
See Section~4 of~\cite{Brt} for a detailed analysis of one such
example,
the gauge action of~$S^1$
on the CAR algebra, which is the $2^{\infty}$~UHF algebra.
Proposition~3.4 of~\cite{Brt} shows that this can also happen
for compact totally disconnected groups.
(This phenomenon is not possible for discrete abelian groups.
See the discussion after Corollary~\ref{C:ExoticCptAb}.)

In any such case,
the dual action $\af = {\widehat{\bt}}$ of $G = {\widehat{H}}$
on $A = C^* (H, B, \bt)$
is a minimal action on a nonsimple prime \ca,
and moreover $C^* (G, A, \af)$ is a simple AF~algebra by Takai duality.
When $G$ is discrete,
that is, when $H$ is compact, $A$ is even an AF~algebra.

What we do differs in three ways.

First, our choice of groups is different.
We don't get actions of locally compact abelian groups other than~$\Z$,
but we do get actions of the nonabelian groups~$F_n$,
including one (see Example~\ref{E-FnMin508} below)
which is no longer minimal when restricted to abelian subgroups.

Second, our choices of~$A$ are unital.
The construction using~\cite{Brt} can never give a unital algebra,
because $A$ is a crossed product by a group which is not discrete.
In fact, it is not possible for the crossed product of any
action $\af$ of~$\Z$ on a unital \ca{} to be~AF.
Indeed, if $A$ is not stably finite,
then neither is the crossed product.
If $A$ is stably finite,
the Pimsner-Voiculescu exact sequence~\cite{PV}
implies that $K_1$ of the crossed product is nonzero.
(The kernel of $\id_A - (\af_{-1})_*$ is nonzero
because it contains $[1]$,
and there is a surjective map from $K_1 (C^* (\Z, A, \af))$
to this kernel.)

Third,
we can prescribe the primitive ideal space within a certain class.
This is how Example~\ref{E-FnMin508}
is shown to have the properties that it does.
It is far from clear how to use the construction based on~\cite{Brt}
to obtain a prescribed primitive ideal space.

Even though it is not visible in our construction or proofs,
the correct choice of the $K_0$-group used in this section
was worked out
for the specific example of $\Z$ acting on~$\Z$ by translation
using the recipe implicit in the proofs in~\cite{BE}.

\begin{dfn}\label{D:MinActCSt}
Let $A$ be a \ca, let $G$ be a group,
and let $\af \colon G \to \Aut (A)$ be an action of $G$ on~$A$.
Then $\af$ is {\emph{minimal}} if the only $\af$-invariant ideals
of $A$ are $0$ and~$A$.
We say that an automorphism is {\emph{minimal}} if it
generates a minimal action of~$\Z$.
\end{dfn}

Thus, an action of~$G$ on $C_0 (X)$ is minimal \ifo{} the
corresponding action of $G$ on $X$ is minimal.
More generally, an action of $G$ on $A$ is minimal \ifo{} the
corresponding action of $G$ of $\Prim (A)$ is minimal.
(We use here the usual sense of minimality,
namely that there exist no nontrivial closed invariant subsets,
even when the space is not Hausdorff.)

An action of $G$ on $A$ is minimal
\ifo{} $A$ is $G$-simple in the sense used,
for example, in~3.3 of~\cite{OPd1}.

We give a fairly general construction.
The basic example for the following is $P = \Z$,
$H = \Z$ acting by translation,
$\mu$ is counting measure,
and $\Dt = \Q$.

\begin{dfn}\label{D:G}
Fix the following objects:
\begin{enumerate}
\item\label{D-G-Space}
A totally disconnected
noncompact second countable locally compact Hausdorff space~$P$.
\item\label{D-G-Homeo}
A \ct{} action of a discrete group~$H$ on~$P$
such that there are no nonempty compact $H$-invariant subsets of~$P$.
\item\label{D-G-Meas}
An $H$-invariant Borel measure $\mu$ on~$P$
(not required to be finite)
such that $\mu (U) > 0$ for every nonempty open set $U \S P$.
\item\label{D-G-Exh}
A sequence $L_1 \S L_2 \S \cdots \S P$
of nonempty compact open subsets
such that $\bigcup_{n = 1}^{\I} L_n = P$.
\item\label{D-G-Group}
A countable subgroup $\Dt \S \R$ such that $1 \in \Dt$
and such that if $r \in \Dt$,
then $\mu (L) r \in \Dt$ for every compact open subset $L \S P$
and $\mu (L_n)^{-1} r \in \Dt$ for all $n \in \N$.
\end{enumerate}
Give $\Dt$ the discrete topology.
Let $\Gm_0$ be the group
consisting of all \ct{} functions $\et \colon P \to \Dt$
with compact support such that $\int_P \et \, d \mu = 0$.
Let $\Gm$ be the group
consisting of all functions $\et \colon \Z \to \Dt$
for which there is $r \in \Dt$ such that,
regarding $r$ as a constant function on $\Z$,
we have $\et - r \in \Gm_0$.
Let $\Gm_{+} \S \Gm$ be the set of all $\et \in \Gm$
such that $\et (x) \geq 0$ for all $x \in P$.
We take the positive elements of $\Gm$ to be $\Gm_{+}$.
We identify $\Dt$ with the subgroup of~$\Gm$
consisting of the constant functions.
\end{dfn}

When constructing a \ca{}
with an action of a group~$G$,
we usually take $H = G$ in Definition~\ref{D:G},
but this is not necessary.

The following results address the existence of the data
required for Definition~\ref{D:G}.

\begin{lem}\label{L-Ctbl505}
Let $P$ be a totally disconnected
second countable locally compact Hausdorff space.
Then the collection of compact open sets in~$P$
is countable.
\end{lem}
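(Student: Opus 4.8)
The plan is to build a countable basis for the topology of~$P$ and show that every compact open set is a finite union of basic sets, so that the collection of compact open sets injects into the countable collection of finite subsets of a countable base. First I would use the hypotheses: $P$ is second countable and locally compact Hausdorff, so every point has a \nbhd{} base of compact sets, and $P$ is totally disconnected, so the compact open sets form a base for the topology (in a locally compact Hausdorff totally disconnected space, the compact open sets separate points from closed sets). Since $P$ is second countable, I can extract from the collection of all compact open sets a \emph{countable} subfamily $\{ B_1, B_2, \ldots \}$ which is still a base for the topology.

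The key step is then the following. Let $L \S P$ be compact and open. Because $\{ B_n \}$ is a base, $L$ is a union of members of this family, say $L = \bigcup_{n \in I} B_n$ for some index set $I$. Since $L$ is compact and each $B_n$ is open, finitely many of them suffice, so $L = B_{n_1} \cup B_{n_2} \cup \cdots \cup B_{n_k}$ for some $n_1, n_2, \ldots, n_k \in I$. Thus every compact open set is a finite union of members of the countable base $\{ B_n \}$. The assignment $L \mapsto \{ n_1, n_2, \ldots, n_k \}$ need not be canonical, but it shows that the map from compact open sets to the set of finite subsets of~$\N$ (recording \emph{some} finite cover) has the property that the induced set of finite unions $\big\{ \bigcup_{j} B_{n_j} \big\}$ already exhausts all compact open sets. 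Since the collection of finite subsets of a countable set is countable, and each finite subset determines at most one compact open set (namely the union of the corresponding $B_n$), the collection of compact open sets is the image of a countable set and is therefore countable.

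The main obstacle I expect is the justification that the compact open sets form a base, that is, that total disconnectedness together with local compactness actually yields a base of \emph{compact open} sets rather than merely a base of open sets or a base of closed-and-open sets that might fail to be compact. The clean way to handle this is to invoke the standard fact that a locally compact Hausdorff space is totally disconnected \ifo{} it is zero-dimensional, i.e.\ has a base of compact open sets; combined with second countability this immediately gives the countable base of compact open sets and the argument above goes through. I would state this explicitly, perhaps with a reference, to avoid any circularity, and then the remaining combinatorial counting is entirely routine.
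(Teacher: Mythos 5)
Your proof is correct, but it carries unnecessary overhead compared with the paper's argument, and the core counting step is the same in both: a compact open set is a union of basic open sets, compactness extracts a finite subcover, and a countable base admits only countably many finite unions. The difference is that you insist that the base consist of compact open sets, which forces you to invoke the nontrivial equivalence (for locally compact Hausdorff spaces) between total disconnectedness and zero-dimensionality, and then to extract a countable subfamily of compact open sets which is still a base. The paper skips all of this: it takes an arbitrary countable base $\mathcal{U}$, handed to it directly by second countability, and observes that every compact open set $L$ is a finite union of elements of $\mathcal{U}$ --- it is irrelevant whether those basic sets are themselves compact or closed, since all one needs is that $L$ is determined by some finite subfamily of $\mathcal{U}$. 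In particular, the hypothesis of total disconnectedness is never used in the paper's proof of this lemma; it enters only afterwards (for instance in Corollary~\ref{C-Exh505}, where it guarantees that the compact open sets exhaust~$P$). So the ``main obstacle'' you flagged --- justifying that the compact open sets form a base --- is genuine mathematics, but it is not needed here, and dropping it yields a shorter proof that is valid for every second countable space.
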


\begin{proof}
Let ${\mathcal{U}}$ be a countable base for the topology of~$P$.
Let $L \S X$ be compact and open.
Being open, $L$ is a union of elements of~${\mathcal{U}}$.
Since $L$ is compact,
$L$ is in fact a finite union of elements of~${\mathcal{U}}$.
There are only countably many
finite unions of elements of~${\mathcal{U}}$.
\end{proof}

\begin{cor}\label{C-Exh505}
Let $P$ be a totally disconnected
second countable locally compact Hausdorff space.
Then there exists a sequence $L_1 \S L_2 \S \cdots \S P$
of nonempty compact open subsets
such that $\bigcup_{n = 1}^{\I} L_n = P$.
\end{cor}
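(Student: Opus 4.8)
The plan is to exhibit the $L_n$ as finite unions of members of a countable family of compact open sets that covers~$P$. The one substantial ingredient is that $P$ has a basis of compact open sets; granting this, the preceding Lemma~\ref{L-Ctbl505} supplies the countability, and the rest is bookkeeping.

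First I would prove the key fact: \emph{in a totally disconnected locally compact Hausdorff space, the compact open sets form a basis for the topology.} Given $x \in P$ and an open \nbhd{} $W$ of~$x$, local compactness provides a compact \nbhd{} $C \S W$ of~$x$. The subspace $C$ is a totally disconnected \chs, and for such spaces total disconnectedness coincides with zero-dimensionality; applying this to the point~$x$ and the ($C$-open) set $\sint (C)$ yields a set $D$ that is clopen in~$C$ with $x \in D \S \sint (C)$. Being closed in the compact set~$C$, the set $D$ is compact; being open in~$C$ and contained in the $P$-open set $\sint (C)$, it is open in~$P$. Hence $D$ is a compact open \nbhd{} of~$x$ inside~$W$. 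This step is where I expect the real work to lie, since it rests on the nontrivial equivalence of total disconnectedness and zero-dimensionality in the \cpt{} setting.

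With the basis established, the compact open subsets of~$P$ cover~$P$, and by Lemma~\ref{L-Ctbl505} there are only countably many of them; enumerate the nonempty ones as $W_1, W_2, \ldots$ (there is at least one, as $P$ is nonempty). Setting
\[
L_n = W_1 \cup W_2 \cup \cdots \cup W_n
\]
gives an increasing sequence in which each $L_n$, being a finite union of compact open sets, is itself compact and open, and nonempty because it contains~$W_1$. Finally $\bigcup_{n=1}^{\I} L_n = \bigcup_i W_i = P$, since every point of~$P$ lies in some compact open set and hence in some~$W_i$. This completes the construction.
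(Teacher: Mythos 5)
Your proof is correct and follows essentially the same route as the paper: enumerate the (countably many, by Lemma~\ref{L-Ctbl505}) compact open sets and take finite unions, with the covering property resting on the fact that a totally disconnected locally compact Hausdorff space has a basis of compact open sets. The paper compresses that last fact into a single sentence, whereas you spell out the zero-dimensionality argument; the substance is identical.
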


\begin{proof}
By Lemma~\ref{L-Ctbl505},
there is an enumeration
$U_1, U_2, \ldots$ of the compact open sets in~$P$.
For $n \in \N$, define $L_n = \bigcup_{k = 1}^n U_k$.
Since $P$ is totally disconnected and locally compact,
every point of~$P$ is contained in $\bigcup_{n = 1}^{\I} L_n$.
\end{proof}

We summarize for convenient reference some
basic facts which follow from Definition~\ref{D:G}.

\begin{lem}\label{L:BasicOnG}
Let the notation be as in Definition~\ref{D:G}.
\begin{enumerate}
\item\label{L:BasicOnG:0}
Let $\et \in \Gm$.
Then the range of $\et$ is finite.
\item\label{L:BasicOnG:1}
Let $\et \in \Gm$.
Then there is a unique $r \in \Dt$,
which we call $\om (\et)$,
such that $\et - r$ has compact support.
\item\label{L:BasicOnG:2}
Let $\et \in \Gm$.
Suppose $r \in \Dt$ and $\et - r$ has compact support.
Then $\et - r \in \Gm_0$.
\item\label{L:BasicOnG:3}
Let $\et \colon P \to \Dt$ be a \ct{} function and let $r \in \Dt$.
Suppose that $\et - r$ has compact support.
Then \tfae:
\begin{enumerate}
\item\label{L:BasicOnG:3a}
$\et \in \Gm$.
\item\label{L:BasicOnG:3b}
For some nonempty compact open $F \S P$ containing $\supp (\et - r)$,
we have
\[
\frac{1}{\mu (F)} \int_F \et \, d \mu = r.
\]
\item\label{L:BasicOnG:3c}
For any nonempty compact $F \S P$ containing $\supp (\et - r)$
such that $\mu (F) \neq 0$,
we have
\[
\frac{1}{\mu (F)} \int_F \et \, d \mu = r.
\]
\end{enumerate}
\item\label{L:BasicOnG:4}
Let $\et \in \Gm_{+} \SM \{ 0 \}$.
Then $\big\{ x \in P \colon \et (x) = 0 \big\}$
is a compact open set,
and there is $t \in (0, \I)$
such that $\et (x) \geq t$ for all $x \not\in F_0$.
\end{enumerate}
\end{lem}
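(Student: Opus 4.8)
The plan is to treat the five assertions in order, resting everything on the single structural fact behind Definition~\ref{D:G}: each $\et \in \Gm$ agrees with a constant in $\Dt$ off a compact set. Parts~(\ref{L:BasicOnG:0}),~(\ref{L:BasicOnG:1}), and~(\ref{L:BasicOnG:2}) are formal consequences of this, and the measure condition of Definition~\ref{D:G}(\ref{D-G-Meas}) enters only in parts~(\ref{L:BasicOnG:3}) and~(\ref{L:BasicOnG:4}). For~(\ref{L:BasicOnG:0}), I would write $\et = r + (\et - r)$ with $r \in \Dt$ and $\et - r$ of compact support (from the definition of $\Gm$); since $\et$ is \ct{} into the discrete space $\Dt$, its image on the compact set $\supp(\et - r)$ is finite, while off that set $\et$ equals $r$, so the range of $\et$ is finite. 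For~(\ref{L:BasicOnG:1}), existence of $r$ is immediate, and uniqueness follows because if $\et - r$ and $\et - r'$ both have compact support then so does the constant $r' - r$, whose support is all of the noncompact space $P$ unless $r' = r$. This defines $\om(\et)$. Part~(\ref{L:BasicOnG:2}) is then immediate: the definition of $\Gm$ gives some $r_0$ with $\et - r_0 \in \Gm_0$, hence $\et - r_0$ compactly supported, so $r_0 = \om(\et) = r$ by~(\ref{L:BasicOnG:1}), and therefore $\et - r \in \Gm_0$.

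For part~(\ref{L:BasicOnG:3}), the key computation is that whenever $F \S P$ is compact with $\supp(\et - r) \S F$ and $\mu(F) > 0$, then
\[
\frac{1}{\mu(F)} \int_F \et \, d\mu
   = r + \frac{1}{\mu(F)} \int_P (\et - r) \, d\mu ,
\]
since $\et - r$ vanishes off $F$; and for nonempty open $F$ the hypothesis $\mu(F) > 0$ is automatic by Definition~\ref{D:G}(\ref{D-G-Meas}). Thus each of the averages in~(\ref{L:BasicOnG:3b}) and~(\ref{L:BasicOnG:3c}) equals $r$ exactly when $\int_P (\et - r)\, d\mu = 0$, a condition that does not depend on $F$ and is equivalent to $\et - r \in \Gm_0$, that is, to $\et \in \Gm$. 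Since some $L_n$ contains the compact set $\supp(\et - r)$ and has $\mu(L_n) > 0$, a witnessing $F$ exists, so neither~(\ref{L:BasicOnG:3b}) nor~(\ref{L:BasicOnG:3c}) is vacuous; all three statements therefore reduce to the same integral condition and are equivalent.

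Part~(\ref{L:BasicOnG:4}) carries the only genuinely non-formal point. Put $F_0 = \{ x \in P \colon \et(x) = 0 \}$; as $\{ 0 \}$ is clopen in the discrete group $\Dt$ and $\et$ is \ct, $F_0$ is clopen. With $r = \om(\et)$, positivity of $\et$ at a point off the compact set $\supp(\et - r)$ (such a point exists since $P$ is noncompact) forces $r \geq 0$. The crux is to exclude $r = 0$: if $r = 0$ then $\et \in \Gm_0$ is a nonnegative \ct{} function with $\int_P \et \, d\mu = 0$, while $\{ x \colon \et(x) > 0 \}$ is open; were this set nonempty it would have positive measure by Definition~\ref{D:G}(\ref{D-G-Meas}) and force $\int_P \et \, d\mu > 0$, so $\et \equiv 0$, contradicting $\et \neq 0$. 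Hence $r > 0$, so $\et = r \neq 0$ off $\supp(\et - r)$, whence $F_0 \S \supp(\et - r)$ is a closed subset of a compact set and therefore compact, as well as open. Finally, by~(\ref{L:BasicOnG:0}) the range of $\et$ is finite, so its nonzero values have a positive minimum $t$, and every $x \notin F_0$ has $\et(x) \geq t$. The main obstacle is precisely this exclusion of $\om(\et) = 0$, which is the one place where the strict positivity of $\mu$ on nonempty open sets is indispensable.
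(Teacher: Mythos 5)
Your proposal is correct and takes essentially the same approach as the paper: the paper declares parts (\ref{L:BasicOnG:0})--(\ref{L:BasicOnG:3}) immediate (your write-up just supplies the routine details), and for part~(\ref{L:BasicOnG:4}) the paper's argument is precisely yours---pick $r$ with $\et - r \in \Gm_0$, use $\et \geq 0$, $\et \neq 0$, $\int_P (\et - r)\, d\mu = 0$ together with positivity of $\mu$ on nonempty open sets to rule out $r = 0$, and then conclude from the finite range, continuity of~$\et$, and compactness of $\supp(\et - r)$.
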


\begin{proof}
Parts (\ref{L:BasicOnG:0}), (\ref{L:BasicOnG:1}), (\ref{L:BasicOnG:2}),
and~(\ref{L:BasicOnG:3}) are immediate.
For~(\ref{L:BasicOnG:4}),
choose $r \in \Dt$ such that $\et - r \in \Gm_0$.
We then have $\et \neq 0$, $\et (x) \geq 0$ for all $x \in P$,
and $\int_{P} (\et - r ) \, d \mu = 0$.
Since $\mu (U) > 0$ for all nonempty open subsets $U \S P$,
it follows that $r \neq 0$.
The result now follows from (\ref{L:BasicOnG:0}), continuity of~$\et$,
and the fact that $\et - r$ has compact support.
\end{proof}

\begin{lem}\label{L:Riesz}
Let the notation be as in Definition~\ref{D:G}.
Then the triple $(\Gm, \Gm_{+}, 1)$
is a Riesz group with order unit.
For $h \in H$ define
$\bt_h \colon \Gm \to \Gm$
by $\bt_h (\et) (x) \et (h^{-1} x)$
for $\et \in \Gm$ and $x \in P$.
Then $\bt$ is an action of $H$ on $(\Gm, \Gm_{+}, 1)$
and the function $\om \colon \Gm \to \R$
of Lemma~\ref{L:BasicOnG}(\ref{L:BasicOnG:1})
is an $H$-invariant state on $(\Gm, \Gm_{+}, 1)$.
\end{lem}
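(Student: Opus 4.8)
The plan is to verify the ordered-group axioms and the order unit first, then dispose of the action~$\bt$ and the state~$\om$, and finally to treat the Riesz interpolation property, which carries all the real content.

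That $(\Gm, \Gm_{+})$ is an ordered abelian group is immediate: the conditions $\Gm_{+} + \Gm_{+} \S \Gm_{+}$ and $\Gm_{+} \cap (- \Gm_{+}) = \{ 0 \}$ hold because positivity is defined pointwise. Directedness and the order unit both come from finiteness of ranges (Lemma~\ref{L:BasicOnG}(\ref{L:BasicOnG:0})): given $\et \in \Gm$, choose an integer $N \geq 0$ exceeding every value of~$\et$; then the constant function~$N$ lies in $\Gm_{+}$, and $N - \et \in \Gm_{+}$ as well, so $\et = N - (N - \et)$ exhibits $\et$ as a difference of positive elements and also shows that $1$ is an order unit. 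Unperforation is free: if $n \et \in \Gm_{+}$ for some $n \in \N$, then $\et (x) \geq 0$ for all~$x$ because $\Dt \S \R$, so $\et \in \Gm_{+}$.

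The action and the invariant state are formal once one uses $H$-invariance of~$\mu$ (Definition~\ref{D:G}(\ref{D-G-Meas})). For $\et \in \Gm$ with $\et - \om (\et)$ supported on a compact set~$C$, the function $\bt_h (\et) - \om (\et)$ is supported on the compact set $h (C)$, and
\[
\int_P \big( \bt_h (\et) - \om (\et) \big) \, d \mu
   = \int_P \big( \et - \om (\et) \big) \, d \mu = 0 ;
\]
hence $\bt_h (\et) \in \Gm$ with $\om (\bt_h (\et)) = \om (\et)$. This simultaneously shows that each $\bt_h$ maps $\Gm$ into itself and that $\om$ is $H$-invariant. The remaining points are routine: $\bt_{h_1 h_2} = \bt_{h_1} \bt_{h_2}$ and $\bt_1 = \id$ hold by a direct computation, each $\bt_h$ clearly preserves $\Gm_{+}$ and fixes~$1$, and $\om$ is a unital homomorphism by Lemma~\ref{L:BasicOnG}(\ref{L:BasicOnG:1}) with positivity given by the formula $\om (\et) = \mu (F)^{-1} \int_F \et \, d \mu$ of Lemma~\ref{L:BasicOnG}(\ref{L:BasicOnG:3}).

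The hard part is the Riesz interpolation property, and the obstacle is precisely the integral-zero condition built into~$\Gm$. Given $a_1, a_2 \leq b_1, b_2$ in~$\Gm$, I would set $u = \max (a_1, a_2)$ and $v = \min (b_1, b_2)$. These are locally constant (because $\Dt$ is discrete) $\Dt$-valued functions with $u \leq v$, equal off a compact set to the constants $\om_u = \max (\om (a_1), \om (a_2))$ and $\om_v = \min (\om (b_1), \om (b_2))$, and it suffices to produce $c \in \Gm$ with $u \leq c \leq v$. The trouble is that $u$ and $v$ need not lie in~$\Gm$, since their integrals against~$\mu$ need not match their values at infinity. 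To repair this I would exploit Definition~\ref{D:G}(\ref{D-G-Exh}) and~(\ref{D-G-Group}): choose $K = L_N$ containing the (compact) union of the supports of $a_i - \om (a_i)$ and $b_j - \om (b_j)$, put
\[
t = \frac{1}{\mu (K)} \int_K u \, d \mu
   \andeqn
c = u \cdot 1_K + t \cdot 1_{P \SM K} ,
\]
and check that $c$ has the required properties. The point of taking $K = L_N$ rather than an arbitrary compact open set is that $\mu (L_N)^{-1} r \in \Dt$ for $r \in \Dt$, so that $t \in \Dt$ (note $\int_K u \, d \mu \in \Dt$ because it is a finite sum of terms $\mu (L) r$ with $L$ compact open and $r \in \Dt$). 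By construction $c - t$ is supported in~$K$ and $\int_P (c - t) \, d \mu = \int_K (u - t) \, d \mu = 0$, so $c \in \Gm$ with $\om (c) = t$. Finally $u \leq c \leq v$: on~$K$ one has $c = u \leq v$, while off~$K$ one has $c = t$, and the bounds $\om_u \leq t \leq \om_v$ hold because $\int_K u \, d \mu \geq \int_K a_i \, d \mu = \om (a_i) \mu (K)$ and $\int_K u \, d \mu \leq \int_K v \, d \mu \leq \int_K b_j \, d \mu = \om (b_j) \mu (K)$. This yields the interpolant and completes the verification that $(\Gm, \Gm_{+}, 1)$ is a Riesz group with order unit.
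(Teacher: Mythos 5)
Your proof is correct and takes essentially the same approach as the paper: the interpolation step---restricting the pointwise maximum $\max(a_1,a_2)$ to a compact open set $L_N$ containing all the relevant supports, extending it by its $\mu$-average $t$ over $L_N$, using the hypotheses on $\Dt$ to see $t \in \Dt$, and verifying the outer inequalities by integrating over $L_N$---is exactly the paper's construction. The only difference is cosmetic: you spell out the verification that $\bt$ is a well-defined action and $\om$ an $H$-invariant state, which the paper dismisses as immediate.
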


\begin{proof}
Clearly $\Gm$ is a torsion free abelian group
and $\Gm_{+}$ is a subsemigroup containing~$0$.

We show that $\Gm_{+}$ generates~$\Gm$.
Let $\et \in \Gm$.
Then $\et$ is bounded by Lemma \ref{L:BasicOnG}(\ref{L:BasicOnG:0}),
so there is $t \in \Dt$ such that $t > \et (x)$ for all $x \in P$.
Then $t, \, t - \et \in \Gm_{+}$ and $t - (t - \et) = \et$.
Similarly, we can easily check that any element of $\Gm$
is dominated by an integer multiple of~$1$.
Thus $1$ is an order unit.

It remains to prove Riesz interpolation.
Let $\rh_1, \rh_2, \sm_1, \sm_2 \in \Gm$ satisfy
$\rh_j \leq \sm_k$ for $j, k \in \{ 1, 2 \}$.
Choose $r_1, r_2, s_1, s_2 \in \Dt$ and a compact set $F_0 \S P$
such that
\[
\supp (\rh_1 - r_1), \, \supp (\rh_2 - r_2),
 \, \supp (\sm_1 - s_1), \, \supp (\sm_2 - s_2) \S F_0.
\]
Choose $n$ such that $F_0 \S L_n$.
Define $\et_0 \colon L_n \to \Dt$
by $\et_0 (x) = \max (\rh_1 (x), \, \rh_2 (x))$ for $x \in L_n$.
Set
\[
t = \frac{1}{\mu (L_n)} \int_{L_n} \et_0 \, d \mu.
\]
The expression $\int_{L_n} \et_0 \, d \mu$
is a finite linear combination of
elements of $\Dt$ with coefficients of the form $\mu (L)$
for compact open subsets $L \S P$.
The hypotheses on~$\Dt$ therefore
imply that $\int_{L_n} \et_0 \, d \mu \in \Dt$,
and then that $t \in \Dt$.
Now define $\et \colon P \to \Dt$ by
\[
\et (x)
 = \begin{cases}
     \et_0 (x)   &  x \in L_n \\
     t           &  x \in P \SM L_n.
\end{cases}
\]
Clearly $\supp (\et - t)$ is compact and contained in~$L_n$,
and
\[
\frac{1}{\mu (L_n)} \int_{L_n} (\et - t) \, d \mu = 0,
\]
so $\et \in \Gm$ by Lemma~\ref{L:BasicOnG}(\ref{L:BasicOnG:3}).

Clearly
\[
\rh_1 (x), \, \rh_2 (x) \leq \et_0 (x) \leq \sm_1 (x), \, \sm_2 (x)
\]
for $x \in L_n$.
Moreover, using Lemma~\ref{L:BasicOnG}(\ref{L:BasicOnG:3}),
for $j = 1, 2$ we have
\[
r_j = \frac{1}{\mu (L_n)} \int_{L_n} \rh_j \, d \mu
    \leq \frac{1}{\mu (L_n)} \int_{L_n} \et_0 \, d \mu
    = t,
\]
and similarly $t \leq s_1, s_2$.
Therefore $\rh_1, \rh_2 \leq \et \leq \sm_1, \sm_2$.
This completes the verification that
$(\Gm, \Gm_{+}, 1)$ is a Riesz group.

It is immediate that $\bt_h$ is an automorphism
of $(\Gm, \Gm_{+}, 1)$ for $h \in H$,
and it is immediate that $h \mapsto \bt_h$
is a \hm{} and that $\om$ is an $H$-invariant state.
\end{proof}

\begin{cor}\label{C:AFG}
Let the notation be as in Definition~\ref{D:G}.
Then there exists a unital AF~algebra $A$ such that
$(K_0 (A), \, K_0 (A)_{+}, \, [1]) \cong (\Gm, \Gm_{+}, 1)$,
and such that for every $h \in H$
there exists an automorphism $\af_h \in \Aut (A)$
which induces on $\Gm$ the map
$(\af_h)_* (\et) (x) = \et ( h^{-1} x)$
for $\et \in \Gm$ and $x \in P$.
Moreover,
there is a unique tracial state $\ta$ on~$A$
such that
the induced map $\ta_* \colon \Gm \to \R$
is the map $\om$ of Lemma~\ref{L:BasicOnG}(\ref{L:BasicOnG:1}),
and $\ta \circ \af_h = \af_h$ for all $h \in H$.
\end{cor}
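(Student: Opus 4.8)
The plan is to read off $A$, the automorphisms, and the trace from the Elliott classification of AF~algebras, taking as input the Riesz group structure already proved in Lemma~\ref{L:Riesz}. Recall that a countable ordered abelian group is the scaled ordered $K_0$-group of a unital AF~algebra exactly when it is a \emph{dimension group}: countable, unperforated, and satisfying Riesz interpolation (Effros--Handelman--Shen together with Elliott's realization theorem; see also Theorem~10.9 of~\cite{Gd}). Interpolation and the order unit are supplied by Lemma~\ref{L:Riesz}, so only two points remain. Unperforation is immediate, since $\Gm_{+}$ is the set of pointwise nonnegative functions: if $n \et \in \Gm_{+}$ with $n \in \N$, then $\et (x) \geq 0$ for all~$x$, so $\et \in \Gm_{+}$. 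Countability follows from Lemma~\ref{L-Ctbl505}: each $\et \in \Gm$ differs by a constant in~$\Dt$ from a compactly supported, locally constant $\Dt$-valued function, which is determined by the finitely many compact open sets on which it is constant together with the corresponding values in the countable set~$\Dt$; as there are only countably many compact open subsets of~$P$, there are only countably many such~$\et$. Elliott's theorem now yields a unital AF~algebra~$A$ with $(K_0 (A), K_0 (A)_{+}, [1]) \cong (\Gm, \Gm_{+}, 1)$, and I henceforth identify $K_0 (A)$ with~$\Gm$ through this isomorphism.

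For the automorphisms, observe that each $\bt_h$ from Lemma~\ref{L:Riesz} is an automorphism of the ordered group $(\Gm, \Gm_{+})$ fixing the order unit. By the functoriality (equivalently, uniqueness) half of Elliott's classification, every such automorphism of $K_0 (A)$ lifts to some $\af_h \in \Aut (A)$ with $(\af_h)_{*} = \bt_h$; under the identification above this is precisely the asserted formula $(\af_h)_{*} (\et) (x) = \et (h^{-1} x)$. Since the statement asks only for the existence of each individual~$\af_h$, no compatibility among different~$h$ is required; coherence into a genuine group action is handled separately in the cases $G = \Z$ and $G = F_n$.

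For the trace, I would use the standard correspondence for unital AF~algebras: the map $\ta \mapsto \ta_{*}$ is a bijection from the tracial states of~$A$ onto the states of the dimension group $(K_0 (A), K_0 (A)_{+}, [1])$. Lemma~\ref{L:Riesz} identifies $\om$ as a state on $(\Gm, \Gm_{+}, 1)$, so there is a tracial state~$\ta$ with $\ta_{*} = \om$, and it is the only one because $\ta \mapsto \ta_{*}$ is injective. Invariance then follows from uniqueness: for $h \in H$ we have $(\ta \circ \af_h)_{*} = \ta_{*} \circ (\af_h)_{*} = \om \circ \bt_h = \om$, using the $H$-invariance of~$\om$ from Lemma~\ref{L:Riesz}, and since $\ta \circ \af_h$ is again a tracial state inducing~$\om$, we conclude $\ta \circ \af_h = \ta$.

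The genuinely hard work has already been done in Lemma~\ref{L:Riesz} (Riesz interpolation) and Lemma~\ref{L-Ctbl505} (countability of the compact open sets), so the corollary is mainly a repackaging of AF~theory. The step deserving the most care is the trace correspondence: one must invoke that, for AF~algebras, $\ta \mapsto \ta_{*}$ is not merely injective but a bijection onto the state space of the dimension group, as it is this bijectivity that simultaneously produces~$\ta$ and gives the uniqueness clause its content.
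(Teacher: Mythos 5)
Your proposal is correct and takes essentially the same route as the paper's proof: existence of $A$ from the Effros--Handelman--Shen characterization together with Elliott's realization theorem, lifting of each order-automorphism $\bt_h$ to an automorphism $\af_h \in \Aut (A)$ via Elliott's classification, the trace obtained from the bijection between tracial states of a unital AF~algebra and states of its dimension group, and invariance of $\ta$ deduced from uniqueness together with the $H$-invariance of~$\om$. The only difference is that you verify countability (via Lemma~\ref{L-Ctbl505}) and unperforation explicitly, which the paper leaves implicit.
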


We do not claim that $h \mapsto \af_h$
is a group \hm.

\begin{proof}[Proof of Corollary~\ref{C:AFG}]
The existence of $A$
follows from Theorem~2.2 of~\cite{EHS} and Theorem~5.5 of~\cite{Ell1}.
Using the $H$-invariance of~$\mu$,
it is clear that the claimed formula for $(\af_h)_*$
defines an automorphism of $(\Gm, \Gm_{+}, 1)$,
and the existence of $\af_h$
now follows from Theorem~4.3 of~\cite{Ell1},
together with the observation that the isomorphism constructed there
induces the given isomorphism of dimension ranges.

The existence and uniqueness of~$\ta$
follow from Theorem IV.5.3 of~\cite{Dv}.
That $\ta$ is $\af_h$-invariant for all $h \in H$
follows from uniqueness of~$\ta$ and $H$-invariance of~$\om$.
\end{proof}

\begin{rmk}\label{R:ZNoRiesz}
In Definition~\ref{D:G},
take $P = \Z$,
take $H = \Z$,
let the action of $H$ on~$P$ be generated by
$x \mapsto x + 1$,
and take $\mu$ to be counting measure.
By Lemma~\ref{L:Riesz},
the choices
$\Dt = \Q$, $\Dt = \Z \big[ \tfrac{1}{2} \big]$,
and many others, make $(\Gm, \Gm_{+})$ a Riesz group.
A slightly more involved argument,
which we omit,
shows that one gets a Riesz group using any subgroup of~$\Q$
which contains~$1$ and is dense in~$\R$.
(We don't know a convenient analogous statement
for general $P$.)
However, with $P$ and $h$ as given,
the choice $\Dt = \Z$ does not give a Riesz group.
Letting $\dt_x$ be the function which is $1$ at~$x$
and zero elsewhere,
Riesz interpolation can be shown to fail
for
\[
\rh_1  = 1,
\,\,\,\,\,\,
\rh_2 = 1 - \dt_0 + \dt_1,
\,\,\,\,\,\,
\sm_1 = 2,
\andeqn
\sm_2 = 2 - \dt_0 + \dt_1.
\]
\end{rmk}

Recall (see the beginning of Section~IV.5 of~\cite{Dv})
that an order ideal
in a partially ordered abelian group $(\Gm, \Gm_{+})$
is a subgroup $N$ which is generated by $N \cap \Gm_{+}$
and such that $\et \in N$ and $0 \leq \mu \leq \et$
imply $\mu \in N$.

\begin{lem}\label{L:IdealsG}
Let $(\Gm, \Gm_{+}, 1)$ be as in Definition~\ref{D:G}.
For a compact subset $F \S P$,
let $N_F \S \Gm$ be the set of all
$\et \in \Gm$ such that $\et (x) = 0$ for all $x \in F$.
Then $F \mapsto N_F$ is a bijection from the set of compact
subsets of $P$ to the set of nonzero order ideals of~$\Gm$.
\end{lem}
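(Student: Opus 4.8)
The plan is to verify in turn that the map is well defined (each $N_F$ is a nonzero order ideal), that it is injective, and that it is surjective, with the surjectivity carrying almost all the weight. The basic tool is an observation about positive elements: if $\et \in \Gm_{+} \SM \{0\}$, then $\om (\et) > 0$, since an element of $\Gm_{+}$ with value $0$ at infinity has compact support and nonnegative values with integral $0$, hence is $0$; and by Lemma~\ref{L:BasicOnG}(\ref{L:BasicOnG:4}) the zero set $Z_\et = \{x \in P : \et (x) = 0\}$ is compact and open, hence clopen, with $\et$ bounded below by a positive constant off $Z_\et$. Two consequences are used repeatedly. First, since every element of $\Gm$ takes only finitely many values (Lemma~\ref{L:BasicOnG}(\ref{L:BasicOnG:0})), for $\et, \zt \in \Gm_{+}$ with $\{x : \zt (x) > 0\} \S \{x : \et (x) > 0\}$ there is an integer $m$ with $\zt \le m \et$. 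Second, integrals of $\Dt$-valued functions over compact open sets, and averages over the sets $L_n$, lie in $\Dt$ by the closure hypotheses of Definition~\ref{D:G}(\ref{D-G-Group}).

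Next I would check that $N_F$ is an order ideal. The hereditary condition is immediate, since $0 \le \zt \le \et \in N_F$ forces $\zt|_F = 0$. The point requiring work is that $N_F$ is generated by $N_F \cap \Gm_{+}$: given $\et \in N_F$, I want a positive $\ph \in \Gm$ with $\ph|_F = 0$ dominating the pointwise negative part $\et_{-} = \max (-\et, 0)$, for then $\et = (\et + \ph) - \ph$ writes $\et$ as a difference of two elements of $N_F \cap \Gm_{+}$. I would construct $\ph$ by choosing a compact open $V$ with $F \S V$ on which $\et_{-}$ vanishes, defining $\ph$ on a large exhaustion set $L_n \supseteq V$ to equal $\et_{-}$ there (raised by a large constant on $L_n \SM V$ when $\om (\et) < 0$, so as to control its value at infinity), and extending $\ph$ off $L_n$ by the constant $\mu (L_n)^{-1} \int_{L_n} \ph \, d\mu$, which lies in $\Dt$ and forces $\ph \in \Gm$. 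That $N_F \ne 0$ is seen by exhibiting, from two disjoint nonempty compact open sets $U, U' \S P \SM F$ (available since $P \SM F$ is noncompact and totally disconnected), the nonzero element $\mu (U') \ch_U - \mu (U) \ch_{U'} \in \Gm_0 \cap N_F$. The same separating device gives injectivity: if $x \in F_1 \SM F_2$, take compact open $U \ni x$ and $U'$ with $U, U'$ disjoint from each other and from $F_2$, and then $\mu (U') \ch_U - \mu (U) \ch_{U'}$ lies in $N_{F_2}$ but not in $N_{F_1}$.

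The surjectivity is the core. Given a nonzero order ideal $N$, I would set $F = \{x \in P : \et (x) = 0 \text{ for all } \et \in N\}$. Choosing a nonzero $\et_0 \in N \cap \Gm_{+}$ shows $F \S Z_{\et_0}$, so $F$ is closed in a compact set and hence compact, and $N \S N_F$ holds by the definition of~$F$. For the reverse inclusion $N_F \S N$ it suffices, since $N_F$ is generated by its positive part, to show that each $\zt \in N_F \cap \Gm_{+}$ lies in $N$, and for this I would dominate $\zt$ by a single element of $N$. The set $K = Z_{\et_0} \SM Z_\zt$ is compact, being closed in $Z_{\et_0}$ because $Z_\zt$ is clopen, and is disjoint from $F$; at each of its points some positive element of $N$ is strictly positive (write a witnessing element of $N$ as a difference of positives), so compactness yields finitely many such elements whose sum with $\et_0$ is an $\et' \in N \cap \Gm_{+}$ with $\{x : \et' (x) > 0\} \supseteq (P \SM Z_{\et_0}) \cup K \supseteq \{x : \zt (x) > 0\}$. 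Then $\zt \le m \et'$ for some integer $m$, and the hereditary property gives $\zt \in N$.

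The main obstacle is precisely this last inclusion $N_F \S N$: everything hinges on manufacturing a single $\et' \in N$ that is strictly positive on all of $\{x : \zt (x) > 0\}$, which is where the co-compact positivity of nonzero positive elements, the clopenness of their zero sets, and compactness must be combined. The technically fiddliest companion point is that $N_F$ is generated by its positive elements, whose proof forces the integral-balancing constant to be chosen as an average over some $L_n$; this is exactly what the $\mu (L_n)^{-1}$-closure built into $\Dt$ in Definition~\ref{D:G}(\ref{D-G-Group}) is there to supply.
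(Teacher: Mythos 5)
Your proof is correct, and its core --- the surjectivity argument --- is essentially the paper's: given a nonzero order ideal $N$, you take $F$ to be the common zero set of $N$, note that $F$ lies inside the compact open zero set of a nonzero positive element of $N$, and then, for $\zt \in N_F \cap \Gm_{+}$, manufacture a single $\et' \in N \cap \Gm_{+}$ strictly positive wherever $\zt$ is, by adding to that fixed positive element finitely many positive elements of $N$ chosen by compactness of $K = Z_{\et_0} \SM Z_{\zt}$; the domination $\zt \leq m \et'$ and hereditariness of $N$ finish, exactly as in the paper. Where you genuinely diverge is injectivity (and nontriviality of each $N_F$). The paper's separating element is $\ch_K - \ch_{h K}$, where $h \in H$ is chosen with $h x \notin \{ x \} \cup F_2$; that choice exists precisely because of the dynamical hypothesis of Definition~\ref{D:G}(\ref{D-G-Homeo}) (no nonempty compact $H$-invariant subsets), and membership in $\Gm_0$ uses the $H$-invariance of $\mu$. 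Your element $\mu (U') \ch_U - \mu (U) \ch_{U'}$, for disjoint nonempty compact open sets $U, U' $ avoiding $F_2$ with $x \in U$, uses neither: only noncompactness and total disconnectedness of~$P$, full support of~$\mu$, and the hypothesis that $\mu (L) \Dt \S \Dt$ for compact open~$L$. So your argument proves the lemma for $(\Gm, \Gm_{+}, 1)$ independently of the group action, a mild but genuine gain in generality. Finally, you prove, rather than assert, that $N_F$ is generated by $N_F \cap \Gm_{+}$ --- a point the paper dismisses as immediate, and which is needed both for well-definedness and for the reduction to positive elements in surjectivity. It is not quite trivial, since the pointwise decomposition $\et = \et_{+} - \et_{-}$ generally leaves~$\Gm$ (the parts fail the averaging condition of Lemma~\ref{L:BasicOnG}(\ref{L:BasicOnG:3})); your construction of a dominating positive function vanishing on a compact open neighborhood of~$F$, balanced by its average over a suitable~$L_n$, is correct, and is exactly where the $\mu (L_n)^{-1}$-closure of $\Dt$ in Definition~\ref{D:G}(\ref{D-G-Group}) is used.
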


\begin{proof}
One immediately checks that $N_F$ as defined
is in fact an order ideal.

We first prove that $F \mapsto N_F$ is injective.
So suppose $F_1 \neq F_2$.
\Wolog{} there is $x \in F_1$ such that $x \not\in F_2$.

We claim that there is $h \in H$
such that $h x \not\in \{ x \} \cup F_2$.
Otherwise, ${\overline{H x}} \S \{ x \} \cup F_2$,
and is therefore a nonempty compact $H$-invariant subset of~$P$.
This contradicts the hypothesis on the action of~$H$,
and proves the claim.

Since $P \SM F_2$ is open and $h$ is \ct,
there is a compact open set $K \S P$
such that
\[
x \in K,
\,\,\,\,\,\,
K \cap F_2 = \E,
\,\,\,\,\,\,
h K \cap F_2 = \E,
\andeqn
K \cap h K = \E.
\]
Now define $\et \colon P \to \Dt$ by
\[
\et (x)
 = \begin{cases}
   1  & x \in K
        \\
   -1 & x \in h K
       \\
   0  & x \in P \SM ( K \cup h K ).
\end{cases}
\]
Then $\et \in \Gm_0$ because $\mu$ is $h$-invariant.
We have $\et \in N_{F_2}$ but $\et \not\in N_{F_1}$,
so $N_{F_2} \neq N_{F_1}$.

We now prove that $F \mapsto N_F$ is surjective.
Let $N \S \Gm$ be a nonzero order ideal of~$\Gm$.
Set
\[
F =
 \big\{ x \in P \colon
  {\mbox{$\et (x) = 0$ for all $\et \in N$}} \big\}.
\]
Since $N$ is nonzero and an order ideal,
there is a nonzero positive element $\rh \in N$.
By Lemma \ref{L:BasicOnG}(\ref{L:BasicOnG:4}), the set
\[
F_0 = \big\{ x \in P \colon \rh (x) = 0 \big\}
\]
is a compact open set.
Clearly $F \S F_0$, so $F$ is compact.

Clearly $N \S N_F$,
so it suffices to prove that $N_F \S N$.
Indeed, it suffices to prove that $N_F \cap \Gm_{+} \S N$.
So let $\et \in N_F \cap \Gm_{+}$.
Set
\[
E = \big\{ x \in P \colon \et (x) = 0 \big\},
\]
which is a compact open set
by Lemma \ref{L:BasicOnG}(\ref{L:BasicOnG:4}).
Therefore
$K = F_0 \cap (P \SM E)$ is a compact open set
such that $K \cap F = \E$.
For every $y \in K$,
there is therefore $\nu \in N$ such that $\nu (y) \neq 0$,
so there is $\rh_y \in N \cap \Gm_{+}$ such that
$\rh_y (y) \neq 0$.
Then in fact $\rh_y (y) > 0$.
Set $U_y = \{ x \in P \colon \rh_y (x) > 0 \}$.
The sets $U_y$ are open and cover~$K$,
so there is a finite set $S \S K$
such that $K \S \bigcup_{y \in S} U_y$.
Set $\sm = \rh + \sum_{y \in S} \rh_y$.
Then $\sm \in N \cap \Gm_{+}$ and $\sm (x) > 0$
for all $x \in K \cup ( P \SM F_0)$,
hence for all $x \in P \SM E$.
By Lemma \ref{L:BasicOnG}(\ref{L:BasicOnG:4}),
there is $t \in (0, \I)$
such that $\sm (x) \geq t$ for all $x \in P \SM E$.

By Lemma \ref{L:BasicOnG}(\ref{L:BasicOnG:0}),
there is $s > 0$ such that $\et (x) \leq s$ for all $x \in P$.
Choose $m \in \N$ such that $m t \geq s$.
We claim that $m \rh \geq \et$.
Indeed,
if $x \in P \SM E$,
then $m \rh (x) \geq m t \geq s \geq \et (x)$,
while if $x \in E$ then $m \rh (x) \geq 0 = \et (x)$.
The claim follows.
Since $N \cap \Gm_{+}$ is hereditary,
it follows that $\et \in N$.
This completes the proof.
\end{proof}

\begin{cor}\label{C:AFPrimZ}
Let $A$, $H$, and $\af_h$ for $h \in H$ as in Corollary~\ref{C:AFG}.
Interpret $h$ as a homeomorphism of~$P$.
Then $\Prim (A) \cong \Xi (P)$,
in such a way that for every $h \in H$,
and interpreting $h$ as a \hme{} of~$P$,
the automorphism $\af_h$
induces the \hme{} $\Xi (h) \colon \Xi (P) \to \Xi (P)$
of Proposition~\ref{P:Funct}.
\end{cor}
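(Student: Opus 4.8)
The plan is to transport everything through the standard correspondence between the closed ideals of an AF~algebra and the order ideals of its ordered $K_0$-group, and then to read off both the points and the topology of $\Prim(A)$ from Lemma~\ref{L:IdealsG}. First I would recall that for the unital AF~algebra $A$ of Corollary~\ref{C:AFG} the assignment $I \mapsto K_0(I)$ is an inclusion-preserving bijection from the closed two-sided ideals of $A$ onto the order ideals of $(\Gm, \Gm_+) = (K_0(A), K_0(A)_+)$ (see, e.g., \cite{Dv}), that it carries intersections to intersections, and hence that it matches prime ideals with prime order ideals. Combining this with Lemma~\ref{L:IdealsG}, the order ideals of $\Gm$ are precisely $0$ together with the $N_F$ for $F \S P$ compact, with $N_{F_1} \S N_{F_2}$ \ifo{} $F_2 \S F_1$ and $N_{F_1} \cap N_{F_2} = N_{F_1 \cup F_2}$; every nonzero one is some $N_F$. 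Writing $I_F$ for the ideal with $K_0(I_F) = N_F$ (so $I_{\E} = A$) and $J_x = I_{\{x\}}$, the task becomes to single out which $I_F$ are primitive.

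Next I would identify the primitive ideals. For $F = \{x\}$, evaluation at $x$ gives $\Gm / N_{\{x\}} \cong (\Dt, \, \Dt \cap [0, \I), \, 1)$, a simple dimension group, so $A / J_x$ is simple and $J_x$ is primitive. For the zero ideal, since each $N_F$ is a \emph{nonzero} order ideal, $N_{F_1} \cap N_{F_2} = N_{F_1 \cup F_2}$ is never zero; thus $0$ is not the intersection of two nonzero order ideals, so $0$ is a prime ideal of $A$, and since $A$ is separable, $0$ is primitive. Finally, if $F$ is compact with at least two points, then --- and here is where total disconnectedness of $P$ enters, which I expect to be the main point to get right --- $F$ is disconnected, so $F = F_1 \sqcup F_2$ with $F_1, F_2$ nonempty and compact; then $I_F = I_{F_1} \cap I_{F_2}$ with both factors strictly larger, so $I_F$ is not prime. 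Hence $\Prim(A) = \{0\} \cup \{ J_x : x \in P \}$, the map $x \mapsto J_x$ is injective by Lemma~\ref{L:IdealsG}, and each $J_x \neq 0$.

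I would then define $\Phi \colon \Xi(P) \to \Prim(A)$ by $\Phi(\I) = 0$ and $\Phi(x) = J_x$, which by the previous paragraph is a bijection, and verify it is a \hme{} by matching open sets. The open sets of $\Prim(A)$ are the $O_I = \{ J \in \Prim(A) \colon I \not\S J \}$ for ideals $I$; using $J_x \supseteq I_F \Leftrightarrow N_{\{x\}} \supseteq N_F \Leftrightarrow x \in F$ together with $0 \not\supseteq I_F$ for $F \neq \E$, one computes $\Phi(\Xi(P) \SM F) = O_{I_F}$. Since every ideal of $A$ is $0$, $A$, or some $I_F$, this exhibits $\Phi$ as a bijection between the open sets of $\Xi(P)$ (as described in Definition~\ref{D:X} and Lemma~\ref{L:Top}) and those of $\Prim(A)$, so $\Phi$ is a \hme.

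Finally, for equivariance I would use that $\af_h$ induces $(\af_h)_* = \bt_h$ on $K_0(A) = \Gm$. A direct computation from $\bt_h(\et)(x) = \et(h^{-1} x)$ gives $\bt_h(N_{\{x\}}) = N_{\{hx\}}$, hence $\af_h(J_x) = J_{hx}$ and $\af_h(0) = 0$; since $\Prim(\af_h)(J) = \af_h(J)$ by Notation~\ref{N:PrimA}, transporting through $\Phi$ shows that the \hme{} of $\Prim(A)$ induced by $\af_h$ is exactly $\Xi(h)$ of Proposition~\ref{P:Funct}. I expect the only genuinely delicate step to be the primitivity analysis in the second paragraph --- the splitting argument using total disconnectedness, and the appeal to prime $=$ primitive for separable \ca{s} --- with the topological matching and the equivariance then being routine bookkeeping with the ideal/order-ideal dictionary.
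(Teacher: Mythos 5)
Your proposal is correct, and it rests on the same two pillars as the paper's proof: the order-preserving bijection between closed ideals of $A$ and order ideals of $(K_0 (A), K_0 (A)_{+})$ (Proposition~IV.5.1 of~\cite{Dv}) together with Lemma~\ref{L:IdealsG}, and the same equivariance computation $(\af_h)_* (N_F) = N_{h (F)}$ at the end. Where you genuinely diverge is the middle step. The paper never determines which ideals are primitive: it notes that the two facts above give a lattice isomorphism $\Ideal (A) \cong \Open (\Xi (P))$, hence $\Open (\Prim (A)) \cong \Open (\Xi (P))$, and the \hme{} $\Prim (A) \cong \Xi (P)$ then follows because both spaces are point-complete ${\mathrm{T}}_0$-spaces (Lemma~\ref{L:PtC} for $\Xi (P)$; $\Prim$ of any \ca{} is always point-complete), and such spaces are recovered from their open-set lattices. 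You instead identify $\Prim (A)$ explicitly as $\{ 0 \} \cup \{ J_x \colon x \in P \}$ --- $0$ is prime since no two nonzero ideals intersect in $0$, hence primitive by separability; $J_x$ is primitive since $A / J_x$ is simple; $I_F$ with $\card (F) \geq 2$ is a nontrivial intersection, hence not prime --- and then match the open sets by hand. Your route is more self-contained (no appeal to sober-space reconstruction) and yields a concrete list of the primitive ideals, at the price of invoking prime $\Rightarrow$ primitive for separable \ca{s} and a simplicity argument. Two minor observations: total disconnectedness is not what makes your splitting work, and this step is less delicate than you feared --- Hausdorffness already suffices, exactly as in the proof of Lemma~\ref{L:PtC} (write $F = (F \SM U) \cup (F \SM V)$ for disjoint open sets $U, V$ around two distinct points of~$F$); and the simplicity of $A / J_x$ can be read off Lemma~\ref{L:IdealsG} directly (the only order ideals containing $N_{\{x\}}$ are $N_{\{x\}}$ and $\Gm$, by injectivity of $F \mapsto N_F$ and the formula $N_{F_1} \cap N_{F_2} = N_{F_1 \cup F_2}$), so the computation of the quotient $\Gm / N_{\{x\}} \cong (\Dt, \, \Dt \cap [0, \I), \, 1)$, while correct, is not needed.
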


\begin{proof}
The order ideals of $K_0 (A)$
are in bijective order preserving correspondence
with the ideals of~$A$.
(See Proposition IV.5.1 of~\cite{Dv}.)
Lemma~\ref{L:IdealsG}
therefore implies that the ideals of~$A$
are in bijective order preserving correspondence
with the open subsets of $\Xi (\Z)$.
The identification $\Prim (A) \cong \Xi (P)$ follows.
For $h \in H$,
the automorphism $(\af_h)_*$ acts on the order ideals of $K_0 (A)$,
as described in Lemma~\ref{L:IdealsG},
via $(\af_h)_* (N_F) = N_{h (F)}$ for compact sets $F \S P$.
The corresponding open set in $\Xi (P)$
is $\Xi (P) \SM F$.
So the action on $\Xi (P)$ is $\Xi (h)$.
\end{proof}

\begin{cor}\label{C-FnAct508}
Let $n \in \N$,
let $P$ be a nonempty totally disconnected
noncompact second countable locally compact Hausdorff space,
let $\mu$ be a Borel measure on~$P$
such that $\mu (U) > 0$ for every nonempty open set $U \S P$,
and let $h_1, h_2, \ldots, h_n \colon P \to P$
be measure preserving homeomorphisms.
Assume that there is no nonempty compact subset $L \S P$
such that $h_j (L) = L$ for $j = 1, 2, \ldots, n$.
Then there is a unital nonsimple prime AF algebra~$A$
and a minimal action $\af \colon F_n \to \Aut (A)$
of the free group $F_n$ on $n$~generators
such that $\Prim (A)$ is equivariantly homeomorphic to $\Xi (P)$
with the action of $F_n$ which sends the generators to
$\Xi (h_1), \, \Xi (h_2), \, \ldots, \, \Xi (h_n)$.
\end{cor}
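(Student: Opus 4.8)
The plan is to reduce the corollary to the machinery assembled in Definition~\ref{D:G} and Corollary~\ref{C:AFG}, with the free group $F_n$ playing the role of the group~$H$. The one genuinely new point is that Corollary~\ref{C:AFG} produces automorphisms $\af_h$ but explicitly does \emph{not} assemble them into a group action; freeness of $F_n$ is exactly what repairs this, and is the reason the construction is confined to~$\Z$ and the groups~$F_n$.

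First I would manufacture the data required by Definition~\ref{D:G}. Let $H = F_n$, with free generators $g_1, g_2, \ldots, g_n$, and let $H$ act on~$P$ via the unique homomorphism from $F_n$ to the homeomorphism group of~$P$ sending $g_j$ to~$h_j$; this is well defined precisely because $F_n$ is free, so there are no relations to check. A subset $L \S P$ is $H$-invariant \ifo{} $h_j (L) = L$ for $j = 1, 2, \ldots, n$ (invariance under the generators forces invariance under their inverses, hence under all words), so the hypothesis on the $h_j$ is exactly Definition~\ref{D:G}(\ref{D-G-Homeo}). Since each $h_j$ preserves~$\mu$, the measure is $H$-invariant, giving Definition~\ref{D:G}(\ref{D-G-Meas}); an exhaustion $L_1 \S L_2 \S \cdots$ by nonempty compact open sets exists by Corollary~\ref{C-Exh505}, giving Definition~\ref{D:G}(\ref{D-G-Exh}). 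Finally I would take $\Dt$ to be the subring of~$\R$ generated by~$1$, by $\mu (L)$ for all compact open $L \S P$, and by $\mu (L_m)^{-1}$ for all $m \in \N$. There are only countably many compact open sets by Lemma~\ref{L-Ctbl505}, so $\Dt$ is countable, and being a ring containing all these elements it satisfies the closure requirements of Definition~\ref{D:G}(\ref{D-G-Group}).

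With this data in hand, Lemma~\ref{L:Riesz} produces the Riesz group $(\Gm, \Gm_{+}, 1)$ with the action~$\bt$ of $H = F_n$, and Corollary~\ref{C:AFG} produces a unital AF algebra~$A$ with $K_0 (A) \cong \Gm$ together with, for each generator~$g_j$, an automorphism $\af_{h_j} \in \Aut (A)$ inducing $\et \mapsto \et \circ h_j^{-1}$ on~$\Gm$. By Corollary~\ref{C:AFPrimZ} we may identify $\Prim (A) \cong \Xi (P)$ in such a way that each $\af_{h_j}$ induces $\Xi (h_j)$. Now comes the key step: since $F_n$ is free on $g_1, \ldots, g_n$, the assignment $g_j \mapsto \af_{h_j}$ extends uniquely to a genuine group homomorphism $\af \colon F_n \to \Aut (A)$. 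This use of the universal property of the free group is where freeness is essential, since for a group with relations the automorphisms furnished by Corollary~\ref{C:AFG} need not respect them.

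It then remains to read off the stated properties. The induced maps on $\Prim (A) \cong \Xi (P)$ again form a homomorphism from $F_n$ to the homeomorphism group of $\Xi (P)$ sending $g_j$ to~$\Xi (h_j)$, so the homeomorphism $\Prim (A) \cong \Xi (P)$ is $F_n$-equivariant for the action with generators $\Xi (h_1), \ldots, \Xi (h_n)$, as required. This $F_n$-action on $\Xi (P)$ is exactly the one induced by the action of $H = F_n$ on~$P$, so by Lemma~\ref{L:MinXiP} it is minimal \ifo{} $P$ has no nonempty compact $H$-invariant subset, which was arranged in the first step; hence $\af$ is minimal, using that an action on~$A$ is minimal \ifo{} the induced action on $\Prim (A)$ is. Finally, $A$ is prime by Corollary~\ref{L-Prime426}, and nonsimple because $P \neq \E$ forces $\Xi (P)$ to possess a nonempty proper closed subset (any singleton $\{ x \}$ with $x \in P$ is compact, hence closed in $\Xi (P)$), so $A$ has a nontrivial ideal; it is unital and AF by Corollary~\ref{C:AFG}. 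The main obstacle throughout is the deliberate non-functoriality of Corollary~\ref{C:AFG}, which is dissolved entirely by the freeness of $F_n$.
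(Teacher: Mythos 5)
Your proposal is correct and follows essentially the same route as the paper's own proof: reduce to Definition~\ref{D:G} with $H = F_n$ (using Corollary~\ref{C-Exh505} and Lemma~\ref{L-Ctbl505} for the exhaustion and the countable group~$\Dt$), apply Corollary~\ref{C:AFG} to the generators and use freeness of $F_n$ to assemble the resulting automorphisms into an action, then invoke Corollaries \ref{C:AFPrimZ} and~\ref{L-Prime426} and Lemma~\ref{L:MinXiP}. Your explicit construction of~$\Dt$ as a generated subring and your emphasis on why freeness repairs the non-functoriality of Corollary~\ref{C:AFG} (which the paper leaves implicit in the phrase ``these generate an action'') are welcome elaborations, not deviations.
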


Some suitable spaces $P$ and actions of $F_n$ on~$P$
can be obtained by taking $G = F_n$
in the first three parts of Example~\ref{E-111109dd}.

\begin{proof}[Proof of Corollary~\ref{C-FnAct508}]
In Definition~\ref{D:G},
take $H = F_n$.
Let $\mu$ be as in the statement.
The existence of $L_1 \S L_2 \S \cdots \S P$
and of a suitable countable subgroup~$\Dt$ as in Definition~\ref{D:G}
follow from Corollary~\ref{C-Exh505} and Lemma~\ref{L-Ctbl505}.

Apply Corollary~\ref{C:AFG} to $h_1, h_2, \ldots, h_n$,
regarded as generators of~$F_n$,
getting automorphisms $\af_1, \af_2, \ldots, \af_n$.
These generate an action $\af \colon F_n \to \Aut (A)$.
The $F_n$-equivariant homeomorphism
$\Prim (A) \cong \Xi (P)$ follows from Corollary~\ref{C:AFPrimZ}.
The algebra $A$ is prime by Corollary~\ref{L-Prime426},
but not simple because $P \neq \E$.
Minimality of~$\af$ follows from Lemma~\ref{L:MinXiP}.
\end{proof}

\begin{cor}\label{C-ZAF428}
There exists a unital nonsimple prime AF algebra~$A$
such that there is a minimal effective action
$\af \colon \Z \to \Aut (A)$
and such that $\Prim (A)$ is equivariantly homeomorphic to $\Xi (\Z)$.
\end{cor}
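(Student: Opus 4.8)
The plan is to obtain this as the special case $n = 1$ of Corollary~\ref{C-FnAct508}, using that $F_1 \cong \Z$, and then to check the one extra assertion---effectiveness---that is not already part of that corollary. Concretely, I would take $P = \Z$ with the discrete topology, let $\mu$ be counting measure, and let $h_1 \colon \Z \to \Z$ be the translation $h_1 (x) = x + 1$, which generates the translation action of $\Z = F_1$ on~$P$ as in Example~\ref{E-111109dd}(\ref{E-111109dd-1}).

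First I would verify the hypotheses of Corollary~\ref{C-FnAct508}. The space $P = \Z$ is nonempty, totally disconnected, noncompact, second countable, locally compact, and Hausdorff. Counting measure satisfies $\mu (U) > 0$ for every nonempty (in particular every nonempty open) subset $U \S P$, and $h_1$ is clearly a measure preserving homeomorphism. Finally, the compact subsets of the discrete space $\Z$ are exactly its finite subsets, and a nonempty finite set $L \S \Z$ cannot satisfy $h_1 (L) = L$: if $m = \max (L)$, then $m + 1 \in h_1 (L) \SM L$. Thus there is no nonempty compact $L \S P$ with $h_1 (L) = L$. Corollary~\ref{C-FnAct508} then produces a unital nonsimple prime AF~algebra~$A$ with a minimal action $\af \colon \Z \to \Aut (A)$ and an equivariant homeomorphism $\Prim (A) \cong \Xi (\Z)$ under which the generator of~$\Z$ acts as $\Xi (h_1)$.

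It then remains only to check that $\af$ is effective in the sense of Definition~\ref{D-Effective}. The induced action on $\Prim (A) \cong \Xi (\Z)$ sends the generator to $\Xi (h_1)$, whose restriction to $\Z \S \Xi (\Z)$ is $x \mapsto x + 1$; hence the $n$-th power of the generator restricts on~$\Z$ to $x \mapsto x + n$, which is not the identity when $n \neq 0$. Therefore, for every $n \in \Z \SM \{ 0 \}$, the automorphism $\af_n$ acts nontrivially on $\Prim (A)$, so in particular $\af_n \neq \id_A$, and there is $a \in A$ with $\af_n (a) \neq a$. This is exactly effectiveness. I do not expect any genuine obstacle here: the result is a direct specialization of Corollary~\ref{C-FnAct508}, and the only point needing a word of justification is that a nontrivial induced action on the primitive ideal space forces the underlying automorphism to be nontrivial, which is what yields effectiveness.
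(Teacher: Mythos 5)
Your proposal is correct and is exactly the paper's own proof: Corollary~\ref{C-FnAct508} is applied with $n = 1$, $P = \Z$, $h_1 (x) = x + 1$, and $\mu$ counting measure. Your explicit verification of the hypotheses and of effectiveness (via nontriviality of the induced action on $\Prim (A) \cong \Xi (\Z)$) just spells out what the paper leaves implicit.
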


\begin{proof}
Apply Corollary~\ref{C-FnAct508}
with $n = 1$,
with $P = \Z$,
with $h_1 (x) = x + 1$ for $x \in \Z$,
and with $\mu$ being counting measure.
\end{proof}

\begin{qst}\label{Q-RokhCP-430}
Let $\af \colon \Z \to \Aut (A)$ be as in Corollary~\ref{C-ZAF428}.
Is $C^* (\Z, A, \af)$ necessarily an AT~algebra?
What if, in addition, $\af$~has the Rokhlin property?
(This can always be arranged,
by tensoring with an action of $\Z$
on a UHF algebra with the Rokhlin property.)
\end{qst}

\begin{exa}\label{E-FnMin508}
Let $n \geq 2$.
Then there is a unital nonsimple prime AF algebra~$A$
such that there is a minimal action
$\af \colon F_n \to \Aut (A)$
which is effective on $\Prim (A)$,
and such that,
for any abelian subgroup $H \S F_n$,
the restricted action $\af |_{H}$ is not minimal.

To see this, let $S$ be the set of abelian subgroups of~$F_n$.
Subgroups of free groups are free,
so every abelian subgroup is singly generated.
Therefore $S$ is countable.
In Corollary~\ref{C-FnAct508},
take $P = \coprod_{H \in S} F_n / H$,
take $\mu$ to be counting measure,
and let $h_1, h_2, \ldots, h_n \colon P \to P$
be the generators of the obvious action of $F_n$ on $P$
given by left translation on $F_n / H$ for each $H \in S$.
There are no compact $F_n$-invariant subsets
since the $F_n$-orbits are the infinite sets $F_n / H$ for $H \in S$.
So Corollary~\ref{C-FnAct508} gives a minimal action
$\af \colon F_n \to \Aut (A)$.
It is clearly effective on $\Prim (A)$.

Now let $H \S F_n$ be an abelian subgroup.
Then the action of $H$ on $P$ has a fixed point,
so Lemma~\ref{L:MinXiP} implies that the action of $H$ on $\Prim (A)$
is not minimal.
Therefore the action of $H$ on $A$ is not minimal.
\end{exa}

\section{Minimal actions on nonsimple prime C*-algebras:
  nonexistence}\label{Sec:2a}

\indent
As discussed in the introduction, we are interested in minimal
actions on \ca{s} which are simple but not prime.
One example appears in Corollary~\ref{C-ZAF428},
and many more will appear in Theorem~\ref{T:ExistMinAct}.
Here, we describe some constraints.
In particular, there can be no such action if the group is compact,
or if the group is almost connected
(the quotient by its identity component is compact)
and the algebra has the ideal property.

If $G$ is compact (and the action is \ct),
if $A$ is separable and prime,
and if $\af \colon G \to \Aut (A)$ is minimal,
then $A$ is simple.
This, and some more general nonexistence results,
are consequences of the following lemma.

\begin{lem}\label{L:CptGMinX}
Let $G$ be a compact group
and let $X$ be a ${\mathrm{T}}_0$-space.
Suppose that $G$ acts \ct ly and minimally on ~$X$
and that there is $x_0 \in X$ such that ${\overline{ \{ x_0 \} }} = X$.
Then $X = \{ x_0 \}$.
\end{lem}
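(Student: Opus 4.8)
The plan is to argue by contradiction: assuming $X \neq \{ x_0 \}$, I will exhibit a nonempty proper closed $G$-invariant subset of~$X$, contradicting minimality. The subset will be the $G$-saturation of the closure of a single non-generic point, and the whole point of the argument is to see that this saturation is \emph{closed}, for which the compactness of~$G$ is used through the tube lemma rather than through any compactness of~$X$ (which is not assumed).

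First I would record two elementary consequences of $\ov{\{ x_0 \}} = X$. Since any closed set containing $x_0$ must contain $\ov{\{ x_0 \}} = X$, the only closed set containing~$x_0$ is $X$ itself; in particular $x_0$ lies in no proper closed subset. Next, $x_0$ is the unique generic point: if $\ov{\{ y \}} = X$ as well, then $x_0 \in \ov{\{ y \}}$ and $y \in \ov{\{ x_0 \}}$, and the ${\mathrm{T}}_0$ axiom (antisymmetry of specialization) forces $y = x_0$. Consequently, if $X \neq \{ x_0 \}$ I may choose $y \in X$ with $y \neq x_0$, and then $C = \ov{\{ y \}}$ is a proper closed subset, so $x_0 \notin C$. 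The same reasoning applied to $gy$ (note $gy \neq x_0$, since each $g$ is a \hme{} and hence carries the unique generic point to itself) shows $\ov{\{ g y \}} = g C$ is a proper closed set with $x_0 \notin g C$ for every $g \in G$.

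Now I would form the saturation $GC = \bigcup_{g \in G} g C$. It is visibly nonempty and $G$-invariant, and by the previous paragraph $x_0 \notin GC$, so it is proper. The key step is that $GC$ is closed. For this I would consider the continuous action map $\af \colon G \times X \to X$ and the closed set $\af^{-1} (C) \S G \times X$. Since $G$ is compact, the projection $p \colon G \times X \to X$ is a closed map (the tube lemma, which needs no separation hypothesis on~$X$). Therefore $p \big( \af^{-1} (C) \big)$ is closed, and a direct computation gives
\[
p \big( \af^{-1} (C) \big)
 = \big\{ x \in X \colon {\mbox{$gx \in C$ for some $g \in G$}} \big\}
 = \bigcup_{g \in G} g^{-1} C = GC,
\]
so $GC$ is closed. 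Thus $GC$ is a nonempty proper closed $G$-invariant subset of~$X$, contradicting minimality; hence $X = \{ x_0 \}$.

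The main obstacle I anticipate is precisely the closedness of $GC$: because $X$ is only assumed ${\mathrm{T}}_0$, the naive move of observing that the continuous image $GC$ of a compact set is compact does not help, since compact subsets of a non-Hausdorff space need not be closed. The correct tool is the closedness of projection along the compact factor~$G$, applied to the preimage under the action map of the closed set~$C$; this is where compactness of the group does the essential work. Everything else is bookkeeping with the generic point~$x_0$ and the ${\mathrm{T}}_0$ axiom.
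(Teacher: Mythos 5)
Your proof is correct, and it reaches the contradiction by a route dual to the paper's. The paper also begins by using the ${\mathrm{T}}_0$ axiom to see that $x_0$ is the unique generic point, so that $g x_0 = x_0$ for all $g \in G$; but it then uses ${\mathrm{T}}_0$ a second time to produce an open set $U$ with $x_0 \in U$ and $x \notin U$ (for a chosen $x \neq x_0$), covers the compact set $G \times \{x_0\} \subset \mu^{-1}(U)$ by finitely many product open sets, and intersects their $X$-factors to obtain an open $V \ni x_0$ with $G V \subset U$; the saturation $G V$ is then a nonempty proper \emph{open} invariant set, contradicting minimality. You instead saturate a \emph{closed} set: with $C = \overline{\{y\}}$ for $y \neq x_0$, you show $G C$ is closed because it is the image of the closed set $\alpha^{-1}(C)$ under the projection $G \times X \to X$, which is a closed map since $G$ is compact. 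The two compactness steps are the same tube lemma in different packaging, so the proofs are close cousins, but the decompositions genuinely differ. Your version isolates a clean, reusable fact (for a continuous action of a compact group on an arbitrary space, the saturation of a closed set is closed) and needs ${\mathrm{T}}_0$ only once, for uniqueness of the generic point, never having to manufacture a separating open set. The paper's version is more hands-on: it runs the covering argument directly and exhibits the invariant open set explicitly, without invoking the closed-projection formulation. Either way the heart of the matter is identical: compactness of $G$ lets one control the entire $G$-orbit of a neighborhood (or of a point closure) at once.
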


\begin{proof}
Since $X$ is a ${\mathrm{T}}_0$-space,
there can be at most one point in $X$ whose closure is~$X$.
It follows that $g x_0 = x_0$ for all $g \in G$.
The conclusion now follows from the last statement
in Lemma~2.1 of~\cite{OPd3}.
Since the rest of the argument is somewhat shorter than the
proof of Lemma~2.1 of~\cite{OPd3},
we give it anyway.

Suppose $X$ contains a point $x \neq x_0$.
Since $X$ is a ${\mathrm{T}}_0$-space
and there is no open set containing $x$ but not~$x_0$,
there must be an open set $U \S X$
such that $x_0 \in U$ but $x \not\in U$.
Let $\mu \colon G \times X \to X$ be $\mu (g, x) = g x$.
Then $\mu^{-1} (U)$ is an open set in $G \times X$ containing
$G \times \{ x_0 \}$.
Since $G \times \{ x_0 \}$ is compact
and $\mu^{-1} (U)$ is a union of product open sets,
there are open sets $W_1, W_2, \ldots, W_n \S G$
and $V_1, V_2, \ldots, V_n \S X$
such that $W_j \times V_j \S \mu^{-1} (U)$ for all~$j$
and $G \times \{ x_0 \} \S \bigcup_{j = 1}^n W_j \times V_j$.
\Wolog{} $x_0 \in V_j$ for all~$j$.
Set $V = \bigcap_{j = 1}^n V_j$.
Then also $G V$
is a $G$-invariant open subset of $X$ containing~$x_0$
and contained in~$U$.
Since $x \not\in U$, this contradicts minimality.
\end{proof}

\begin{cor}\label{C:CptGMinA}
Let $\af \colon G \to \Aut (A)$ be a minimal action of
a Hausdorff topological group $G$ on a separable prime \ca~$A$.
Following (loosely) Notation~\ref{N:PrimA},
set $H = \big\{ g \in G \colon \Prim (g) = \id_{\Prim (A)} \big\}$.
If $G / H$ is compact, then $A$ is simple.
\end{cor}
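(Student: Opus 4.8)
The plan is to deduce the statement from Lemma~\ref{L:CptGMinX}, applied to the space $X = \Prim (A)$ equipped with the action of the compact group $G / H$ induced by~$\af$.

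First I would record the topological data. The action $\af$ induces an action of $G$ on the ${\mathrm{T}}_0$-space $X = \Prim (A)$, given by $g \cdot J = \af_g (J)$ in the sense of Notation~\ref{N:PrimA}. By the definition of~$H$, every element of $H$ acts as the identity on~$X$, so the $G$-action on $X$ descends to an action of $G / H$ on~$X$, through the very same homeomorphisms. In particular a subset of $X$ is $G$-invariant if and only if it is $G/H$-invariant, so minimality of~$\af$ --- equivalently, minimality of the $G$-action on $\Prim (A)$, as recorded after Definition~\ref{D:MinActCSt} --- is the same as minimality of the $G/H$-action on~$X$.

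Second, and this is the crucial step, I would produce a dense point of~$X$. Since $A$ is separable and prime, it is primitive, so $\{ 0 \}$ is a primitive ideal of~$A$; that is, $x_0 := \{ 0 \}$ is a point of $X = \Prim (A)$. Every primitive ideal contains $\{ 0 \}$, so in the Jacobson topology $\overline{\{ x_0 \}} = X$. With this in hand I would invoke Lemma~\ref{L:CptGMinX} for the compact group $G/H$, the ${\mathrm{T}}_0$-space~$X$, and the point~$x_0$: the hypotheses are exactly what the previous two steps supply, so the lemma yields $X = \{ x_0 \}$, i.e.\ $\Prim (A)$ is a single point. As the closed two-sided ideals of $A$ correspond bijectively to the open subsets of $\Prim (A)$, and a one-point space has only $\E$ and itself as open subsets, the only ideals of $A$ are $0$ and~$A$; hence $A$ is simple.

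The main obstacle is the passage from the C*-algebraic hypotheses to the topological input demanded by Lemma~\ref{L:CptGMinX}. The decisive point is the production of the dense point $x_0 = \{ 0 \}$, which rests on the nontrivial theorem that a separable prime \ca{} is primitive; this is precisely where separability enters, and the conclusion fails in general without it. A secondary point to verify is the joint continuity of the induced $G/H$-action on $\Prim (A)$, which the proof of Lemma~\ref{L:CptGMinX} uses when forming $\mu^{-1}(U)$. This follows from point-norm continuity of~$\af$ together with the fact that $(g, J) \mapsto \| \af_{g^{-1}} (a) + J \|$ is lower semicontinuous --- which reduces, via continuity of $g \mapsto \af_{g^{-1}}(a)$ and an $\ep$-estimate, to the standard lower semicontinuity of $J \mapsto \| a + J \|$ on $\Prim (A)$ --- but it deserves an explicit remark.
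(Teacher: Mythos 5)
Your proposal is correct and coincides with the paper's own proof, which simply says ``Apply Lemma~\ref{L:CptGMinX} to the action of $G/H$ on $\Prim (A)$''; you have merely made explicit the verifications the paper leaves implicit (descent of the action to $G/H$, the dense point $x_0 = \{0\}$ coming from the fact that a separable prime \ca{} is primitive, and continuity of the induced action on $\Prim (A)$).
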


\begin{proof}
Apply Lemma~\ref{L:CptGMinX}
to the action of $G / H$ on $\Prim (A)$.
\end{proof}

Recall (see the introduction to~\cite{Ps})
that a \ca~$A$ has the {\emph{ideal property}}
if every ideal in $A$ is generated as an ideal by the \pj{s} it contains.
In particular, every \ca{} with real rank zero
has the ideal property.
(The ideal property appears to have been first
defined by Stevens.
See the introduction in~\cite{Stv}.)

\begin{prp}\label{P:NotRR0}
Let $A$ be a \ca{} with the ideal property.
Let $\af \colon G \to \Aut (A)$ be an action of
a connected Hausdorff topological group on~$A$.
Then the action of~$G$ on $\Prim (A)$ is trivial.
\end{prp}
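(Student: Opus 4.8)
The plan is to prove that every automorphism $\af_g$ fixes each ideal of~$A$; since the action of $G$ on $\Prim (A)$ is the one induced on the ideal lattice, this immediately makes the action on $\Prim (A)$ trivial. The essential elementary input is that norm-close \pj{s} generate the same ideal. Indeed, if $p$ and $q$ are \pj{s} in a \ca{} with $\| p - q \| < 1$, then $p$ and $q$ are \mvnt; and if $v$ is a partial isometry with $v^* v = p$ and $v v^* = q$, then $q = v p v^*$ lies in the ideal generated by~$p$, and symmetrically $p$ lies in the ideal generated by~$q$, so \mvnt{} \pj{s} generate the same ideal.

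Next I would fix a \pj{} $p \in A$ and consider the map $G \to A$ sending $g$ to $\af_g (p)$. Since the action is \ct{} in the point-norm topology, this map is \ct, and all its values are \pj{s}. Hence each $g_0 \in G$ has a \nbhd{} $W$ on which $\| \af_g (p) - \af_{g_0} (p) \| < 1$, so that $\af_g (p)$ and $\af_{g_0} (p)$ generate the same ideal for $g \in W$. Therefore the function assigning to $g$ the ideal generated by $\af_g (p)$ is locally constant. As $G$ is connected, it is constant, and evaluating at $g = 1$ identifies this constant ideal as the one generated by~$p$. In particular $\af_g (p)$ lies in the ideal generated by~$p$ for every $g \in G$.

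Finally I would invoke the ideal property. Let $I \S A$ be an ideal. By the ideal property $I$ is generated (as an ideal) by the \pj{s} it contains, and since $\af_g$ is an automorphism, $\af_g (I)$ is likewise generated by the \pj{s} $\af_g (p)$ for \pj{s} $p \in I$. By the previous step each $\af_g (p)$ generates the same ideal as~$p$, which is contained in~$I$ because $p \in I$; thus $\af_g (p) \in I$ and hence $\af_g (I) \S I$. Applying this inclusion to $g^{-1}$ gives $\af_{g^{-1}} (I) \S I$, whence $I \S \af_g (I)$, so $\af_g (I) = I$. Thus every ideal, and in particular every primitive ideal, is $\af_g$-invariant, which means $\Prim (g) = \id_{\Prim (A)}$ for all $g \in G$.

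The step I expect to be the main obstacle is the passage from continuity of $g \mapsto \af_g (p)$ to local constancy of the generated ideal: the point is that the metric datum ``$\| \af_g (p) - \af_{g_0} (p) \| < 1$'' must be converted into the discrete datum of equality in the ideal lattice, and it is exactly the observation that norm-close \pj{s} are \mvnt{} (and so generate the same ideal) that effects this conversion and lets connectedness of~$G$ do its work. Once this is in hand, the reduction via the ideal property from invariance of ideals to invariance of their \pj{s} is routine.
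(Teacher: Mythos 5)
Your proof is correct and follows essentially the same route as the paper's: both arguments use point-norm continuity of $g \mapsto \af_g (p)$ together with connectedness of~$G$ to conclude that $\af_g (p)$ is \mvnt{} to~$p$ (hence generates the same ideal), then invoke the ideal property to pass from \pj{s} to arbitrary ideals, and finally apply the resulting inclusion $\af_g (I) \S I$ to $g^{-1}$ to get equality. The only difference is presentational: you spell out the locally-constant-ideal argument that the paper compresses into the single sentence ``continuity and connectedness imply \mvnc,'' which is a worthwhile elaboration rather than a new method.
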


\begin{proof}
We claim that if $J \S A$ is an ideal,
then $\af_g (J) \S J$ for all $g \in G$.
To prove this,
let $p \in J$ be a \pj.
Continuity of $g \to \af_g (p)$ and connectedness of~$G$
imply that for all $g \in G$,
the \pj{} $\af_g (p)$ is \mvnt{} to~$p$.
Thus, there is $s \in A$ such that $s^* s = p$ and $s s^* = \af_g (p)$.
Then $\af_g (p) = s p s^* \in J$.
Since the elements $\af_g (p)$, for \pj{s} $p \in J$,
generate $\af_g (J)$ as an ideal,
it follows that $\af_g (J) \S J$.

Applying the claim to $g^{-1}$,
we get $\af_g (J) = J$ for all $g \in G$.
Thus, all ideals in~$A$ are $G$-invariant.
The result follows.
\end{proof}

\begin{cor}\label{C:ConnNotRR0}
Let $G$ be a connected Hausdorff topological group.
Let $A$ be a \ca{} with the ideal property.
Suppose there exists a minimal action $\af \colon G \to \Aut (A)$.
Then $A$ is simple.
\end{cor}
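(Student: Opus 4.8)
The plan is to derive this immediately from Proposition~\ref{P:NotRR0} together with the definition of minimality, so the corollary is essentially a packaging of the preceding proposition. First I would check that the hypotheses of Proposition~\ref{P:NotRR0} are met: $G$ is a connected Hausdorff topological group, $A$ has the ideal property, and $\af \colon G \to \Aut(A)$ is an action. The proposition then yields that the induced action of~$G$ on $\Prim(A)$ is trivial; that is, $\Prim(\af_g)$ fixes every primitive ideal of~$A$ for each $g \in G$.

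Next I would translate this into a statement about ideals. Recall the standard lattice isomorphism between the ideals of~$A$ and the open subsets of $\Prim(A)$, under which an ideal $J$ corresponds to the open set $\{ Q \in \Prim(A) \colon J \not\subset Q \}$. By the convention recorded after Notation~\ref{N:PrimA}, the action of~$G$ on $\Prim(A)$ induced by $\af$ is compatible with this isomorphism, so that an ideal $J$ is $\af$-invariant precisely when its associated open set is $G$-invariant. Since the action on $\Prim(A)$ is trivial, \emph{every} open subset of $\Prim(A)$ is $G$-invariant, and therefore every ideal of~$A$ is $\af$-invariant.

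Finally I would invoke minimality. By Definition~\ref{D:MinActCSt}, the only $\af$-invariant ideals of~$A$ are $0$ and~$A$. Combining this with the previous step, which shows that \emph{all} ideals are $\af$-invariant, forces $0$ and~$A$ to be the only ideals of~$A$ at all; this is exactly the assertion that $A$ is simple. I do not anticipate any genuine obstacle, since the substantive work is already carried out in Proposition~\ref{P:NotRR0}, which uses the ideal property and connectedness of~$G$ to move projections within invariant ideals. The only point requiring brief care is the equivariance of the ideal--open set correspondence, and this is built into the paper's conventions rather than needing a separate argument.
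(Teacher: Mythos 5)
Your proof is correct and follows exactly the paper's route: the paper's own proof of this corollary is simply ``This is immediate from Proposition~\ref{P:NotRR0},'' and your argument spells out precisely why it is immediate (trivial action on $\Prim(A)$ forces every ideal to be $\af$-invariant via the equivariant ideal--open set correspondence, and minimality then leaves only $0$ and $A$). No gaps; your filled-in details are the intended ones.
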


\begin{proof}
This is immediate from Proposition~\ref{P:NotRR0}.
\end{proof}

In particular,
if $G$ acts minimally on a \ca~$A$ with real rank zero,
then $A$ is simple.

\begin{cor}\label{C:PrimeNotRR0}
Let $G$ be a Hausdorff topological group.
Let $G_0$ be the connected component of~$G$ containing~$1$,
and suppose $G / G_0$ is compact.
Let $A$ be a separable prime \ca{} with the ideal property.
Suppose there exists a minimal action $\af \colon G \to \Aut (A)$.
Then $A$ is simple.
\end{cor}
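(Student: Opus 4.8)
The plan is to combine Proposition~\ref{P:NotRR0}, applied to the connected subgroup~$G_0$, with Corollary~\ref{C:CptGMinA}. First I would note that $G_0$, being the connected component of the identity in a Hausdorff topological group, is a closed normal subgroup and hence itself a connected Hausdorff topological group, and that the restriction $\af|_{G_0}$ is again a \ct{} action of $G_0$ on~$A$. Since $A$ has the ideal property, Proposition~\ref{P:NotRR0} applies to $\af|_{G_0}$ and shows that the induced action of $G_0$ on $\Prim (A)$ is trivial; that is, $\Prim (\af_g) = \id_{\Prim (A)}$ for every $g \in G_0$.

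Next I would record the consequence for the subgroup
\[
H = \big\{ g \in G \colon \Prim (\af_g) = \id_{\Prim (A)} \big\}
\]
appearing in Corollary~\ref{C:CptGMinA}: the previous step says exactly that $G_0 \S H$. Since $H$ contains~$G_0$, the quotient homomorphism $G \to G / H$ factors through $G \to G / G_0$, yielding a \ct{} surjection $G / G_0 \to G / H$. By hypothesis $G / G_0$ is compact, so its \ct{} image $G / H$ is compact as well.

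Finally I would invoke Corollary~\ref{C:CptGMinA}. Its hypotheses are met: $\af$ is a minimal action of the Hausdorff topological group~$G$ on the separable prime \ca~$A$, and we have just shown that $G / H$ is compact. The conclusion is that $A$ is simple, as desired.

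The argument is essentially a bookkeeping combination of the two preceding results, so I do not anticipate a serious obstacle. The only points needing care are that $G_0$ genuinely satisfies the hypotheses of Proposition~\ref{P:NotRR0} --- connectedness is the crucial feature exploited there, through the fact that \pj{s} joined by a \ct{} path are \mvnt{} --- and that compactness transfers from $G / G_0$ to $G / H$ through the factored quotient map. Note that no separate Hausdorffness of $G / H$ is required for the statement of Corollary~\ref{C:CptGMinA}, consistent with the paper's convention that compact spaces need not be Hausdorff.
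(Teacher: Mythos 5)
Your proposal is correct and is essentially the paper's own proof: the paper derives this corollary directly by combining Proposition~\ref{P:NotRR0} and Corollary~\ref{C:CptGMinA}, and your argument simply spells out the bookkeeping (that $G_0 \S H$, so $G/H$ is a continuous image of the compact $G/G_0$) that the paper leaves implicit.
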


\begin{proof}
This is immediate from Proposition~\ref{P:NotRR0}
and Corollary~\ref{C:CptGMinA}.
\end{proof}

\section{Minimal actions on nonsimple prime C*-algebras:
  existence}\label{Sec:2b}

\indent
In this section,
we construct a variety of examples of minimal actions of groups~$G$
on nonsimple prime \ca{s}~$A$.
The algebras will be produced by applying Corollary~\ref{C-111109bb}
to $\Xi (P)$ for suitable $G$-spaces~$P$,
but some work
(based on the uniqueness part of Theorem~\ref{T:ExistA})
will be required to lift the action of $G$ on $\Xi (P)$
to an action on~$A$.

\begin{ntn}\label{N:LatN}
For a \ca~$A$,
we let ${\mathcal{I}} (A)$ denote the lattice
of closed ideals of~$A$.

For a topological space~$X$,
we let ${\mathbb{O}} (X)$ denote the lattice
of open subsets of~$X$.
\end{ntn}

\begin{rmk}\label{R:SupInf}
Recall that $I \mapsto \Prim (I)$
defines a lattice isomorphism
${\mathcal{I}} (A) \to {\mathbb{O}} ( \Prim (A) )$.
For $U \S \Prim (A)$ open,
we denote by $J (U)$ the corresponding ideal in~$A$.

Arbitrary supremums and infimums
are defined in ${\mathcal{I}} (A)$:
the infimum of a family $( J_{\ld} )_{\ld \in \Ld}$
of ideals is their intersection $\bigcap_{\ld \in \Ld} J_{\ld}$,
and the supremum is the closed ideal generated by
$\bigcup_{\ld \in \Ld} J_{\ld}$.

In ${\mathbb{O}} (X)$, arbitrary supremums (the union)
and infimums (the interior of the intersection) also exist.

The bijection
${\mathcal{I}} (A) \to {\mathbb{O}} ( \Prim (A) )$
preserves arbitrary supremums and infimums.
\end{rmk}

\begin{dfn}[Definition~2.9 of~\cite{HK}]\label{D:MAI}
Let $A$ be a \ca, and let $J \S A$ be a closed ideal.
We define the subalgebra $M (A, J)$ of the multiplier algebra $M (A)$
by
\[
M (A, J) = \{ x \in M (A) \colon x A \S J \}.
\]
\end{dfn}

Following the usual convention,
if $B$ and $C$ are subalgebras of a \ca~$A$,
we write
\[
B C
 = \spn \big( \big\{ b c \colon
    {\mbox{$b \in B$ and $c \in C$}} \big\} \big).
\]

\begin{lem}\label{R:MAIRmk}
The subalgebra $M (A, J)$ of Definition~\ref{D:MAI}
has the following properties:
\begin{enumerate}
\item\label{R:MAIRmk:1}
$M (A, J) = \{ x \in M (A) \colon A x \S J \}$.
\item\label{R:MAIRmk:2}
$M (A, J)$ is the kernel of the canonical map $M (A) \to M (A / J)$.
In particular, it is a closed ideal in $M (A)$.
\item\label{R:MAIRmk:3}
${\overline{M (A, J) A}} = M (A, J) \cap A = J$.
\item\label{R:MAIRmk:4}
$M (A, J)$ is the strict closure of $J$ in $M (A)$.
\end{enumerate}
\end{lem}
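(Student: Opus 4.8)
The plan is to obtain all four properties at once from the canonical $*$-\hm{} $\pi \colon M (A) \to M (A / J)$ induced by the quotient map $A \to A / J$, supplemented by routine approximate identity arguments. The first preliminary step is to record that closed ideals are invariant under multipliers: if $x \in M (A)$ and $(e_\ld)$ is an approximate identity for~$J$, then for $j \in J$ we have $x j = \lim_\ld (x e_\ld) j \in J$, since $x e_\ld \in A$ and $J$ is an ideal, and likewise $j x \in J$. This invariance is what lets me define $\pi (x)$ as the multiplier of $A / J$ determined by $\pi (x) (a + J) = x a + J$ and $(a + J) \pi (x) = a x + J$; checking that this is a well-defined double centralizer and that $\pi$ is a $*$-\hm{} is then a direct computation using the multiplier identity for~$x$.

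The one genuinely non-formal step, which I expect to be the crux, is the identification of $\ker \pi$. Here I would use that a multiplier of any \ca~$B$ is determined by its left action: if $m \in M (B)$ satisfies $m b = 0$ for all $b \in B$, then the identity $b_1 (m b_2) = (b_1 m) b_2$ gives $(b_1 m) B = 0$, and since $B$ acts faithfully on itself this forces $b_1 m = 0$ for all~$b_1$, hence $m = 0$. Applying this with $B = A / J$ shows
\[
\pi (x) = 0 \iff x A \S J \iff A x \S J .
\]
The first equivalence identifies $M (A, J)$ with $\ker \pi$, which is automatically a closed ideal of $M (A)$; this is exactly~(\ref{R:MAIRmk:2}). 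The second equivalence is precisely~(\ref{R:MAIRmk:1}); it is this left-right symmetry, supplied by the faithfulness of $B$ on itself, that couples the two conditions.

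For~(\ref{R:MAIRmk:3}) I would fix an approximate identity $(e_\ld)$ for~$A$. The inclusion $J \S M (A, J) \cap A$ holds because $J$ is an ideal, while any $x \in M (A, J) \cap A$ satisfies $x = \lim_\ld x e_\ld \in \ov{x A} \S \ov{J} = J$, giving $M (A, J) \cap A = J$. For the other equality, $M (A, J) A \S J$ by the defining property of $M (A, J)$, so $\ov{M (A, J) A} \S J$; conversely each $j \in J \S M (A, J)$ equals $\lim_\ld j e_\ld$ with $j e_\ld \in M (A, J) A$, so $J \S \ov{M (A, J) A}$.

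Finally, for~(\ref{R:MAIRmk:4}) I would show separately that $M (A, J)$ is strictly closed and that $J$ is strictly dense in it. Strict closedness is immediate: if $x_i \to x$ strictly with $x_i \in M (A, J)$, then for each $a \in A$ we have $x a = \lim_i x_i a \in J$ because $J$ is norm closed. Strict density follows by approximating a given $x \in M (A, J)$ by the elements $x e_\ld \in J$: for every $a \in A$ one has $(x e_\ld) a = x (e_\ld a) \to x a$ and $a (x e_\ld) = (a x) e_\ld \to a x$, so $x e_\ld \to x$ strictly. Combining these two facts shows that $M (A, J)$ is exactly the strict closure of~$J$, completing the proof.
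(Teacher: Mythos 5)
Your proof is correct, and its overall architecture matches the paper's: parts (\ref{R:MAIRmk:1}) and~(\ref{R:MAIRmk:2}) come from identifying $M(A,J)$ with the kernel of the canonical map $M(A) \to M(A/J)$ (the paper treats this identification as clear; you supply the details, including the fact that a multiplier is determined by either one-sided action, which is exactly what couples the two conditions), and parts (\ref{R:MAIRmk:3}) and~(\ref{R:MAIRmk:4}) come from approximate identity arguments. The one genuine difference is in~(\ref{R:MAIRmk:4}): to prove that $x e_{\lambda} \to x$ strictly for $x \in M(A,J)$, the paper invokes Arveson's theorem to choose an approximate identity of~$A$ that is quasicentral in $M(A)$, and uses quasicentrality to get $a x e_{\lambda} \to a x$. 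You instead observe that $a (x e_{\lambda}) = (a x) e_{\lambda} \to a x$ in norm simply because $a x \in A$, so an ordinary approximate identity of~$A$ suffices and the appeal to quasicentrality is unnecessary. Your route is therefore slightly more elementary and self-contained at that step. (Two further cosmetic differences: for strict closedness you argue directly from the definition of strict convergence, where the paper cites strict continuity of the canonical map; and for~(\ref{R:MAIRmk:3}) the paper routes through $M(A,J) \cap A = \overline{(M(A,J) \cap A)^2}$, while you use the approximate identity directly. These are equivalent in substance.)
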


\begin{proof}
The identification of $M (A, J)$
as the kernel of the map in~(\ref{R:MAIRmk:2}) is clear.
Part~(\ref{R:MAIRmk:1}) follows,
since, for the same reason,
the kernel of the map in~(\ref{R:MAIRmk:2})
is also equal to $\{ x \in M (A) \colon A x \S J \}$.
Part~(\ref{R:MAIRmk:3}) follows
from the easily checked inclusions
\[
J \S M (A, J) \cap A
  = {\overline{(M (A, J) \cap A)^2}}
  \S {\overline{M (A, J) A}}
  \S J.
\]

We prove~(\ref{R:MAIRmk:4}).
The ideal $M (A, J)$ is strictly closed because
the map in~(\ref{R:MAIRmk:2}) is strictly \ct.
It remains to prove that $M (A, J)$ is contained
in the strict closure of~$J$.
So let $x \in M (A, J)$.
By Theorem~1 of~\cite{Ar},
we may choose an approximate identity $(e_{\ld})_{\ld \in \Ld}$
for $A$ which is quasicentral for $M (A)$.
Then $x e_{\ld} \in J$ by the definition of $M (A, J)$.
We verify that $x e_{\ld} \to x$ strictly.
Let $a \in A$.
Then $x e_{\ld} a \to x a$ because $e_{\ld} a \to a$.
Also, $\| x e_{\ld} - e_{\ld} x \| \to 0$ by quasicentrality,
and $a e_{\ld} x \to a x$ because $a e_{\ld} \to a$,
so $a x e_{\ld} \to a x$.
This is the required strict convergence.
\end{proof}

The following definition is in Remark~2.4 of~\cite{HK}.

\begin{dfn}\label{D:dtInfty}
Let $A$ be any nonzero stable \ca.
We define a unital \hm{}
$\dt_{A, \infty} \colon M (A) \to M (A)$,
well defined up to unitary equivalence
(see Lemma~\ref{L-dtIRmk}(\ref{L-dtIRmk:2})),
as follows.
Choose isometries $s_1, s_2, \ldots \in M (A)$
with orthogonal ranges and such that
$\sum_{n = 1}^{\infty} s_n s_n^*$ converges to $1$ in the strict
topology.
Then define, again with convergence in the strict topology,
\[
\dt_{A, \infty} (a) = \sum_{n = 1}^{\infty} s_n a s_n^*
\]
for $a \in M (A)$.
We write $\dt_{\infty}$ when $A$ is clear.
\end{dfn}

We give several properties of $\dt_{A, \infty}$.
The first two are well known.
The first three are in Remark~2.4 of~\cite{HK}.

\begin{lem}\label{L-dtIRmk}
Let $A$ be a nonzero stable \ca.
Then:
\begin{enumerate}
\item\label{L-dtIRmk:1}
There exist isometries $s_1, s_2, \ldots \in M (A)$
as required in Definition~\ref{D:dtInfty}.
\item\label{L-dtIRmk:2}
Any two choices of isometries $s_1, s_2, \ldots \in M (A)$
as in Definition~\ref{D:dtInfty}
give unitarily equivalent \hm{s}.
\item\label{L-dtIRmk:3}
For any \hm{} $\dt_{\infty} \colon M (A) \to M (A)$
as in Definition~\ref{D:dtInfty},
the \hm{} $\dt_{\infty} \circ \dt_{\infty}$
is unitarily equivalent to~$\dt_{\infty}$.
\item\label{L-dtIRmk:4}
Suppose $J$ is an ideal in~$A$ and $a \in M (A)$.
Then $a \in M (A, J)$ \ifo{} $\dt_{\infty} (a) \in M (A, J)$.
\end{enumerate}
\end{lem}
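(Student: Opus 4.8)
The plan is to prove the four parts in order, with Part~(\ref{L-dtIRmk:1}) producing the isometries from stability, Part~(\ref{L-dtIRmk:2}) giving well-definedness via an explicit intertwining unitary, Part~(\ref{L-dtIRmk:3}) reducing idempotence to Part~(\ref{L-dtIRmk:2}) by reindexing, and Part~(\ref{L-dtIRmk:4}) following quickly from the ideal structure in Lemma~\ref{R:MAIRmk}. For Part~(\ref{L-dtIRmk:1}), I would fix an isomorphism $A \cong K \otimes A$, where $K$ is the compact operators on a separable infinite-dimensional Hilbert space $\mathcal{H}$. Left multiplication on the first tensor factor defines a unital $*$-\hm{} $\iota \colon B (\mathcal{H}) = M (K) \to M (K \otimes A) = M (A)$. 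In $B (\mathcal{H})$ choose, from an identification $\mathcal{H} \cong \bigoplus_{n} \mathcal{H}$, isometries $t_1, t_2, \ldots$ with orthogonal ranges and range projections summing to $1$ strongly, and set $s_n = \iota (t_n)$. These are isometries in $M (A)$ with orthogonal ranges; to check $\sum_{n \leq N} s_n s_n^* \to 1$ strictly I would verify it first on elementary tensors $k \otimes a$, where it reduces to $\big( \sum_{n \leq N} t_n t_n^* \big) k \to k$ in norm for compact $k$ (true since increasing range projections converging strongly to $1$ converge in norm on compacts), and then extend by density using the uniform bound $1$ on the partial sums.

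For Part~(\ref{L-dtIRmk:2}), given two such systems $(s_n)$ and $(t_n)$ with associated maps $\dt_{\infty}$ and $\dt_{\infty}'$, I would set $u = \sum_n t_n s_n^*$. The relations $s_m^* s_n = t_m^* t_n = \dt_{mn} 1$ make the partial sums strictly Cauchy: writing $p_N = \sum_{n \leq N} s_n s_n^*$, the estimate $\big\| \sum_{n > N} t_n s_n^* x \big\|^2 = \| (1 - p_N) x \|^2 \to 0$ for $x \in A$, together with its mirror image on the other side (using $\sum_n t_n t_n^* = 1$), does the bookkeeping. Hence $u \in M (A)$ is well defined, and $u^* u = \sum_n s_n s_n^* = 1 = \sum_n t_n t_n^* = u u^*$, so $u$ is unitary. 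A direct computation collapsing the orthogonality relations then gives $u \dt_{\infty} (a) u^* = \sum_m t_m a t_m^* = \dt_{\infty}' (a)$ for all $a \in M (A)$.

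For Part~(\ref{L-dtIRmk:3}), I would compute
\[
(\dt_{\infty} \circ \dt_{\infty}) (a)
  = \sum_{m, n} (s_m s_n) \, a \, (s_m s_n)^*,
\]
and observe that $(s_m s_n)_{m, n \in \N}$ is again a family of isometries with pairwise orthogonal ranges whose range projections sum strictly to $\sum_m s_m \big( \sum_n s_n s_n^* \big) s_m^* = 1$. Reindexing by a bijection $\N \times \N \to \N$ exhibits this as a system of the type in Definition~\ref{D:dtInfty}, so $\dt_{\infty} \circ \dt_{\infty}$ is the map attached to it, and Part~(\ref{L-dtIRmk:2}) yields $\dt_{\infty} \circ \dt_{\infty} \sim \dt_{\infty}$. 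Part~(\ref{L-dtIRmk:4}) is then immediate: by Lemma~\ref{R:MAIRmk}(\ref{R:MAIRmk:2}) and~(\ref{R:MAIRmk:4}), $M (A, J)$ is a strictly closed ideal of $M (A)$. If $a \in M (A, J)$, each partial sum $\sum_{n \leq N} s_n a s_n^*$ lies in this ideal, so its strict limit $\dt_{\infty} (a)$ does too; conversely, if $\dt_{\infty} (a) \in M (A, J)$, then $a = s_1^* \dt_{\infty} (a) s_1$ (using $s_1^* s_n = \dt_{1n}$) lies in the ideal as well.

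I expect the main obstacle to be the strict-convergence bookkeeping in Parts~(\ref{L-dtIRmk:2}) and~(\ref{L-dtIRmk:3}): showing that $u$ is a genuine unitary in $M (A)$ and that the doubly-indexed family reindexes to a legitimate system both rest on the orthogonality of ranges forcing the relevant tails to be dominated by $\| (1 - p_N) x \|$, and on the fact that an increasing net of mutually orthogonal range projections with supremum $1$ converges strictly to~$1$. Everything else is then routine given Lemma~\ref{R:MAIRmk}.
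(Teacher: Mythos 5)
Your proof is correct and follows essentially the same route as the paper's: stability plus isometries in $L(l^2)$ for part~(\ref{L-dtIRmk:1}), the intertwining unitary $u = \sum_n t_n s_n^*$ for part~(\ref{L-dtIRmk:2}), reindexing the products $s_m s_n$ to reduce part~(\ref{L-dtIRmk:3}) to part~(\ref{L-dtIRmk:2}), and the strictly closed ideal $M(A,J)$ together with $a = s_1^* \dt_{\infty}(a) s_1$ for part~(\ref{L-dtIRmk:4}). The only difference is that you spell out the strict-convergence estimates that the paper leaves implicit.
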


\begin{proof}
Part~(\ref{L-dtIRmk:1}) is easy,
using $A \cong K \otimes A$
and the existence of such isometries in $L (l^2)$.

For~(\ref{L-dtIRmk:2}),
let $s_1, s_2, \ldots$ and $t_1, t_2, \ldots$
be two sets of isometries in $M (A)$,
in each of which the ranges are orthogonal,
and such that $\sum_{n = 1}^{\infty} s_n s_n^*$
and $\sum_{n = 1}^{\infty} t_n t_n^*$ converge strictly to~$1$.
Set $u = \sum_{n = 1}^{\infty} t_n s_n^*$,
again with convergence in the strict topology.
Then
\[
u \left( \sum_{n = 1}^{\infty} s_n a s_n^* \right) u^*
  = \sum_{n = 1}^{\infty} t_n a t_n^*
\]
for all $a \in M (A)$.

Part~(\ref{L-dtIRmk:3}) is a special case of part~(\ref{L-dtIRmk:2}),
as is seen by taking
$t_1, t_2, \ldots$ to consist of all products $s_k s_l$
with $k, l \in \N$.

We prove~(\ref{L-dtIRmk:4}).
If $a \in M (A, J)$,
then
$\dt_{\infty} (a) = \sum_{n = 1}^{\infty} s_n a s_n^* \in M (A, J)$
since $M (A, J)$ is a strictly closed ideal
(Lemma~\ref{R:MAIRmk}(\ref{R:MAIRmk:2}) and~(\ref{R:MAIRmk:4})).
On the other hand,
if $\dt_{\infty} (a) \in M (A, J)$,
then $a = s_1^* \dt_{\infty} (a) s_1 \in M (A, J)$
because $M (A, J)$ is an ideal.
\end{proof}

Recall that if $A$ and $B$ are \ca{s},
with $B$ unital,
then two \hm{s} $\ph, \ps \colon A \to B$ are
{\emph{unitarily homotopic}}
(or {\emph{asymptotically unitarily equivalent}})
if there is a \ct{} path $t \mapsto u_t$ of unitaries in~$B$,
defined for $t \in [0, \I)$,
such that $\limi{t} \| u_t \ph (a) u_t^* - \ps (a) \| = 0$
for all $a \in A$.

The \hm{} called $\et_0$ in the next theorem
is called $h_0$ in~\cite{HK}.
We don't use the letter $h$ for \hm{s} here,
to avoid confusion with group elements.

\begin{thm}\label{T:HKReconstr}
Let $A$ be a separable
stable nuclear \ca.
Let $\Om \S \Open (\Prim (A))$
be a sublattice of $\Open (\Prim (A))$
which contains $\E$ and $\Prim (A)$,
and which is closed under arbitrary supremums and infimums.
Then there is an injective nondegenerate \hm{}
$\et_0 \colon A \to \M (A)$
with following properties:
\begin{enumerate}
\item\label{T:HKReconstr:1}
The infinite repeat $\delta_{\infty} \circ \et_0$
is unitarily equivalent to $\et_0$.
\item\label{T:HKReconstr:2}
For $U \in \Open (\Prim(A))$ let
$V \in \Omega$ be given by
$V = \bigcup \{ W \in \Omega \colon W \subset U \}$.
Then $\et_0 (J (V)) = \et_0 (A) \cap \M (A, \, J (U))$.
\end{enumerate}
Moreover, $\et_0$
satisfying these conditions
is uniquely determined up to unitary homotopy.

For any such \hm~$\et_0$,
the Cuntz-Pimsner algebra ${\mathcal{O}}_{\mathcal{H}}$
(Definition~1.1 of~\cite{Pm})
of the
Hilbert $A$--$A$--bimodule ${\mathcal{H}} = (A, \et_0)$
is stable and nuclear, and
it is the same as the
Toeplitz algebra ${\mathcal{T}} ({\mathcal{H}})$
(Definition~1.1 of~\cite{Pm})
of ${\mathcal{H}}$.
Moreover, the natural embedding of
$A$ into ${\mathcal{O}}_{\mathcal{H}}$ defines a
lattice isomorphism from $\Omega$
onto $\Open (\Prim ( {\mathcal{O}}_{\mathcal{H}}))$.
\end{thm}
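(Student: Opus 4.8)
The plan is to treat the three assertions of the theorem separately: the existence of $\et_0$ satisfying~(\ref{T:HKReconstr:1}) and~(\ref{T:HKReconstr:2}), its uniqueness up to unitary homotopy, and the structure of the associated Cuntz--Pimsner algebra. My first move is to peel off condition~(\ref{T:HKReconstr:1}) so that everything reduces to building a single map with property~(\ref{T:HKReconstr:2}). Suppose $\phi \colon A \to \M (A)$ is any injective nondegenerate homomorphism satisfying~(\ref{T:HKReconstr:2}), and set $\et_0 = \dt_{\infty} \circ \phi$. Then $\et_0$ is injective (both factors are) and nondegenerate (since $\dt_{\infty}$ is unital and strictly continuous), condition~(\ref{T:HKReconstr:1}) is immediate from Lemma~\ref{L-dtIRmk}(\ref{L-dtIRmk:3}) because $\dt_{\infty} \circ \et_0 = (\dt_{\infty} \circ \dt_{\infty}) \circ \phi \sim \dt_{\infty} \circ \phi = \et_0$, and condition~(\ref{T:HKReconstr:2}) is preserved because Lemma~\ref{L-dtIRmk}(\ref{L-dtIRmk:4}) gives $\dt_{\infty}(\phi (a)) \in \M (A, J(U))$ iff $\phi (a) \in \M (A, J(U))$, so applying the injective homomorphism $\dt_{\infty}$ to both sides of~(\ref{T:HKReconstr:2}) for $\phi$ yields~(\ref{T:HKReconstr:2}) for $\et_0$.

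The real content, and the main obstacle, is constructing $\phi$ with property~(\ref{T:HKReconstr:2}). First I would reinterpret the condition: for an open $U$ put $I_{\phi}(U) = \phi^{-1}\big( \M (A, J(U)) \big) = \{ a \in A \colon \phi (a) A \S J(U) \}$, which is an ideal of $A$ by Lemma~\ref{R:MAIRmk}(\ref{R:MAIRmk:2}). Using injectivity of $\phi$, condition~(\ref{T:HKReconstr:2}) is exactly the statement that $I_{\phi}(U) = J(V)$, where $V = \bigcup \{ W \in \Omega \colon W \S U \} \in \Omega$ is the largest element of $\Omega$ below $U$ (well defined since $\Omega$ is closed under arbitrary supremums). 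In other words, the lattice map $U \mapsto I_{\phi}(U)$ must have range exactly $\{ J(V) \colon V \in \Omega \}$ and must act on open sets as the canonical retraction onto $\Omega$. I would then build such a $\phi$ by assembling, over the complete lattice $\Omega$, faithful nondegenerate representations of the subquotients $J(U)/J(V)$ and fitting them into $\M (A) \cong \M (K \otimes A)$ via Kasparov's stabilization theorem, arranged so that the $\Omega$-grading becomes precisely the saturated-invariant-ideal structure of $\phi$ and no open set outside the retraction is exactly respected. This is where separability, stability, and above all \emph{nuclearity} of $A$ are indispensable, since the subquotient representations must be absorbed; this existence step is the homomorphism called $h_0$ in~\cite{HK} and is the deepest ingredient of the theorem.

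For uniqueness, suppose $\et_0, \et_0'$ both satisfy~(\ref{T:HKReconstr:1}) and~(\ref{T:HKReconstr:2}). The plan is a Cuntz-addition (Eilenberg swindle) argument made legitimate by condition~(\ref{T:HKReconstr:1}). Using a pair of isometries in $\M (A)$ one forms the sum $\et_0 \oplus \et_0'$, and condition~(\ref{T:HKReconstr:1}) gives $\et_0 \sim \et_0^{\oplus \infty}$ and likewise for $\et_0'$; since, by the reinterpretation above, $\et_0$ and $\et_0'$ induce the \emph{same} retraction onto $\Omega$ and hence respect exactly the same ideals, interleaving infinite repeats yields $\et_0^{\oplus \infty} \sim (\et_0 \oplus \et_0')^{\oplus \infty} \sim (\et_0')^{\oplus \infty} \sim \et_0'$. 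The unitary homotopies implementing these interleavings are supplied by Kirchberg's uniqueness theorem for nuclear, $\dt_{\infty}$-absorbing homomorphisms into the properly infinite algebra $\M (A)$: because the infinite repeat washes out the $K$-theoretic obstruction, the only invariant is the induced action on the ideal lattice, which~(\ref{T:HKReconstr:2}) has pinned down. This is the uniqueness half of~\cite{HK}, and I expect it, together with the existence construction, to carry essentially all the difficulty.

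Finally, for the assertions about ${\mathcal{H}} = (A, \et_0)$, I would argue as follows. Since $\et_0$ is injective and nondegenerate the left action is faithful and essential, and condition~(\ref{T:HKReconstr:1}) forces the left action to factor through its own infinite repeat, so the Cuntz covariance relations already hold at the Toeplitz level; by Pimsner's criterion (Definition~1.1 of~\cite{Pm}) this gives ${\mathcal{T}} ({\mathcal{H}}) = {\mathcal{O}}_{\mathcal{H}}$. Nuclearity of ${\mathcal{O}}_{\mathcal{H}}$ follows from nuclearity of $A$, and stability from the infinite-repeat structure of $\et_0$, whose generating isometries exhibit ${\mathcal{O}}_{\mathcal{H}}$ as absorbing $K$. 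For the ideal structure, the gauge-invariant ideal theory for Cuntz--Pimsner algebras puts the ideals of ${\mathcal{O}}_{\mathcal{H}}$ in lattice-preserving bijection with the $\et_0$-invariant ideals of $A$, which by~(\ref{T:HKReconstr:2}) are exactly $\{ J(V) \colon V \in \Omega \}$; tracing the correspondence $V \leftrightarrow J(V)$ through the embedding $A \hookrightarrow {\mathcal{O}}_{\mathcal{H}}$ identifies this family with $\Omega$, giving the asserted isomorphism $\Omega \cong \Open (\Prim ( {\mathcal{O}}_{\mathcal{H}}))$.
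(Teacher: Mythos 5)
Your skeleton agrees with the paper's up to the last paragraph: the paper also delegates the two hard steps to~\cite{HK}, quoting Proposition~2.15 there for existence and Remark~2.16 for uniqueness, and your preliminary reduction (construct $\phi$ with property~(\ref{T:HKReconstr:2}) only, then pass to $\et_0 = \dt_{\infty} \circ \phi$ and use Lemma~\ref{L-dtIRmk}(\ref{L-dtIRmk:3}) and~(\ref{L-dtIRmk:4})) is correct, though the paper does not need it because the cited proposition produces both properties simultaneously. Be aware, however, that what the paper actually \emph{does} at the existence step is verify the hypotheses of that proposition: that $\OT \otimes A$ has a regular commutative subalgebra (Theorem~6.11 of~\cite{KR}), and that the retraction $U \mapsto \bigcup \{ W \in \Om \colon W \S U \}$ satisfies conditions (I) and~(II) of Definition~1.1 of~\cite{HK} (via Remark~A.4 of~\cite{HK}). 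Your alternative description of the construction (assembling subquotient representations via Kasparov stabilization) is not a substitute for this hypothesis checking, and your uniqueness sketch likewise presupposes exactly the statement being quoted.

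The genuine gap is in your final paragraph. The theorem asserts that $\Om$ is isomorphic to the \emph{entire} lattice $\Open (\Prim ({\mathcal{O}}_{\mathcal{H}}))$, but the gauge-invariant ideal theory you invoke classifies only the gauge-invariant ideals of a Cuntz--Pimsner algebra; in general such algebras have many ideals that are not gauge-invariant (already for the bimodule $(A, \id_A)$, where ${\mathcal{O}}_{\mathcal{H}} \cong C (S^1) \otimes A$). So your argument, as written, identifies only the gauge-invariant part of the ideal lattice with $\Om$, and a further argument --- necessarily using the special structure of $\et_0$, not general Pimsner theory --- is needed to show that \emph{every} ideal of ${\mathcal{O}}_{\mathcal{H}}$ arises this way. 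This is exactly what the paper extracts from Corollary~4.31 of~\cite{HK}, after verifying its hypotheses: first the claim that a closed ideal $I \S A$ satisfies $\et_0 (I) A \S I$ \ifo{} $\Prim (I) \in \Om$ (this uses injectivity of $\et_0$, closure of $\Om$ under supremums, and property~(\ref{T:HKReconstr:2})), and then, via Remark~2.20 of~\cite{HK}, that such ideals satisfy $\et_0 (I) = \et_0 (A) \cap [ A + \M (A, I) ]$. A smaller imprecision of the same kind: the input to Corollary~3.14 of~\cite{Pm} for ${\mathcal{T}} ({\mathcal{H}}) = {\mathcal{O}}_{\mathcal{H}}$ is the concrete fact $\et_0 (A) \cap A = \{ 0 \}$ (which follows from condition~(\ref{T:HKReconstr:1}), since a nonzero infinite repeat never lies in $A$), rather than the vaguer statement that the covariance relations already hold at the Toeplitz level.
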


\begin{proof}
To produce~$\et_0$,
we want to apply Proposition~2.15 of~\cite{HK} with $A = B$.
The requirement that ${\mathcal{O}}_2 \otimes A$ have a
regular commutative subalgebra is satisfied
by Theorem~6.11 of~\cite{KR}.
We use Remark~A.4 of~\cite{HK} to get an order preserving map
$\Ps \colon {\mathbb{O}} (\Prim (A)) \to {\mathbb{O}} (\Prim (A))$
whose range is exactly $\Om$ and which satisfies
conditions (I) and~(II) in Definition~1.1 of~\cite{HK}.
Moreover,
the construction in Remark~A.4 of~\cite{HK} gives a formula for~$\Ps$:
for $U \S \Prim (A)$ open,
$\Ps (U)$ is the union of all $W \in \Om$ such that $W \S U$.
(See Definition~A.2 of~\cite{HK}.)
(There a misprint in Remark~A.4 of~\cite{HK}
one must take $\Th = \Ps \circ \Ph$, not $\Th = \Ph \circ \Ps$.)

Proposition~2.15 of~\cite{HK} now provides an injective nondegenerate
\hm{} $\et_0 \colon A \to M (A)$
such that $\dt_{\infty} \circ \et_0$ is unitarily equivalent to~$\et_0$
(this is (\ref{T:HKReconstr:1}) of the conclusion)
and such that
$J (\Ps (U)) = \et_0^{-1} \big( \et_0 (A) \cap \M (A, \, J (U)) \big)$
for every open set $U \S \Prim (A)$.
Applying $\et_0$ to both sides
and using the formula for $\Ps (U)$,
we get (\ref{T:HKReconstr:2}) of the conclusion.

The uniqueness statement is Remark~2.16 of~\cite{HK}.

Let $\io \colon A \to {\mathcal{O}}_{\mathcal{H}}$
be the natural embedding.

We next prove that $\io$ induces a lattice isomorphism
from $\Om$ to $\Open (\Prim ( {\mathcal{O}}_{\mathcal{H}}))$,
using Corollary~4.31 of~\cite{HK}.

We first claim that if $I \S A$ is a closed ideal,
then $\et_0 (I) A \S I$ \ifo{} $\Prim (I) \in \Om$.
To prove the claim,
set
\[
U = \Prim (I)
\andeqn
V = \bigcup \{ W \in \Om \colon W \S U \}.
\]
Thus $J (U) = I$,
so by part~(\ref{T:HKReconstr:2}) (already proved),
we have
\[
\et_0 (J (V)) = \et_0 (A) \cap M (A, I).
\]
If $U \in \Om$ then $V = U$,
so $\et_0 (I) \S M (A, I)$.
Therefore, by definition,
$\et_0 (I) A \S I$.
On the other hand, suppose that $U \not\in \Om$.
Since $\Om$ is closed under supremums,
$V$ is a proper subset of~$U$.
Since $\et_0$ is injective,
it follows that $\et_0 (A) \cap M (A, I) = \et_0 (J (V))$
is a proper subset of $\et_0 (I)$.
Thus $\et_0 (I) \not\S M (A, I)$, whence $\et_0 (I) A \not\S I$.
This proves the claim.

It now follows from Remark~2.20 of~\cite{HK}
that if $I \S A$ is a closed ideal such that $\et_0 (I) A \S I$,
then $\et_0 (I) = \et_0 (A) \cap [A + M (A, I) ]$.
So Corollary~4.31 of~\cite{HK} applies,
and the fact that $\io$ induces a lattice isomorphism follows.

Condition~(\ref{T:HKReconstr:1})
implies that $\et_0 (A) \cap A = \{ 0 \}$.
Using this fact,
the isomorphism
${\mathcal{T}} ({\mathcal{H}}) \cong {\mathcal{O}} ({\mathcal{H}})$
follows from Corollary~3.14 of~\cite{Pm}.
Stability follows from Corollary~4.28 of~\cite{HK}.
Nuclearity follows from the description in Corollary~4.28 of~\cite{HK}
of the algebra~$C$ there;
see Remark~4.29 of~\cite{HK}.
\end{proof}

The Cuntz-Pimsner algebra ${\mathcal{O}}_{\mathcal{H}}$
of Theorem~\ref{T:HKReconstr}
is also strongly purely infinite
and the natural embedding of
$A$ into ${\mathcal{O}}_{\mathcal{H}}$
is a $KK (\Omega; \cdot, \cdot)$-equivalence.
See footnote~3 on page~7 of~\cite{HK}.
We will not need these facts.
(We also don't need stability.)

For a \ca~$A$ and $\ph \in \Aut (A)$,
we denote by $M (\ph)$
the induced automorphism of the multiplier algebra $M (A)$.

Recall that if $G$ is a topological group
and $\af \colon G \to \Aut (A)$ is a \ct{} action
of $G$ on a \ca~$A$,
then an {\emph{$\af$-cocycle}} is a strictly \ct{} function
$g \mapsto u_g$ from $G$ to the unitary group of $M (A)$
such that $u_{g h} = u_g M (\af_g) (u_h)$ for all $g, h \in G$.
If $g \mapsto u_g$ is an $\af$-cocycle,
then the formula ${\widetilde{\af}}_g (a) = u_g \af_g (a) u_g^*$,
for $g \in G$ and $a \in A$,
also defines a \ct{} action of $G$ on~$A$.
This action is called a {\emph{cocycle perturbation}} of~$\af$,
and the actions $\af$ and ${\widetilde{\af}}$ are
said to be {\emph{exterior equivalent}}.
Further recall that actions $\af \colon G \to \Aut (A)$
and $\bt \colon G \to \Aut (B)$
are {\emph{cocycle conjugate}} if there is an isomorphism
$\ph \colon A \to B$
and a cocycle perturbation ${\widetilde{\af}}$ of~$\af$
such that
$\bt_g = \ph \circ {\widetilde{\af}}_g \circ \ph^{-1}$
for all $g \in G$.
Let $K = K (l^2)$.
Then actions $\af$ and $\bt$ as above are
called {\emph{stably cocycle conjugate}} if the actions
$g \mapsto \id_K \otimes \af_g$
and $g \mapsto \id_K \otimes \bt_g$,
of $G$ on $K \otimes A$ and $K \otimes B$,
are cocycle conjugate.

In the definition of stable cocycle conjugacy,
one can replace the trivial action of $G$ on~$K$
with any inner action,
that is, one of the form $g \mapsto \Ad (v_g)$
for some unitary representation $g \mapsto v_g$
of $G$ on~$l^2$.
The reason is that $g \mapsto \Ad (v_g) \otimes \af_g$
is a cocycle perturbation of $\id_K \otimes \af$,
using the cocycle $g \mapsto v_g \otimes 1$.

It seems unlikely that, even for $A$ and $B$ stable,
stable cocycle conjugacy implies cocycle conjugacy.

\begin{thm}\label{T:EquivReconstr}
Let the notation and assumptions
be as in Theorem~\ref{T:HKReconstr}.
In addition, let $G$ be a second countable locally compact group
and let $\af \colon G \to \Aut (A)$ be an action of $G$ on~$A$.
Assume that $\Om$ is $\af$-invariant
the sense that for every $g \in G$ and $U \in \Om$,
we have $\Prim (\af_g) (U) \in \Om$.
Then there exist a stably cocycle conjugate action
$\gm \colon G \to \Aut (A)$,
for which the conjugation map $\ph \colon K \otimes A \to K \otimes A$
from the discussion above induces the identity on $\Prim (A)$,
and a \hm{} $\et_0 \colon A \to \M (A)$
as in Theorem~\ref{T:HKReconstr},
such that in addition
$\et_0$ is $\gm$-equivariant in the sense that
$\et_0 \circ \gm_g = \M (\gm_g) \circ \et_0$
for all $g \in G$.
\end{thm}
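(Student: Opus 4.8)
The plan is to exploit the uniqueness clause of Theorem~\ref{T:HKReconstr}: the homomorphism $\et_0$ is essentially canonical, so the action $\af$ should carry it to something unitarily equivalent to itself, and the resulting intertwining unitaries should, after correction, be an $\af$-cocycle defining~$\gm$. First I would, for each $g \in G$, consider the conjugated homomorphism
\[
\ps_g = M(\af_g) \circ \et_0 \circ \af_{g^{-1}} \colon A \to M(A),
\]
and check that it again satisfies conditions (\ref{T:HKReconstr:1}) and~(\ref{T:HKReconstr:2}) of Theorem~\ref{T:HKReconstr}. For~(\ref{T:HKReconstr:1}), since $M(\af_g)$ is an automorphism of $M(A)$ it sends a system of isometries as in Definition~\ref{D:dtInfty} to another such system, so $\dt_{\infty} \circ M(\af_g)$ and $M(\af_g) \circ \dt_{\infty}$ are unitarily equivalent by Lemma~\ref{L-dtIRmk}(\ref{L-dtIRmk:2}); combined with $\dt_{\infty} \circ \et_0 \sim \et_0$ this gives $\dt_{\infty} \circ \ps_g \sim \ps_g$. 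For~(\ref{T:HKReconstr:2}), the point is that $\af_{g^{-1}}$ induces a lattice automorphism of the ideals $J(U)$ and that $M(\af_g)$ carries $M(A, J(U))$ to $M(A, \af_g(J(U)))$, so the $\af$-invariance of $\Om$ is exactly what forces the ``best approximation from below'' $V$ of $U$ inside $\Om$ to transform correctly. Hence $\ps_g$ is admissible, and by the uniqueness part of Theorem~\ref{T:HKReconstr} it is unitarily homotopic to~$\et_0$.

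Next I would upgrade this pointwise homotopy to a genuine intertwiner. Replacing $\et_0$ by its infinite repeat $\dt_{\infty} \circ \et_0$ (permitted by Theorem~\ref{T:HKReconstr}(\ref{T:HKReconstr:1})) supplies infinite multiplicity, and an Eilenberg swindle along the identifications $\et_0 \sim \dt_{\infty} \circ \et_0 \sim \dt_{\infty} \circ \dt_{\infty} \circ \et_0 \sim \cdots$ (Lemma~\ref{L-dtIRmk}(\ref{L-dtIRmk:3})) lets me absorb the asymptotic unitary path into a single unitary. The outcome is, for each~$g$, a unitary $u_g \in M(A)$ (after tensoring with $K$, where there is room to run the swindle) with
\[
\et_0 \circ \af_g = \Ad(u_g) \circ M(\af_g) \circ \et_0 .
\]
Because each $u_g$ is a multiplier unitary, the associated inner perturbation fixes every ideal of~$A$, which is why the conjugation map can be arranged to induce the identity on $\Prim (A)$.

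The remaining and decisive step is to make the family $g \mapsto u_g$ into an honest $(\id_K \otimes \af)$-cocycle: it must satisfy the coherence identity $u_{gh} = u_g \, M(\af_g)(u_h)$ and be strictly continuous in~$g$, neither of which holds a priori. I would measure the failure of coherence by the defect $c(g, h) = u_{gh}^* \, u_g \, M(\af_g)(u_h)$, observe that it commutes with the range of the (infinitely repeated, hence ``large'') image of $\et_0$, and then absorb it by a further swindle using the infinite repeat, simultaneously arranging strict continuity by treating all of $G$ at once (equivalently, by solving a single lifting problem over $A \otimes C_0(G)$ rather than pointwise). Setting $\gm_g = \Ad(u_g) \circ (\id_K \otimes \af_g)$ with the corrected cocycle then yields an action that is by construction a stable cocycle perturbation of~$\af$ and for which the now equivariant $\et_0$ satisfies $\et_0 \circ \gm_g = M(\gm_g) \circ \et_0$.

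The main obstacle is precisely this last step. For discrete $G$ it is already a coherence (cohomological) problem, but for a general second countable locally compact~$G$ the strict continuity in~$g$ is the genuine difficulty: the pointwise unitaries produced by the uniqueness theorem carry no continuity, and one must lean on the absorbing properties of $\dt_{\infty}$ together with the freedom granted by stabilization to assemble them into a strictly \ct{} cocycle. This is exactly why the conclusion is stated only up to \emph{stable} cocycle conjugacy rather than as honest equivariance of the original~$\et_0$.
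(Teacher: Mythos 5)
Your proposal has a genuine gap at exactly the step you flag as ``decisive.'' The first part is fine: for each $g$ the conjugate $\ps_g = M(\af_g) \circ \et_0 \circ \af_{g^{-1}}$ does satisfy conditions (\ref{T:HKReconstr:1}) and~(\ref{T:HKReconstr:2}) (this is where $\af$-invariance of $\Om$ enters), so the uniqueness clause makes $\ps_g$ unitarily homotopic to $\et_0$. But from there you face two unproved claims. First, uniqueness gives only \emph{unitary homotopy} (asymptotic unitary equivalence), and your passage to a single exact intertwining unitary $u_g$ via a swindle is asserted, not argued. Second, and more seriously, even granting the $u_g$, the coherence defect $c(g,h) = u_{gh}^* u_g M(\af_g)(u_h)$ is a unitary commuting with the range of $M(\af_{gh}) \circ \et_0$; this is a genuine $2$-cocycle (cohomological) obstruction with values in a huge relative commutant, and ``absorb it by a further swindle'' names no mechanism that kills it. Worst of all is continuity: the uniqueness theorem is applied separately for each $g$ and provides no relation whatsoever between the unitaries at nearby group elements, and no parameterized or equivariant version of Theorem~\ref{T:HKReconstr} (your ``single lifting problem over $A \otimes C_0(G)$'') exists in the paper to supply one. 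So the proposal reduces the theorem to a problem that is, as you yourself note, the entire difficulty, and then does not solve it.

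The paper's proof avoids this obstruction entirely rather than confronting it. One applies the non-equivariant Theorem~\ref{T:HKReconstr} to $B = K \otimes A$ (with $K = K(l^2 \otimes L^2(G))$ and the perturbed action $\bt_g = \Ad(v_g) \otimes \af_g$, $v_g$ built from the regular representation) to get some $\et_1$, and then makes it equivariant \emph{tautologically} by passing to functions on the group: define
\[
\ph (a) (n, h) = \big( M (\bt_h) \circ \et_1 \circ \bt_h^{-1} \big) (a)
\]
as an element of $\Cbst \big( \Z \times G, \, M (K \otimes A) \big) = M \big( C_0 (\Z \times G, \, K \otimes A) \big)$, where $G$ acts by translation in the variable $h$ composed with $M(\bt_g)$; the translation exactly cancels the twist, so $\ph$ is equivariant by construction, with no unitaries to choose and no continuity issue. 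One then composes with an equivariant nondegenerate embedding $\mu$ of this multiplier algebra into $M(K \otimes K \otimes A)$ (multiplication operators on $l^2(\Z \times G)$) and an equivariantly chosen infinite repeat $\dt_{\I}$, and verifies conditions (\ref{T:HKReconstr:1}) and~(\ref{T:HKReconstr:2}) directly for the composite $\et_0 = \dt_{\I} \circ \mu \circ \ph \circ \nu$. The stabilization by $K(l^2 \otimes L^2(G))$ and the inner perturbation $\Ad(v_g)$ are precisely why the conclusion is only up to stable cocycle conjugacy; but the cocycle identity and strict continuity that block your argument never arise. If you want to salvage your approach, you would need an equivariant (or $C_0(G)$-parameterized) uniqueness theorem, which is a substantially stronger result than anything quoted in the paper.
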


\begin{proof}
Take $K = K (l^2 \otimes L^2 (G))$.
Let $g \mapsto z_g$ be the regular representation of $G$ on $L^2 (G)$,
for $g \in G$
set $v_g = 1 \otimes z_g \in L (l^2 \otimes L^2 (G))$,
define $B = K \otimes A$,
and define an action $\bt \colon G \to \Aut (B)$
by $\bt_g = \Ad (v_g) \otimes \af_g$
for $g \in G$.
Since $A$ is stable,
the action $g \mapsto \id_K \otimes \af_g$
is stably conjugate to~$\af$
using a conjugation map
which induces the identity on $\Prim (A)$,
so $\bt$
is stably cocycle conjugate to~$\af$
using a conjugation map
which induces the identity on $\Prim (A)$.
In particular,
under the obvious identification of $\Prim (B)$
with $\Prim (A)$,
the sublattice $\Om$ of the hypotheses is $\bt$-invariant.

For a locally compact Hausdorff space~$X$ and a \ca~$C$,
we let $C_{ {\mathrm{b}}, {\mathrm{st}} } (X, M (C))$
denote the \ca{} of bounded strictly \ct{} functions
from $X$ to $M (C)$.
We will construct maps
\begin{align*}
& K \otimes K \otimes A
  \stackrel{\nu}{\longrightarrow} K \otimes A
  \stackrel{\ph}{\longrightarrow}
     C_{ {\mathrm{b}}, {\mathrm{st}} }
             \big( \Z \times G, \, M (K \otimes A) \big)
           \\
& \hspace*{10em} {\mbox{}}
  \stackrel{\mu}{\longrightarrow} M (K \otimes K \otimes A)
  \stackrel{\dt_{\I}}{\longrightarrow} M (K \otimes K \otimes A),
\end{align*}
which are equivariant with respect to suitable actions of~$G$
(defined below) on the algebras shown,
and we will then take
$\et_0 = \dt_{\I} \circ \mu \circ \ph \circ \nu$.

The action on $K \otimes A$
is $g \mapsto \Ad (v_g) \otimes \af_g = \bt_g$.

The action on $K \otimes K \otimes A$
is
$g \mapsto \Ad (v_g) \otimes \Ad (v_g) \otimes \af_g
   = \Ad (v_g) \otimes \bt_g$.

For the action on
$C_{ {\mathrm{b}}, {\mathrm{st}} }
      \big( \Z \times G, \, M (K \otimes A) \big)$,
begin by defining an action
$\ta^{(0)} \colon G \to \Aut (C_0 (\Z \times G))$
by
\[
\ta^{(0)}_g (f) (n, h) = f (n, \, g^{-1} h)
\]
for $g, h \in G$, $n \in \Z$, and $f \in C_0 (\Z \times G)$.
Identifying $C_0 (\Z \times G, \, K \otimes A)$
with $C_0 (\Z \times G) \otimes K \otimes A$,
define
\[
\ta \colon G \to \Aut \big( C_0 (\Z \times G, \, K \otimes A) \big)
\]
by $\ta_g = \ta^{(0)}_g \otimes \bt_g$ for $g \in G$.
Use Corollary~3.4 of~\cite{APT} to identify
\begin{equation}\label{Eq:MCX428}
C_{ {\mathrm{b}}, {\mathrm{st}} }
          \big( \Z \times G, \, M (K \otimes A) \big)
  = M \big( C_0 (\Z \times G, \, K \otimes A) \big).
\end{equation}
Take the action of $G$ to be $g \mapsto M (\ta_g)$.
Directly in terms of
$C_{ {\mathrm{b}}, {\mathrm{st}} }
        \big( \Z \times G, \, M (K \otimes A) \big)$,
it is easy to see that we have
\[
M (\ta_g) (b) (n, h) = M (\bt_g) (b (n, \, g^{-1} h))
\]
for $g, h \in G$, $n \in \Z$,
and
$b \in
 C_{ {\mathrm{b}}, {\mathrm{st}} }
       \big( \Z \times G, \, M (K \otimes A) \big)$.
We warn that for
$b \in M \big( C_0 (\Z \times G, \, K \otimes A) \big)$,
the function $g \mapsto M (\ta_g) (b)$ is strictly \ct{}
but not in general norm \ct.

Finally, the action of $G$ on $M (K \otimes K \otimes A)$
is
\[
g \mapsto M \big( \Ad (v_g) \otimes \Ad (v_g) \otimes \af_g \big)
\]
for $g \in G$.
Again, this action is strictly \ct,
but generally not norm \ct.

As a representation space of~$G$,
the space $l^2 \otimes L^2 (G) \otimes l^2 \otimes L^2 (G)$
is the direct sum of countably many copies
of the regular representation of~$G$.
Thus, there is an equivariant unitary
\[
w \colon l^2 \otimes L^2 (G) \otimes l^2 \otimes L^2 (G)
  \to l^2 \otimes L^2 (G),
\]
and we take $\nu (k \otimes a) = w k w^* \otimes a$
for
\[
k \in K \big( l^2 \otimes L^2 (G) \otimes l^2 \otimes L^2 (G) \big)
\andeqn
a \in A.
\]
This map is obviously an equivariant isomorphism.

Next, we construct~$\ph$.
Apply Theorem~\ref{T:HKReconstr} with $B = K \otimes A$ in place of~$A$,
and let $\et_1 \colon K \otimes A \to M (K \otimes A)$
be the resulting \hm.
(We do not expect $\et_1$ to be equivariant.)
Then define
\[
\ph \colon K \otimes A \longrightarrow
  C_{ {\mathrm{b}}, {\mathrm{st}} }
       \big( \Z \times G, \, M (K \otimes A) \big)
\]
by
\[
\ph (a) (n, h) = \big( M (\bt_h) \circ \et_1 \circ \bt_h^{-1} \big) (a)
\]
for $a \in K \otimes A$, $n \in \Z$, and $h \in G$.
(The right hand side does not depend on~$n$.)

We check that the proposed definition of $\ph (a)$
is in fact a strictly \cfn.
Let $b \in K \otimes A$.
Then for $h \in G$ and $n \in \Z$,
we have,
using at the second step the fact that the product is in $K \otimes A$,
\[
\ph (a) (n, h) b
 = M (\bt_h)
  \big( \big( \et_1 \circ \bt_h^{-1} \big) (a) \bt_h^{-1} (b) \big)
 = \bt_h
  \big( \big( \et_1 \circ \bt_h^{-1} \big) (a) \bt_h^{-1} (b) \big).
\]
The functions
\[
(n, h) \mapsto \big( \et_1 \circ \bt_h^{-1} \big) (a)
\andeqn
(n, h) \mapsto \bt_h^{-1} (b)
\]
are norm \ct,
from which it follows that
$(n, h) \mapsto \ph (a) (n, h) b$ is norm \ct.
Similarly
$(n, h) \mapsto b \ph (a) (n, h)$ is norm \ct.
Strict continuity of $\ph (a)$ follows.

We claim that $\ph$ is equivariant.
For $a \in K \otimes A$,
$g, h \in G$, and $n \in \Z$,
we have
\begin{align*}
\big( M (\ta_g) \circ \ph) (a) (n, h)
& = M (\bt_g) \big( \ph (a) (n, \, g^{-1} h) \big)
\\
& = \big( M (\bt_g) \circ M (\bt_{g^{-1} h}
     \circ \et_1 \circ \bt_{g^{-1} h}^{-1} \big) (a)
\\
& = \big( M (\bt_h) \circ \et_1 \circ \bt_{h}^{-1} \big) (\bt_g (a))
  = (\ph \circ \bt_g) (a) (n, h).
\end{align*}
The claim is proved.

Now we construct~$\mu$.
Let $\mu_0 \colon C_0 (\Z \times G) \to M (K) = L (l^2 (\Z \times G))$
send $f \in C_0 (\Z \times G)$ to the operator $\mu_0 (f)$
given by multiplication by~$f$.
Then one checks that there is a unique \hm{}
\[
\mu_1 \colon C_0 (\Z \times G, \, K \otimes A)
       \to M (K \otimes K \otimes A)
\]
such that
$\mu_1 (f \otimes b) \cdot (k \otimes c) = \mu_0 (f) k \otimes b c$
for $f \in C_0 (\Z \times G)$, $b, c \in K \otimes A$, and $k \in K$.
Recall that if $C$ and $D$ are \ca{s},
then a \hm{} $\ph \colon C \to M (D)$
is called nondegenerate if ${\overline{\rh (C) D}} = D$.
One easily checks that $\mu_1$ is nondegenerate in this sense.
Therefore Corollary 1.1.15 of~\cite{JK}
provides a unique strict to strict \ct{} \hm{}
\[
\mu \colon M \big( C_0 (\Z \times G, \, K \otimes A) \big)
  \to M (K \otimes K \otimes A)
\]
such that
\[
\mu (x) \cdot \mu_1 (a) b = \mu_1 (x a) b
\]
for
\[
x \in M \big( C_0 (\Z \times G, \, K \otimes A) \big),
\,\,\,\,\,\,
a \in C_0 (\Z \times G, \, K \otimes A),
\andeqn
b \in K \otimes K \otimes A.
\]
Making the identification in~(\ref{Eq:MCX428}),
we get
\[
\mu \colon C_{ {\mathrm{b}}, {\mathrm{st}} }
       \big( \Z \times G, \, M (K \otimes A) \big)
    \to M (K \otimes K \otimes A),
\]
as required.

We claim that $\mu$ is equivariant.
It suffices to prove that $\mu_1$ is equivariant.
A calculation shows that
$\mu_0 \big( \ta_g^{(0)} (f) \big) = v_g \mu_0 (f) v_g^*$
for $f \in C_0 (\Z \times G)$ and $g \in G$.
For $f \in C_0 (\Z \times G)$, $g \in G$, $k \in K$, and
$b, c \in K \otimes A$,
we now have
\begin{align*}
& \big[ M (\Ad (v_g) \otimes \bt_g) \circ \mu_1 \big] (f \otimes b)
      \cdot (k \otimes c)
       \\
& \hspace*{3em} {\mbox{}}
 = M (\Ad (v_g) \otimes \bt_g) \big( \mu_1 (f \otimes b) \cdot
            \big[ \Ad (v_g)^{-1} (k) \otimes \bt_g^{-1} (c) \big] \big)
       \\
& \hspace*{3em} {\mbox{}}
 = M (\Ad (v_g) \otimes \bt_g)
    \big( \mu_0 (f) v_g^* k v_g \otimes b \bt_g^{-1} (c) \big)
 = v_g \mu_0 (f) v_g^* k \otimes \bt_g (b) c
       \\
& \hspace*{3em} {\mbox{}}
 = \mu_0 \big( \ta_g^{(0)} (f) \big) k \otimes \bt_g (b) c
 = \mu_1 \big( \ta_g (f \otimes b) \big) \cdot ( k \otimes c).
\end{align*}
Since $f$, $k$, $b$, and $c$ are arbitrary,
we therefore get
\[
\big[ M (\Ad (v_g) \otimes \bt_g) \circ \mu_1 \big]
  = \mu_1 \circ \ta_g.
\]
This is equivariance of~$\mu_1$.
The claim is proved.

We now define $\dt_{\I}$,
making a specific choice in Definition~\ref{D:dtInfty}.
We interpret $M (K \otimes K \otimes A)$ as the bounded operators
on the Hilbert $A$-module
$l^2 \otimes L^2 (G) \otimes l^2 \otimes L^2 (G) \otimes A$.
Choose isometries $t_1, t_2, \ldots \in L (l^2)$
with orthogonal ranges and such that
$\sum_{n = 1}^{\infty} t_n t_n^*$ converges to $1$ in the
strong operator topology.
Then set $s_n = t_n \otimes 1 \otimes 1 \otimes 1 \otimes 1$
for $n \in \N$.
The resulting map
\[
\dt_{\I} \colon
   M (K \otimes K \otimes A) \to M (K \otimes K \otimes A),
\]
following Definition~\ref{D:dtInfty},
is clearly equivariant.

It remain to verify that the resulting equivariant \hm{}
\[
\et_0 = \dt_{\I} \circ \mu \circ \ph \circ \nu \colon
    K \otimes K \otimes A \to M (K \otimes K \otimes A)
\]
is injective, nondegenerate,
and
satisfies conditions (\ref{T:HKReconstr:1}) and~(\ref{T:HKReconstr:2})
of Theorem~\ref{T:HKReconstr}.
Injectivity is immediate,
nondegeneracy is easy,
and condition~(\ref{T:HKReconstr:1})
is immediate from Lemma~\ref{L-dtIRmk}(\ref{L-dtIRmk:3}).

As a first step towards proving
condition~(\ref{T:HKReconstr:2}) of Theorem~\ref{T:HKReconstr},
we claim that if $J$ is an ideal in $A$ and
$b \in
 C_{ {\mathrm{b}}, {\mathrm{st}} }
       \big( \Z \times G, \, M (K \otimes A) \big)$,
then $\mu (b) \in M (K \otimes K \otimes A, \, K \otimes K \otimes J)$
\ifo{} for all $h \in G$ and $n \in \Z$,
we have $b (n, h) \in M (K \otimes A, \, K \otimes J)$.
The statement to be proved is that
$\mu (b) \cdot (K \otimes K \otimes A) \S K \otimes K \otimes J$
\ifo{} $b (n, h) \cdot (K \otimes A) \S K \otimes J$
for all $h \in G$ and $n \in \Z$,
which is clear.

Next, for any open set $U \S \Prim (A)$,
define the open set $S_U \S \Prim (A)$ by
$S_U = \bigcup \{ W \in \Omega \colon W \subset U \}$.
To simplify notation, for $V \S \Prim (A)$ abbreviate
$\Prim (\af_g) (V)$ to $g V$.
Since $\Om$ is $\af$-invariant,
we have $S_{g U} = g \cdot S_U$
for every open $U \S \Prim (A)$.

Now let $a \in K \otimes A$ and let $U \S \Prim (A)$ be open.
We claim that
\begin{enumerate}
\item\label{SUIdeal-1}
$(\mu \circ \ph) (a)
 \in M \big( K \otimes K \otimes A, \, K \otimes K \otimes J (U) \big)$
\setcounter{TmpEnumi}{\value{enumi}}
\end{enumerate}
\ifo{}
\begin{enumerate}
\setcounter{enumi}{\value{TmpEnumi}}
\item\label{SUIdeal-2}
$a \in K \otimes J (S_U)$.
\setcounter{TmpEnumi}{\value{enumi}}
\end{enumerate}
By the previous claim and the definition of~$\ph$,
condition~(\ref{SUIdeal-1})
is equivalent to:
\begin{enumerate}
\setcounter{enumi}{\value{TmpEnumi}}
\item\label{SUIdeal-3}
$\big( M (\bt_h) \circ \et_1 \circ \bt_h^{-1} \big) (a)
   \in M \big( K \otimes A, \, K \otimes J (U) \big)$
for all $h \in G$ and $n \in \Z$.
\setcounter{TmpEnumi}{\value{enumi}}
\end{enumerate}
Using $\bt_h^{-1} (K \otimes J (U) ) = K \otimes J (h^{-1} U)$,
we can rewrite~(\ref{SUIdeal-3}) as:
\begin{enumerate}
\setcounter{enumi}{\value{TmpEnumi}}
\item\label{SUIdeal-4}
$\et_1 ( \bt_h^{-1} (a) )
   \in M \big( K \otimes A, \, K \otimes J (h^{-1} U) \big)$
for all $h \in G$ and $n \in \Z$.
\setcounter{TmpEnumi}{\value{enumi}}
\end{enumerate}
By the condition in Theorem~\ref{T:HKReconstr}(\ref{T:HKReconstr:2})
for~$\et_1$,
condition~(\ref{SUIdeal-4})
is equivalent to:
\begin{enumerate}
\setcounter{enumi}{\value{TmpEnumi}}
\item\label{SUIdeal-5}
$\bt_h^{-1} (a) \in K \otimes J (h^{-1} U)$
for all $h \in G$ and $n \in \Z$.
\end{enumerate}
Since~(\ref{SUIdeal-5}) is the same as~(\ref{SUIdeal-2}),
the claim is proved.

Since $\nu$ is an isomorphism which preserves the ideals,
it follows that for $a \in K \otimes K \otimes A$,
we have
\[
(\mu \circ \ph \circ \nu) (a)
  \in M \big( K \otimes K \otimes A, \, K \otimes K \otimes J (U) \big)
\]
\ifo{} $a \in K \otimes K \otimes J (S_U)$.
Condition~(\ref{T:HKReconstr:2}) of Theorem~\ref{T:HKReconstr}
now follows by applying Lemma~\ref{L-dtIRmk}(\ref{L-dtIRmk:4}).
\end{proof}

\begin{thm}\label{T:ExistAct}
Let $G$ be a second countable locally compact group,
let $P$ be a second countable noncompact
locally compact Hausdorff space,
and let $G$ act \ct ly on~$P$.
Then there exists a separable nuclear \ca~$C$
and an action $\gm$ of $G$ on $C$
such that $K \otimes \OT \otimes C$,
with the action $g \mapsto \id_{K \otimes \OT} \otimes \gm_g$,
is equivariantly isomorphic to~$C$,
and such that
$\Prim (C)$ is equivariantly isomorphic to $\Xi (P)$
with its induced action of~$G$.
\end{thm}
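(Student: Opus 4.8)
The plan is to reduce the whole construction to the equivariant reconstruction theorem, Theorem~\ref{T:EquivReconstr}, by producing a separable stable nuclear \ca{} carrying an \emph{obvious} action of~$G$ together with a $G$-invariant sublattice of its ideal lattice isomorphic to $\Open (\Xi (P))$. The device that makes this work is to pull a pseudocovering back along the action of~$G$. First I would note that $G$ acts continuously on $\Xi (P)$ by Proposition~\ref{P:IndActXi}, and that, since $P$ is noncompact, Proposition~\ref{P:ExistDR} and Proposition~\ref{P:AppOfDR} supply a compact metric space~$Z$ and a continuous open surjection $\pi_0 \colon Z \to \Xi (P)$. Set $X = G \times Z$, let $G$ act by left translation on the first coordinate, and define $\pi \colon X \to \Xi (P)$ by $\pi (g, z) = g \cdot \pi_0 (z)$. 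Then $\pi$ is $G$-equivariant and surjective by construction; it is \ct{} because it factors as the action map $\mu \colon G \times \Xi (P) \to \Xi (P)$ (continuous by Proposition~\ref{P:IndActXi}) composed with $\id_G \times \pi_0$; and it is open because $\id_G \times \pi_0$ is open (a product of open maps) and $\mu$ is open (on a box $W \times V$ its image is $\bigcup_{g \in W} g V$, a union of open sets). Thus $X$ is a second countable locally compact Hausdorff $G$-space and $\pi$ is an equivariant continuous open surjection onto $\Xi (P)$.

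Next take $A = C_0 (X) \otimes K \otimes \OT$ with the action $\af_g = \mathrm{Lt}_g \otimes \id_{K \otimes \OT}$, where $\mathrm{Lt}$ is translation on $C_0 (X)$. Then $A$ is separable, nuclear, and stable, $\af$ is a genuine \ct{} action, and $\Prim (A) \cong X$ equivariantly. Put $\Om = \pi^{-1} \big( \Open (\Xi (P)) \big) \subseteq \Open (X) = \Open (\Prim (A))$. Because $\pi$ is a continuous open surjection, $\pi^{-1}$ is an injection of $\Open (\Xi (P))$ into $\Open (X)$ preserving arbitrary unions and arbitrary infima: openness of $\pi$ gives $\sint_X \pi^{-1} (S) = \pi^{-1} (\sint\, S)$ for any intersection~$S$. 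Hence $\Om$ is a sublattice closed under arbitrary supremums and infimums, it contains $\varnothing$ and $\Prim (A)$, and it is $\af$-invariant since $\pi$ is equivariant; moreover the induced lattice isomorphism $\Open (\Xi (P)) \cong \Om$ intertwines the given $G$-action on $\Xi (P)$ with the $G$-action on~$\Om$.

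Now apply Theorem~\ref{T:EquivReconstr} to the triple $(A, \Om, \af)$. It yields an action $\gm$ on~$A$, stably cocycle conjugate to~$\af$ through a conjugation inducing the identity on $\Prim (A)$, and a $\gm$-equivariant \hm{} $\eta_0 \colon A \to \M (A)$ as in Theorem~\ref{T:HKReconstr}. Let $C = {\mathcal{O}}_{\mathcal{H}}$ be the Cuntz--Pimsner algebra of $\mathcal{H} = (A, \eta_0)$; since $\eta_0$ is equivariant, $\gm$ induces a \ct{} action on~$C$, which I take to be the desired~$\gm$. By Theorem~\ref{T:HKReconstr}, $C$ is separable, nuclear, and stable, and the embedding $A \hookrightarrow C$ induces a lattice isomorphism $\Om \cong \Open (\Prim (C))$. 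Because the conjugation is trivial on $\Prim (A)$, the action of $\gm$ on~$\Om$ agrees with that of~$\af$, so composing with $\Open (\Xi (P)) \cong \Om$ gives a $G$-equivariant isomorphism $\Open (\Xi (P)) \cong \Open (\Prim (C))$. Since $\Xi (P)$ is point-complete (Lemma~\ref{L:PtC}), hence sober, and $\Prim (C)$ is point-complete, this lattice isomorphism is induced by a $G$-equivariant \hme{} $\Prim (C) \cong \Xi (P)$, as required.

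The step needing the most care is the equivariant $K \otimes \OT$-stability, namely that $\big( K \otimes \OT \otimes C, \, \id \otimes \gm \big)$ is equivariantly isomorphic to $(C, \gm)$. Here I would exploit that $A$ is itself equivariantly $K \otimes \OT$-stable: the extra tensor factor carries the trivial action and is absorbed using $K \otimes K \cong K$ and $\OT \otimes \OT \cong \OT$. Running the construction of Theorem~\ref{T:EquivReconstr} on $K \otimes \OT \otimes A$ with the same sublattice reproduces $\big( K \otimes \OT \otimes C, \, \id \otimes \gm \big)$, and the uniqueness up to unitary homotopy of~$\eta_0$ in Theorem~\ref{T:HKReconstr}, used equivariantly, identifies the two actions. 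This absorption argument is the main obstacle; everything else is bookkeeping with the functoriality of the Cuntz--Pimsner construction and the cited results.
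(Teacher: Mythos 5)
Your first three paragraphs are correct, and they constitute a genuine variant of the paper's argument. The paper builds its ambient system by applying Theorem~\ref{T:ExistA} twice: the existence part to get an algebra $A$ with $\Prim (A) \cong \Xi (P)$, and the uniqueness part to produce an automorphism $\ph$ of $B = C_0 (G) \otimes A$ inducing $(g, x) \mapsto (g, g x)$ on $\Prim (B) \cong G \times \Xi (P)$, which is then used to twist the translation action before feeding $B$, with the sublattice $\{ G \times U \}$, into Theorem~\ref{T:EquivReconstr}. You instead take the transparent system $A = C_0 (G \times Z) \otimes K \otimes \OT$ with the translation action, and encode $\Xi (P)$ purely through the pulled-back sublattice $\Om = \pi^{-1} \big( \Open (\Xi (P)) \big)$ along the equivariant continuous open surjection $\pi (g, z) = g \cdot \pi_0 (z)$; your verifications (openness of the action map, the identity $\sint\, \pi^{-1} (S) = \pi^{-1} (\sint\, S)$ for continuous open maps, $\af$-invariance of $\Om$) are all sound. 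This bypasses Theorem~\ref{T:ExistA} entirely, which is a real simplification: Theorem~\ref{T:HKReconstr} only needs a separable stable nuclear algebra and a complete sublattice, not a prescribed primitive ideal space.

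The gap is in your last paragraph. The uniqueness in Theorem~\ref{T:HKReconstr} is only up to unitary homotopy and is \emph{not} equivariant; the phrase ``used equivariantly'' is exactly what is missing from the paper (and from~\cite{HK}). A unitary homotopy between $\id_{K \otimes \OT} \otimes \et_0$ and the homomorphism produced by a second run of Theorem~\ref{T:EquivReconstr} identifies the two Cuntz--Pimsner algebras only non-equivariantly; matching the two $G$-actions would require the intertwining unitaries to be compatible with the group actions, which is an equivariant classification statement that nothing in the paper supplies. There is a second soft spot: the action $\gm$ on $A$ coming from Theorem~\ref{T:EquivReconstr} is only \emph{stably cocycle conjugate} to $\af$, so while $(A, \af)$ absorbs $K \otimes \OT$ equivariantly, it does not follow that $(A, \gm)$ does. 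Fortunately the entire step is unnecessary. As in the paper's own proof, do not try to show that the Cuntz--Pimsner algebra is itself equivariantly $K \otimes \OT$-stable; instead define the final object to be $C = K \otimes \OT \otimes {\mathcal{O}}_{\mathcal{H}}$ with $\gm_g = \id_{K \otimes \OT} \otimes \gm^{(0)}_g$, where $\gm^{(0)}$ is the action induced on your Cuntz--Pimsner algebra. Then $\Prim (C) \cong \Prim ({\mathcal{O}}_{\mathcal{H}}) \cong \Xi (P)$ equivariantly, and $K \otimes \OT \otimes C \cong C$ equivariantly is immediate from $K \otimes K \cong K$ and $\OT \otimes \OT \cong \OT$, because $G$ acts trivially on every $K \otimes \OT$ tensor factor involved. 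With that replacement your argument is complete.
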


\begin{proof}
By Proposition~\ref{P:ExistDR}
and Proposition~\ref{P:AppOfDR},
there exists a second countable compact Hausdorff space~$Z_0$
and a \ct{} open surjection
$\pi_0 \colon Z_0 \to \Xi (P)$.
It follows from Lemma~\ref{L:VerPseudo} that $\pi$ is
pseudo-open and pseudo-epimorphic.
We may therefore apply Theorem~\ref{T:ExistA}
to produce a separable nuclear \ca~$A$
such that $K \otimes \OT \otimes A \cong A$
and such that there is an isomorphism $l \colon \Prim (A) \to \Xi (P)$.

Let $Z = G \times Z_0$,
and let $\pi = \id_G \times \pi_0 \colon Z \to G \times \Xi (P)$.
Then $\pi$ is also a \ct{} open surjection,
so, by the same reasoning as in the previous paragraph,
the space $G \times \Xi (P)$ satisfies
the hypotheses of Theorem~\ref{T:ExistA}.
The algebra $B = C_0 (G) \otimes A$
is separable, nuclear, and satisfies
$K \otimes \OT \otimes B \cong B$.
Let $m \colon \Prim (C_0 (G)) \to G$ be the standard isomorphism,
and write $m \times l$ for the corresponding isomorphism
$\Prim (B) \to G \times \Xi (P)$.
Proposition~\ref{P:IndActXi} implies that
$(g, x) \mapsto (g, g x)$ is a \hme{}
of $\Prim (B) \cong G \times \Xi (P)$.
So, using the uniqueness statement in Theorem~\ref{T:ExistA},
there exists $\ph \in \Aut (B)$
which induces this \hme.
That is,
\[
\big( (m \times l) \circ \Prim (\ph) \circ
             (m \times l)^{-1} \big) (g, x)
  = (g, g x)
\]
for $g \in G$ and $x \in \Xi (P)$.

Define $\rh_g \in \Aut (C_0 (G))$ by
$\rh_g (f) (h) = f (h g)$ for $g, h \in G$ and $f \in C_0 (G)$.
Then
$\big( m \circ \Prim (\rh_g) \circ m^{-1} \big) (h) = h g^{-1}$
for $g, h \in G$.
Therefore the action
\[
g \mapsto \bt_g
  = \ph^{-1} \circ (\rh_g \otimes \id_A) \circ \ph
 \in \Aut (B)
\]
induces the \hme{} $(h, x) \mapsto (h g^{-1}, \, g x)$
on $\Prim (B) \cong G \times \Xi (P)$.

Let $\Om \S {\mathcal{I}} (B) \cong {\mathbb{O}} (G \times \Xi (P))$
be the sublattice of ${\mathcal{I}} (B)$ given in terms of open sets
by
\[
\Om = \big\{ G \times U \colon
      {\mbox{$U$ is an open subset of $\Xi (P)$}} \big\}.
\]
With $B$ in place of $A$ and $\bt$ in place of $\af$,
this choice of $\Om$
clearly satisfies the hypotheses of Theorem~\ref{T:EquivReconstr}.
Let $C_0$ be the \ca~${\mathcal{O}}_{\mathcal{H}}$ of the conclusion
of the theorem.
Equivariance of $\et_0$ means that
${\mathcal{H}}$ is an equivariant Hilbert $A$--$A$-bimodule
in the sense of Remark 4.10(2) of~\cite{Pm}.
Therefore Remark 4.10(2) of~\cite{Pm}
provides an induced action $\gm^{(0)} \colon G \to \Aut (C_0)$.
The conclusion also gives an equivariant isomorphism
of $\Om$ with ${\mathcal{I}} (C_0)$.
There is an obvious equivariant lattice isomorphism
$\Om \cong {\mathbb{O}} (\Xi (P))$,
and we thus obtain an equivariant homeomorphism
$\Prim (C_0) \cong \Xi (P)$.

All that is missing is stability under tensoring with $K \otimes \OT$.
To this end, set $C = K \otimes \OT \otimes C_0$
and $\gm_g = \id_{K \otimes \OT} \otimes \gm_g^{(0)}$.
\end{proof}

\begin{prp}\label{P:RR0}
Suppose the space $P$ in Theorem~\ref{T:ExistAct} is
infinite and discrete.
Then the \ca~$C$ constructed in Theorem~\ref{T:ExistAct}
has real rank zero.
\end{prp}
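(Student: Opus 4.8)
The plan is to reduce the statement to the characterization of real rank zero for purely infinite \ca{s} and then to verify its hypotheses through the identification $\Prim (C) \cong \Xi (P)$ supplied by Theorem~\ref{T:ExistAct}. First I would record the relevant structure of~$C$: from the last line of the proof of Theorem~\ref{T:ExistAct} we have $C = K \otimes \OT \otimes C_0$, where $C_0$ is the Cuntz--Pimsner algebra of Theorem~\ref{T:HKReconstr}. Thus $C$ is separable and nuclear, and by the remark following Theorem~\ref{T:HKReconstr} the algebra $C_0$ is strongly purely infinite; this property is inherited by the stabilization $K \otimes \OT \otimes C_0$, so $C$ is strongly purely infinite and in particular purely infinite. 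By the characterization of real rank zero for purely infinite \ca{s} due to Pasnicu and R\o{}rdam, such an algebra has real rank zero \ifo{} it has the ideal property and its primitive ideal space has a basis of compact-open sets. So it remains only to verify these two conditions for $C$, both of which are properties of $\Xi (P)$.

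Next I would verify the topological condition. Since $P$ is discrete, its compact subsets are exactly its finite subsets, so by Definition~\ref{D:X} the open subsets of $\Xi (P)$ are precisely $\E$, $\Xi (P)$, and the sets $\Xi (P) \SM F$ with $F \S P$ finite. I claim every such open set is compact. This is clear for $\E$ and, by Lemma~\ref{L:Top}, for $\Xi (P)$; for $U = \Xi (P) \SM F$, given an open cover, one member $U_0$ contains~$\I$ and hence is the complement of a finite set, so $U \SM U_0$ is a finite subset of~$P$ and is covered by finitely many further members. Therefore every open subset of $\Xi (P)$ is compact-open, and in particular $\Xi (P)$ has a basis of compact-open sets.

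It remains to establish the ideal property for~$C$. By Remark~\ref{R:SupInf} the ideals of $C$ correspond to the open subsets of $\Xi (P)$, and by the previous paragraph every nonzero such open set $U$ is compact-open; so it suffices to show that each corresponding ideal $J (U)$ is generated, as an ideal, by a single projection. Now $J (U)$ is $\sigma$-unital (as $C$ is separable), purely infinite (being an ideal in a purely infinite algebra), and has compact primitive ideal space~$U$. The step I expect to be the main obstacle is producing a \emph{full projection} in $J (U)$: starting from a strictly positive, hence full, element of $J (U)$, one must use pure infiniteness to replace it by a full projection. This is genuinely where the compactness of~$U$ enters, since pure infiniteness alone does not force the existence of projections (indeed that is exactly why purely infinite algebras need not have real rank zero), so the passage from a full positive generator of the compact ideal $J (U)$ to a full projection must combine proper infiniteness of positive elements with compactness of~$U$. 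Once such a projection is obtained for every compact-open~$U$, the ideal property follows, and together with the basis of compact-open sets it yields real rank zero of~$C$ by the Pasnicu--R\o{}rdam characterization.
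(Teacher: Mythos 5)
Your reduction to pure infiniteness plus properties of $\Xi (P)$ is sound as far as it goes: $C \cong K \otimes \OT \otimes C$ is indeed (strongly) purely infinite, and your verification that every open subset of $\Xi (P)$ is compact when $P$ is infinite and discrete is correct. The genuine gap is exactly the step you flag and then leave undone: the ideal property. As you state the Pasnicu--R{\o}rdam characterization, the ideal property is one of its two hypotheses, and describing what the missing argument ``must combine'' is not the same as giving it; as written, the proposal stops one lemma short of a proof. The needed lemma does exist in this very paper, though later and logically independent of Proposition~\ref{P:RR0}: Lemma~\ref{L-ExistP508} says that ${\mathcal{O}}_{\infty} \otimes B$ contains a full projection whenever $\Prim (B)$ is compact. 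Since $C \cong C \otimes \OT$, every ideal of $C$ is itself $\OT$-stable (ideals of $C \otimes \OT$ have the form $I \otimes \OT$, and $\OT \otimes \OT \cong \OT \cong \OT \otimes {\mathcal{O}}_{\infty}$), hence ${\mathcal{O}}_{\infty}$-stable; applying Lemma~\ref{L-ExistP508} to $J (U)$ for each compact-open $U$ then yields a full projection generating $J (U)$, which closes the gap without circularity. (Alternatively, the Pasnicu--R{\o}rdam theorem in its full form makes real rank zero, the ideal property, and a compact-open basis for the primitive ideal space all equivalent for purely infinite \ca{s}, in which case your topological verification alone would suffice --- but that is not the form of the theorem you quote and use.)

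For comparison, the paper's proof is entirely different and never looks inside $C$ at all. Since $P$ is infinite, discrete, and second countable, one may take $P = \Z$; Corollary~\ref{C:AFPrimZ} provides an AF algebra $A$ with $\Prim (A) \cong \Xi (\Z)$; the algebra $D = K \otimes \OT \otimes A$ then has the same invariants as $C$ (separable, nuclear, $K \otimes \OT \otimes D \cong D$, primitive ideal space $\Xi (\Z)$) and has real rank zero simply because it is $\OT$ tensored with an AF algebra; the uniqueness statement in Theorem~\ref{T:ExistA} then forces $D \cong C$. In other words, the paper buys the conclusion from the classification-type uniqueness theorem by exhibiting a second, visibly real-rank-zero model of $C$, whereas your route --- once the full-projection lemma is supplied --- gives an intrinsic argument that applies directly to the constructed algebra.
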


\begin{proof}
We may as well take $P = \Z$.
By Corollary~\ref{C:AFPrimZ},
there is an AF~algebra $A$
such that $\Prim (A) \cong \Xi (\Z)$.
(This can also be deduced from Section~5 of~\cite{BE}.)
Therefore $D = K \otimes \OT \otimes A$
is a separable nuclear \ca{} such that
$K \otimes \OT \otimes D \cong D$ and $\Prim (A) \cong \Xi (\Z)$.
Since $D$ is the tensor product of $\OT$
with the AF~algebra $K \otimes A$,
clearly $D$ has real rank zero.
The uniqueness statement in Theorem~\ref{T:ExistA}
implies that $D \cong C$.
\end{proof}

\begin{thm}\label{T:ExistMinAct}
Let $G$ be a noncompact second countable locally compact group,
and let $P$ be a locally compact second countable $G$-space
with no nonempty $G$-invariant compact subsets.
Then there exists a separable nuclear nonsimple prime \ca~$C$
such that $K \otimes \OT \otimes C \cong C$,
and such that there is a minimal action
$\gm \colon G \to \Aut (C)$
for which $\Prim (C)$ is equivariantly homeomorphic to $\Xi (P)$.
If the action on~$P$ is effective (Definition~\ref{D-Effective}),
then the action on~$C$ and
the induced action on $\Prim (C)$ are also effective.
If $P$ is discrete then $C$ may be taken to have real rank zero.
\end{thm}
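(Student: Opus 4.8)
The plan is to assemble the machinery already developed, the substantive input being the equivariant existence result Theorem~\ref{T:ExistAct}. First I would dispose of the degenerate case: we may assume $P \neq \E$, since the asserted nonsimplicity of $C$ would fail if $\Xi(P)$ were a single point. Because $P$ is itself a $G$-invariant subset of~$P$, the hypothesis that $P$ has no nonempty compact $G$-invariant subset forces $P$ to be noncompact. Thus $P$ satisfies the hypotheses of Theorem~\ref{T:ExistAct}, and applying that theorem produces a separable nuclear \ca~$C$ with an action $\gm \colon G \to \Aut(C)$ such that $K \otimes \OT \otimes C \cong C$ equivariantly, together with an equivariant homeomorphism $\Prim(C) \cong \Xi(P)$.

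Next I would read off primeness and nonsimplicity directly from the primitive ideal space. Primeness is immediate from Corollary~\ref{L-Prime426}, since $\Prim(C) \cong \Xi(P)$. For nonsimplicity, note that $P \neq \E$ implies $\Xi(P)$ has more than one point; any single point of $P \S \Xi(P)$ is a nonempty closed subset that is not all of $\Xi(P)$, so $\Prim(C)$ is not a single point and $C$ is not simple.

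For minimality I would invoke Lemma~\ref{L:MinXiP}: the assumption that $P$ has no nonempty compact $G$-invariant subset is precisely the condition there equivalent to minimality of the induced action of~$G$ on $\Xi(P)$. Transporting along the equivariant homeomorphism $\Prim(C) \cong \Xi(P)$, the action of $G$ on $\Prim(C)$ is minimal, and hence, by the correspondence between invariant ideals of~$C$ and invariant open subsets of $\Prim(C)$ recorded after Definition~\ref{D:MinActCSt}, the action $\gm$ on~$C$ is minimal.

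Finally I would treat effectiveness and real rank zero. If the action on $P$ is effective, then for $g \in G \SM \{1\}$ there is $x \in P$ with $gx \neq x$; since $P \S \Xi(P)$ carries the same action (Proposition~\ref{P:Funct}), $g$ acts nontrivially on $\Xi(P)$, hence, via the equivariant homeomorphism, on $\Prim(C)$. In particular $\Prim(\gm_g) \neq \id_{\Prim(C)}$, which forces $\gm_g \neq \id_C$; thus both the action on $\Prim(C)$ and the action $\gm$ on~$C$ are effective. If $P$ is discrete, then being noncompact it is infinite, so Proposition~\ref{P:RR0} applies to the algebra produced by Theorem~\ref{T:ExistAct} and shows that $C$ may be taken to have real rank zero. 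The genuine content of the theorem lies entirely in Theorem~\ref{T:ExistAct} (and behind it Theorem~\ref{T:EquivReconstr}); the only step here requiring an argument rather than a citation is the passage from effectiveness on $\Prim(C)$ to effectiveness on~$C$, and that is immediate once one observes that an automorphism acting as the identity on~$C$ induces the identity on $\Prim(C)$.
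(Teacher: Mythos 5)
Your proof is correct and takes essentially the same route as the paper: apply Theorem~\ref{T:ExistAct}, then read off primeness (Corollary~\ref{L-Prime426}), nonsimplicity, minimality (via Lemma~\ref{L:MinXiP} and the correspondence between invariant ideals and invariant open sets), effectiveness, and real rank zero (Proposition~\ref{P:RR0}) from the equivariant homeomorphism $\Prim (C) \cong \Xi (P)$. You are in fact slightly more careful than the paper's own terse proof, which leaves implicit both the minimality verification and your preliminary observations that $P$ must be nonempty and is automatically noncompact.
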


See Example~\ref{E-111109dd}
for the existence of suitable choices for~$P$.

\begin{proof}[Proof of Theorem~\ref{T:ExistMinAct}]
We apply Theorem~\ref{T:ExistAct}.
Let $C$ be the resulting \ca,
with $\gm \colon G \to \Aut (C)$.
Then there is an equivariant homeomorphism
from $\Prim (C)$ to $\Xi (G)$.
Since $\Xi (G)$ has more than one point, $C$ is not simple.
However, $C$ is prime by Corollary~\ref{L-Prime426}.

If the action of $G$ on $P$ is effective,
then so is the action of $G$ on $\Xi (G)$.
The action of $G$ on~$C$ is then also effective.

The last sentence follows from Proposition~\ref{P:RR0}.
\end{proof}

When $G$ is $\Z$ or~$\R$,
we can require that the algebra be unital.
For $G = \Z$,
this follows directly from the results in Section~\ref{Sec:AF}.

\begin{prp}\label{P:ZUnital}
There exists a separable unital nuclear nonsimple prime \ca~$C$
with real rank zero
such that $\OT \otimes C \cong C$,
and such that there is a minimal effective action
$\gm \colon \Z \to \Aut (C)$
for which induced action on $\Prim (C)$ is also effective.
\end{prp}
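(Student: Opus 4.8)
The plan is to obtain $C$ by tensoring with~$\OT$ the unital nonsimple prime AF~algebra produced in Corollary~\ref{C-ZAF428}. So I would begin with the AF~algebra $A$ and the minimal effective action $\af \colon \Z \to \Aut(A)$ of that corollary, for which $\Prim(A)$ is equivariantly homeomorphic to $\Xi(\Z)$, and set $C = \OT \otimes A$ and $\gm = \id_{\OT} \otimes \af$. Since $\OT$ and $A$ are both unital, separable, and nuclear, so is~$C$; and $\OT \otimes C = \OT \otimes \OT \otimes A \cong \OT \otimes A = C$ because $\OT \otimes \OT \cong \OT$, so the required $\OT$-stability is automatic.

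Next I would transport the ideal structure from $A$ to~$C$. Because $\OT$ is simple and nuclear, $J \mapsto \OT \otimes J$ is an (evidently $\gm$-equivariant) lattice isomorphism $\mathcal{I}(A) \to \mathcal{I}(C)$, and hence yields an equivariant homeomorphism $\Prim(C) \cong \Prim(A) \cong \Xi(\Z)$ under which $\gm$ corresponds to~$\af$. Since $\Xi(\Z)$ has more than one point, $C$ is nonsimple, while Corollary~\ref{L-Prime426} shows it is prime; and minimality of $\gm$ is inherited from that of $\af$ through this identification. The induced action on $\Prim(C) \cong \Xi(\Z)$ is $\Xi(h)$ for the translation $h(x) = x + 1$ on~$\Z$; as translation by any $n \neq 0$ fixes no point of~$\Z$, this action on $\Prim(C)$ is effective, which in turn forces $\gm_n \neq \id$ for every $n \neq 0$, giving effectiveness of~$\gm$ on~$C$.

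The only step requiring real work is real rank zero. Writing $A = \dirlim A_i$ with each $A_i$ finite dimensional, continuity of the (nuclear) tensor product gives $C \cong \dirlim(\OT \otimes A_i)$, where each $\OT \otimes A_i$ is a finite direct sum of matrix algebras over~$\OT$, hence a finite direct sum of purely infinite simple \ca{s}, which therefore has real rank zero. I would then invoke the fact that real rank zero passes to inductive limits with arbitrary connecting maps: given a self-adjoint $a \in C$ and $\ep > 0$, approximate $a$ within $\ep/2$ by the image of a self-adjoint element $b$ of some $\OT \otimes A_i$, then replace $b$ within $\ep/2$ by a self-adjoint element of finite spectrum, whose image in~$C$ still has finite spectrum because a homomorphism cannot enlarge the spectrum beyond possibly adjoining~$0$. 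Alternatively, $\OT \otimes A$ is a corner of the real-rank-zero algebra $\OT \otimes (K \otimes A)$ appearing in Proposition~\ref{P:RR0}, and real rank zero passes to hereditary subalgebras.

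The main obstacle, modest as it is, lies in this last verification, since the tensor product of two real-rank-zero algebras need not have real rank zero in general; here it is the AF structure of the second factor that makes the inductive-limit (or corner) argument succeed. Every remaining requirement follows directly from the isomorphism $C = \OT \otimes A$ together with the properties of $A$ supplied by Section~\ref{Sec:AF}.
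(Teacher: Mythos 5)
Your proposal is correct and is exactly the paper's own proof: the paper simply takes $C = \OT \otimes A$ and $\gm_n = \id_{\OT} \otimes \af_n$ with $A$ and $\af$ from Corollary~\ref{C-ZAF428}, leaving the verifications implicit. Your fleshing-out of those verifications (the ideal-lattice transfer via simplicity and nuclearity of~$\OT$, and real rank zero via the inductive limit of finite direct sums of matrix algebras over~$\OT$) is sound and matches what the paper takes for granted, e.g.\ in Proposition~\ref{P:RR0}.
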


\begin{proof}
Let $A$ and the action $\af \colon \Z \to \Aut (A)$
be the algebra and action of Corollary~\ref{C-ZAF428}.
Then take $C = \OT \otimes A$
and $\gm_n = \id_{\OT} \otimes \af_n$ for $n \in \Z$.
\end{proof}

For $G = \R$, more work is needed.
The following lemma is essentially due to Connes.

\begin{lem}\label{L-RMkInv502}
Let $A$ be a \ca,
let $\af \colon \R \to \Aut (A)$ be an action,
and let $p \in A$ be a \pj.
Then there exist a \pj{} $q \in A$
which is \mvnt{} to~$p$
and a cocycle perturbation $\bt \colon \R \to \Aut (A)$ of~$\af$
such that $q$ is $\bt$-invariant.
\end{lem}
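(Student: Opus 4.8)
The plan is to produce a projection $q$ that is norm-close to $p$ (hence \mvnt{} to it) and that lies in the domain of the generator of~$\af$, and then to make the whole orbit of $q$ trivial by perturbing the generator by an inner \emph{self-adjoint} derivation. The passage from $p$ to a nearby $q$ is exactly what the weakened conclusion ``$q$ \mvnt{} to $p$'' buys us: $p$ itself need not be smooth for~$\af$, and there may be a real obstruction to making $p$ itself invariant, whereas any sufficiently close smooth projection can be fixed exactly.

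First I would pass to the smooth subalgebra. Let $\delta$ be the (densely defined, closed) infinitesimal generator of $\af$, and let $A^{\I} = \bigcap_{n} \mathrm{dom}(\delta^{n})$ be the dense $*$-subalgebra of smooth elements, obtained by mollification $R_{\ph}(a) = \int_{\R} \ph(t)\, \af_{t}(a)\, dt$ for $\ph \in C_{c}^{\I}(\R)$. Taking $\ph$ real and running through an approximate identity, $b = R_{\ph}(p)$ is a self-adjoint element of $A^{\I}$ with $\| b - p \|$ as small as we like, say $\| b - p \| < \tfrac14$. Then $\spec(b)$ is contained in two small intervals about $0$ and~$1$, and applying holomorphic functional calculus with a function $f$ equal to $1$ near the upper part of $\spec(b)$ and $0$ near the lower part yields a projection $q = f(b)$. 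Since holomorphic functional calculus preserves $A^{\I}$, we have $q \in A^{\I} \S \mathrm{dom}(\delta)$, while $\| b - p \| < \tfrac14$ gives $\| q - p \| < 1$, so $q$ is \mvnt{} to~$p$.

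Next I would write down the cocycle generator explicitly, following Connes. Set
\[
h = i \bigl( \delta(q)\, q - q\, \delta(q) \bigr) \in A_{\sa},
\]
self-adjointness using $\delta(q)^{*} = \delta(q)$. The key identity is $q\, \delta(q)\, q = 0$, which follows from $\delta(q) = \delta(q) q + q\, \delta(q)$ on multiplying left and right by~$q$; combining this with $\delta(q) q + q\, \delta(q) = \delta(q)$ a direct computation gives $[h, q] = i\, \delta(q)$, so that $i[h, q] = -\delta(q)$ and hence $\bigl( \delta + i[h, \cdot\,] \bigr)(q) = \delta(q) + i[h, q] = 0$. Because $i[h, \cdot\,]$ is a bounded $*$-derivation, standard perturbation theory for generators of one-parameter automorphism groups shows that $\delta + i[h, \cdot\,]$ again generates a one-parameter group $\bt$, which is precisely the cocycle perturbation of~$\af$ by the $\af$-cocycle obtained by integrating this bounded inner perturbation; thus $\bt_{t} = \Ad(u_{t}) \circ \af_{t}$ in the sense of the cocycle perturbations discussed earlier. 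Since $q \in \mathrm{dom}(\delta) = \mathrm{dom}(\delta + i[h, \cdot\,])$ and the perturbed generator annihilates $q$, the constant path $t \mapsto q$ solves the defining differential equation $\tfrac{d}{dt} \bt_{t}(q) = \bigl( \delta + i[h, \cdot\,] \bigr)(\bt_{t}(q))$ with initial value~$q$; by uniqueness $\bt_{t}(q) = q$ for all $t$, so $q$ is $\bt$-invariant.

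The main obstacle is the simultaneous requirement that $q$ be a genuine projection \emph{and} lie in $\mathrm{dom}(\delta)$; this is resolved by the observation that a smooth self-adjoint approximant to~$p$ has disconnected spectrum, so holomorphic functional calculus yields a smooth projection. The only other point needing care is that a bounded inner perturbation of the generator integrates to an honest cocycle perturbation of the flow, which is routine one-parameter perturbation theory. I would expect the write-up's delicate steps to be just the verification that mollification lands in $\mathrm{dom}(\delta)$ and the identification of the perturbed generator with a cocycle perturbation of~$\af$.
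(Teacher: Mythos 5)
Your proof is correct, and it takes essentially the same route as the paper: the paper's proof unitizes and then cites Connes --- Lemma~4 of Section~VI of~\cite{Cn} for a smooth projection \mvnt{} to~$p$, and Proposition~4 of Section~II of~\cite{Cn} for the cocycle perturbation fixing it. Your mollification-plus-holomorphic-functional-calculus step and the inner perturbation of the generator by $h = i[\delta(q), q]$ are precisely the content of those two cited results, so your write-up is just a self-contained unfolding of the paper's argument.
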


\begin{proof}
It is enough to prove this for the unitization $A^{+}$ of~$A$,
since if $p \in A$ is \mvnt{} to a \pj{} $q \in A^{+}$,
then $q \in A$.
So assume that $A$ is unital.

Let $A^{\infty}$ be the dense subalgebra of $A$
consisting of the $\af$-smooth elements in~$A$.
Lemma~4 in Section~VI of~\cite{Cn}
provides a \pj{} $q \in A^{\I}$
which is \mvnt{} to~$p$.
The existence of~$\bt$
now follows from Proposition~4 in Section~II of~\cite{Cn}.
\end{proof}

\begin{lem}\label{L-ExistP508}
Let $A$ be a \ca{} such that $\Prim (A)$ is compact.
Then ${\mathcal{O}}_{\infty} \otimes A$ contains a full \pj.
\end{lem}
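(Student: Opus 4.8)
The plan is to reduce the problem to producing inside $B = {\mathcal{O}}_{\infty} \otimes A$ a positive element that is simultaneously full, ``uniformly full'' in the sense that its norm is attained in every primitive quotient, and properly infinite; the structure theory of properly infinite positive elements then upgrades such an element to a full \pj.

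First I would use compactness of $\Prim (A)$ to manufacture a convenient full element of~$A$. For $a \in A_{+}$ the function $P \mapsto \| a + P \|$ on $\Prim (A)$ is lower semicontinuous, and the sets $U_a = \{ P \colon \| a + P \| > 0 \}$, for $a \in A_{+}$, form an open cover of $\Prim (A)$. By compactness, finitely many $U_{a_1}, \ldots, U_{a_n}$ cover, and since $\| (a_1 + \cdots + a_n) + P \| \geq \| a_j + P \|$ for positive elements, the element $a = a_1 + \cdots + a_n$ is full, that is, $\| a + P \| > 0$ for every $P \in \Prim (A)$. A lower semicontinuous function attains its infimum on a compact space, so $\delta = \inf_{P} \| a + P \|$ is attained and hence strictly positive since $a$ is full. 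Choosing a \cfn{} $f \colon [0, \| a \|] \to [0, 1]$ with $f (0) = 0$ and $f \equiv 1$ on $[\delta, \| a \|]$, and setting $b = f (a)$, I obtain a positive contraction $b \in A$ with $\| b + P \| = 1$ for every $P \in \Prim (A)$ (because $\spec (a + P)$ meets $[\delta, \| a \|]$).

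Next I would pass to $B = {\mathcal{O}}_{\infty} \otimes A$ and set $c = 1_{{\mathcal{O}}_{\infty}} \otimes b \in B_{+}$. Since ${\mathcal{O}}_{\infty}$ is simple and exact, $J \mapsto {\mathcal{O}}_{\infty} \otimes J$ is a lattice isomorphism $\Ideal (A) \to \Ideal (B)$, so $\Prim (B) \cong \Prim (A)$ and each primitive quotient of~$B$ is ${\mathcal{O}}_{\infty} \otimes (A / P)$; hence $\| c + Q \| = \| b + P \| = 1$ for every $Q \in \Prim (B)$, so that $c$ is full and its norm is attained in every primitive quotient. Moreover $c$ is properly infinite: choosing isometries $s_1, s_2 \in {\mathcal{O}}_{\infty}$ with $s_1 s_1^* \perp s_2 s_2^*$, the elements $(s_i \otimes 1) c (s_i \otimes 1)^* = s_i s_i^* \otimes b$ are orthogonal, each Cuntz below~$c$, and sum to $(s_1 s_1^* + s_2 s_2^*) \otimes b \leq c$, which exhibits $c \oplus c \precsim c$.

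The main point, and the step I expect to be the real obstacle, is the final upgrade: a properly infinite positive element whose norm is attained in every primitive quotient is Cuntz equivalent to a \pj. Here I would invoke the structure theory of properly infinite elements of Kirchberg and R{\o}rdam. The uniform fullness is essential: over a connected noncompact base, a full, fiberwise properly infinite positive element whose norm decays at infinity is still properly infinite yet is Cuntz equivalent to no \nzp, which is consistent with the failure of the conclusion when $\Prim (A)$ is noncompact. Granting that $c$ is Cuntz equivalent to a \pj{} $p \in \ov{c B c}$, the element~$p$ generates the same closed ideal as~$c$, namely all of~$B$, so $p$ is the desired full \pj. Finally, should a separability or $\sigma$-unitality hypothesis be needed to apply the cited results, one may first replace $A$ by a separable full \ca{} containing~$b$; this is harmless and is automatic in the applications, where $A$ is separable.
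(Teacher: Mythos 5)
Your first half coincides with the paper's own proof: compactness of $\Prim (A)$ together with lower semicontinuity of $I \mapsto \| x + I \|$ produces a positive element whose norm is bounded below in every primitive quotient (the paper's element $h (x)$), and your verification that $c = 1 \otimes b$ is properly infinite using two isometries of ${\mathcal{O}}_{\infty}$ is correct. The gap is your final paragraph, which is where all the content of the lemma actually sits. The algebra $B = {\mathcal{O}}_{\infty} \otimes A$ is purely infinite (Proposition~4.5 of~\cite{KR0}, the same fact the paper uses), and in a purely infinite \ca{} any two full positive elements are Cuntz equivalent; consequently the assertion ``$c$ is Cuntz equivalent to a \pj'' is not merely sufficient for the conclusion --- it is \emph{equivalent} to the conclusion that $B$ contains a full \pj. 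So the ``structure theory'' result you propose to invoke is, in the present setting, a restatement of the lemma itself, and invoking it with no precise citation and no proof is circular. You have correctly located the obstacle (and your $C_0 ((0, 1])$-type example correctly explains why uniform fullness must enter), but locating it is not the same as overcoming it.

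Concretely, two steps are missing. The easier one is the bridge from uniform fullness to Cuntz comparison: for $\varepsilon \in (0, 1)$, if $c$ were not in the closed ideal generated by $(c - \varepsilon)_+$, some primitive quotient would kill $(c - \varepsilon)_+$ while $\| c + Q \| = 1$, a contradiction; since $(c - \varepsilon)_+ = 1 \otimes (b - \varepsilon)_+$ is again properly infinite, comparison below properly infinite elements (Section~3 of~\cite{KR0}) then gives $c \precsim (c - \varepsilon)_+$. The hard step is converting ``$c$ properly infinite and $c \precsim (c - \varepsilon)_+$'' into an actual \pj, and this is exactly what the paper's proof exists to do: from pure infiniteness it extracts elements $y_n$ with $y_n c_1 y_n^* \to a$, builds approximate scaling elements $v_n = x_n + (1 - x_n^* x_n)^{1/2}$ following Theorem~3.1 of~\cite{BC}, corrects them to isometries $s_n = v_n (v_n^* v_n)^{-1/2}$, and proves that the \pj{} $e = 1 - s_n s_n^*$ is full via the estimate $\| e + M_2 \otimes {\mathcal{O}}_{\infty} \otimes I \| > \tfrac{1}{2}$, which is the one place the uniform lower bound is used. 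Unless you can supply a precise published statement of the implication you want, your proposal is missing exactly this scaling-element construction. A secondary flaw: your separability fix is not ``harmless,'' since $b$ need not remain uniformly full in a small subalgebra containing it (it is not uniformly full even in $C^* (b)$ unless $0$ is isolated in its spectrum); fortunately $\sigma$-unitality of ${\overline{c B c}}$ is automatic, so this repair is not needed if the main step is argued correctly.
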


\begin{proof}
Since $M_2 \otimes {\mathcal{O}}_{\infty} \otimes A$
is isomorphic to a full subalgebra
of ${\mathcal{O}}_{\infty} \otimes A$,
it suffices to prove that $M_2 \otimes {\mathcal{O}}_{\infty} \otimes A$ contains a full \pj.

We first claim that there is $x \in A$ such that $\| x + I \| \geq 1$
for all $I \in \Prim (A)$.
Recall (Proposition 3.3.2 of~\cite{Dx})
that for any $x \in A$,
the map $I \mapsto \| x + I \|$ is lower semi\ct{}
on $\Prim (A)$.
For $I \in \Prim (A)$,
choose $x_I \in A_{+}$ such that $\| x_I + I \| \geq 2$.
The sets
\[
U_I = \big\{ J \in \Prim (A) \colon \| x_I + J \| > 1 \big\}
\]
form an open cover of $\Prim (A)$.
So there is a finite set $F \S \Prim (A)$
such that the sets $U_I$, for $I \in F$, cover $\Prim (A)$.
The element $x = \sum_{I \in F} x_I$ verifies the claim.

Choose \cfn{s} $f, g, h \colon [0, \I) \to [0, 1]$
such that $f (0) = g (0) = h (0) = 0$,
$f g = g$, $g h = h$, and $h (\ld) = 1$ for all $\ld \in [1, \I)$.
Let $x$ be as in the claim.
Then $f (x), g (x), h (x) \in A_{+}$
and satisfy $f (x) g (x) = g (x)$ and $g (x) h (x) = h (x)$.
Moreover, for all $I \in \Prim (A)$,
we have $\| h (x) + I \| = 1$.
Define elements
\[
a, b_1, b_2, c_1, c_2
 \in M_2 \otimes {\mathcal{O}}_{\infty} \otimes A = M_2 ({\mathcal{O}}_{\infty} \otimes A)
\]
by
\[
a = [1 \otimes f (x)] \oplus [1 \otimes f (x)],
\,\,\,\,\,\,
b_1 = [1 \otimes g (x)] \oplus 0,
\,\,\,\,\,\,
b_2 = 0 \oplus [1 \otimes g (x)],
\]
\[
c_1 = [1 \otimes h (x)] \oplus 0,
\andeqn
c_2 = 0 \oplus [1 \otimes h (x)].
\]
Then
\begin{equation}\label{Eq:Unit508}
a b_1 = b_1,
\,\,\,\,\,\,
b_1 c_1 = c_1,
\,\,\,\,\,\,
a b_2 = b_2,
\andeqn
b_2 c_2 = c_2.
\end{equation}

It follows from Proposition~4.5 of~\cite{KR0}
that $M_2 \otimes {\mathcal{O}}_{\infty} \otimes A$ is purely infinite.
Since $a$ and $c_1$ are both full,
there is a sequence $(y_n)_{n \in \N}$
in $M_2 \otimes {\mathcal{O}}_{\infty} \otimes A$
such that $\limi{n} y_n c_1 y_n^* = a$.
\Wolog{} $y_n c_1 y_n^* \neq 0$ for all $n \in \N$.
For $n \in \N$ define $z_n = \| y_n c_1 y_n^* \|^{-1/2} y_n$.
Since $\| a \| = 1$,
we also get $\limi{n} z_n c_1 z_n^* = a$,
and now $\| z_n c_1 z_n^* \| = 1$ for all $n \in \N$.

For $n \in \N$,
define $x_n = c_1^{1/2} z_n^*$.
Then $\| x_n \| = 1$.
We have $\limi{n} x_n^* x_n = a$,
so~(\ref{Eq:Unit508}) implies
\[
\limi{n} x_n^* x_n b_1 = b_1
\andeqn
\limi{n} x_n^* x_n b_2 = b_2.
\]
Therefore $\limi{n} (1 - x_n^* x_n) b_1 = 0$.
By approximating $(1 - x_n^* x_n)^{1/2}$ with polynomials
in $1 - x_n^* x_n$ with no constant term,
we deduce that $\limi{n} (1 - x_n^* x_n)^{1/2} b_1 = 0$.
Using~(\ref{Eq:Unit508}),
we get $b_1 c_1^{1/2} = c_1^{1/2}$,
so $b_1 x_n = x_n$.
Therefore
\[
\big\| (1 - x_n^* x_n)^{1/2} x_n \big\|
  \leq \big\| (1 - x_n^* x_n)^{1/2} b_1 \big\|
                \cdot \| x_n \|
  = \big\| (1 - x_n^* x_n)^{1/2} b_1 \big\|.
\]
Thus
\begin{equation}\label{Eq:OrtLim508}
\lim_{n \to \I} \big\| (1 - x_n^* x_n)^{1/2} x_n \big\| = 0.
\end{equation}

We also get
\begin{equation}\label{Eq:Ort508}
x_n^* b_2 = z_n c_1^{1/2} b_2 = 0
\end{equation}
for all $n \in \N$.

We now follow the proof of Theorem~3.1 of~\cite{BC},
except that we don't have an exact scaling element.
Define
\[
v_n = x_n + (1 - x_n^* x_n)^{1/2}
     \in (M_2 \otimes {\mathcal{O}}_{\infty} \otimes A)^{+}
\]
for $n \in \N$.
Then
\[
v_n^* v_n
  = x_n^* x_n + (1 - x_n^* x_n)^{1/2} x_n
     + \big[ (1 - x_n^* x_n)^{1/2} x_n \big]^* + (1 - x_n^* x_n),
\]
so $\limi{n} v_n^* v_n = 1$ by~(\ref{Eq:OrtLim508}).
Redefining initial terms of the sequence $(v_n)_{n \in \N}$
to be~$1$,
we may moreover
assume that $v_n^* v_n$ is invertible for all $n \in \N$,
and define $s_n = v_n (v_n^* v_n)^{-1/2}$.
Then $\limi{n} \| s_n - v_n \| = 0$.
Also,
\[
v_n v_n^*
  = x_n x_n^* + x_n (1 - x_n^* x_n)^{1/2}
      +  (1 - x_n^* x_n)^{1/2} x_n^* + (1 - x_n^* x_n).
\]
{}From (\ref{Eq:Ort508}) and~(\ref{Eq:Unit508}),
we conclude that $\limi{n} v_n v_n^* b_2 = 0$,
so that $\limi{n} v_n v_n^* c_2 = 0$.
Therefore $\limi{n} s_n s_n^* c_2 = 0$.
Also, for all $n \in \N$, we have
$1 - v_n v_n^* \in M_2 \otimes {\mathcal{O}}_{\infty} \otimes A$,
so
\[
1 - s_n s_n^*
 = 1 - v_n v_n^* + v_n \big[ 1 - (v_n^* v_n)^{-1} \big] v_n^*
 \in M_2 \otimes {\mathcal{O}}_{\infty} \otimes A.
\]

Choose some $n \in \N$ such that $\| s_n s_n^* c_2 \| < \tfrac{1}{2}$.
Set $e = 1 - s_n s_n^*$, which is a projection.
For every $I \in \Prim (A)$,
we have
\begin{align*}
\| e + M_2 \otimes {\mathcal{O}}_{\infty} \otimes I \|
& \geq \| e c_2 + M_2 \otimes {\mathcal{O}}_{\infty} \otimes I \|
  \geq \| c_2 + M_2 \otimes {\mathcal{O}}_{\infty} \otimes I \|
          - \| s_n s_n^* c_2 \|
     \\
& = \| h (x) + I \|
          - \| s_n s_n^* c_2 \|
  > \tfrac{1}{2}.
\end{align*}
Therefore $e + M_2 \otimes {\mathcal{O}}_{\infty} \otimes I \neq 0$.
It follows that $e$ is full.
\end{proof}

\begin{prp}\label{P-RUnital}
There exists a separable unital nuclear nonsimple prime \ca~$C$
such that $\OT \otimes C \cong C$,
and such that there is a minimal effective action
$\af \colon \R \to \Aut (C)$
for which induced action on $\Prim (C)$ is also effective.
\end{prp}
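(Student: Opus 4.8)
The plan is to start from the stable example furnished by Theorem~\ref{T:ExistMinAct} and cut it down to a unital full corner by a projection which, after a cocycle perturbation, has been made invariant. Concretely, I would apply Theorem~\ref{T:ExistMinAct} with $G = \R$ and $P = \R$ under the translation action (Example~\ref{E-111109dd}), obtaining a separable nuclear nonsimple prime \ca~$B$ with $K \otimes \OT \otimes B \cong B$ and a minimal effective action $\gm \colon \R \to \Aut (B)$ whose induced action on $\Prim (B) \cong \Xi (\R)$ is also effective. By Lemma~\ref{L:Top} the space $\Xi (\R)$ is compact, so $\Prim (B)$ is compact. The whole point is to upgrade this to a unital algebra without losing minimality, primeness, or $\OT$-absorption.

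Next I would produce an invariant full projection. Since $B \cong \OT \otimes B$ and $\OT$ absorbs ${\mathcal{O}}_{\infty}$, we have ${\mathcal{O}}_{\infty} \otimes B \cong B$; as $\Prim (B)$ is compact, Lemma~\ref{L-ExistP508} then supplies a full projection $p \in B$. Feeding $\gm$ and~$p$ into the Connes-type Lemma~\ref{L-RMkInv502} yields a projection $q \in B$ that is \mvnt{} to~$p$ (hence again full) together with a cocycle perturbation $\bt \colon \R \to \Aut (B)$ of~$\gm$ leaving $q$ invariant. I would then set $C = q B q$ and let $\af \colon \R \to \Aut (C)$ be the restriction of~$\bt$, which is legitimate because $\bt_g (q) = q$.

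To finish, I would verify that $(C, \af)$ has all the required properties. The algebra $C$ is unital with unit~$q$, separable, and nuclear; being a full corner of~$B$ it is Morita equivalent to~$B$, so the Rieffel correspondence gives an order isomorphism between the ideals of~$B$ and those of~$C$ together with a \hme{} $\Prim (C) \cong \Prim (B) \cong \Xi (\R)$. Hence $C$ is prime by Corollary~\ref{L-Prime426} and nonsimple because $\Xi (\R)$ has more than one point. Since $C$ is a hereditary subalgebra of the separable $\OT$-stable algebra~$B$, the standard permanence of $\OT$-absorption under passage to hereditary subalgebras gives $\OT \otimes C \cong C$. For minimality, a cocycle perturbation has exactly the same invariant ideals as the original action, so $\bt$ is minimal on~$B$; because $\bt_g (q J q) = q \bt_g (J) q$, the ideal correspondence above is $\bt$--$\af$ equivariant, whence $\af$ is minimal on~$C$. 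Finally, $\bt$ and $\gm$ induce the same \hme{s} on $\Prim (B)$, so under $\Prim (C) \cong \Prim (B)$ the action induced by~$\af$ is the given effective one, and effectiveness of $\af$ on~$C$ follows.

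The hard part will be the combination in the second paragraph: fullness and invariance of a projection pull in opposite directions, and it is exactly here that the group~$\R$ enters in an essential way, through the smoothing and averaging argument underlying Lemma~\ref{L-RMkInv502} (which is why the analogous unital statement for general groups is not available). The only other point that needs genuine care is confirming that cutting down to the corner preserves $\OT$-stability, handled above by the hereditary-subalgebra permanence property.
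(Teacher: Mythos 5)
Your proposal is correct and is essentially the paper's own proof: apply Theorem~\ref{T:ExistMinAct} with $G = \R$ acting on $P = \R$ by translation, use compactness of $\Xi (\R)$ together with ${\mathcal{O}}_{\infty}$-absorption and Lemma~\ref{L-ExistP508} to produce a full projection, use Lemma~\ref{L-RMkInv502} to replace it by a \mvnt{} (hence still full) projection fixed by a cocycle perturbation of the action, and pass to the resulting full corner, transferring primeness, nonsimplicity, minimality, and effectiveness through the equivariant Rieffel correspondence exactly as you describe. The single point where you diverge is how $\OT \otimes C \cong C$ is secured: you invoke the permanence of $\OT$-stability under passage to hereditary subalgebras of separable $\OT$-stable algebras (true, by Toms--Winter for strongly self-absorbing algebras, but a nontrivial external result the paper neither proves nor cites), whereas the paper sidesteps this by defining $C = \OT \otimes p C_0 p$ with the action $\id_{\OT} \otimes \gm^{(1)}_t |_{p C_0 p}$, so that absorption follows from $\OT \otimes \OT \cong \OT$ alone.
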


\begin{proof}
Apply Theorem~\ref{T:ExistMinAct} with $G = \R$
and with $P = \R$ with the translation action,
getting $C_0$ and an action $\gm^{(0)} \colon \R \to \Aut (C_0)$.
Since $\Xi (\R)$ is compact,
Lemma~\ref{L-ExistP508} provides a full \pj{} $p_0 \in C_0$.
Using Lemma~\ref{L-RMkInv502},
we can find an exterior equivalent action
$\gm^{(1)} \colon \R \to \Aut (C_0)$
and a \pj{} $p \in C_0$ which is
\mvnt{} to~$p_0$
such that $\gm^{(1)}_t (p) = p$ for all $t \in \R$.
Then $p$ is also full.
Now take $C = \OT \otimes p C_0 p$
and $\gm_t = \id_{\OT} \otimes \gm^{(1)}_t |_{p C_0 p}$.
\end{proof}

\section{Exotic actions on $\OT$}\label{Sec:3}

\indent
In this section,
for every second countable locally compact abelian group
which is not discrete,
we prove the existence of an action
$\af \colon G \to \Aut (K \otimes \OT)$
such that the crossed product is prime but not simple.
This behavior is unlike what one usually expects
for crossed products by actions on simple \ca{s}.
For $G = S^1$ and $G = \R$,
the action can be taken to be on $\OT$ instead of $K \otimes \OT$,
and for $G = S^1$
the action can be taken to be on
a simple separable stably finite unital nuclear \ca.

To obtain such examples,
we start with actions of ${\widehat{G}}$
as in Theorem~\ref{T:ExistMinAct},
Proposition~\ref{P:ZUnital},
or Corollary~\ref{C-FnAct508} with $n = 1$,
and take the dual action of~$G$.
We do not know whether crossed products
by actions such as in Theorem~\ref{T:ExistMinAct}
are necessarily simple
(although this is true in some cases),
so the first part of this section is devoted to a method
for modifying such an action slightly
so as to ensure simplicity of the crossed product.

Related examples can be produced using~\cite{Brt}.
The algebras are unital for more groups,
in particular for connected abelian groups,
but there is much less control over the primitive ideal spaces
of the crossed products.
See Theorem~\ref{T_2X31-Brt}.

We will use the Fock-Toeplitz construction of~${\mathcal{O}}_{\infty}$.
For reference, and to establish notation,
we summarize it here.
It is a special case of, for example,~\cite{Pm}.

\begin{dfn}\label{D:TCAlg}
Let $H$ be a Hilbert space.
Define the Fock-Toeplitz \ca{} ${\mathcal{E}} (H)$
to be the universal \uca{} on generators
$s_{\xi}$ for $\xi \in H$ and relations:
\begin{enumerate}
\item\label{D:TCAlg:Add}
$s_{\xi + \et} = s_{\xi} + s_{\et}$
for $\xi, \et \in H$.
\item\label{D:TCAlg:Mult}
$s_{ \ld \xi} = \ld s_{\xi}$
for $\ld \in \C$ and $\xi \in H$.
\item\label{D:TCAlg:Cz}
$s_{\xi}^* s_{\et} = \langle \et, \xi \rangle \cdot 1$
for $\xi, \et \in H$.
\end{enumerate}
\end{dfn}

In the notation of Definition~1.1 of~\cite{Pm},
this algebra is ${\mathcal{T}}_H$ when $H$ is regarded
in the obvious way as a $\C$--$\C$--bimodule in the sense of~\cite{Pm}.
It is the same as the algebra ${\mathcal{O}}_H$
of Definition~1.1 of~\cite{Pm},
by Corollary~3.14 of~\cite{Pm}.

\begin{lem}\label{L:TCOfL2}
Let $H$ be a separable in\fd{} Hilbert space.
Then ${\mathcal{E}} (H) \cong {\mathcal{O}}_{\infty}$.
\end{lem}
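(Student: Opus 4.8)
The plan is to exhibit both algebras as the universal \uca{} on the same generators and relations, and then match the generators. Recall that ${\mathcal{O}}_{\infty}$ is the universal \uca{} generated by a sequence of isometries $t_1, t_2, \ldots$ with mutually orthogonal ranges, that is, with $t_n^* t_m = \delta_{nm} \cdot 1$ for all $m, n \in \N$; crucially, no Cuntz relation on $\sum_n t_n t_n^*$ is imposed. Since $H$ is separable and in\fd, I would first fix a (countably infinite) orthonormal basis $(e_n)_{n \in \N}$ of~$H$.

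First I would construct a \hm{} $\Phi \colon {\mathcal{O}}_{\infty} \to {\mathcal{E}}(H)$. Relation~(3) of Definition~\ref{D:TCAlg} gives $s_{e_n}^* s_{e_m} = \langle e_m, e_n \rangle \cdot 1 = \delta_{nm} \cdot 1$, so the $s_{e_n}$ are isometries with mutually orthogonal ranges, and the universal property of ${\mathcal{O}}_{\infty}$ yields a unital \hm{} $\Phi$ with $\Phi(t_n) = s_{e_n}$. Conversely, to build $\Psi \colon {\mathcal{E}}(H) \to {\mathcal{O}}_{\infty}$, for $\xi \in H$ I would set $\tilde s_\xi = \sum_{n=1}^{\infty} \langle \xi, e_n \rangle t_n$. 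Orthogonality of ranges gives $\| \sum_{n=N}^{M} \langle \xi, e_n \rangle t_n \|^2 = \sum_{n=N}^{M} |\langle \xi, e_n \rangle|^2$, which is Cauchy because $\sum_n |\langle \xi, e_n \rangle|^2 = \|\xi\|^2 < \infty$, so the series converges in norm. A direct check shows $\xi \mapsto \tilde s_\xi$ is linear and $\tilde s_\xi^* \tilde s_\eta = \sum_n \overline{\langle \xi, e_n \rangle} \langle \eta, e_n \rangle \cdot 1 = \langle \eta, \xi \rangle \cdot 1$, so the family $(\tilde s_\xi)_{\xi \in H}$ satisfies relations (1)--(3) of Definition~\ref{D:TCAlg}; the universal property of ${\mathcal{E}}(H)$ then yields a unital \hm{} $\Psi$ with $\Psi(s_\xi) = \tilde s_\xi$.

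Finally I would verify that $\Phi$ and $\Psi$ are mutually inverse. On generators, $(\Psi \circ \Phi)(t_n) = \Psi(s_{e_n}) = \tilde s_{e_n} = t_n$, so $\Psi \circ \Phi = \id$ since the $t_n$ generate ${\mathcal{O}}_{\infty}$. For the other composite, relation~(3) gives $s_\xi^* s_\xi = \|\xi\|^2 \cdot 1$, so $\xi \mapsto s_\xi$ is isometric, hence norm-continuous, and therefore $s_\xi = \sum_n \langle \xi, e_n \rangle s_{e_n}$ in ${\mathcal{E}}(H)$. Thus $(\Phi \circ \Psi)(s_\xi) = \Phi(\tilde s_\xi) = \sum_n \langle \xi, e_n \rangle s_{e_n} = s_\xi$, and $\Phi \circ \Psi = \id$ as the $s_\xi$ generate ${\mathcal{E}}(H)$. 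Hence $\Phi$ is an isomorphism.

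The one point requiring care is the characterization of ${\mathcal{O}}_{\infty}$ as the universal \ca{} on countably many isometries with orthogonal ranges, together with the norm convergence of the series defining $\tilde s_\xi$; these are the load-bearing facts, while the relation-checking is routine. Alternatively, one could bypass the explicit generators by invoking that ${\mathcal{E}}(H) = {\mathcal{O}}_H$ (the remark after Definition~\ref{D:TCAlg}, via Corollary~3.14 of~\cite{Pm}) and identifying the Cuntz--Pimsner algebra of the in\fd{} $\C$--$\C$--bimodule $H$ with ${\mathcal{O}}_{\infty}$, but the universal-property argument above is self-contained and seems cleanest.
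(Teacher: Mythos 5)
Your proof is correct, and it takes a genuinely different route: the paper's entire proof of this lemma is the citation ``This is Example~(1) after Proposition~1.3 in~\cite{Pm}'', i.e.\ it outsources the identification to Pimsner's paper, whereas you give a self-contained universal-property argument. Your bookkeeping is consistent with the convention of Definition~\ref{D:TCAlg} (the inner product there must be linear in the \emph{first} variable, since $s_{\xi}^* s_{\eta} = \langle \eta, \xi \rangle \cdot 1$ is linear in $\eta$); the norm convergence of $\sum_n \langle \xi, e_n \rangle t_n$ via the C*-identity is right; and the continuity of $\xi \mapsto s_{\xi}$, coming from $\| s_{\xi} \|^2 = \| s_{\xi}^* s_{\xi} \| = \| \xi \|^2$, is exactly what legitimizes the expansion $s_{\xi} = \sum_n \langle \xi, e_n \rangle s_{e_n}$, so that checking $\Psi \circ \Phi = \id$ and $\Phi \circ \Psi = \id$ on generators suffices. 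What your route buys is transparency and independence from~\cite{Pm}; the cost is that the ``load-bearing fact'' you correctly flag --- that ${\mathcal{O}}_{\infty}$ is the universal \ca{} on countably many isometries with mutually orthogonal ranges --- is itself a theorem of Cuntz (one must know the universal algebra on these relations is simple, hence coincides with any concretely generated copy), so your argument is not really more elementary than the paper's, just differently sourced. Note also that the alternative you sketch in your closing paragraph, via ${\mathcal{E}}(H) = {\mathcal{O}}_H$ and Pimsner's identification of the Cuntz--Pimsner algebra of an infinite dimensional $\C$--$\C$--bimodule, is precisely the paper's actual proof; the paper's choice keeps the exposition uniform, since Definition~\ref{D:TCAlg} is already framed as a special case of Pimsner's construction and Corollary~3.14 of~\cite{Pm} is invoked in the surrounding text anyway.
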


\begin{proof}
This is Example~(1) after Proposition~1.3 in~\cite{Pm}.
\end{proof}

\begin{lem}\label{L:GOnFc}
Let $G$ be a locally compact group,
and let $g \mapsto u_g$ be a unitary representation of $G$
on a Hilbert space~$H$.
Then there is a
unique \ct{} action $\bt \colon G \to \Aut ({\mathcal{E}} (H))$
such that $\bt (s_{\xi}) = s_{u_g \xi}$ for all $\xi \in H$.
\end{lem}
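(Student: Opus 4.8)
The plan is to build each $\bt_g$ from the universal property of ${\mathcal{E}} (H)$ and then assemble these maps into a continuous action. First I would fix $g \in G$ and consider the family of elements $s_{u_g \xi} \in {\mathcal{E}} (H)$, indexed by $\xi \in H$. Because $u_g$ is linear, the assignment $\xi \mapsto s_{u_g \xi}$ respects Conditions~(\ref{D:TCAlg:Add}) and~(\ref{D:TCAlg:Mult}) of Definition~\ref{D:TCAlg}; and because $u_g$ is unitary,
\[
s_{u_g \xi}^* s_{u_g \et} = \langle u_g \et, \, u_g \xi \rangle \cdot 1 = \langle \et, \xi \rangle \cdot 1,
\]
so Condition~(\ref{D:TCAlg:Cz}) holds as well. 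The universal property of ${\mathcal{E}} (H)$ therefore yields a unique unital \hm{} $\bt_g \colon {\mathcal{E}} (H) \to {\mathcal{E}} (H)$ with $\bt_g (s_{\xi}) = s_{u_g \xi}$ for all $\xi \in H$. The verification of the Cuntz relation is the one place where unitarity of the representation is essential.

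Next I would check that $g \mapsto \bt_g$ is a group \hm{} into $\Aut ({\mathcal{E}} (H))$. Since the $s_{\xi}$ generate ${\mathcal{E}} (H)$ as a \ca, two unital \hm{s} that agree on all $s_{\xi}$ must coincide. On generators we have $\bt_g (\bt_h (s_{\xi})) = \bt_g (s_{u_h \xi}) = s_{u_g u_h \xi} = s_{u_{gh} \xi} = \bt_{gh} (s_{\xi})$, whence $\bt_g \circ \bt_h = \bt_{gh}$; likewise $\bt_1 = \id_{{\mathcal{E}} (H)}$. It follows that each $\bt_g$ is invertible with inverse $\bt_{g^{-1}}$, hence an automorphism. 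Uniqueness of the whole action is immediate from the same ``agreement on generators'' principle.

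For continuity (in the point-norm sense) I would first record that the Cuntz relation forces $\| s_{\zt} \|^2 = \| s_{\zt}^* s_{\zt} \| = \| \zt \|^2$, so $\| s_{\zt} \| = \| \zt \|$ for every $\zt \in H$. Consequently
\[
\| \bt_g (s_{\xi}) - \bt_{g_0} (s_{\xi}) \| = \| s_{u_g \xi - u_{g_0} \xi} \| = \| u_g \xi - u_{g_0} \xi \|,
\]
which tends to $0$ as $g \to g_0$ by the strong continuity of the unitary representation. Thus $g \mapsto \bt_g (s_{\xi})$ is norm continuous for each $\xi$, and since each $\bt_g$ is a \hm, also $g \mapsto \bt_g (b)$ is norm continuous for every $b$ in the dense $*$-subalgebra generated by $\{ s_{\xi} \colon \xi \in H \}$ (continuity propagates through finite sums, products, and adjoints). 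Finally, because every $\bt_g$ is isometric, a standard $\tfrac{\ep}{3}$-approximation passes continuity from this dense subalgebra to all of ${\mathcal{E}} (H)$.

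The argument is essentially an exercise in the universal property, so there is no deep obstacle; the only part requiring care is the continuity bookkeeping, where one must combine the isometry identity $\| s_{\zt} \| = \| \zt \|$ with strong continuity of $u$ and the density/$\tfrac{\ep}{3}$ reduction, rather than attempting to verify norm continuity directly on more complicated elements.
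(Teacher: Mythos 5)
Your proof is correct, but it takes a genuinely different route from the paper's: the paper's entire proof of Lemma~\ref{L:GOnFc} is a citation to Remark 4.10(2) of~\cite{Pm}, where the statement is a special case of the general principle that a group acting compatibly on a Hilbert bimodule induces a point-norm continuous action on the associated Toeplitz/Cuntz--Pimsner algebra. You instead argue directly from the generators-and-relations description in Definition~\ref{D:TCAlg}: the universal property produces each $\bt_g$ (and you correctly isolate unitarity of $u_g$ as exactly what is needed to preserve the relation $s_{\xi}^* s_{\et} = \langle \et, \xi \rangle \cdot 1$), agreement of homomorphisms on the generating set gives the group law $\bt_g \circ \bt_h = \bt_{g h}$, invertibility, and uniqueness, and continuity follows from the identity $\| s_{\zt} \| = \| \zt \|$ forced by the Cuntz relation, strong continuity of the representation, propagation of continuity through the dense $*$-subalgebra generated by the $s_{\xi}$, and an $\tfrac{\ep}{3}$-approximation using that automorphisms of a \ca{} are isometric. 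All of these steps are sound; the one implicit hypothesis is that the unitary representation is strongly continuous, which is the standard convention and is needed for the conclusion to hold at all. What each approach buys: your argument is self-contained and elementary, requiring nothing beyond the definition as stated in the paper, which is pedagogically preferable; the paper's citation is shorter and keeps the lemma embedded in the Pimsner machinery that the paper relies on elsewhere (the same Remark 4.10(2) of~\cite{Pm} is invoked again in the proof of Theorem~\ref{T:ExistAct} to produce actions on Cuntz--Pimsner algebras from equivariant bimodules, where no generators-and-relations shortcut is available).
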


\begin{proof}
See Remark 4.10(2) of~\cite{Pm}.
\end{proof}

\begin{lem}\label{L:KerComp}
Let $\af \colon G \to \Aut (A)$ be an action of a
second countable infinite locally compact group~$G$ on a \ca~$A$.
Set $B = {\mathcal{E}} (L^2 (G))$,
and let $\bt \colon G \to \Aut (B)$ be the action of
Lemma~\ref{L:GOnFc} coming from the left regular representation
$z$ of $G$ on $L^2 (G)$.
Let $(u, \pi)$ be a covariant representation of
$(G, \, B \otimes A, \, \bt \otimes \af)$ on a Hilbert space~$H$,
such that $\pi$ is injective and nondegenerate.
Let $\io \colon A \to B \otimes A$ be $\io (a) = 1 \otimes a$,
and let $\sm \colon C^* (G, A, \af) \to L (H)$
be the integrated form of the covariant representation
$(u, \, \pi \circ \io)$ of $(G, A, \af)$.
Then $\ker (\sm)$ is contained in the kernel
of the canonical \hm{}
$C^* (G, A, \af) \to C^*_{\mathrm{r}} (G, A, \af)$.
\end{lem}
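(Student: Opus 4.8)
The plan is to exhibit a regular representation of $C^*(G, A, \af)$ as a subrepresentation of $\sm$, manufacturing the left regular representation of $G$ out of the creation operators of $B = \mathcal{E}(L^2(G))$ and then invoking Fell's absorption principle. Write $\rho = \pi\circ\io \colon A \to L(H)$; since $\pi$ and $\io$ are injective, $\rho$ is a \emph{faithful} representation of~$A$. First I would extend $\pi$ to a unital \hm{} on $M(B\otimes A)$ (possible since $\pi$ is nondegenerate) and, using that $B$ is unital so that $s_\xi\otimes 1\in M(B\otimes A)$, set $S_\xi = \pi(s_\xi\otimes 1)$ for $\xi\in L^2(G)$. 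The relations of Definition~\ref{D:TCAlg} and the tensor factorization then give three facts: $S_\xi^* S_\et = \langle\et,\xi\rangle 1_H$; each $S_\xi$ commutes with $\rho(a) = \pi(1\otimes a)$; and, extending the covariance of $(u,\pi)$ to multipliers, $u_g S_\xi u_g^* = S_{z_g\xi}$ because $\bt_g(s_\xi) = s_{z_g\xi}$.

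Next I would assemble the $S_\xi$ into an isometry $\Phi$ from $L^2(G)\otimes H$ into $H$ by $\Phi(\xi\otimes h) = S_\xi h$. Since the convention in Definition~\ref{D:TCAlg} makes $\xi\mapsto S_\xi$ linear with $S_\xi^* S_\et = \langle\et,\xi\rangle 1$, one must read the first tensor factor as the conjugate Hilbert space $\ov{L^2(G)}$ to get a genuine linear isometry; as the left regular representation is unitarily equivalent to its conjugate (complex conjugation of functions intertwines left translation with itself), this costs nothing. The three facts above say exactly that $\Phi$ intertwines the covariant representation $(z_g\otimes u_g,\ 1\otimes\rho)$ of $(G, A, \af)$ on $L^2(G)\otimes H$ with $(u_g,\rho)$ on $H$, i.e.\ $\Phi(z_g\otimes u_g) = u_g\Phi$ and $\Phi(1\otimes\rho(a)) = \rho(a)\Phi$. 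Hence the range of $\Phi$ is $\sm$-invariant, the integrated form $\tilde\sm$ of $(z\otimes u,\ 1\otimes\rho)$ satisfies $\tilde\sm(x) = \Phi^*\sm(x)\Phi$, and therefore $\ker(\sm)\subseteq\ker(\tilde\sm)$.

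Finally I would identify $\tilde\sm$ as a regular representation. Fell's absorption principle furnishes a unitary $W$ on $L^2(G, H)$ with $W(z_g\otimes u_g)W^* = z_g\otimes 1_H$ and $W(1\otimes\rho(a))W^*$ equal to pointwise multiplication by $g\mapsto\rho(\af_{g^{-1}}(a))$; thus $W\tilde\sm W^*$ is precisely the regular representation $\Lambda_\rho$ of $(G, A, \af)$ induced from~$\rho$. Because $\rho$ is faithful, writing $\rho = \rho_1\oplus 0$ with $\rho_1$ the faithful nondegenerate restriction to $\ov{\rho(A)H}$, the induced $\Lambda_{\rho_1}$ is a subrepresentation of $\Lambda_\rho$ whose kernel is the kernel of the canonical map $C^*(G, A, \af)\to C^*_{\mathrm{r}}(G, A, \af)$; hence $\ker(\Lambda_\rho)\subseteq\ker(\Lambda_{\rho_1})$. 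Combining everything, $\ker(\sm)\subseteq\ker(\tilde\sm) = \ker(\Lambda_\rho)\subseteq\ker\big(C^*(G, A, \af)\to C^*_{\mathrm{r}}(G, A, \af)\big)$, which is the assertion.

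I would expect the main obstacle to be the bookkeeping in the middle: arranging the conjugate-Hilbert-space convention and the multiplier extension of $\pi$ so that $\Phi$ is genuinely isometric and $G$-equivariant, and then checking carefully that Fell's unitary carries the diagonal pair $(z\otimes u,\ 1\otimes\rho)$ onto the induced regular representation rather than merely onto something formally similar. The conceptual core is clean: the Fock--Toeplitz algebra built on $L^2(G)$ with the regular representation encodes a $G$-equivariant isometric copy of $L^2(G)\otimes H$ inside $H$, and Fell's trick converts this copy into the regular representation of the crossed product.
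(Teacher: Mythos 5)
Your proposal is correct and follows essentially the same route as the paper's proof: build an isometry $L^2(G)\otimes H\to H$ out of the operators $\pi(s_\xi\otimes 1)$, check that it intertwines the covariant pair $(z\otimes u,\,1\otimes\rho)$ with $(u,\rho)$, and identify the former, via Fell's absorption (which the paper verifies by writing down the explicit unitary $(y\xi)(h)=u_h(\xi(h))$ rather than citing the principle), with the regular representation induced from the faithful nondegenerate representation $\rho=\pi\circ\io$. The only superfluous step is the conjugate-Hilbert-space worry: with the convention $s_\xi^*s_\et=\langle \et,\xi\rangle\cdot 1$ of Definition~\ref{D:TCAlg}, the map $\xi\otimes\et\mapsto \pi(s_\xi\otimes 1)\et$ is already a genuine linear isometry on the usual tensor product.
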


\begin{proof}
Since $\pi$ is injective and nondegenerate,
it extends to an injective unital \hm,
also denoted~$\pi$,
from $M (B \otimes A)$ to $L (H)$.
For $b \in B$, write its image in $M (B \otimes A)$
as $b \otimes 1$.

We claim that there is an isometric linear map
$v \colon L^2 (G) \otimes H \to H$
such that $v (\xi \otimes \et) = \pi (s_{\xi} \otimes 1) \et$
for $\xi \in L^2 (G)$ and $\et \in H$.
To prove the claim, the following computation suffices:
for $\xi_1, \xi_2 \in L^2 (G)$ and $\et_1, \et_2 \in H$,
\[
\langle \pi (s_{\xi_1} \otimes 1) \et_1,
      \, \pi (s_{\xi_2} \otimes 1) \et_2 \rangle
  = \langle \pi (s_{\xi_2}^* s_{\xi_1} \otimes 1) \et_1,
      \, \et_2 \rangle
  = \langle \xi_1, \xi_2 \rangle \langle \et_1, \et_2 \rangle.
\]

We now consider the regular covariant representation
of $(G, A, \af)$ coming from the representation
$\pi \circ \io$ of $A$ on~$H$.
(See 7.7.1 of~\cite{Pd}.)
On $L^2 (G, H) = L^2 (G) \otimes H$,
we define the representation $z^H$ of $G$
by $z_g^H = z_g \otimes 1$,
and we define the representation $\rh_0$ of $A$
by
\[
(\rh_0 (a) \xi) (h)
   = (\pi \circ \io \circ \af_h^{-1} ) (a) ( \xi (h))
\]
for $a \in A$, $\xi \in L^2 (G, H)$, and $h \in G$.
Then the regular covariant representation is $(z^H, \rh_0)$.

Let $y \in L ( L^2 (G, H) )$ be the unitary given by
$(y \xi) (h) = u_h (\xi (h))$
for $\xi \in L^2 (G, H)$ and $h \in G$.

We claim that $y z_g^H y^* = z_g \otimes u_g$
for $g \in G$.
To see this, we compute, for $g, h \in G$ and $\xi \in H$:
\begin{align*}
\big( (y z_g^H y^* ) \xi \big) (h)
 & = u_h \big( \big( ( z_g^H y^*) \xi \big) (h) \big)
   = u_h \big( ( y^* \xi) (g^{-1} h) \big)
        \\
 & = u_h \big( u_{h^{-1} g} (\xi (g^{-1} h)) \big)
   = u_g (\xi (g^{-1} h))
   = \big( ( z_g \otimes u_g) \xi \big) (h).
\end{align*}
This proves the claim.

Define $\rh \colon A \to L ( L^2 (G) \otimes H)$
by $\rh (a) = 1 \otimes \pi (1 \otimes a)$ for $a \in A$.
We claim that $y \rh_0 ( \cdot ) y^* = \rh$.
To see this, we compute, for $a \in A$, $h \in G$, and $\xi \in H$:
\begin{align*}
\big( y \rh_0 ( a ) y^* \big) \xi (h)
 & = u_h \big( ((\rh_0 ( a ) y^* ) \xi ) (h) \big)
   = u_h \big( \pi \circ \io \circ \af_h^{-1} \big) (a)
            \big( (y^* \xi) (h) \big)
        \\
 & = u_h \big( \pi \circ \io \circ \af_h^{-1} \big) (a)
            \big( u_h^*( \xi (h)) \big)
        \\
 & = \big( u_h \big[ \pi
                \circ \big( \gm_h^{-1} \otimes \af_h^{-1} \big)
             (1 \otimes a) \big] u_h^* \big) ( \xi (h))
        \\
 & = \pi (1 \otimes a) (\xi (h))
   = \big( \rh (a) \xi \big) (h).
\end{align*}
This proves the claim.

It follows that $(z \otimes u, \, \rh)$
is a covariant representation of
$(G, A, \af)$ which is unitarily equivalent to $(z^H, \rh_0)$.

We now claim that $v$ intertwines $(z \otimes u, \, \rh)$
and $(u, \, \pi \circ \io)$.
For $g \in G$, $\xi \in L^2 (G)$, and $\et \in H$, we have:
\begin{align*}
v (z_g \otimes u_g) (\xi \otimes \et)
  & = v (z_g \xi \otimes u_g \et)
    = \pi (s_{z_g \xi} \otimes 1) u_g \et
    = \pi \big( \bt_g (s_{\xi}) \otimes 1 \big) u_g \et
        \\
  & = \pi \big( ( \bt_g \otimes \af_g ) (s_{\xi} \otimes 1) \big)
              u_g \et
    = \big[ u_g \pi (s_{\xi} \otimes 1) u_g^* \big] u_g \et
        \\
  & = u_g \pi (s_{\xi} \otimes 1) \et
    = u_g v (\xi \otimes \et).
\end{align*}
Also, for $a \in A$, $\xi \in L^2 (G)$, and $\et \in H$, we have:
\begin{align*}
v \rh (a) (\xi \otimes \et)
 & = v \big( 1 \otimes \pi (1 \otimes a) \big) (\xi \otimes \et)
   = v \big( \xi \otimes \pi (1 \otimes a) \et \big)
        \\
 & = \pi (s_{\xi} \otimes 1) \big( \pi (1 \otimes a) \et \big)
   = \pi (1 \otimes a) \pi (s_{\xi} \otimes 1) \et
   = \pi (1 \otimes a) v (\xi \otimes \et).
\end{align*}
These two calculations prove the claim.

It follows that the integrated form $\sm$ of $(u, \, \pi \circ \io)$
has a summand (the restriction to the range of $v$)
which is unitarily equivalent to the regular representation
of $C^* (G, A, \af)$ obtained from the representation
$\pi \circ \io$ of~$A$.
The kernel $I$ of this representation thus
satisfies $\ker (\sm) \S I$,
and $I$ is equal to the kernel
of the canonical \hm{}
$C^* (G, A, \af) \to C^*_{\mathrm{r}} (G, A, \af)$.
\end{proof}

The following result is easy and well known,
but we have been unable to find a statement in the literature.

\begin{lem}\label{L_2X25_CPDlim}
Let $G$ be a locally compact group,
and let $(A_{\ld})_{\ld \in \Ld}$
be a direct system of \ca{s}
with actions $\af^{\ld} \colon G \to \Aut (A_{\ld})$
and in which the maps are injective and equivariant.
Set $A = \dirlim_{\ld} A_{\ld}$,
with the direct limit action $\af \colon G \to \Aut (A)$.
Then there is a canonical isomorphism
$\dirlim_{\ld} C^*_{\mathrm{r}} (G, A_{\ld}, \af^{\ld})
   \cong C^*_{\mathrm{r}} (G, A, \af)$.
\end{lem}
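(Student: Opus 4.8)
The plan is to construct the canonical map at the level of compactly supported functions and to verify that it is an isometric isomorphism, the point being to use the concrete regular representation that defines the reduced norm. The one fact I would isolate first, and which is the crux of the argument, is that the reduced crossed product is functorial for \emph{injective} equivariant \hm{s}: if $A \S B$ equivariantly, then the induced map $C^*_{\mathrm{r}} (G, A, \af) \to C^*_{\mathrm{r}} (G, B, \bt)$ is again injective. To see this, I would choose a faithful representation $\pi$ of~$B$ on a Hilbert space~$H_0$; then $\pi |_A$ is faithful on~$A$. The regular representations of $C_c (G, A)$ and $C_c (G, B)$ on $L^2 (G, H_0)$ built from $\pi |_A$ and from~$\pi$ are given by the same formulas, so they agree on $C_c (G, A) \S C_c (G, B)$. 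Since the reduced norm is the operator norm in any such regular representation, and is independent of the choice of faithful representation, the inclusion $C_c (G, A) \hookrightarrow C_c (G, B)$ is isometric for the reduced norms and therefore extends to an isometric injection of reduced crossed products.

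Applying this both to the connecting maps $A_{\ld} \to A_{\mu}$ and to the canonical injections $A_{\ld} \to A$, I obtain a direct system $\big( C^*_{\mathrm{r}} (G, A_{\ld}, \af^{\ld}) \big)_{\ld \in \Ld}$ with isometric connecting maps, together with compatible isometric \hm{s} $\sm_{\ld} \colon C^*_{\mathrm{r}} (G, A_{\ld}, \af^{\ld}) \to C^*_{\mathrm{r}} (G, A, \af)$. By the universal property of the direct limit, the $\sm_{\ld}$ assemble into a \hm{} $\Ph \colon \dirlim_{\ld} C^*_{\mathrm{r}} (G, A_{\ld}, \af^{\ld}) \to C^*_{\mathrm{r}} (G, A, \af)$, whose restriction to each $C_c (G, A_{\ld})$ is just the inclusion into $C_c (G, A)$; this is the canonical map. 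Because each $\sm_{\ld}$ is isometric, $\Ph$ is isometric on the dense subalgebra $\bigcup_{\ld} C^*_{\mathrm{r}} (G, A_{\ld}, \af^{\ld})$, hence isometric, hence injective with closed range.

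It remains to show that the range of~$\Ph$ is dense, which together with closedness gives surjectivity. The range contains $\sm_{\ld} \big( C_c (G, A_{\ld}) \big)$ for every~$\ld$, that is, it contains $\bigcup_{\ld} C_c (G, A_{\ld})$ viewed inside $C^*_{\mathrm{r}} (G, A, \af)$. Now let $f \in C_c (G, A)$, with support in a fixed compact set~$K$. Since $C_c (G) \odot A$ is dense in $C_c (G, A)$ in the inductive-limit topology and $\bigcup_{\ld} A_{\ld}$ is dense in~$A$, using directedness of~$\Ld$ I can approximate~$f$ uniformly on~$K$ by functions taking values in a single~$A_{\ld}$. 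Uniform approximation on~$K$ gives approximation in the $L^1$-norm, and since the reduced norm is dominated by the $L^1$-norm, this approximates~$f$ in $C^*_{\mathrm{r}} (G, A, \af)$. As $C_c (G, A)$ is dense in $C^*_{\mathrm{r}} (G, A, \af)$, the range of~$\Ph$ is dense, so $\Ph$ is an isomorphism. The main obstacle is the functoriality fact in the first paragraph; once injectivity of the induced maps is in hand, the rest is the routine direct-limit argument. It is worth emphasizing that this step genuinely uses the reduced crossed product: the analogous statement fails for full crossed products, where an equivariant injection need not induce an injection.
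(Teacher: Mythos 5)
Your proposal is correct and takes essentially the same route as the paper: both rest on the fact that the regular representation associated to a faithful representation of~$A$ restricts, for each~$\ld$, to the regular representation of a faithful representation of~$A_{\ld}$, so that each $C^*_{\mathrm{r}} (G, A_{\ld}, \af^{\ld})$ sits isometrically inside $C^*_{\mathrm{r}} (G, A, \af)$, after which one checks the union is dense. The only difference is expository: you isolate the injectivity-of-induced-maps fact as a lemma and spell out the density step via $C_{\mathrm{c}}$-approximation and the $L^1$-norm bound, which the paper's proof leaves as ``clear.''
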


\begin{proof}
We may assume that $A_{\ld} \subset A$ for all $\ld \in \Ld$,
and that the maps of the system are the inclusions.
Choose a Hilbert space~$H$ and a faithful representation
$\pi \colon A \to L (H)$.
Let $\sm \colon C^*_{\mathrm{r}} (G, A, \af) \to L (L^2 (G, H))$
be the representation of the reduced crossed product obtained
from the regular representation of $(G, A, \af)$
associated with~$\pi$.
Then $\sm$ is faithful.
Moreover, for $\ld \in \Ld$,
the representation $\pi |_{A_{\ld}}$ is faithful,
and the corresponding regular representation
of $C^*_{\mathrm{r}} (G, A_{\ld}, \af^{\ld})$ therefore
identifies this algebra
with a subalgebra of $C^*_{\mathrm{r}} (G, A, \af)$.
It is clear that, with these identifications,
we have
${\overline{\bigcup_{\ld \in \Ld}
       C^*_{\mathrm{r}} (G, A_{\ld}, \af^{\ld}) }}
  = C^*_{\mathrm{r}} (G, A, \af)$.
\end{proof}

Recall from the introduction to~\cite{KW}
that an action $\af \colon G \to \Aut (A)$
is {\emph{exact}} if for every $G$-invariant ideal $J \S A$,
with induced actions ${\overline{\af}}$ on $A / J$
and $\af_{( \cdot )} |_J$ on $J$,
the kernel of the map
$C^*_{\mathrm{r}} (G, A, \af)
   \to C^*_{\mathrm{r}} (G, A / J, {\overline{\af}})$
is exactly $C^*_{\mathrm{r}} (G, J, \af_{( \cdot )} |_J)$.

\begin{lem}\label{C:IContr}
Let $\af \colon G \to \Aut (A)$ be an action of a
second countable infinite locally compact group~$G$ on a \ca~$A$.
Set $B = {\mathcal{E}} (L^2 (G))$,
and let $\bt \colon G \to \Aut (B)$ be the action of
Lemma~\ref{L:GOnFc} coming from the left regular representation
$z$ of $G$ on $L^2 (G)$.
Set $D = \bigotimes_{k = 1}^{\I} B$,
with the action $\ld \colon G \to \Aut (D)$ given by
$\ld_g = \bigotimes_{k = 1}^{\I} \bt_g$.
Assume that $\ld \otimes \af \colon G \to \Aut (D \otimes A)$
is an exact action.
Set $F = C^*_{\mathrm{r}} (G, \, D \otimes A, \, \ld \otimes \af)$.
Let $J \S F$ be an ideal such that,
as subalgebras of $M (F)$
and using the notation of Definition~\ref{D:MAI},
we have
$(D \otimes A) \cap M (F, J) = 0$.
Then $J = 0$.
\end{lem}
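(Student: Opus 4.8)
The plan is to show that the hypothesis forces the quotient map $q\colon F\to F/J$ to be injective. First I would unwind the hypothesis. Write $C=D\otimes A$ and $\gm=\ld\otimes\af$, so $F=C^*_{\mathrm{r}}(G,C,\gm)$ and $C\S M(F)$ via the canonical nondegenerate inclusion, with implementing unitaries $U_g\in M(F)$. Set $I=C\cap M(F,J)$. Using that $J$ is an ideal of $F$ and that $\Ad(U_g)$ realizes $\gm_g$, one checks that $I$ is a $\gm$-invariant ideal of $C$; in fact, by Lemma~\ref{R:MAIRmk} and the description of $M(F,J)$ as the strict closure of $J$, the ideal $I$ is exactly the largest $\gm$-invariant ideal of $C$ with $C^*_{\mathrm{r}}(G,I)\S J$. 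The hypothesis $C\cap M(F,J)=0$ thus says $I=0$, which is equivalent to the canonical map $C\to M(F/J)$ being injective. Representing $F/J$ faithfully on a Hilbert space~$H$, I obtain a covariant representation $(u,\pi)$ of $(G,C,\gm)$ with $\pi$ injective and nondegenerate; its integrated form factors through $F$ and realizes $q$ there.

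Next I would bring in the Fock structure. Split off the first tensor factor, writing $C=B\otimes A'$ with $A'=\big(\bigotimes_{k=2}^{\I}B\big)\otimes A$ and $\gm=\bt\otimes\af'$, and let $\io\colon A'\to C$ be $\io(a)=1_B\otimes a$. Applying Lemma~\ref{L:KerComp} with $A'$ in place of $A$ to the representation $(u,\pi)$, the integrated form of $(u,\pi\circ\io)$ has kernel contained in $\ker\big(C^*(G,A')\to C^*_{\mathrm{r}}(G,A')\big)$. Since $\io$ is an injective equivariant $*$-homomorphism it induces an injective map $\io_*^{\mathrm{r}}\colon C^*_{\mathrm{r}}(G,A')\to F$, and a diagram chase through the functoriality square relating the full and reduced crossed products of $\io$ turns the conclusion of Lemma~\ref{L:KerComp} into
\[
\io_*^{\mathrm{r}}\big(C^*_{\mathrm{r}}(G,A')\big)\cap J=0 .
\]
Writing $\Sm\colon F\to F$ for the endomorphism induced by the full invariant subalgebra $\io(A')=1_B\otimes A'\S C$ (note $A'\cong C$ equivariantly, since $\bigotimes_{k\ge 2}B\cong\bigotimes_{k\ge 1}B$), this says precisely $\Sm^{-1}(J)=0$, i.e.\ $q\circ\Sm$ is isometric.

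The final and hardest step is to upgrade $\Sm^{-1}(J)=0$ to $J=0$, which I would do by comparing $q$ with $q\circ\Sm$. Under the equivariant identification $C\cong B\otimes C$, the endomorphism $\ps$ of $C$ inducing $\Sm$ is the absorption embedding $a\mapsto 1_B\otimes a$, where $B\cong{\mathcal{O}}_{\infty}$ by Lemma~\ref{L:TCOfL2}. The key claim is that $\ps$ is equivariantly approximately unitarily equivalent to $\id_C$: because $\bt$ freely permutes the Fock isometries $s_{z_g\xi}$ — the very feature underlying Lemma~\ref{L:KerComp} — one can build approximately $\gm$-invariant unitaries $v_n\in C$ with $v_n\,a\,v_n^*\to\ps(a)$ for all $a\in C$. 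Regarding the $v_n$ as multipliers of $F$ and conjugating, this makes $q\circ\Sm$ and $q$ approximately unitarily equivalent in $M(F/J)$, so $\|q(y)\|=\|q(\Sm(y))\|$ for every $y\in F$. Since $q\circ\Sm$ is isometric, $q$ is isometric, and hence $J=0$. Exactness of $\gm$ enters to guarantee that the kernel computation of the previous paragraph and these approximations behave correctly at the reduced level.

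The main obstacle is exactly this last step. Fullness of $\io(A')$ together with $\Sm^{-1}(J)=0$ is \emph{not} by itself enough — trivial commutative examples (e.g.\ $\C 1\S C(X)$ and a maximal ideal) show that a full nondegenerate subalgebra can miss a nonzero ideal — so what saves the argument is the interplay of the simplicity of the Fock factors $B\cong{\mathcal{O}}_{\infty}$, the free permutation of the Fock isometries by $\bt$, and exactness, which together yield the (residual) intersection property for $\gm$.
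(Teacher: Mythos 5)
Your first two steps are sound, and they parallel the paper's own argument: from a faithful nondegenerate representation of $F/J$ you extract a covariant representation $(u,\pi)$ of $(G,\,D\otimes A,\,\ld\otimes\af)$ whose $\pi$ is injective (this is exactly what the hypothesis $(D\otimes A)\cap M(F,J)=0$ gives, via Lemma~\ref{R:MAIRmk}), and Lemma~\ref{L:KerComp} applied to a splitting $C=B\otimes A'$ then shows that the copy of $C^*_{\mathrm{r}}(G,A')$ inside $F$ meets $J$ trivially. The gap is your third step. Knowing that the single subalgebra $\Sm(F)\cong F$ intersects $J$ trivially does not give $J=0$ (as you note yourself), and the repair you propose --- that $\ps\colon a\mapsto 1_B\otimes a$ is \emph{equivariantly} approximately unitarily equivalent to $\id_C$ via approximately $\gm$-invariant unitaries $v_n$ --- is an unproven claim of a different order of magnitude. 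Non-equivariantly such absorption is available because $B\cong{\mathcal{O}}_{\infty}$ (Lemma~\ref{L:TCOfL2}) is strongly self-absorbing and $C\cong{\mathcal{O}}_{\infty}\otimes A$ absorbs it; but what you need is convergence of $\Ad(v_n)$ on all of $F$, i.e.\ $v_n\,a\,\gm_g(v_n)^*\to\ps(a)$ uniformly for $g$ in compact sets (so that conjugation also converges on the canonical unitaries $U_g\in M(F)$), and that is an equivariant absorption statement about the specific quasi-free action $\bt\otimes\bt\otimes\cdots$. Nothing in the paper provides it, ``$\bt$ freely permutes the Fock isometries'' is not an argument, and statements of this kind require machinery (strongly self-absorbing actions) far beyond the lemmas at hand. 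So the proof as written does not close, and your closing appeal to simplicity, freeness, and exactness does not substitute for the missing construction.

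The paper closes this gap in an elementary way, and you could too: instead of splitting off one factor and keeping an infinite tail, exhaust $D\otimes A$ by the finite stages $D_n\otimes A$ with $D_n=\bigotimes_{k=1}^n B$, and observe that $D_n\otimes A$ sits inside $D_{n+1}\otimes A=B\otimes(D_n\otimes A)$ with a \emph{fresh outer} Fock factor. Applying Lemma~\ref{L:KerComp} at stage $n$, exactly as in your second paragraph, shows that $\sm|_{F_n}$ is injective, i.e.\ $J\cap F_n=0$, where $F_n=C^*_{\mathrm{r}}\big(G,\,D_n\otimes A,\,\ld^{(n)}\otimes\af\big)$. Since $F=\dirlim_n F_n$ (Lemma~\ref{L_2X25_CPDlim}) and an ideal in an inductive limit is the closed union of its intersections with the stages, $J=\ov{\bigcup_{n}(J\cap F_n)}=0$. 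The point is that the trivial-intersection property is established for an \emph{increasing exhausting} family of subalgebras rather than for a single proper copy of $F$, which is what makes the conclusion $J=0$ immediate and avoids any absorption argument.
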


\begin{proof}
For $n \in \N$, define the $n$th stage algebras $D_n$ and $F_n$,
and action $\ld^{(n)} \colon G \to \Aut (D_n)$,
by
\[
D_n = \bigotimes_{k = 1}^{n} B,
\,\,\,\,\,\,
\ld_g^{(n)} = \bigotimes_{k = 1}^{n} \bt_g,
\andeqn
F_n = C^*_{\mathrm{r}}
         \big( G, \, D_n \otimes A, \, \ld^{(n)} \otimes \af \big).
\]
Then $D = \dirlim_n D_n$.
Let $\io_n \colon D_n \otimes A \to D_{n + 1} \otimes A$
be the map coming from the inclusion of $D_n$ in $D_{n + 1}$.
Since the algebras $D_n$ are nuclear
(by Lemma~\ref{L:TCOfL2}),
the maps $\io_n$ are injective and
$D \otimes A = \dirlim_n D_n \otimes A$,
with the direct limit being taken with respect to these maps.
Moreover, $F = \dirlim_n F_n$ by Lemma~\ref{L_2X25_CPDlim}.
Let
\[
\kp_n \colon
 C^*  \big( G, \, D_n \otimes A, \, \ld^{(n)} \otimes \af \big) \to F_n
\andeqn
\kp \colon C^* (G, \, D \otimes A, \, \ld \otimes \af) \to F
\]
be the comparison maps from the full to the reduced \cp s.

Let $\sm \colon F \to L (H)$
be a nondegenerate representation of $F$ whose kernel
is exactly~$J$.
Then $\sm$ is the integrated form of some covariant representation
$(u, \pi)$ of $(G, \, D \otimes A, \, \ld \otimes \af)$.
Use the same letter $\sm$ for the corresponding
map $M (F) \to L (H)$.
The kernel of this extension is exactly the kernel
of the map $M (F) \to M (F / J)$, which is $M (F, J)$.
Observe that $\pi$ is injective,
since it is the restriction of $\sm$ to $D \otimes A \S M (F)$,
and $(D \otimes A) \cap M (F, J) = 0$.
Furthermore, $\pi$ is nondegenerate.

Let
\[
\rh_n \colon
 C^* \big( G, \, D_n \otimes A, \, \ld^{(n)} \otimes \af \big) \to L (H)
\]
be the integrated form of the covariant representation
$(u, \, \pi |_{D_n \otimes A} )$.
Then $\rh_n$ and $( \sm |_{F_n} ) \circ \kp_n$
come from the same covariant representation
$(u, \, \pi |_{D_n \otimes A} )$
of $\big( G, \, D_n \otimes A, \, \ld^{(n)} \otimes \af \big)$,
so $\rh_n = ( \sm |_{F_n} ) \circ \kp_n$.
Thus $\ker ( \kp_n) \S \ker (\rh_n)$.
Since
$\pi |_{D_n \otimes A}
   = \big( \pi |_{D_{n + 1} \otimes A} \big) \circ \io_n$,
Lemma~\ref{L:KerComp} implies that
in fact $\ker ( \rh_n) = \ker (\kp_n)$,
whence $\ker (\sm |_{F_n} ) = 0$.
Now
\[
J = \ker (\sm)
  = {\overline{\bigcup_{n = 1}^{\I} (\ker (\sm) \cap F_n) }}
  = {\overline{\bigcup_{n = 1}^{\I} \ker (\sm |_{F_n} ) }}
  = 0.
\]
This completes the proof.
\end{proof}

\begin{thm}\label{C:ILatt}
Let $\af \colon G \to \Aut (A)$ be an action of a
second countable infinite exact locally compact group~$G$ on a \ca~$A$.
Set $B = {\mathcal{E}} (L^2 (G))$,
and let $\bt \colon G \to \Aut (B)$ be the action of
Lemma~\ref{L:GOnFc} coming from the left regular representation
$z$ of $G$ on $L^2 (G)$.
Set $D = \bigotimes_{k = 1}^{\I} B$,
with the action $\ld \colon G \to \Aut (D)$ given by
$\ld_g = \bigotimes_{k = 1}^{\I} \bt_g$.
Then every ideal in
$C^*_{\mathrm{r}} (G, \, D \otimes A, \, \ld \otimes \af)$
is the reduced crossed product by an invariant ideal of $D \otimes A$.
\end{thm}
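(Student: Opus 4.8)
The plan is to deduce this from the contraction result Lemma~\ref{C:IContr} by a quotient argument, using crucially that $D$ is simple. Throughout I write $C = D \otimes A$ and $F = C^*_{\mathrm{r}} (G, \, C, \, \ld \otimes \af)$, and regard $C \S M (F)$. Since $G$ is exact, every action of $G$ is exact, so all exactness hypotheses (including those of Lemma~\ref{C:IContr}) are automatic. I would first record the structural input. By Lemma~\ref{L:TCOfL2} we have $B \cong {\mathcal{O}}_{\infty}$, so $D = \bigotimes_{k = 1}^{\I} B$ is a minimal infinite tensor product of copies of a unital simple nuclear \ca; hence $D$ is itself unital, simple, and nuclear. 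Consequently $I' \mapsto D \otimes I'$ is a lattice isomorphism from the ideals of $A$ onto the ideals of $C$, an ideal $D \otimes I'$ being $(\ld \otimes \af)$-invariant \ifo{} $I'$ is $\af$-invariant, in which case $C / (D \otimes I') \cong D \otimes (A / I')$ equivariantly.

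Now let $J \S F$ be an ideal, and set $I = C \cap M (F, J)$ in the notation of Definition~\ref{D:MAI}. I claim $I$ is a $(\ld \otimes \af)$-invariant ideal of $C$ and that $J = C^*_{\mathrm{r}} (G, \, I)$. That $I$ is an ideal of $C$ is immediate, since $M (F, J)$ is an ideal of $M (F)$ by Lemma~\ref{R:MAIRmk}(\ref{R:MAIRmk:2}) and $C \S M (F)$. For invariance, I would use that a closed ideal $J$ of $F$ is automatically invariant under left and right multiplication by $M (F)$ (approximate $j \in J$ by $f_{\mu} j$, with $f_{\mu}$ an approximate identity of $J$, and push the multiplier inside). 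Writing the implementing unitaries $u_g \in M (F)$, one then checks for $c \in I$ that $u_g c u_g^* F \S u_g c F \S u_g J \S J$, so $(\ld \otimes \af)_g (c) = u_g c u_g^* \in C \cap M (F, J) = I$.

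Set $J_0 = C^*_{\mathrm{r}} (G, \, I)$, which since the action is exact is an ideal of $F$. For $J_0 \S J$ I would show $J_0 \S M (F, J)$; combined with $J_0 \S F$ and Lemma~\ref{R:MAIRmk}(\ref{R:MAIRmk:3}) this gives $J_0 \S F \cap M (F, J) = J$. Indeed, every element of $C_c (G, I)$ is a norm limit of finite sums $\sum_i a_i u_{h_i}$ with $a_i \in I \S M (F, J)$ and $u_{h_i} \in M (F)$; each such sum lies in the norm-closed ideal $M (F, J)$ of $M (F)$, so $J_0 = {\overline{C_c (G, I)}} \S M (F, J)$. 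For the reverse inclusion I would use simplicity of $D$ to write $I = D \otimes I'$ with $I' \S A$ an $\af$-invariant ideal, so that by exactness
\[
F / J_0 \cong C^*_{\mathrm{r}} \big( G, \, C / I, \, \ld \otimes {\overline{\af}} \big)
       = C^*_{\mathrm{r}} \big( G, \, D \otimes (A / I'), \, \ld \otimes {\overline{\af}} \big),
\]
which is exactly the algebra to which Lemma~\ref{C:IContr} applies, with $A / I'$ in place of $A$. Writing ${\overline{J}} = J / J_0$, I would verify the hypothesis $(C / I) \cap M (F / J_0, \, {\overline{J}}) = 0$: if ${\overline{c}}$ lies in this intersection, lift it to $c \in C$; then ${\overline{c}} \, (F / J_0) \S {\overline{J}}$ forces $c F \S J$ (using $J_0 \S J$), whence $c \in C \cap M (F, J) = I$ and ${\overline{c}} = 0$. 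Lemma~\ref{C:IContr} then gives ${\overline{J}} = 0$, i.e.\ $J \S J_0$, and together with $J_0 \S J$ this yields $J = J_0 = C^*_{\mathrm{r}} (G, \, I)$.

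I expect the main obstacle to be organizing the quotient step so that Lemma~\ref{C:IContr} can legitimately be reapplied to $F / J_0$. This is precisely where simplicity of $D = \bigotimes_{k = 1}^{\I} {\mathcal{O}}_{\infty}$ is indispensable: it forces the invariant ideal $I$ to split as $D \otimes I'$, and thereby guarantees that the quotient $C / I$ again has the infinite-${\mathcal{O}}_{\infty}$-tensor form required by the lemma. The remaining points (invariance of $I$, the inclusion $J_0 \S J$, and the identity $(C/I) \cap M (F/J_0, {\overline{J}}) = 0$) are routine manipulations of the ideals $M (F, \cdot)$ once the correct invariant ideal $I = C \cap M (F, J)$ has been singled out.
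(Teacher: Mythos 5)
Your route is essentially the paper's own: you isolate the ideal $I = (D \otimes A) \cap M (F, J)$, prove its invariance by the same computation $u_g M (F, J) u_g^* \S M (F, J)$, use simplicity and nuclearity of $D$ to split $I = D \otimes I'$, pass to the quotient by $J_0 = C^*_{\mathrm{r}} (G, I)$ using exactness, and feed the quotient into Lemma~\ref{C:IContr}; your verification that $(C / I) \cap M (F / J_0, \, J / J_0) = 0$ is the same computation the paper phrases as $\pi^{-1} (M (E, I)) \S M (F, J)$. However, one step is wrong as written: the claim that every element of $C_{\mathrm{c}} (G, I)$ is a norm limit of finite sums $\sum_i a_i u_{h_i}$ with $a_i \in I$. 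This holds only for discrete~$G$, whereas the theorem is applied with $G = \R$ (Theorem~\ref{T-CPbyR-430}) and other nondiscrete groups. For nondiscrete~$G$ the elements $a_i u_{h_i}$ do not even lie in~$F$, only in $M (F)$, and such finite sums cannot norm-approximate elements of $C_{\mathrm{c}} (G, I) \S F$: already for $G = \R$ and $A = \C$, where $F = C^*_{\mathrm{r}} (\R) \cong C_0 (\R)$, a finite sum $\sum_i c_i u_{h_i}$ corresponds to the almost periodic function $x \mapsto \sum_i c_i e^{i h_i x}$, whose supremum is asymptotically attained at infinity, so it cannot be uniformly close to a nonzero function vanishing at infinity. (Riemann sums for $\int f (g) u_g \, d g$ converge strictly, not in norm.)

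The inclusion $J_0 \S J$ that you need is nevertheless true, with an easier argument, which is presumably why the paper states it without proof: let $(e_{\mu})$ be an approximate identity for~$I$; for $f \in C_{\mathrm{c}} (G, I)$ the set $\{ f (g) \colon g \in G \}$ is a compact subset of~$I$, so
\[
\| e_{\mu} f - f \|
  \leq \int_G \| e_{\mu} f (g) - f (g) \| \, d g
  \longrightarrow 0,
\]
while each $e_{\mu} f$ lies in $I \cdot F \S J$ because $I \S M (F, J)$. Hence $C_{\mathrm{c}} (G, I) \S J$, so $J_0 \S J$. With this repair, your proof is complete and coincides in all essential respects with the paper's.
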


\begin{proof}
Set $F = C^*_{\mathrm{r}} (G, \, D \otimes A, \, \ld \otimes \af)$.
Let $J \S F$ be an ideal.
Set $L = ( D \otimes A ) \cap M (F, J)$,
which is an ideal in $D \otimes A$.

We claim that $L$ is $G$-invariant.
For $g \in G$, let $u_g$ be the standard unitary in the \cp,
which is in $M (F)$.
We start by showing that $u_g M (F, J) u_g^* \S M (F, J)$.
So let $a \in M (F, J)$ and let $b \in F$.
Then $\big( u_g a u_g^* \big) b = u_g a \big( u_g^* b \big)$.
We have $u_g^* b \in F$ since $F$ is an ideal in $M (F)$,
so that $a \in M (F, J)$ implies $a \big( u_g^* b \big) \in J$.
Since $J$ is an ideal in $M (F)$,
it follows that $u_g a \big( u_g^* b \big) \in J$.
This shows that $u_g a u_g^* \in M (F, J)$.
So $(\ld \otimes \af)_g (L) = u_g L u_g^* \S L$.
This proves the claim.

The algebra $D$ is simple and nuclear by Lemma~\ref{L:TCOfL2},
so there is an ideal $L_0 \S A$ such that $L = D \otimes L_0$.
Clearly $L_0$ is $G$-invariant.
Moreover,
$C^*_{\mathrm{r}}
 \big( G, \, L, \, (\ld \otimes \af)_{(\cdot)} |_{L})$
is an ideal in $F$ which is contained in~$J$.

Let ${\overline{\af}}$ be the action of $G$ on $A / L_0$.
Set
$E = C^*_{\mathrm{r}} \big( G, \, D \otimes (A / L_0),
      \, \ld \otimes {\overline{\af}} \big)$.
Since $D$ is nuclear and $G$ is exact, there is a short exact sequence
\[
0 \longrightarrow C^*_{\mathrm{r}}
 \big( G, \, L, \, (\ld \otimes \af)_{(\cdot)} |_{L})
  \longrightarrow F
  \longrightarrow E
  \longrightarrow 0.
\]
Since $F \to E$ is surjective,
we also have a surjective extension $\pi \colon M (F) \to M (E)$.

Let $I$ be the image of $J$ in~$E$.
We claim that $\pi^{-1} (M (E, I)) \S M (F, J)$.
To see this,
let $x \in M (F)$
with $\pi (x) \in M (E, I)$.
Then $\pi (x F) = \pi (x) E \S I$
by the definition of $M (E, I)$.
So $x F \S \pi^{-1} (I)$.
Also $x F \S F$, so $x F \S \pi^{-1} (I) \cap F = J$.
This proves the claim.

It now follows that
$M (E, I) \cap \big( D \otimes (A / L_0) \big) = 0$.
Lemma~\ref{C:IContr} now implies that $I = 0$,
whence
$J = C^*_{\mathrm{r}}
 \big( G, \, L, \, (\ld \otimes \af)_{(\cdot)} |_{L})$.
\end{proof}

We can now arrange for crossed products by actions like those
in Theorem~\ref{T:ExistMinAct},
Proposition~\ref{P:ZUnital},
and Proposition~\ref{P-RUnital}
to be simple.
We thus obtain the following results.

\begin{thm}\label{T-CPO2-430}
Let $G$ be a noncompact second countable amenable
locally compact group,
and let $P$ be a nonempty second countable
locally compact Hausdorff $G$-space with no nonempty compact
$G$-invariant subsets.
Then there exists a separable nuclear nonsimple prime \ca~$C$
such that $K \otimes \OT \otimes C \cong C$
and a minimal action $\gm \colon G \to \Aut (C)$
such that $\Prim (C)$ is equivariantly homeomorphic to $\Xi (P)$
and such that $C^* (G, C, \gm) \cong K \otimes \OT$.
\end{thm}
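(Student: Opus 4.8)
The plan is to take the minimal action furnished by Theorem~\ref{T:ExistMinAct} and tensor it with the infinite Fock-type action of this section, so that Theorem~\ref{C:ILatt} forces the crossed product to be simple while all the other required features survive. First I would apply Theorem~\ref{T:ExistMinAct} to produce a separable nuclear nonsimple prime \ca~$C_0$ with $K \otimes \OT \otimes C_0 \cong C_0$ and a minimal action $\gm^{(0)} \colon G \to \Aut (C_0)$ for which $\Prim (C_0) \cong \Xi (P)$ equivariantly. By the construction in Theorem~\ref{T:ExistAct}, this $C_0$ may be taken of the form $K \otimes \OT \otimes C_0'$ with $\gm^{(0)}$ acting as the identity on the $K \otimes \OT$ factor. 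Then, as in Theorem~\ref{C:ILatt}, I would set $B = {\mathcal{E}} (L^2 (G))$ with the action $\bt$ of Lemma~\ref{L:GOnFc} coming from the left regular representation, put $D = \bigotimes_{k = 1}^{\I} B$ with $\ld_g = \bigotimes_{k = 1}^{\I} \bt_g$, and define $C = D \otimes C_0$ and $\gm = \ld \otimes \gm^{(0)}$.

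Next I would verify the structural properties of $(C, \gm)$. Since $G$ is noncompact it is infinite and $L^2 (G)$ is separable and infinite dimensional, so $B \cong {\mathcal{O}}_{\infty}$ by Lemma~\ref{L:TCOfL2}; hence $D$ is separable, nuclear, and simple. Because $D$ is simple and nuclear, the ideals of $D \otimes C_0$ are precisely the $D \otimes J$ for ideals $J \S C_0$, so $\Prim (C) \cong \Prim (C_0) \cong \Xi (P)$, and since the $\ld$-factor acts trivially on the one-point space $\Prim (D)$ this homeomorphism is $G$-equivariant. In particular $C$ is nonsimple (as $\Xi (P)$ has more than one point) and prime by Corollary~\ref{L-Prime426}, while $\gm$ is minimal because the only $G$-invariant ideals $D \otimes J$ of $C$ are those with $J \in \{ 0, C_0 \}$. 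Stability under $K \otimes \OT$ is inherited from $C_0$.

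For the crossed product I would use that an amenable group is exact and satisfies $C^* (G, \, \cdot \,) = C^*_{\mathrm{r}} (G, \, \cdot \,)$. Applying Theorem~\ref{C:ILatt} with $A = C_0$ and $\af = \gm^{(0)}$, every ideal of $F := C^* (G, C, \gm) = C^*_{\mathrm{r}} (G, \, D \otimes C_0, \, \ld \otimes \gm^{(0)})$ is a reduced crossed product by a $G$-invariant ideal of $C$; by the previous paragraph there are only the two trivial such ideals, so $F$ is simple, and it is separable and (by amenability) nuclear. Finally, writing $C \cong K \otimes \OT \otimes (D \otimes C_0')$ with $\gm$ acting trivially on the $K \otimes \OT$ factor, that factor pulls out of the crossed product, giving $F \cong K \otimes \OT \otimes C^* (G, \, D \otimes C_0', \, \ld \otimes \gm')$. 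Thus $F$ is stable and $\OT$-absorbing; being in addition separable, nuclear, and simple, Kirchberg's uniqueness theorem for $\OT$-absorbing \ca{s} yields $F \cong K \otimes \OT$.

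I expect the main obstacle to be the bookkeeping of the second and third paragraphs: confirming that tensoring by $(D, \ld)$ simultaneously preserves the equivariant identification $\Prim (C) \cong \Xi (P)$ and the minimality of the action, triggers the hypotheses of Theorem~\ref{C:ILatt} to force simplicity of $F$, and retains enough of the $K \otimes \OT$-structure of $C_0$ to invoke Kirchberg's theorem. All the genuinely hard analytic content is already packaged in Theorem~\ref{C:ILatt}; the remaining task is to check that these three features coexist.
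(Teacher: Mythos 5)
Your proposal is correct and follows essentially the same route as the paper: the paper likewise tensors the algebra coming from Theorem~\ref{T:ExistAct} with $K \otimes \OT \otimes D$ (trivial action on $K \otimes \OT$, the action $\ld$ of Theorem~\ref{C:ILatt} on $D$), deduces simplicity of the crossed product from Theorem~\ref{C:ILatt} together with minimality, and gets nuclearity from amenability. The only cosmetic difference is the last step: where you invoke Kirchberg's uniqueness theorem for $\OT$-absorbing \ca{s}, the paper cites the uniqueness statement of Theorem~\ref{T:ExistA} applied to the one-point primitive ideal space, which is the same result.
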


\begin{thm}\label{T-CPAutO2-430}
Let $P$ be a nonempty totally disconnected
noncompact second countable locally compact Hausdorff space,
let $\mu$ be a Borel measure on~$P$
such that $\mu (U) > 0$ for every nonempty open set $U \S P$,
and let $h \colon P \to P$
be a measure preserving homeomorphism.
Assume that there is no nonempty compact subset $L \S P$
such that $h (L) = L$.
Then there exists a separable nuclear unital nonsimple prime \ca~$C$
such that $\OT \otimes C \cong C$
and a minimal action $\gm \colon \Z \to \Aut (C)$
such that $\Prim (C)$ is equivariantly homeomorphic to $\Xi (P)$
with the action coming from $\Xi (h)$
as in Proposition~\ref{P:IndActXi}
and such that $C^* (\Z, C, \gm) \cong \OT$.
\end{thm}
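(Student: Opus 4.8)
The plan is to start from the unital AF algebra and $\Z$-action supplied by Section~\ref{Sec:AF}, tensor with the infinite Fock--Toeplitz construction so that Theorem~\ref{C:ILatt} applies, and then identify the resulting crossed product as $\OT$ by means of the Kirchberg--Phillips classification theorem. Concretely, apply Corollary~\ref{C-FnAct508} with $n = 1$ (whose hypotheses are exactly those imposed on $P$, $\mu$, and $h$ here) to obtain a unital nonsimple prime AF~algebra~$A$ and a minimal action $\af \colon \Z \to \Aut (A)$ with $\Prim (A) \cong \Xi (P)$ equivariantly for the action induced by $\Xi (h)$. With $B = {\mathcal{E}} (L^2 (\Z)) \cong \OT$ (Lemma~\ref{L:TCOfL2}) and $\bt$ as in Lemma~\ref{L:GOnFc}, I set $D = \bigotimes_{k = 1}^{\I} B$, $\ld = \bigotimes_{k = 1}^{\I} \bt$, and finally $C = D \otimes A$ with $\gm = \ld \otimes \af$; this is precisely the system to which Theorem~\ref{C:ILatt} applies with $G = \Z$.

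First I would record the properties of $(C, \gm)$. Since $D \cong \OT$ and $\OT \otimes \OT \cong \OT$, the algebra $C$ is unital, separable, nuclear, and satisfies $\OT \otimes C \cong C$. Because $D$ is simple, the ideals of $C$ are exactly the $D \otimes J$ for ideals $J \S A$, so $\Prim (C) \cong \Prim (A) \cong \Xi (P)$, the induced action on $\Prim (C)$ agrees with the $\Xi (h)$-action, $C$ is prime by Corollary~\ref{L-Prime426} and nonsimple since $P \neq \E$, and $\gm$ is minimal by Lemma~\ref{L:MinXiP}.

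Next I turn to the crossed product $F = C^* (\Z, C, \gm)$. As $\Z$ is amenable, the full and reduced crossed products coincide and $F$ is unital, separable, and nuclear. Simplicity of $F$ comes from Theorem~\ref{C:ILatt}: every ideal of $F$ is the crossed product of a $\gm$-invariant ideal of $C$; these correspond to $\af$-invariant ideals of $A$, hence to $\Xi (h)$-invariant open subsets of $\Xi (P)$, of which minimality leaves only $\E$ and $\Xi (P)$. For the classification invariants, $C$ lies in the UCT bootstrap class (as $A$ is AF and $\OT$ is, and the class is closed under tensor products and under crossed products by $\Z$), so $F$ does too; moreover $K_* (\OT) = 0$ forces $K_* (C) = K_* (\OT \otimes A) = 0$ by the K\"{u}nneth theorem, and then the Pimsner--Voiculescu sequence~\cite{PV} gives $K_0 (F) = K_1 (F) = 0$.

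The main obstacle is pure infiniteness of $F$; once this is in hand, $F$ is a unital Kirchberg algebra in the UCT class with vanishing $K$-theory, so the Kirchberg--Phillips classification theorem identifies $F \cong \OT$, as required. The route I expect to work is to show that $(C, \gm)$ equivariantly absorbs the trivial action of $\Z$ on $\OT$, i.e.\ that $(\OT, \id) \otimes (C, \gm)$ is cocycle conjugate to $(C, \gm)$; this yields $\OT \otimes F \cong F$, and a simple $\OT$-absorbing \ca{} is purely infinite. Such an absorption should be extracted from the infinite tensor structure of $D$ together with Fell's absorption principle for the regular representation underlying $\bt$, by producing approximately central, approximately $\ld$-invariant unital copies of $\OT$ inside~$D$; this is the delicate technical point. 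Alternatively, since $C$ is $\OT$-stable and hence strongly purely infinite, pure infiniteness of the simple crossed product $F$ should follow exactly as in the proof of Theorem~\ref{T-CPO2-430}, where the same mechanism already produces $K \otimes \OT$ in the stable (nonunital) case; here unitality of $C$ is what replaces $K \otimes \OT$ by~$\OT$.
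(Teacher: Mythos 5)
Your construction is nearly the paper's, but it omits one tensor factor, and the omission is fatal. The error originates in your appeal to Lemma~\ref{L:TCOfL2}: that lemma gives ${\mathcal{E}} (L^2 (\Z)) \cong {\mathcal{O}}_{\infty}$, \emph{not} $\OT$. Hence $B \cong {\mathcal{O}}_{\infty}$, and $D = \bigotimes_{k = 1}^{\I} B$ has $K_0 (D) \cong \Z$, generated by $[1_D]$, so $D \not\cong \OT$. With your choice $C = D \otimes A$, the K\"{u}nneth formula (valid since $D$ is an inductive limit of tensor powers of ${\mathcal{O}}_{\infty}$, which lie in the bootstrap class and have $K_1 = 0$) gives $K_0 (C) \cong K_0 (D) \otimes K_0 (A) \cong K_0 (A) \neq 0$, with $[1_C] = [1_D] \otimes [1_A] \neq 0$ because $K_0 (A)$ is torsion free and $[1_A] \neq 0$. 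In particular your claim ``$K_* (C) = K_* (\OT \otimes A) = 0$'' is false, and since $K_* (\OT \otimes C) = 0$, the required conclusion $\OT \otimes C \cong C$ already fails for your~$C$. Worse, the crossed product itself cannot be~$\OT$: the nonzero class $[1_C]$ is fixed by $\gm_*$, so it lies in $\ker \big( \id - (\gm_{-1})_* \big) \S K_0 (C)$, and exactness of the Pimsner--Voiculescu sequence~\cite{PV} at $K_0 (C)$ then forces $K_1 ( C^* (\Z, C, \gm) ) \neq 0$ (this is the same argument the paper gives in the introduction to Section~\ref{Sec:AF}). So no amount of work on pure infiniteness can rescue your choice of~$C$; the problem is not the ``delicate technical point'' you flagged, but the $K$-theory of the input.

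The paper's fix is to take $C = \OT \otimes D \otimes A$ and $\gm = \io \otimes \ld \otimes \af$, where $\io$ is the \emph{trivial} action of $\Z$ on~$\OT$. All of your verifications of unitality, primeness, nonsimplicity, minimality, and the equivariant identification $\Prim (C) \cong \Xi (P)$ go through unchanged (and now $\OT \otimes C \cong C$ genuinely holds), while the crossed product becomes $C^* (\Z, C, \gm) \cong \OT \otimes E$ with $E = C^* ( \Z, \, D \otimes A, \, \ld \otimes \af )$, which is simple by Theorem~\ref{C:ILatt} (your argument for that step is correct), and separable, unital, and nuclear. At this point no classification theorem, UCT argument, or pure infiniteness verification is needed at all: Theorem~3.8 of~\cite{KP1} states precisely that $\OT$ tensored with any simple separable unital nuclear \ca{} is isomorphic to~$\OT$. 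The extra trivial $\OT$ factor is thus doing double duty --- it repairs the $K$-theoretic obstruction and it makes the identification of the crossed product a one-line consequence of $\OT$-absorption.
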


\begin{thm}\label{T-CPbyR-430}
Let $P$ be a nonempty second countable
locally compact Hausdorff space with a continuous action of~$\R$
which has no nonempty compact $\R$-invariant subsets.
Then there exists a separable nuclear unital nonsimple prime \ca~$C$
such that $\OT \otimes C \cong C$
and a minimal action $\gm \colon \R \to \Aut (C)$
such that $\Prim (C)$ is equivariantly homeomorphic to $\Xi (P)$
and such that $C^* (\R, C, \gm) \cong K \otimes \OT$.
\end{thm}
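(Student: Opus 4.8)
The plan is to run the construction of Proposition~\ref{P-RUnital} on top of the crossed-product simplicity machinery of Theorem~\ref{C:ILatt}, and then recognize the resulting simple crossed product via classification. First I would apply Theorem~\ref{T:ExistMinAct} with $G = \R$ and the given $P$, producing a separable nuclear nonsimple prime \ca~$C_0$ with $K \otimes \OT \otimes C_0 \cong C_0$ and a minimal action $\af^{(0)} \colon \R \to \Aut (C_0)$ inducing on $\Prim (C_0) \cong \Xi (P)$ the action coming from~$P$. To force the crossed product to be simple, set $B = \mathcal{E} (L^2 (\R))$ with the action $\bt$ of Lemma~\ref{L:GOnFc}, put $D = \bigotimes_{k=1}^{\I} B$ with $\ld = \bigotimes_{k=1}^{\I} \bt$, and replace $C_0$ by $A = D \otimes C_0$ with $\af = \ld \otimes \af^{(0)}$. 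Since $D$ is simple and nuclear (Lemma~\ref{L:TCOfL2}), $A$ is again separable, nuclear, $\OT$-stable, prime and nonsimple, with $\Prim (A) \cong \Xi (P)$ equivariantly, and $\af$ is minimal because $\af^{(0)}$ is and $D$ is simple. As $\R$ is amenable, hence exact, Theorem~\ref{C:ILatt} applies: every ideal of $C^*_{\mathrm{r}} (\R, A, \af)$ is a reduced crossed product by an invariant ideal of~$A$, so minimality forces $C^*_{\mathrm{r}} (\R, A, \af)$ to be simple, and amenability makes the full and reduced crossed products coincide, so $C^* (\R, A, \af)$ is simple.

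Next I would unitalize as in Proposition~\ref{P-RUnital}. Since $\Prim (A) \cong \Xi (P)$ is compact (Lemma~\ref{L:Top}) and $A \cong \mathcal{O}_{\infty} \otimes A$, Lemma~\ref{L-ExistP508} provides a full \pj{} $p_0 \in A$, and Lemma~\ref{L-RMkInv502} then yields a cocycle perturbation $\tilde{\af}$ of~$\af$ and a full \pj{} $p \in A$, \mvnt{} to~$p_0$, with $\tilde{\af}_t (p) = p$ for all $t$. I then set $C = \OT \otimes p A p$ and $\gm = \id_{\OT} \otimes \tilde{\af} |_{p A p}$. This $C$ is unital, separable, nuclear and satisfies $\OT \otimes C \cong C$. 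Passing through the full corner $pAp$, the cocycle perturbation, and the simple tensor factor~$\OT$ — none of which changes the induced action on primitive ideals — one gets an equivariant homeomorphism $\Prim (C) \cong \Xi (P)$ for the $P$-action, so $C$ is prime and nonsimple (Corollary~\ref{L-Prime426}) and $\gm$ is minimal by Lemma~\ref{L:MinXiP}.

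Finally I would identify the crossed product. Because $\gm = \id_{\OT} \otimes (\,\cdot\,)$, we have $C^* (\R, C, \gm) \cong \OT \otimes C^* (\R, p A p, \tilde{\af} |_{p A p})$, and $C^* (\R, p A p, \tilde{\af} |_{p A p})$ is a full corner of $C^* (\R, A, \tilde{\af}) \cong C^* (\R, A, \af)$, hence simple. Thus $C^* (\R, C, \gm)$ is simple, separable, nuclear and $\OT$-absorbing, so it is purely infinite with vanishing $K$-theory; being a crossed product of the unital algebra~$C$ by the non-discrete group~$\R$ it is non-unital, hence stable by Zhang's dichotomy for $\sigma$-unital simple purely infinite \ca{s}. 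A stable Kirchberg algebra with trivial $K$-groups is $K \otimes \OT$ by the Kirchberg--Phillips classification, so $C^* (\R, C, \gm) \cong K \otimes \OT$.

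I expect the stability claim to be the main obstacle. Unlike the stable algebra of Theorem~\ref{T-CPO2-430}, where stability of the crossed product is automatic, and unlike the discrete case of Theorem~\ref{T-CPAutO2-430}, where the unital algebra yields a unital crossed product~$\OT$, here $C$ is unital yet the crossed product must come out stable. This hinges precisely on $\R$ being non-discrete, which forces non-unitality, combined with the unital-or-stable dichotomy for purely infinite simple \ca{s}; the rest is a matter of assembling the simplicity input of Theorem~\ref{C:ILatt} with the unitalization of Proposition~\ref{P-RUnital}.
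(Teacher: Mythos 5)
Your construction is essentially the paper's, with one step reordered: the paper first makes the algebra unital (as in Proposition~\ref{P-RUnital}: full projection from Lemma~\ref{L-ExistP508}, cocycle perturbation from Lemma~\ref{L-RMkInv502}, compression, tensoring with $\OT$) and only then tensors with $\OT \otimes D$ and quotes the proof of Theorem~\ref{T-CPO2-430}, whereas you tensor with $D$ first and unitalize the product $D \otimes C_0$ afterwards. The permutation is harmless: cocycle perturbation does not change the crossed product, compression by an invariant full projection gives a full corner of the (already simple) crossed product, and the equivariant identifications of primitive ideal spaces pass through the corner, the perturbation, and the $\OT$ factor exactly as you say. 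Your explicit appeals to Theorem~\ref{C:ILatt} (with exactness of $\R$ coming from amenability), to minimality via simplicity and nuclearity of $D$, and to nonunitality of a crossed product by a nondiscrete group plus Zhang's dichotomy all match the paper's (partly implicit) reasoning.

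The one step that needs repair is the final identification. You invoke the Kirchberg--Phillips classification in the form ``a stable Kirchberg algebra with trivial $K$-groups is $K \otimes \OT$''; that statement requires the UCT, which you have not verified for $C^* (\R, C, \gm)$ and which is not obviously available, since $C_0$ is produced by Theorem~\ref{T:ExistA}, where no UCT assertion is made. (Indeed, whether every Kirchberg algebra with trivial $K$-theory is isomorphic to $\OT$ or $K \otimes \OT$ is equivalent to an open form of the UCT problem.) The fix is immediate from what you already have: your crossed product is visibly $\OT \otimes X$ with $X$ simple, separable, and nuclear, so Kirchberg's $\OT$-absorption theorem --- in this paper, the uniqueness statement of Theorem~\ref{T:ExistA} applied to the one-point space, exactly as in the proof of Theorem~\ref{T-CPO2-430} --- gives $K \otimes \OT \otimes X \cong K \otimes \OT$ with no UCT hypothesis. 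Combined with your observation that the crossed product is nonunital because $\R$ is not discrete, hence stable by Zhang's dichotomy, this yields $C^* (\R, C, \gm) \cong K \otimes C^* (\R, C, \gm) \cong K \otimes \OT$, which is precisely how the paper concludes.
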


\begin{proof}[Proof of Theorem~\ref{T-CPO2-430}]
Apply Theorem~\ref{T:ExistAct} to $G$ and~$P$,
obtaining a separable nuclear \ca~$C_0$
and an action $\gm^{(0)} \colon G \to \Aut (C_0)$.
Let $D$ and $\ld \colon G \to \Aut (D)$ be as in Theorem~\ref{C:ILatt}.
Let $\io \colon G \to \Aut (K \otimes \OT)$ be the trivial action.
Take $C = K \otimes \OT \otimes D \otimes C_0$,
and take $\gm \colon G \to \Aut (C)$
to be given by $\gm_g = \io_g \otimes \ld_g \otimes \gm^{(0)}_g$
for $g \in G$.
Clearly $\Prim (C)$ is equivariantly homeomorphic to $\Xi (P)$
and $C$ is separable, nuclear, nonsimple, prime,
and satisfies $K \otimes \OT \otimes C \cong C$.
Also $\gm$ is minimal.

Set $E = C^* (G, \, D \otimes C_0, \, \ld \otimes \gm^{(0)} )$.
Then Theorem~\ref{C:ILatt} implies that $E$ is simple.
Clearly $E$ is separable,
and $E$ is nuclear because $G$ is amenable.
Moreover,
$C^* (G, C, \gm) \cong K \otimes \OT \otimes E$.
It follows from the uniqueness statement
in Theorem~\ref{T:ExistA}
that $K \otimes \OT \otimes E \cong K \otimes \OT$.
\end{proof}

\begin{proof}[Proof of Theorem~\ref{T-CPAutO2-430}]
Let $\af \colon \Z \to \Aut (A)$
be the action of Corollary~\ref{C-FnAct508} (with $n = 1$).
Let $D$ and $\ld \colon G \to \Aut (D)$ be as in Theorem~\ref{C:ILatt}.
Let $\io \colon G \to \Aut (\OT)$ be the trivial action.
Take $C = \OT \otimes D \otimes A$,
and take $\gm = \io \otimes \ld \otimes \af \colon G \to \Aut (C)$.
The rest of the proof is the same as for Theorem~\ref{T-CPO2-430},
except that $C^* (G, C, \gm)$ is now unital
and isomorphic to the tensor product of $\OT$ and a
simple separable unital nuclear \ca.
Therefore $C^* (G, C, \gm) \cong \OT$
by Theorem~3.8 of~\cite{KP1}.
\end{proof}

\begin{proof}[Proof of Theorem~\ref{T-CPbyR-430}]
The proof is essentially the same as that of Theorem~\ref{T-CPO2-430}.
We apply Theorem~\ref{T:ExistAct} to $G$ and~$P$,
obtaining a separable nuclear \ca~$C_0$
and an action $\gm^{(0)} \colon G \to \Aut (C_0)$.
Arguing as in the proof of Proposition~\ref{P-RUnital},
instead of $K \otimes \OT \otimes C_0 \cong C_0$,
we may require that $C_0$ be unital and satisfy
$\OT \otimes C_0 \cong C_0$.
Following the proof of Theorem~\ref{T-CPO2-430},
tensor with $\OT \otimes D$ instead of with
$K \otimes \OT \otimes D$.
The resulting algebra~$C$ is unital.
Let $\gm \colon \R \to \Aut (C)$ be the resulting action.
The proof of Theorem~\ref{T-CPO2-430}
implies that $K \otimes C^* (\R, C, \gm) \cong K \otimes \OT$.
Since $C^* (\R, C, \gm)$ is nonunital,
it follows that $C^* (\R, C, \gm) \cong K \otimes \OT$.
\end{proof}

\begin{cor}\label{C:ExoticCptAb}
Let $G$ be a second countable abelian locally compact group
which is not discrete.
Let $P$ be a nonempty second countable
locally compact Hausdorff space,
and let ${\widehat{G}}$ act \ct ly on~$P$.
Assume that there are no nonempty compact
${\widehat{G}}$-invariant subsets of~$P$.
Then there is an action $\af \colon G \to \Aut (K \otimes \OT)$
such that $C^* (G, \, K \otimes \OT, \, \af)$
is a nonsimple prime \ca{}
with $\Prim ( C^* (G, \, K \otimes \OT, \, \af) ) \cong \Xi (P)$.
\end{cor}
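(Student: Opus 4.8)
The plan is to deduce this from Theorem~\ref{T-CPO2-430} applied to the dual group~$\widehat{G}$, followed by Takai duality.

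First I would check that $\widehat{G}$ meets the hypotheses of Theorem~\ref{T-CPO2-430}. Since $G$ is a second countable locally compact abelian group, its Pontryagin dual $\widehat{G}$ is again second countable, locally compact, and abelian, hence amenable; and since $G$ is not discrete, $\widehat{G}$ is not compact. The given action of $\widehat{G}$ on the nonempty second countable locally compact Hausdorff space~$P$, with no nonempty compact $\widehat{G}$-invariant subsets, is exactly the required input. Applying Theorem~\ref{T-CPO2-430} with $\widehat{G}$ in place of~$G$ then produces a separable nuclear nonsimple prime \ca~$C$ with $K \otimes \OT \otimes C \cong C$, a minimal action $\gm \colon \widehat{G} \to \Aut (C)$ with $\Prim (C)$ homeomorphic to $\Xi (P)$, and an isomorphism $\te \colon C^* (\widehat{G}, C, \gm) \to K \otimes \OT$. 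Using $G = \widehat{\widehat{G}}$, I transport the dual action $\widehat{\gm} \colon G \to \Aut \big( C^* (\widehat{G}, C, \gm) \big)$ through~$\te$ to define $\af \colon G \to \Aut (K \otimes \OT)$ by $\af_g = \te \circ \widehat{\gm}_g \circ \te^{-1}$, so that the $G$-\ca{s} $(K \otimes \OT, \af)$ and $\big( C^* (\widehat{G}, C, \gm), \widehat{\gm} \big)$ are equivariantly isomorphic.

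It then remains to identify the crossed product. By Takai duality,
\[
C^* (G, \, K \otimes \OT, \, \af)
  \cong C^* \big( G, \, C^* (\widehat{G}, C, \gm), \, \widehat{\gm} \big)
  \cong C \otimes K \big( L^2 (\widehat{G}) \big).
\]
Since $\widehat{G}$ is second countable and noncompact, $L^2 (\widehat{G})$ is separable and in\fd, so $K \big( L^2 (\widehat{G}) \big) \cong K$; moreover $K \otimes \OT \otimes C \cong C$ forces $K \otimes C \cong C$. Hence $C^* (G, \, K \otimes \OT, \, \af) \cong C$, which is prime by Corollary~\ref{L-Prime426} and nonsimple because $\Xi (P)$ has more than one point (as $P \neq \E$), with primitive ideal space homeomorphic to $\Xi (P)$. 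There is no deep obstacle here: all the substance lies in Theorem~\ref{T-CPO2-430}, and the only points needing care are the passage of the hypotheses to~$\widehat{G}$ and the verification, via Takai duality, that the iterated crossed product collapses back to~$C$.
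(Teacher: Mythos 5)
Your proposal is correct and follows exactly the paper's own route: apply Theorem~\ref{T-CPO2-430} to ${\widehat{G}}$, let $\af$ be the dual action ${\widehat{\gm}}$ under the identification of $G$ with its second dual, and conclude by Takai duality. The extra details you supply (amenability and noncompactness of ${\widehat{G}}$, the collapse $C \otimes K\big(L^2({\widehat{G}})\big) \cong C$ via $K \otimes \OT \otimes C \cong C$) are precisely the routine verifications the paper leaves implicit.
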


There are many choices for~$P$.
See Example~\ref{E-111109dd}.

\begin{proof}[Proof of Corollary~\ref{C:ExoticCptAb}]
Apply Theorem~\ref{T-CPO2-430} with ${\widehat{G}}$
in place of~$G$,
obtaining an action $\gm \colon {\widehat{G}} \to \Aut (C)$.
Identify $G$ with its second dual,
and let $\af = {\widehat{\gm}}$.
The conclusion follows from Takai duality.
\end{proof}

Related examples can be produced using~\cite{Brt}.
The algebras are unital for more groups,
but there is much less control over the primitive ideal spaces
of the crossed products.

\begin{thm}[Bratteli]\label{T_2X31-Brt}
Let $G$ be a second countable locally compact abelian group
which is the product of a nontrivial connected group
and some other group.
Let $A = \bigotimes_{n = 1}^{\infty} M_2$
be the $2^{\infty}$~UHF algebra.
Then there exists an action $\af \colon G \to \Aut (A)$
such that $C^* (G, A, \af)$ is prime but not simple.
\end{thm}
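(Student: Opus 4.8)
The plan is to exhibit a single product type action $\af\colon G\to\Aut(A)$ and then deduce both properties from~\cite{Brt}. Write $G=G_1\times G_2$ with $G_1$ a nontrivial connected group. Since $G_1$ is connected, $\widehat{G_1}$ is torsion free and nonzero; using the structure theorem for connected locally compact abelian groups one obtains a nontrivial continuous character $G_1\to S^1$, which may be taken surjective. Along such a surjection one may pull back the gauge action of $S^1$ on $A=\bigotimes_{n=1}^{\I}M_2$, and for $G=S^1$ this is exactly the prime nonsimple example studied in Section~4 of~\cite{Brt}. For a larger group $G$ this naive pullback is not prime, because a nontrivial subgroup then acts trivially and contributes commutative directions to the crossed product; so the action must genuinely involve all of~$\widehat G$.

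I would therefore use a diagonal product type action. As $G$ is second countable, so is $\widehat G$, and I fix a sequence $(\rh_n)_{n\in\N}$ in $\widehat G$ and set
\[
\af_g=\bigotimes_{n=1}^{\I}\Ad\big(\diag(1,\,\rh_n(g))\big)
\qquad(g\in G).
\]
For any choice of $(\rh_n)$ this is a product type action of $G$ on $A$. The sequence is chosen so that for every $N$ the tail $\{\rh_n\colon n\geq N\}$ is large in the sense relevant to~\cite{Brt}; for instance, so that it generates a dense subgroup of $\widehat G$. Such a sequence exists because $\widehat G$ has a countable dense subset, which one interleaves, repeating each element infinitely often. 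The constant gauge sequence on $S^1$ already has this property, so it is the model to imitate.

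Nonsimplicity is then immediate and requires no properties of $(\rh_n)$: the group $G$ is the product of a nontrivial connected group and another group and $\af$ is product type, so Theorem~3.1 of~\cite{Brt} gives that $C^*(G,A,\af)$ is not simple. The real content is primeness, which I expect to be the main obstacle. For this I would follow the ideal-theoretic computation of Section~4 of~\cite{Brt}. Writing $A$ as the increasing union of $A_k=\bigotimes_{n\leq k}M_2$, the restriction of $\af$ to $A_k$ is inner, implemented by the representation $g\mapsto\bigotimes_{n\leq k}\diag(1,\rh_n(g))$, so that $C^*(G,A_k,\af)\cong A_k\otimes C_0(\widehat G)$ and $C^*(G,A,\af)$ is the corresponding direct limit, with connecting maps that translate the $\widehat G$-coordinate by $\rh_{k+1}$. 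The goal is to show that the induced lattice of open subsets of $\Prim(C^*(G,A,\af))$ contains no two disjoint nonempty members; the density of every tail is precisely what prevents the $\widehat G$-directions from splitting off a nontrivial ideal, exactly as in the gauge computation, and yields primeness. There is no conflict with nonsimplicity: the gauge prototype already has dense tails and is prime but not simple, the connected factor $G_1$ being what obstructs simplicity.
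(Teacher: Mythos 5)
Your construction and your division of labor match the paper's proof, which is pure citation: the diagonal product type action built from a dense sequence in $\widehat{G}$ with every character repeated infinitely often is exactly the action provided by Corollary~2.4 of~\cite{Brt}, and nonsimplicity is obtained, as in the paper, from Theorem~3.1 of~\cite{Brt}. The gap is in the primeness step, which you correctly identify as the real content and then leave as an assertion. Section~4 of~\cite{Brt} is not a general theorem you can ``follow'': it is a detailed analysis of one example, the gauge action of $S^1$ on the CAR algebra, and redoing its direct-limit ideal computation for an arbitrary second countable locally compact abelian $G$ and an arbitrary dense-tail sequence is genuine work that your sketch does not carry out (your description of the connecting maps $A_k \otimes C_0 (\widehat{G}) \to A_{k+1} \otimes C_0 (\widehat{G})$ is also imprecise; the embedding is diagonal, with one summand translated by $\rh_{k+1}$). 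The missing idea is the Connes spectrum, which your proposal never mentions: Corollary~2.3 of~\cite{Brt} (resting on Theorem~5.8 of~\cite{OPd1}, applicable since the simple algebra $A$ is automatically $G$-prime) states that $C^* (G, A, \af)$ is prime \ifo{} $\Gm (\af) = \widehat{G}$, and Theorem~2.1 of~\cite{Brt} computes $\Gm (\af)$ for product type actions. That is how both Bratteli and the paper obtain primeness, and with it your entire argument reduces to three citations, which is the paper's proof.

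A further caution on your density condition. The hypothesis coming out of Theorem~2.1 of~\cite{Brt} is not that each tail $\{ \rh_n \colon n \geq N \}$ generates a dense subgroup of $\widehat{G}$; it is that for every $N$ the set of finite products $\prod_{n \geq N} \rh_n^{\ep_n}$ with $\ep_n \in \{ 0, \pm 1 \}$ is dense in $\widehat{G}$, the exponents being constrained because each factor $M_2$ contributes only the characters $1, \rh_n, \rh_n^{-1}$ to the relevant spectral sets. Density of the generated subgroup allows arbitrary integer exponents and is a priori weaker. Your device of repeating every character infinitely often is precisely what closes this gap, since distinct copies of the same character supply arbitrary integer powers; but this point has to be made explicitly, as it is what verifies $\Gm (\af) = \widehat{G}$ and hence, via Corollary~2.3 of~\cite{Brt}, primeness.
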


\begin{proof}
Choose $\af$ as in Corollary~2.4 of~\cite{Brt}.
Then $C^* (G, A, \af)$ is prime by Corollary~2.3 of~\cite{Brt},
and not simple by Theorem~3.1 of~\cite{Brt}.
\end{proof}

It is not necessary
to require that $G$ be a product with one of the factors
connected.
An example is given in Proposition~3.4 of~\cite{Brt}
in which $G$ is compact and totally disconnected.

In some cases, $\Prim (C^* (G, A, \af))$ can be topologically
more complicated than the spaces $\Xi (P)$ considered here.
See, for example, Theorem~3.2 of~\cite{Brt}.
However, it seems difficult to specify any particular choices
of $\Prim (C^* (G, A, \af))$ in advance.

\begin{rmk}\label{R_2X27No}
There are no examples
as in Corollary~\ref{C:ExoticCptAb} or Theorem~\ref{T_2X31-Brt}
when $G$ is discrete.
One could apply Corollary~\ref{C:CptGMinA} to the dual action.
Another way to see this
is via the Connes spectrum $\Gm (\af)$
and the strong Connes spectrum ${\widetilde{\Gm}} (\af)$.
Since $A$ is simple,
$C^* (G, A, \af)$ is simple
\ifo{} ${\widetilde{\Gm}} (\af) = {\widehat{G}}$
by Theorem~3.5 of~\cite{Ks1},
and $C^* (G, A, \af)$ is prime
\ifo{} $\Gm (\af) = {\widehat{G}}$
by Theorem~5.8 of~\cite{OPd1}.
But ${\widetilde{\Gm}} (\af) = \Gm (\af)$
by Proposition~3.8 of~\cite{Ks1}.
\end{rmk}

When $G = \R$ or $G = S^1$,
we can find actions on~$\OT$
instead of on $K \otimes \OT$.
In both cases, if we don't care what the
primitive ideal space of the crossed product is,
we can get an action on the $2^{\infty}$~UHF algebra
from Theorem~\ref{T_2X31-Brt}.

We consider~$\R$ first.

\begin{prp}\label{P-RExO2502}
Let $P$ be a nonempty second countable locally compact Hausdorff space,
and let $\R$ act \ct ly on~$P$.
Assume that there are no nonempty compact
$\R$-invariant subsets of~$P$.
Then there is an action $\bt \colon G \to \Aut (\OT)$
such that
such that $C^* (\R, \OT, \bt)$
is a nonsimple prime \ca{}
with $\Prim ( C^* (\R, \OT, \bt) ) \cong \Xi (P)$.
\end{prp}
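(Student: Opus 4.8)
The plan is to start from the prime algebra produced by Theorem~\ref{T-CPbyR-430}, pass to the \emph{dual} action on its crossed product (which is $K \otimes \OT$), and then cut down by an invariant full \pj{} to obtain an action on $\OT$ itself, using that the acting group is~$\R$.

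First I would apply Theorem~\ref{T-CPbyR-430} to $P$ with its given $\R$-action. This yields a separable nuclear unital nonsimple prime \ca~$C$ with $\OT \otimes C \cong C$, together with a minimal action $\gm \colon \R \to \Aut(C)$ such that $\Prim(C) \cong \Xi(P)$ and, setting $D = C^* (\R, C, \gm)$, we have $D \cong K \otimes \OT$. Let $\widehat{\gm} \colon \R \to \Aut(D)$ be the dual action. Since $\R$ is abelian and self-dual, with $K(L^2 (\R)) \cong K$, Takai duality gives $C^* (\R, D, \widehat{\gm}) \cong C \otimes K$. This algebra is prime and nonsimple (because $C$ is), and $\Prim(C \otimes K) \cong \Prim(C) \cong \Xi(P)$; primeness also follows directly from Corollary~\ref{L-Prime426}.

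Next I would replace the action on $D \cong K \otimes \OT$ by an action on~$\OT$. Fix a rank one \pj{} $e \in K$ and put $p_0 = e \otimes 1 \in K \otimes \OT$; then $p_0$ is full and $p_0 (K \otimes \OT) p_0 \cong \OT$. Because the group is~$\R$, I would apply Lemma~\ref{L-RMkInv502} to the action $\widehat{\gm}$ and the \pj~$p_0$: this produces a \pj~$q$ which is \mvnt{} to~$p_0$ and a cocycle perturbation $\bt^{(1)} \colon \R \to \Aut(D)$ of~$\widehat{\gm}$ such that $q$ is $\bt^{(1)}$-invariant. \Mvnc{} preserves both fullness and the isomorphism class of the corner, so $q$ is full and $q (K \otimes \OT) q \cong \OT$. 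Identifying $q (K \otimes \OT) q$ with~$\OT$, I take $\bt = \bt^{(1)} |_{q (K \otimes \OT) q} \colon \R \to \Aut(\OT)$ to be the desired action.

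Finally I would identify the crossed product. A cocycle perturbation leaves the crossed product unchanged up to isomorphism, so $C^* (\R, D, \bt^{(1)}) \cong C^* (\R, D, \widehat{\gm}) \cong C \otimes K$. Viewing $q$ as a \pj{} in $M(C^* (\R, D, \bt^{(1)}))$, the crossed product of the invariant hereditary subalgebra $q D q$ is the corresponding corner of the crossed product, $C^* (\R, \OT, \bt) \cong q \, C^* (\R, D, \bt^{(1)}) \, q$, and this corner is full because $q$ is full in~$D$. A full corner of $C \otimes K$ is Morita equivalent to $C \otimes K$, hence is again prime and nonsimple with primitive ideal space $\Xi(P)$. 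The one genuinely substantial step is the production of the $\widehat{\gm}$-invariant full \pj{} whose corner is~$\OT$: this is precisely where the hypothesis $G = \R$ is essential, and it is supplied by the Connes averaging argument of Lemma~\ref{L-RMkInv502}, which has no analogue for general locally compact groups. The remaining ingredients — invariance of the crossed product under cocycle perturbation, and the description of the crossed product of an invariant corner as the corresponding corner of the crossed product — are standard.
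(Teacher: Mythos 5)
Your proposal is correct and takes essentially the same route as the paper: the paper likewise gets an action of $\R$ on $K \otimes \OT$ with nonsimple prime crossed product via Takai duality (its Corollary~\ref{C:ExoticCptAb}, which is exactly what your first paragraph re-derives from Theorem~\ref{T-CPbyR-430}), then applies Lemma~\ref{L-RMkInv502} to replace the action by a cocycle perturbation fixing a \pj{} whose corner is $\OT$, and finally identifies $C^* (\R, \OT, \bt)$ with a full hereditary subalgebra of the original crossed product, so that the primitive ideal space is unchanged. The only cosmetic divergence is the justification of fullness of that corner: you argue directly from fullness of $q$ in $K \otimes \OT$, while the paper notes that the ideal generated by the corner is invariant under the dual action and hence, by simplicity of $K \otimes \OT$, must be everything.
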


\begin{proof}
Corollary~\ref{C:ExoticCptAb}
provides an action $\af \colon \R \to \Aut (K \otimes \OT)$
such that $C^* (\R, \, K \otimes \OT, \, \af)$
is a nonsimple prime \ca{}
with
$\Prim ( C^* \big( \R, \, K \otimes \OT, \, \af) \big) \cong \Xi (P)$.
Using Lemma~\ref{L-RMkInv502},
we can find an exterior equivalent action
$\bt \colon \R \to \Aut (K \otimes \OT)$
and a \nzp{} $p \in K \otimes \OT$
such that $\bt_t (p) = p$ for all $t \in \R$.
Set $B = p (K \otimes \OT) p$.
Then $B \cong \OT$
and is a $\bt$-invariant hereditary subalgebra of $K \otimes \OT$.
We use the same letter $\bt$ for the restricted action
$\R \to \Aut (B)$.

We claim that $C^* (\R, B, \bt)$ is a full hereditary subalgebra
of $C^* (\R, \, K \otimes \OT, \, \bt)$.
Identify $K \otimes \OT$ with its image in
$M \big( C^* (\R, \, K \otimes \OT, \, \bt) \big)$.
Then one can check that
\[
C^* (\R, B, \bt) = p C^* (\R, \, K \otimes \OT, \, \bt) p
\]
by considering the dense convolution subalgebra
$C_{\mathrm{c}} (\R, \, K \otimes \OT, \, \bt)$
of $C^* (\R, \, K \otimes \OT, \, \bt)$.
Therefore $C^* (\R, B, \bt)$ is a hereditary subalgebra
of $C^* (\R, \, K \otimes \OT, \, \bt)$.
Let $I$ be the ideal it generates.
Then $I$ is ${\widehat{\bt}}$-invariant,
and therefore has the form $C^* (\R, J, \bt)$
for some $\bt$-invariant ideal $J \S K \otimes \OT$.
Since $K \otimes \OT$ is simple and $I \neq 0$,
we must have $I = C^* (\R, \, K \otimes \OT, \, \bt)$.
This completes the proof of the claim.

It follows that $C^* (\R, B, \bt)$ is stably isomorphic
to $C^* (\R, \, K \otimes \OT, \, \bt)$.
We have
\[
C^* (\R, \, K \otimes \OT, \, \bt)
  \cong C^* (\R, \, K \otimes \OT, \, \af)
\]
by the proof of part~(5) of Theorem 2.8.3 in~\cite{Ph1}.
Therefore
\[
\Prim (C^* (\R, B, \bt))
 \cong \Prim ( C^* \big( \R, \, K \otimes \OT, \, \bt) \big)
 \cong \Xi (P).
\]
This completes the proof.
\end{proof}

Now we consider~$S^1$.
We describe the kinds of spaces we use.

\begin{rmk}\label{R_2Y01-WhichP}
We will consider nonempty totally disconnected
noncompact second countable locally compact Hausdorff spaces~$P$
such that there exist a Borel measure $\mu$ on~$P$
and a measure preserving homeomorphism $h \colon P \to P$
such that $\mu (U) > 0$ for every nonempty open set $U \S P$
and such that there is no nonempty compact subset $L \S P$
with $h (L) = L$.
Examples of such spaces include $\Z$
(using counting measure and translation),
the product of $\Z$ with the Cantor set,
and the product $\Z \times Y$ for any countable closed
subset $Y \S \R$ which admits a Borel measure $\mu_0$ with full support.
For example, one could take
$Y = \big\{ 1, \tfrac{1}{2}, \tfrac{1}{4}, \ldots \big\} \cup \{ 0 \}$
with $\mu_0 ( \{ x \} ) = x$ for all $x \in Y$.
\end{rmk}

\begin{cor}\label{C-O2byS1-430}
Let $P$ be as in Remark~\ref{R_2Y01-WhichP}.
There is an action $\af \colon S^1 \to \Aut (\OT)$
such that $C^* (S^1, \OT, \af)$ is a nonsimple prime \ca{} with
$\Prim ( C^* (S^1, \OT, \af) ) \cong \Xi (P)$.
\end{cor}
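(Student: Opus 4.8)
The plan is to follow the Takai-duality argument used for Corollary~\ref{C:ExoticCptAb}, but starting from the $\Z$-action provided by Theorem~\ref{T-CPAutO2-430}, whose crossed product is~$\OT$ rather than $K \otimes \OT$. First I would observe that a space $P$ as in Remark~\ref{R_2Y01-WhichP} meets exactly the hypotheses of Theorem~\ref{T-CPAutO2-430}: it is a nonempty totally disconnected noncompact second countable locally compact Hausdorff space equipped with a full-support Borel measure~$\mu$ and a measure preserving homeomorphism~$h$ having no nonempty compact invariant set. Applying that theorem yields a separable nuclear unital nonsimple prime \ca~$C$ with $\OT \otimes C \cong C$ and a minimal action $\gm \colon \Z \to \Aut (C)$ such that $\Prim (C) \cong \Xi (P)$ and $C^* (\Z, C, \gm) \cong \OT$.

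Next I would form the dual action. Since $\widehat{\Z} = S^1$, the dual action $\widehat{\gm}$ of the crossed product is an action of $S^1$ on $C^* (\Z, C, \gm)$; transporting it across the isomorphism $C^* (\Z, C, \gm) \cong \OT$ gives the desired $\af \colon S^1 \to \Aut (\OT)$. To compute its crossed product I would invoke Takai duality for the abelian group~$\Z$, which identifies $C^* \big( S^1, \, C^* (\Z, C, \gm), \, \widehat{\gm} \big)$ with $C \otimes K (L^2 (\Z))$. As $L^2 (\Z)$ is separable and infinite dimensional, $K (L^2 (\Z)) \cong K$, so under the identification $C^* (\Z, C, \gm) \cong \OT$ we obtain $C^* (S^1, \OT, \af) \cong K \otimes C$.

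Finally I would read off the conclusion. Tensoring with the simple algebra~$K$ induces a lattice isomorphism ${\mathcal{I}} (C) \cong {\mathcal{I}} (K \otimes C)$ and hence a homeomorphism $\Prim (K \otimes C) \cong \Prim (C) \cong \Xi (P)$, so that $\Prim (C^* (S^1, \OT, \af)) \cong \Xi (P)$; the same lattice isomorphism transports primeness and nonsimplicity of~$C$ (recorded in Theorem~\ref{T-CPAutO2-430}) to $K \otimes C$, whence $C^* (S^1, \OT, \af)$ is prime but not simple. No step presents a serious obstacle, since all the hard analytic work is packaged in Theorem~\ref{T-CPAutO2-430}; the only points requiring care are confirming that the Takai-duality isomorphism carries the dual action on the literal copy of~$\OT$ to the stated crossed product, and that the passage through $K \otimes C$ faithfully preserves the ideal lattice.
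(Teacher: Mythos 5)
Your proposal is correct and takes essentially the same route as the paper: the paper's proof of Corollary~\ref{C-O2byS1-430} simply repeats the proof of Corollary~\ref{C:ExoticCptAb} (apply the crossed-product theorem with ${\widehat{G}} = \Z$, pass to the dual action, and invoke Takai duality) with Theorem~\ref{T-CPAutO2-430} in place of Theorem~\ref{T-CPO2-430}. Your write-up merely makes explicit the details the paper leaves implicit, namely the identification $C^* (S^1, \OT, \af) \cong K \otimes C$ and the fact that tensoring with $K$ preserves the ideal lattice, primeness, and nonsimplicity.
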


\begin{proof}
The proof is the same as for Corollary~\ref{C:ExoticCptAb},
using Theorem~\ref{T-CPAutO2-430} in place of Theorem~\ref{T-CPO2-430}.
\end{proof}

We also have a stably finite version,
although we have not identified the algebra on which the action
takes place.
We need a different argument for simplicity.

\begin{lem}\label{L:TopFree}
Let $G$ be a group,
and let $P$ be a nonempty noncompact locally compact $G$-space
which is topologically free
in the sense that for every $g \in G \SM \{ 1 \}$,
the set $\{ x \in P \colon g x = x \}$ has empty interior.
Let $\af \colon G \to \Aut (A)$ be an action of $G$ on a \ca~$A$
such that $\Prim (A)$ is equivariantly homeomorphic to $\Xi (P)$.
Then the action of $G$ on~$A$ is topologically free in the sense
of Definition~1 of~\cite{AS}.
\end{lem}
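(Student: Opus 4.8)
The plan is to reduce the Archbold--Spielberg condition to a purely topological density statement about the induced action on $\Prim (A)$, and then to read off the answer from the explicit description of $\Xi (P)$ in Section~\ref{Sec:1}. Definition~1 of~\cite{AS} requires that for every finite subset $\{ g_1, \ldots, g_n \} \S G \SM \{ 1 \}$ the set of points of the spectrum moved by each of $g_1, \ldots, g_n$ be dense. A point of $\Prim (A)$ that is moved by $g_i$ pulls back, under the canonical continuous open surjection ${\widehat{A}} \to \Prim (A)$, to representations $\pi$ with $\pi \circ \af_{g_i}$ not unitarily equivalent to~$\pi$; since an open continuous surjection pulls dense sets back to dense sets, it suffices to prove that, under the given equivariant identification $\Prim (A) \cong \Xi (P)$, the set
\[
D = \big\{ x \in \Xi (P) \colon g_i x \neq x \ \text{for } i = 1, \ldots, n \big\}
\]
is dense in $\Xi (P)$.

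First I would record two structural facts. By Proposition~\ref{P:Funct} the induced homeomorphisms fix the point $\I$, so $\I \notin D$ and hence $D \S P$. By Lemma~\ref{L:PtC} the nonempty proper closed subsets of $\Xi (P)$ are exactly the nonempty compact subsets of~$P$; therefore a nonempty subset of $P$ is dense in $\Xi (P)$ if and only if it is contained in no compact subset of~$P$.

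Next I would analyze $D$ inside~$P$. For each $i$ the fixed-point set $F_i = \{ x \in P \colon g_i x = x \}$ is closed, because the action is continuous and $P$ is Hausdorff, and it has empty interior by the topological freeness of the $G$-space~$P$. A finite union of closed sets with empty interior again has empty interior (elementarily, or by the Baire category theorem, $P$ being locally compact Hausdorff), so $D = P \SM \bigcup_{i=1}^n F_i$ is a dense open subset of~$P$, and in particular nonempty since $P \neq \E$.

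Finally I would use noncompactness of~$P$ to pass from density in $P$ to density in $\Xi (P)$. If $D$ were contained in a compact set $K \S P$, then $P \SM K \S \bigcup_{i=1}^n F_i$; but $P \SM K$ is a nonempty open set because $P$ is noncompact, contradicting the fact that $\bigcup_i F_i$ has empty interior. Hence $D$ lies in no compact subset of~$P$, so by the criterion above it is dense in $\Xi (P)$, as required. I expect the only genuine subtlety to be this last passage: density in $\Xi (P)$ is strictly weaker than density in~$P$, and it is precisely the noncompactness of~$P$, together with $\I$ being a fixed point, that drives the argument --- so the noncompactness hypothesis is the crux rather than a mere regularity assumption.
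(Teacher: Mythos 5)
Your proof is correct and follows essentially the same route as the paper's: reduce Definition~1 of~\cite{AS} to the induced action on $\Prim (A) \cong \Xi (P)$, observe that the points moved by $g_1, \ldots, g_n$ form a finite intersection of dense open subsets of~$P$ (since $\I$ is fixed), and then use noncompactness of~$P$ to upgrade density in~$P$ to density in $\Xi (P)$. The only difference is that you spell out two steps the paper compresses --- the pullback along the open surjection ${\widehat{A}} \to \Prim (A)$, and the closed-set description of $\Xi (P)$ behind the final density claim --- which is a matter of detail, not of method.
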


\begin{proof}
The definition of~\cite{AS} is in terms of the action of $G$
on the set of unitary equivalence classes of irreducible
representations of~$A$,
but it clearly suffices to prove the condition
of Definition~1 of~\cite{AS} for
the action of $G$ on $\Prim (A)$ instead.
So,
following~\cite{AS},
let $g_1, g_2, \ldots, g_n \in  G \SM \{ 1 \}$.
Then
\[
\bigcap_{j = 1}^n \{ x \in \Xi (P) \colon g_j x \neq x \}
= \bigcap_{j = 1}^n \{ x \in P \colon g_j x \neq x \}
\]
is a finite intersection of dense open sets in~$P$,
hence dense in~$P$.
Since $P$ is not compact,
it follows that this set is dense in $\Xi (P)$ as well.
\end{proof}

\begin{prp}\label{P-StFbyS1-430}
Let $P$ be as in Remark~\ref{R_2Y01-WhichP}.
Then there is
a simple separable stably finite nuclear unital \ca~$B$
and an action $\bt \colon S^1 \to \Aut (B)$
such that $C^* (S^1, B, \bt)$ is a nonsimple prime \ca{} with
$\Prim ( C^* (S^1, B, \bt) ) \cong \Xi (P)$.
\end{prp}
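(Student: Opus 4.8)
The plan is to obtain $B$ and $\bt$ by Takai duality, starting from the AF~construction of Section~\ref{Sec:AF} applied to the data attached to~$P$. Concretely, I would apply Corollary~\ref{C-FnAct508} with $n = 1$ to the space~$P$, the measure~$\mu$, and the homeomorphism~$h$ of Remark~\ref{R_2Y01-WhichP}. This produces a unital nonsimple prime AF~algebra~$A$ and a minimal action $\gm \colon \Z \to \Aut (A)$ together with an equivariant homeomorphism $\Prim (A) \cong \Xi (P)$, the action on $\Xi (P)$ being the one induced by~$h$ through Proposition~\ref{P:Funct}. I will then set $B = C^* (\Z, A, \gm)$, let $\bt = {\widehat{\gm}} \colon S^1 \to \Aut (B)$ be the dual action of $\widehat{\Z} = S^1$, and recover the $S^1$-crossed product by Takai duality.

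First I would verify that $B$ has the stated properties. It is separable, and it is unital and nuclear because $A$ is unital and nuclear and $\Z$ is discrete and amenable. For simplicity, the key point is that the action of~$h$ on~$P$ is in fact free: a point~$x$ with $h^n (x) = x$ for some $n \neq 0$ would have finite orbit $\{ x, h(x), \ldots, h^{n-1}(x) \}$, a nonempty compact $h$-invariant subset of~$P$, contradicting the hypothesis of Remark~\ref{R_2Y01-WhichP}. Hence for every $n \in \Z \SM \{ 0 \}$ the set $\{ x \in P \colon h^n (x) = x \}$ is empty, so~$P$ is topologically free, and Lemma~\ref{L:TopFree} shows that $\gm$ is topologically free. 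Topological freeness yields the intersection property for the reduced crossed product (\cite{AS}), while minimality of~$\gm$ means that $A$ has no nontrivial invariant ideals; together these force $C^*_{\mathrm{r}} (\Z, A, \gm)$ to be simple. Since $\Z$ is amenable, $B = C^* (\Z, A, \gm) = C^*_{\mathrm{r}} (\Z, A, \gm)$ is simple.

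Stable finiteness comes from a trace. By Corollary~\ref{C:AFG} the algebra~$A$ carries a $\gm$-invariant tracial state~$\ta$, and the standard formula $\tau = \ta \circ E$, where $E \colon B \to A$ is the canonical conditional expectation, defines a tracial state~$\tau$ on~$B$ (the trace identity being immediate from $\gm$-invariance of~$\ta$ and the trace property of~$\ta$ on~$A$). Since $B$ is simple and unital, $\tau$ is faithful, and hence $B$ is stably finite.

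Finally, with $\bt = {\widehat{\gm}}$, Takai duality gives $C^* (S^1, B, \bt) = C^* (\widehat{\Z}, \, C^* (\Z, A, \gm), \, \widehat{\gm}) \cong K (L^2 (\Z)) \otimes A \cong K \otimes A$. As $A$ is prime and not simple (Corollary~\ref{L-Prime426}, together with $P \neq \E$, so that $\Xi (P)$ has more than one point), so is $K \otimes A$, and $\Prim (K \otimes A) \cong \Prim (A) \cong \Xi (P)$ by Morita invariance. The main obstacle is precisely the simplicity of~$B$: the $\OT$-absorption machinery of Theorem~\ref{C:ILatt} is unavailable in the stably finite setting, so the simplicity argument must instead run through topological freeness, and the crucial (if elementary) observation that makes this possible is that the absence of nonempty compact invariant subsets already forces the action to be free.
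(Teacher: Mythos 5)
Your proof is correct and follows essentially the same route as the paper's: build the unital nonsimple prime AF algebra with minimal $\Z$-action from Section~\ref{Sec:AF}, take $B = C^* (\Z, A, \gm)$ with the dual $S^1$-action $\bt = {\widehat{\gm}}$, obtain simplicity from the Archbold--Spielberg result of~\cite{AS} (topological freeness plus minimality, with $\Z$ amenable so full and reduced crossed products agree), obtain stable finiteness from the $\gm$-invariant tracial state of Corollary~\ref{C:AFG}, and finish with Takai duality. You are in fact slightly more careful than the paper: its proof invokes Corollary~\ref{C-ZAF428} (the case $P = \Z$) although the statement allows general $P$ as in Remark~\ref{R_2Y01-WhichP}, whereas your use of Corollary~\ref{C-FnAct508} with $n = 1$, together with the observation that the absence of nonempty compact invariant subsets forces the action of $h$ on $P$ to be free --- hence topologically free, as required by Lemma~\ref{L:TopFree} --- supplies exactly the details needed for the general case.
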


\begin{proof}
Let $\af \colon \Z \to \Aut (A)$
be the action of Corollary~\ref{C-ZAF428}.
Take $B = C^* (\Z, A, \af)$,
and take $\bt = {\widehat{\af}}$.
The action of $\Z$ on $\Xi (\Z)$ is minimal by Lemma~\ref{L:MinXiP},
so the corollary to Theorem~1 of~\cite{AS}
and Lemma~\ref{L:TopFree}
imply that $B$ is simple.
Corollary~\ref{C:AFG}
provides an $\af$-invariant \tst{} on~$A$.
Therefore $B$ has a \tst,
so is stably finite.
The conclusion now follows from Takai duality.
\end{proof}

We end with two examples suggested by our methods
and two related questions.

\begin{exa}\label{E:FreeNotMin}
A topologically free
(Definition~1 of~\cite{AS})
action on a nonsimple prime \ca{} need not be minimal.

Apply Theorem~\ref{T:ExistAct}
with $G = \Z$,
taking $P$ to be the disjoint union of $\Z$ and the Cantor set~$K$,
and with $\Z$ acting on~$\Z$ by translation and on $K$
via a \mh.
Let $C$ be the resulting \ca.
Since $K$ is an invariant compact set,
Lemma~\ref{L:MinXiP} implies that the action of $\Z$ on
$\Xi (P)$ is not minimal,
so the action on $C$ is not minimal.
However, the action of $\Z$ on $P$ is free, so Lemma~\ref{L:TopFree}
above implies that the action on~$C$ is topologically free.\end{exa}

\begin{exa}\label{E:MinNotFree}
An effective (Definition~\ref{D-Effective}) minimal action
on a unital nonsimple prime \ca{}
need not be topologically free
(Definition~1 of~\cite{AS}).

Let $\gm \colon \Z \to \Aut (C)$
be as in Proposition~\ref{P:ZUnital}.
Let $\bt \colon \Z_2 \to \Aut (M_2)$ send the nontrivial group element
$1 \in \Z_2$ to
$\Ad \left( \left( \begin{smallmatrix} 0 & 1 \\
                       1 & 0 \end{smallmatrix} \right) \right)$.
Define $\af \colon \Z_2 \times \Z \to \Aut (M_2 \otimes C)$
by $\af_{g, n} = \bt_g \otimes \gm_n$
for $g \in \Z_2$ and $n \in \Z$.
It is immediate that $\af_{(1, 0)}$ fixes
all the unitary equivalence classes of irreducible representations.
\end{exa}

The action in Example~\ref{E:MinNotFree} is not effective
on the primitive ideal space.

\begin{qst}\label{Q-EffPrim430}
Does there exist a second countable locally compact group~$G$
and a minimal action of~$G$
on a nonprime simple \ca~$A$ which is effective on $\Prim (A)$
but not topologically free?
\end{qst}

\begin{qst}\label{Q:MinNotFreeForZ}
Is a minimal effective action of~$\Z$
on a unital nonsimple prime \ca{}
necessarily topologically free?
\end{qst}

For comparison, recall that a \mh{} of an infinite
Hausdorff space necessarily generates a free action.

\end{document}